\newtheorem{theorem}{Theorem}
\newtheorem{definition}[theorem]{Definition}
\newtheorem{proposition}[theorem]{Proposition}
\newtheorem{lemma}[theorem]{Lemma}
\newtheorem{corollary}[theorem]{Corollary}
\newtheorem{remark}[theorem]{Remark}
\definecolor{darkgreen}{rgb}{0, 0.5, 0}
\tikzset{->-/.style={decoration={
  markings,
  mark=at position .5 with {\arrow{>}}},postaction={decorate}}}
\tikzset{-<-/.style={decoration={
  markings,
  mark=at position .5 with {\arrow{<}}},postaction={decorate}}}
\tikzset{-->-/.style={decoration={
  markings,
  mark=at position .75 with {\arrow{>}}},postaction={decorate}}}
\tikzset{->--/.style={decoration={
  markings,
  mark=at position .25 with {\arrow{>}}},postaction={decorate}}}
\tikzset{->>-/.style={decoration={
  markings,
  mark=at position .57 with {\arrow{>>}}},postaction={decorate}
}}
\tikzset{->>>-/.style={decoration={
  markings,
  mark=at position .58 with {\arrow{>>>}}},postaction={decorate}
}}
\tikzset{->>>>-/.style={decoration={
  markings,
  mark=at position .62 with {\arrow{>>>>}}},postaction={decorate}
}}
\newcommand{\Spec}{\mathrm{Spec}\,}
\newcommand{\git}{\mathbin{
  \mathchoice{/\mkern-6mu/}% \displaystyle
    {/\mkern-6mu/}% \textstyle
    {/\mkern-5mu/}% \scriptstyle
    {/\mkern-5mu/}}}% \scriptscriptstyle
\newcommand{\bigslant}[2]{{\raisebox{.2em}{$#1$}\left/\raisebox{-.2em}{$#2$}\right.}}
\begin{document}
\title{Classical Limit of genus two DAHA}
\author{S.~Arthamonov}
\address{Department of Mathematics, University of Toronto}
\email{semeon.artamonov@utoronto.ca}
\begin{abstract}
We show that one-parameter deformation $\mathcal A_{q,t}$ of the skein algebra $Sk_q(\Sigma_2)$ of a genus two surface suggested in \cite{ArthamonovShakirov'2019} is flat. We solve the word problem in the algebra and describe monomial basis. In addition, we calculate the classical limit $\mathcal A_{q=1,t}$ of the algebra and prove that it is a one-parameter flat Poisson deformation of the coordinate ring $\mathcal A_{q=t=1}$ of an $SL(2,\mathbb C)$-charater variety of a genus two surface. As a byproduct, we obtain a remarkably simple presentation in terms of generators and relations for the coordinate ring $\mathcal A_{q=t=1}$ of a genus two character variety.
\end{abstract}
\maketitle

\section{Introduction}

%It was shown in \cite{BellamySchedler'2019} that $SL(n,\mathbb C)$ character varieties of closed genus $g>0$ surfaces are irreducible symplectic singularities. A symplectic singularity admits symplectic resolution if the symplectic form on the smooth locus can be extended to a symplectic form on the resolution. This turns out to be a rather strong requirement and symplectic singularities are rare. In the same paper \cite{BellamySchedler'2019} G.~Bellamy and T.~Schedler have shown that $SL(n,\mathbb C)$-character varieties admit symplectic resolutions only when $g=1$ or $(g,n)=(2,2)$. The case $g=1$ is well-known and the resolution can be described explicitly using (a closed subscheme of) the Hilbert Scheme $Hilb^n(\mathbb C^\times\mathbb C^\times)$.

With every surface $\Sigma$ and linear algebraic group $G$ we can associate an affine variety $\mathrm{Hom}(\pi_1(\Sigma),G)$ of representations of the fundamental group. The coordinate ring $\mathcal O(\mathrm{Hom}(\pi_1(\Sigma),G))$ of representation variety comes equipped with a $G$-action corresponding to simultaneous conjugation by an element $g\in G$. The spectrum of the $G$-invariant subring
\begin{align*}
\mathrm{Hom}(\pi_1(\Sigma),G)\git G:=\Spec(\mathcal O(\mathrm{Hom}(\pi_1(\Sigma),G))^G)
\end{align*}
is commonly referred to as \textit{$G$-character variety of $\Sigma$}.

When surface $\Sigma$ is oriented, the coordinate ring of $G$-character variety comes equipped with a natural Poisson bracket \cite{AtiyahBott'1983, Goldman'1986, GuruprasadHuebschmannJeffreyWeinstein'1997}. For $G=SL(2,\mathbb C)$ the quantization of this Poisson algebra is known as the skein algebra $Sk_q(\Sigma)$ of the surface \cite{Turaev'1991}. In other words, the skein algebra $Sk_q(\Sigma)$ and the commutative coordinate ring of $SL(2,\mathbb C)$-character variety share the same basis as vector spaces, however the structure constants of the skein algebra depend on the additional parameter $q$. At $q=1$ the structure constants of the skein algebra coincide with those of the commutative coordinate ring, while the main linear part of the decomposition of structure constants as a series in $(q-1)$ is controlled by the Poisson bracket.

For the case of a torus $\Sigma_1$, the skein algebra $Sk_q(\Sigma_1)$ can be further deformed into $A_1$ spherical Double Affine Hecke Algebra (DAHA) \cite{Cherednik'1995}. Now, spherical DAHA depends on two parameters $q,t$, where extra parameter $t$ corresponds to Macdonald deformation in the theory of orthogonal polynomials \cite{Macdonald'1979}. This brings up a natural question of study and classification of flat deformations of surface skein algebras up to isomorphism.

It is known that formal deformations are controlled by the Hochschild cohomology \cite{Gerstenhaber'1964}. This allows, in principle, to classify flat deformations of quantum character varieties up to isomorphism. The question is well-studied in the genus one case \cite{Oblomkov'2004,Sahi'1999, NoumiStockman'2000, Stockman'2003}, yet no examples of deformations with analytic dependence on parameter were constructed earlier beyond genus one.

In \cite{ArthamonovShakirov'2019} together with Sh.~Shakirov we have proposed an algebra $A_{q,t}$ of $q$-difference operators with two generic parameters $q,t$ and proved that the Mapping Class Group of a genus two surface acts by automorphisms of this algebra. It was subsequently proved by J.~Cooke and P.~Samuelson \cite{CookeSamuelson'2021} that $q=t$ specialization of this algebra is isomorphic to the genus two skein algebra $A_{q=t}\simeq Sk_q(\Sigma_2).$ Results of the current work will, in particular, imply that $A_{q,t}$ provides an example of nontrivial flat deformation of the genus two skein algebra.

\subsection{Outline}

In this paper we introduce the alternative definition of algebra $\mathcal A_{q,t}$ in terms of generators and relations and show that $\mathcal A_{q,t}\simeq A_{q,t}$ is isomorphic to the original algebra of $q$-difference operators from \cite{ArthamonovShakirov'2019}. Our presentation of $\mathcal A_{q,t}$ allows us to solve the word problem and prove that $\mathcal A_{q,t}$ is a flat two-parameter deformation of the commutative coordinate ring of the character variety
\begin{align*}
\mathcal A_{q=t=1}\simeq\mathcal O(\mathrm{Hom}(\pi_1(\Sigma_2),SL(2,\mathbb C)))^{SL(2,\mathbb C)}.
\end{align*}

We calculate the classical limit $q\rightarrow1$ of $\mathcal A_{q,t}$ and prove that resulting commutative algebra $\mathcal A_{q=1,t}$ is a flat Poisson deformation of $\mathcal A_{q=t=1}$. We show that this deformed commutative algebra $\mathcal A_{q=1,t}$ is also equipped with a $\mathrm{Mod}(\Sigma_2)$-action by Poisson automorphisms.

All four algebras involved:
\begin{align*}
\mathcal A_{q,t},\quad\mathcal A_{q=t}\simeq Sk_q(\Sigma_2),\quad\mathcal A_{q=1,t},\quad\mathcal A_{q=t=1}
\end{align*}
share the same monomial basis which we describe in the paper. In all four cases we construct an algorithm which brings the expression in generators to the canonical form\footnote{Realization of this algorithm in Mathematica can be found at \cite{Arthamonov-GitHub-Flat}}. For the two of commutative algebras, this simply amounts to calculating the Groebner basis of defining ideal for appropriate choice of monomial ordering. While, for the two noncommutative algebras we prove a PBW-like Theorem which allows us to relate monomial bases of $\mathcal A_{q,t}$ and $\mathcal A_{q=t}$ to their commutative counterparts. As a byproduct we find a remarkably simple presentation of the $SL(2,\mathbb C)$-classical character variety of a genus two surface which have not appeared in the literature.

In addition we prove that the Mapping Class Group $\mathrm{Mod}(\Sigma_2)$ of a closed genus two surface acts by automorphisms of $\mathcal A_{q,t}$ which induce Poisson automorphsims of $\mathcal A_{q=1,t}$ and $\mathcal A_{q=t=1}$. We show that the action of $\mathrm{Mod}(\Sigma_2)$ coincides with the natural action of the Mapping Class Group on the character variety $\mathcal A_{q=t=1}=\mathcal O(\mathrm{Hom}(\pi_1(\Sigma_2),SL(2,\mathbb C)))^{SL(2,\mathbb C)}$.

\subsection{Finite-dimensional representations and relation with TQFT}

Skein algebra $Sk_q(\Sigma_g)$ of a closed surface admits finite dimensional representations when parameter $q=\mathrm e^{\frac{2\pi\mathrm i}{k+2}},\;k\in\mathbb Z_{\geqslant0}$ is a root of unity. A family of representations for each $k\in\mathbb Z_{\geqslant0}$ can be constructed from $SU(2)$ Chern-Simons topological quantum field theory \cite{KirillovReshetikhin'1989, ReshetikhinTuraev'1990, Turaev'1994}.

The mapping class group $Mod(\Sigma_g)$ of the surface acts by automorphisms of the skein algebra $Sk_q(\Sigma_g)$. On the level of finite dimensional representations of $Sk_q(\Sigma_g)$ coming from TQFT, this action corresponds to simultaneous conjugation by matrices associated to mapping classes \cite{MassbaumRoberts'1993, MasbaumVogel'1994, Funar'1999} (See also \cite{Massbaum'2003} for a review). Thus, $SU(2)$ Chern-Simons theory gives rise to a family of projective representations of Mapping Class Group $\mathrm{Mod}(\Sigma_g)$ for all $g,k\in\mathbb Z_{\geqslant0}$.

On the other hand, $A_1$-spherical Double Affine Hecke Algebra, which realizes a one-parameter deformation of the genus one skein algebra, admits finite dimensional representations at a special values of parameters satisfying $q^kt^2=1$, where $k\in\mathbb Z_{\geqslant0}$ is a nonnegative integer \cite{Cherednik'2005}. In \cite{AganagicShakirov'2015} M.~Aganagic and Sh.~Shakirov have proposed a remarkable interpretation of these finite-dimensional representations from the point of view of Topological Quantum Field Theory. Namely, they have constructed a refinement of genus one algebra of knot operators of Chern-Simons topological quantum field theory and have computed the corresponding family of projective representations of the mapping class group of a torus $\mathrm{Mod}(\Sigma_2)\simeq SL(2,\mathbb Z)$.

This manuscript is a continuation of our joint project with Shamil Shakirov \cite{ArthamonovShakirov'2020, ArthamonovShakirov'2019} where we have studied Macdonald deformation of a TQFT representation of the genus two Mapping Class Group. In \cite{ArthamonovShakirov'2020} we have proposed a conjectural one-parameter deformation of the family of projective TQFT representations of $\mathrm{Mod}(\Sigma_2)$ labelled by Chern-Simons level $k\in\mathbb Z_{\geqslant0}$ and computed the corresponding algebras of knot operators. In \cite{ArthamonovShakirov'2019} we have proved that our formulas proposed in \cite{ArthamonovShakirov'2020} do define a family of projective representation of $\mathrm{Mod}(\Sigma_2)$ by introducing the algebra of $q$-difference operators $A_{q,t}$ equipped with an action of $\mathrm{Mod}(\Sigma_2)$ by automorphisms. The algebra of knot operators at level $k\in\mathbb Z_{\geqslant 0}$ provides a finite dimensional representation of $A_{q,t}$ at special value of parameters $q^kt^2=1$. Based on complete analogy with the genus one case we referred to $A_{q,t}$ as a \textit{genus two generalization of $A_1$-spherical Double Affine Hecke Algebra.}

\subsection{Representation in $q$-difference operators}

Algebra $A_{q,t}$ introduced in \cite{ArthamonovShakirov'2019} can be defined as an associative algebra, which is generated by six $q$-difference operators:
\begin{align}
A_{q,t}=\mathbf k\langle\hat O_{B_{12}}, \hat O_{B_{23}}, \hat O_{B_{13}}, \hat O_{A_1}, \hat O_{A_2}, \hat O_{A_3}\rangle
\label{eq:AqtDifferenceOperatorsAlgebra}
\end{align}
corresponding to six cycles on genus two surface shown on figure \ref{fig:qDiffGeneratingCycles}.
\begin{figure}
\includegraphics[width=250pt]{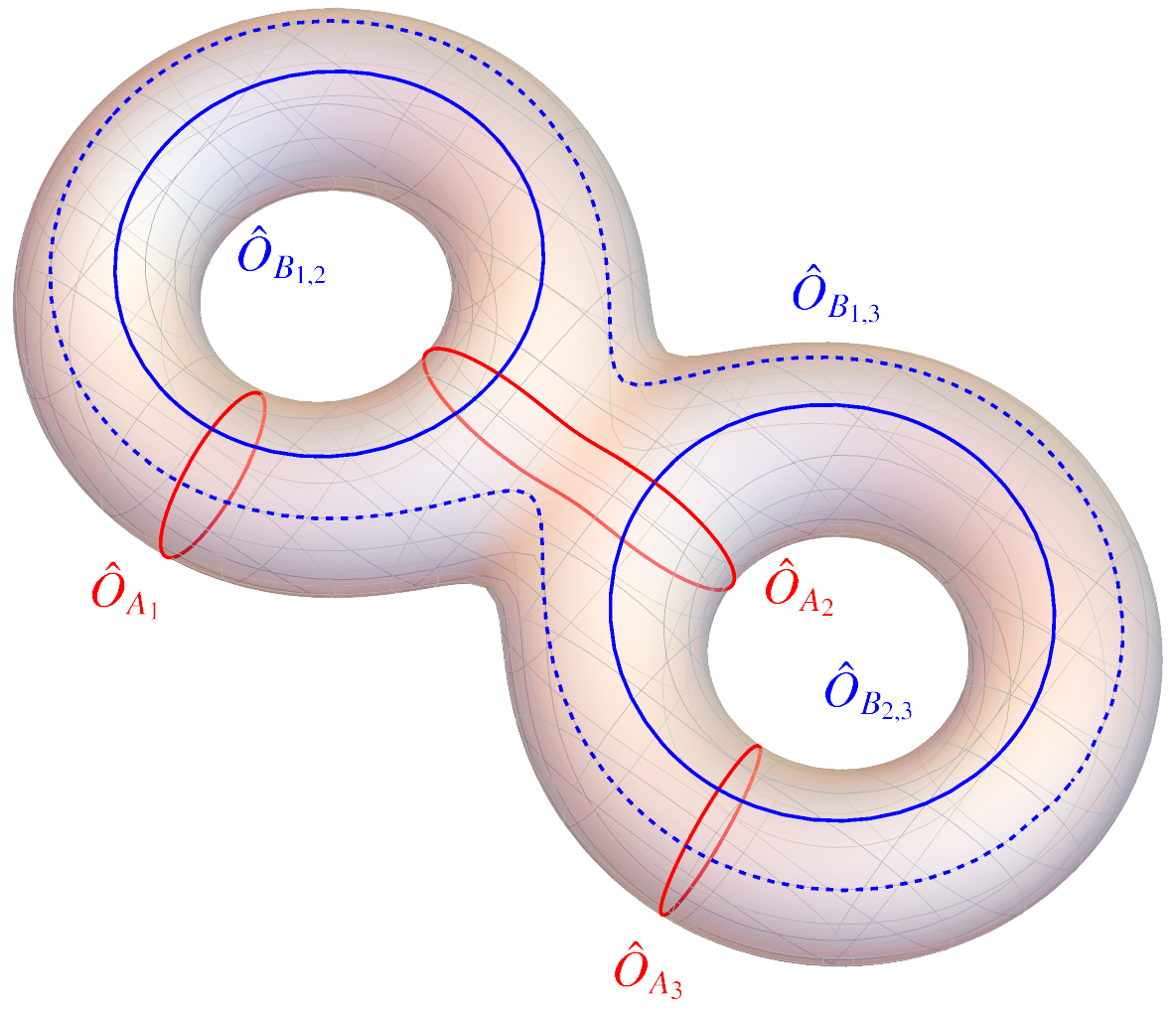}
\caption{Cycles corresponding to six $q$-difference operators}
\label{fig:qDiffGeneratingCycles}
\end{figure}
Here $\mathbf k=\mathbb C(q^{\frac14},t^{\frac14})$ is the ground field.\footnote{We use $q^{\frac14}$ and $t^{\frac14}$ instead of introducing new variables $Q=q^{\frac14}$ and $T=t^{\frac14}$ for consistency with earlier papers on Macdonald Theory and Double Affine Hecke Algebras. This simplifies comparison of familiar formulas and became standard in literature on the subject.}

The operators are acting on the subspace of Laurent polynomials in three variables
\begin{align*}
\mathcal H=\mathbf k[X_{12}+X_{12}^{-1},X_{23}+X_{23}^{-1},X_{13}+X_{13}^{-1}]=\mathbf k[X_{12}^{\pm1},X_{23}^{\pm1},X_{13}^{\pm1}]^{S_2\times S_2\times S_2}.
%\label{eq:HilbertSpace}
\end{align*}
The first three operators act by multiplication
\begin{subequations}
\begin{align}
\label{eq:Mult1}{\hat O}_{B_{12}} \ = \ X_{12} + X_{12}^{-1} \\[10pt]
\label{eq:Mult2}{\hat O}_{B_{13}} \ = \ X_{13} + X_{13}^{-1} \\[10pt]
\label{eq:Mult3}{\hat O}_{B_{23}} \ = \ X_{23} + X_{23}^{-1}
\end{align}
\label{eq:Mult}
\end{subequations}
while the remaining three operators have the following form
\begin{subequations}
\begin{align}
{\hat O}_{A_1} \ = \ \sum\limits_{a,b \in \{\pm 1\}} \ a b \ \dfrac{(1 - t^{\frac{1}{2}} X_{23} X_{12}^a X_{13}^b)(1 - t^{\frac{1}{2}} X_{23}^{-1} X_{12}^a X_{13}^b)}{t^{\frac{1}{2}} X_{12}^{a} X_{13}^b (X_{12} - X_{12}^{-1})(X_{13} - X_{13}^{-1})} \ {\hat \delta}_{12}^{a} {\hat \delta}_{13}^{b}
\label{eq:Hamiltonian1}\\[5pt]
{\hat O}_{A_2} \ = \ \sum\limits_{a,b \in \{\pm 1\}} \ a b \ \dfrac{(1 - t^{\frac{1}{2}} X_{13} X_{12}^a X_{23}^b)(1 - t^{\frac{1}{2}} X_{13}^{-1} X_{12}^a X_{23}^b)}{t^{\frac{1}{2}} X_{12}^{a} X_{23}^b (X_{12} - X_{12}^{-1})(X_{23} - X_{23}^{-1})} \ {\hat \delta}_{12}^{a} {\hat \delta}_{23}^{b}
\label{eq:Hamiltonian2}\\[5pt]
{\hat O}_{A_3} \ = \ \sum\limits_{a,b \in \{\pm 1\}} \ a b \ \dfrac{(1 - t^{\frac{1}{2}} X_{12} X_{13}^a X_{23}^b)(1 - t^{\frac{1}{2}} X_{12}^{-1} X_{13}^a X_{23}^b)}{t^{\frac{1}{2}} X_{13}^{a} X_{23}^b (X_{13} - X_{13}^{-1})(X_{23} - X_{23}^{-1})} \ {\hat \delta}_{13}^{a} {\hat \delta}_{23}^{b}
\label{eq:Hamiltonian3}
\end{align}
\label{eq:Hamiltonians}
\end{subequations}
where $\hat\delta_{12},\hat\delta_{23},$ and $\hat\delta_{13}$ are $q$-shift operators acting on polynomials in three variables as
\begin{align*}
\hat\delta_{12}f(X_{12},X_{23},X_{13}) &=f\left(q^{\frac12}X_{12},X_{23},X_{13}\right),\qquad \textrm{for all}\quad f\in\mathcal H,
\end{align*}
with the other two operators obtained by permutation of indices.

%It is worth noting that it is not immediate from definition (\ref{eq:Hamiltonians}) that $\hat O_{A_1},\hat O_{A_2},\hat O_{A_3}$ preserve $\mathcal H$, however, this fact is proved in Corollary 14 of \cite{ArthamonovShakirov'2019}.

%\subsection{Extended set of generators}

An important question which was left beyond the scope of \cite{ArthamonovShakirov'2019} was to determine a complete set of relations on six operators listed in (\ref{eq:Mult}) and (\ref{eq:Hamiltonians}). In this paper we answer this question from the opposite direction: Based on extensive calculation of relations between generators of $A_{q,t}$, we introduce a new algebra $\mathcal A_{q,t}$ in terms of generators and relations, where we can solve the word problem and prove that $\mathcal A_{q,t}$ is a flat deformation of the commutative coordinate ring of $SL(2,\mathbb C)$-character variety. We then prove that all relations in the defining ideal of $\mathcal A_{q,t}$ are satisfied in $A_{q,t}$. This allows us to introduce a representation of $\mathcal A_{q,t}$ in terms $q$-difference operators
\begin{align}
\psi:\mathcal A_{q,t}\rightarrow A_{q,t}.
\label{eq:QDifferencePresentation}
\end{align}
Finally, we prove that representation (\ref{eq:QDifferencePresentation}) is faithful and $\mathcal A_{q,t}\simeq A_{q,t}$.

Our presentation of $\mathcal A_{q,t}$ is carefully chosen to make the computation of the classical limit $q\rightarrow 1$ maximally straightforward. One of the key features of our presentation is the extended set of generators. Namely, we are using the following collection of $15$ elements of $A_{q,t}$ as a prototype for the generating set of $\mathcal A_{q,t}$:
\begin{subequations}
\begin{align}
\hat O_{1}=&\hat O_{A_1},& \hat O_2=&\hat O_{B_{12}},& \hat O_3=&\hat O_{A_2},& \hat O_4=&\hat O_{B_{23}},& \hat O_5=&\hat O_{A_3},& \hat O_6=&\hat O_{B_{13}},
\label{eq:qdifflevelonegenerators}\\
\hat O_{12}=&\big[\hat O_1,\hat O_2\big]_q,&
\hat O_{23}=&\big[\hat O_2,\hat O_3\big]_q,&
\hat O_{34}=&\big[\hat O_3,\hat O_4\big]_q,&
\hat O_{45}=&\big[\hat O_4,\hat O_5\big]_q,&
\hat O_{56}=&\big[\hat O_5,\hat O_6\big]_q,&
\hat O_{61}=&\big[\hat O_6,\hat O_1\big]_q,
\label{eq:qdiffleveltwogenerators}
\end{align}
\begin{equation}
\begin{aligned}
\hat O_{123}=&\big[\big[\hat O_1,\hat O_2\big]_q,\hat O_3\big]_q=\big[\big[\hat O_4,\hat O_5\big]_q,\hat O_6\big]_q,\\
\hat O_{234}=&\big[\big[\hat O_2,\hat O_3\big]_q,\hat O_4\big]_q=\big[\big[\hat O_5,\hat O_6\big]_q,\hat O_1\big]_q,\\
\hat O_{345}=&\big[\big[\hat O_3,\hat O_4\big]_q,\hat O_5\big]_q=\big[\big[\hat O_6,\hat O_1\big]_q,\hat O_2\big]_q,
\end{aligned}
\label{eq:qdifflevelthreegenerators}
\end{equation}
\label{eq:qdiff15generators}
\end{subequations}
where
\begin{align}
[A,B]_{q^j}=\frac{q^{\frac j4}AB-q^{-\frac j4}BA}{q^{\frac12}-q^{-\frac12}}.
\label{eq:qCommutatorDef}
\end{align}

\section{Generators and Relations}

For this section we take the ground field $\mathbf k=\mathbb C(q^{\frac14},t^{\frac14})$ to be a field of rational functions in $q^{\frac14}$ and $t^{\frac14}$.
\begin{definition}
Let $\mathcal A_{q,t}$ be an associative algebra over $\mathbf k$ with 15 generators
\begin{align}
\begin{array}{cccccc}
O_1,&O_2,&O_3,&O_4,&O_5,&O_6,\\
O_{12},&O_{23},&O_{34},&O_{45},&O_{56},&O_{61},\\
O_{123},&O_{234},&O_{345}
\end{array}
\label{eq:15Generators}
\end{align}
subject to:
\begin{itemize}
\setlength\itemsep{0.5em}
\item ``Normal ordering'' relations listed in Table \ref{tab:QCommRel}. Here $(\pm i|X)$ entry in the $O_J$'th row and $O_K$'th column of the table stands for relation
\begin{subequations}
\begin{align}
\left[O_J,O_K\right]_{q^{\pm i}}=\pm X,
\label{eq:NormalOrderingRelations}
\end{align}
while the zero entry simply stands for commutation relation $O_JO_K=O_KO_J$.
%here by index $(i+a)$ we actually mean $((i+a-1)\bmod 6)+1$.
\smallskip

\item ``$q$-Casimir'' relation
\begin{equation}
\begin{aligned}
    \frac13\big(O_{123}O_{234}O_{345} + O_{234}O_{345}O_{123} + O_{345}O_{123}O_{234}\big)\\[5pt]
    -\frac{1}{6}q^{-\frac{1}{2}}(q^2+2)
    \big(O_1O_4O_{345} +O_2O_5O_{123} +O_3O_6O_{234} +O_4O_1O_{345} +O_5O_2O_{123} +O_6O_3O_{234}\big)\\[5pt]
    +\frac{1}{6}q^{-\frac{5}{4}}t^{-\frac{1}{2}}(2 q+1) (q+t)
    \big(O_1O_6O_{61} +O_2O_1O_{12} +O_3O_2O_{23} +O_4O_3O_{34} +O_5O_4O_{45} +O_6O_5O_{56}\big)\\[5pt]
    -\frac{1}{3}q^{\frac{1}{2}}\big(O_1O_3O_5 +O_2O_4O_6 +O_3O_5O_1 +O_4O_6O_2 +O_5O_1O_3 +O_6O_2O_4\big)\\[5pt]
    -\frac{1}{6}q^{-\frac{3}{2}}t^{-\frac{1}{2}}(2 q+1) (q+t)
    \big(O_{12}^2+O_{23}^2+O_{34}^2+O_{45}^2+O_{56}^2+O_{61}^2\big)\\[5pt]
    \frac{1}{6}q^{-\frac{3}{2}}t^{-\frac{1}{2}}(q-1) (q+t)
    \big(O_1^2+O_2^2+O_3^2+O_4^2+O_5^2+O_6^2\big)\\[5pt]
    q^{-\frac{3}{2}}t^{-\frac{3}{2}}(t+1) (q+t) \left(q^2+t\right)&=0
\end{aligned}
\label{eq:qCasimirRelation}
\end{equation}
\label{eq:DefiningRelationsAqt}
\end{subequations}
\end{itemize}
\label{def:Aqt}
\end{definition}

\begin{table}
\begin{equation*}
\begin{array}{c|cccccccc}
& O_1& O_2& O_3& O_4& O_5& O_6& O_{12}& O_{23}\\\hline
 O_1& * & * & * & * & * & * & * & * \\
 O_2& \text{-1$|$}  O_{12} & * & * & * & * & * & * & * \\
 O_3& 0 & \text{-1$|$}  O_{23} & * & * & * & * & * & * \\
 O_4& 0 & 0 & \text{-1$|$}  O_{34} & * & * & * & * & * \\
 O_5& 0 & 0 & 0 & \text{-1$|$}  O_{45} & * & * & * & * \\
 O_6& \text{1$|$}  O_{61} & 0 & 0 & 0 & \text{-1$|$}  O_{56} & * & * & * \\
 O_{12}&\text{1$|$}  O_2 & \text{-1$|$}  O_1 & \text{1$|$}  O_{123} & 0 & 0 & \text{-1$|$} O_{345} & * & * \\
 O_{23}& \text{-1$|$}  O_{123} & \text{1$|$}  O_3 & \text{-1$|$}  O_2 & \text{1$|$}  O_{234} & 0 & 0 & \text{2$|$} \frac{(q+1)  O_1 O_3}{\sqrt{q}}-\frac{ O_5 (q+t)}{\sqrt{q} \sqrt{t}} & * \\
 O_{34}& 0 & \text{-1$|$}  O_{234} & \text{1$|$}  O_4 & \text{-1$|$}  O_3 & \text{1$|$} O_{345} & 0 & \text{-1$|$}  O_{56} & \text{2$|$} \frac{(q+1)  O_2 O_4}{\sqrt{q}}-\frac{ O_6 (q+t)}{\sqrt{q} \sqrt{t}} \\
 O_{45}& 0 & 0 & \text{-1$|$}  O_{345} & \text{1$|$}  O_5 & \text{-1$|$}  O_4 & \text{1$|$} O_{123} & 0 & \text{-1$|$}  O_{61} \\
 O_{56}& \text{1$|$}  O_{234} & 0 & 0 & \text{-1$|$}  O_{123} & \text{1$|$}  O_6 & \text{-1$|$}  O_5 & \text{1$|$}  O_{34} & 0\\
 O_{61}& \text{-1$|$}  O_6 & \text{1$|$}  O_{345} & 0 & 0 & \text{-1$|$}  O_{234} & \text{1$|$}  O_1 & \text{-2$|$} \frac{(q+1)  O_2 O_6}{\sqrt{q}}-\frac{ O_4 (q+t)}{\sqrt{q} \sqrt{t}} & \text{1$|$}  O_{45} \\
 O_{123}& \text{1$|$}  O_{23} & 0 & \text{-1$|$}  O_{12} & \text{1$|$}  O_{56} & 0 & \text{-1$|$}  O_{45} & \text{1$|$}  O_3 & \text{-1$|$}  O_1 \\
 O_{234}& \text{-1$|$}  O_{56} & \text{1$|$}  O_{34} & 0 & \text{-1$|$}  O_{23} & \text{1$|$} O_{61} & 0 & \text{+0$|$} O_1 O_{34}- O_2 O_{56} & \text{1$|$}  O_4 \\
 O_{345}& 0 & \text{-1$|$}  O_{61} & \text{1$|$}  O_{45} & 0 & \text{-1$|$}  O_{34} & \text{1$|$}  O_{12} & \text{-1$|$}  O_6 & \text{+0$|$} O_2 O_{45} - O_3 O_{61}
\end{array}
\end{equation*}
\vspace{0.3cm}

\begin{equation*}
\begin{array}{c|cccc}
& O_{34}& O_{45}& O_{56}& O_{61}\\\hline
 O_{34}& * & * & * & * \\
 O_{45}& \text{2$|$} \frac{(q+1)  O_3 O_5}{\sqrt{q}}-\frac{ O_1 (q+t)}{\sqrt{q} \sqrt{t}} & * & * & * \\
 O_{56}& \text{-1$|$}  O_{12} & \text{2$|$} \frac{(q+1)  O_4 O_6}{\sqrt{q}}-\frac{ O_2 (q+t)}{\sqrt{q} \sqrt{t}} & * & * \\
 O_{61}& 0 & \text{-1$|$}  O_{23} & \text{2$|$} \frac{(q+1)  O_1 O_5}{\sqrt{q}}-\frac{ O_3 (q+t)}{\sqrt{q} \sqrt{t}} & * \\
 O_{123}& \text{+0$|$}  O_3 O_{56} - O_4 O_{12} & \text{1$|$}  O_6 & \text{-1$|$}  O_4 & \text{+0$|$} O_6 O_{23} - O_1 O_{45} \\
 O_{234}& \text{-1$|$}  O_2 & \text{+0$|$} O_4 O_{61} - O_5 O_{23} & \text{1$|$}  O_1 & \text{-1$|$}  O_5 \\
 O_{345}& \text{1$|$}  O_5 & \text{-1$|$}  O_3 & \text{+0$|$}  O_5 O_{12} - O_6 O_{34} & \text{1$|$}  O_2 \\
\end{array}
\end{equation*}\vspace{0.3cm}

\begin{align*}
[O_{123},O_{345}]_{q^2}=-\frac{(q+t)O_1O_5O_6}{\sqrt{q}\sqrt{t}} +\frac{(q+t)O_5O_{61}}{q^{\frac34}\sqrt{t}} +\frac{(q+t)O_1O_{56}}{\sqrt[4]{q}\sqrt{t}} -\frac{(q+t)O_{234}}{\sqrt{q}\sqrt{t}}+\frac{(q+1)O_3O_6}{\sqrt{q}}\\
[O_{234},O_{123}]_{q^2}=-\frac{(q+t)O_2O_6O_1}{\sqrt{q}\sqrt{t}} +\frac{(q+t)O_6O_{12}}{q^{\frac34}\sqrt{t}} +\frac{(q+t)O_2O_{61}}{\sqrt[4]{q}\sqrt{t}} -\frac{(q+t)O_{345}}{\sqrt{q}\sqrt{t}}+\frac{(q+1)O_4O_1}{\sqrt{q}}\\
[O_{345},O_{234}]_{q^2}=-\frac{(q+t)O_3O_1O_2}{\sqrt{q}\sqrt{t}} +\frac{(q+t)O_1O_{23}}{q^{\frac34}\sqrt{t}} +\frac{(q+t)O_3O_{12}}{\sqrt[4]{q}\sqrt{t}} -\frac{(q+t)O_{123}}{\sqrt{q}\sqrt{t}}+\frac{(q+1)O_5O_2}{\sqrt{q}}
\end{align*}
\caption{Normal ordering relations between generators}
\label{tab:QCommRel}
\end{table}

\begin{lemma}
Six elements $O_1,O_2,O_3,O_4,O_5,O_6\in\mathcal A_{q,t}$ generate $\mathcal A_{q,t}$ as an associative algebra.
\label{lemm:SixGenerators}
\end{lemma}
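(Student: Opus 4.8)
The plan is to show that the subalgebra $\mathcal{B}\subseteq\mathcal{A}_{q,t}$ generated by $O_1,\dots,O_6$ already contains the remaining nine generators, whence $\mathcal{B}=\mathcal{A}_{q,t}$ since $\mathcal{A}_{q,t}$ is generated by the fifteen elements (\ref{eq:15Generators}). Every one of the nine higher generators can be read off from a single ``normal-ordering'' relation in Table \ref{tab:QCommRel} as an iterated $q$-commutator of elements already known to lie in $\mathcal{B}$; the only point requiring justification is that the chosen relation can actually be solved for the higher generator, which holds because $q^{\frac12}-q^{-\frac12}$ is a unit in the ground field $\mathbf{k}=\mathbb{C}(q^{\frac14},t^{\frac14})$.

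First I would dispose of the six ``level-two'' generators $O_{12},O_{23},O_{34},O_{45},O_{56},O_{61}$. The relevant entries of Table \ref{tab:QCommRel}, in the rows $O_2,\dots,O_6$ against the columns $O_1,\dots,O_6$, record $[O_2,O_1]_{q^{-1}}=-O_{12}$, $[O_3,O_2]_{q^{-1}}=-O_{23}$, $\dots$, $[O_6,O_5]_{q^{-1}}=-O_{56}$, together with $[O_6,O_1]_{q}=O_{61}$. Using the definition (\ref{eq:qCommutatorDef}) and solving each relation, for instance $O_{12}=\bigl(q^{\frac14}O_1O_2-q^{-\frac14}O_2O_1\bigr)/\bigl(q^{\frac12}-q^{-\frac12}\bigr)$, exhibits each level-two generator as a $q$-commutator of two consecutive $O_i$, hence as an element of $\mathcal{B}$.

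Next I would handle the three ``level-three'' generators. The rows $O_{12},O_{23},O_{34}$ against the columns $O_3,O_4,O_5$ give $[O_{12},O_3]_{q}=O_{123}$, $[O_{23},O_4]_{q}=O_{234}$, and $[O_{34},O_5]_{q}=O_{345}$. Solving each for the level-three generator expresses it as a $q$-commutator of a level-two generator with some $O_i$; since the level-two generators already belong to $\mathcal{B}$ by the previous step, so do $O_{123},O_{234},O_{345}$. At this point all fifteen generators are exhibited inside $\mathcal{B}$, so $\mathcal{B}=\mathcal{A}_{q,t}$ and the lemma follows.

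I do not expect a genuine obstacle here: the argument is entirely bookkeeping, namely selecting for each of the nine higher generators one table entry of the form $[\,\cdot\,,\,\cdot\,]_{q^{\pm i}}=\pm(\text{generator})$ whose left argument is already in $\mathcal{B}$. Many generators admit several such expressions, e.g. $O_{123}$ also equals $[O_{45},O_6]_{q}$ via the row $O_{45}$, column $O_6$, but a single choice suffices. The sole substantive remark is that each chosen relation is solvable for the higher generator, which is immediate from the invertibility of $q^{\frac12}-q^{-\frac12}$ in $\mathbf{k}$.
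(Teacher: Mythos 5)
Your proposal is correct and takes essentially the same route as the paper: the paper's proof likewise reads off from Table \ref{tab:QCommRel} that $[O_i,O_{i+1}]_q=O_{i,i+1}$ for $1\leqslant i\leqslant 6$ and $[[O_i,O_{i+1}]_q,O_{i+2}]_q=O_{i,i+1,i+2}$ for $1\leqslant i\leqslant 3$, so all fifteen generators are iterated $q$-commutators of $O_1,\dots,O_6$. The only difference is presentational: you make explicit which table entries are invoked and that each relation can be solved for the higher generator because $q^{\frac12}-q^{-\frac12}$ is a unit in $\mathbf k$, a point the paper leaves implicit in the definition (\ref{eq:qCommutatorDef}) of the $q$-commutator.
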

\begin{proof}
Normal ordering relations, in particular, imply that
\begin{align}
\big[O_i,O_{i+1}\big]_q=O_{i,i+1},\qquad\textrm{for all}\quad 1\leqslant i\leqslant6
\label{eq:LevelTwoGeneratorsRelation}
\end{align}
Hereinafter, by index $(i+a)$ we mean $((i+a-1)\bmod 6)+1$. Similarly,
\begin{align}
\big[\big[O_i,O_{i+1}\big]_q,O_{i+2}]_q=O_{i,i+1,i+2}\qquad\textrm{for all}\quad 1\leqslant i\leqslant3.
\label{eq:LevelThreeGeneratorRelation}
\end{align}
Hence, all of the 15 generators can be obtained as $q$-commutators of the six elements $O_1,\dots,O_6$.
\end{proof}

%We refer to (\ref{eq:NormalOrderingRelations}) as normal ordering relations in light of the following
\begin{lemma}
Normally ordered monomials of the form
\begin{align}
O_1^{n_1}O_2^{n_2}O_3^{n_3}O_4^{n_4}O_5^{n_5}O_6^{n_6} O_{12}^{n_7}O_{23}^{n_8}O_{34}^{n_9}O_{45}^{n_{10}}O_{56}^{n_{11}}O_{61}^{n_{12}} O_{123}^{n_{13}}O_{234}^{n_{14}}O_{345}^{n_{15}},\qquad n_1,\dots,n_{15}\in\mathbb Z_{\geqslant 0}.
\label{eq:NormallyOrderedMonomials}
\end{align}
provide a spanning set for algebra $\mathcal A_{q,t}$
\label{lemm:NormalOrderingLemma}
\end{lemma}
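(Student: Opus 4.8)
The plan is to show that the $\mathbf k$-span of the normally ordered monomials \eqref{eq:NormallyOrderedMonomials} is closed under left multiplication by each of the six generators $O_1,\dots,O_6$. Since by Lemma \ref{lemm:SixGenerators} these six elements generate $\mathcal A_{q,t}$, and since the constant $1$ is the empty normally ordered monomial, closure under multiplication by generators implies that this span is the whole algebra; hence the monomials form a spanning set. The argument is therefore a straightening (rewriting) argument: I must exhibit a procedure that takes $O_i\cdot M$, where $M$ is a normally ordered monomial, and rewrites it as a $\mathbf k$-linear combination of normally ordered monomials, and argue that the procedure terminates.

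First I would fix the ordering of the fifteen generators as listed in \eqref{eq:NormallyOrderedMonomials}, say $O_1\prec O_2\prec\cdots\prec O_6\prec O_{12}\prec\cdots\prec O_{61}\prec O_{123}\prec O_{234}\prec O_{345}$, so that a normally ordered monomial is one in which the generators appear in weakly increasing order. Given $O_i M$, I move the leading factor $O_i$ to the right past the factors of $M$ one transposition at a time. Each elementary move replaces an out-of-order adjacent pair $O_K O_J$ (with $O_J\prec O_K$) using the relevant entry of Table \ref{tab:QCommRel}: the normal ordering relation $[O_J,O_K]_{q^{\pm i}}=\pm X$ rearranges into the $q$-commutator form $\tfrac{q^{s}}{q^{1/2}-q^{-1/2}}O_J O_K=\tfrac{q^{-s}}{q^{1/2}-q^{-1/2}}O_K O_J\pm X$, so that $O_K O_J$ is rewritten as a scalar multiple of the ordered product $O_J O_K$ plus the correction term $X$, where $X$ is either zero, a single lower generator, a quadratic expression in lower generators, or one of the right-hand sides of the three displayed $[\,\cdot\,,\cdot\,]_{q^2}$ relations.

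The key point to verify is termination, and this is where I expect the main obstacle to lie, because some of the correction terms $X$ are \emph{quadratic} rather than linear in the generators (for instance the $\text{2}|$ entries such as $\tfrac{(q+1)O_1O_3}{\sqrt q}-\tfrac{O_5(q+t)}{\sqrt q\sqrt t}$, and the $\text{+0}|$ entries such as $O_1O_{34}-O_2O_{56}$, together with the three bottom relations). A naive measure like total degree does not decrease under such a move, so I would introduce a more refined well-founded measure. The natural choice is to weight each generator by its \emph{level} --- level one for $O_1,\dots,O_6$, level two for $O_{12},\dots,O_{61}$, and level three for $O_{123},O_{234},O_{345}$ --- and to observe, by inspecting every nonzero entry of Table \ref{tab:QCommRel}, that in each correction term the total level strictly drops relative to the product $O_K O_J$ being rewritten, while the ordered product $O_J O_K$ keeps the same level but strictly reduces the number of out-of-order adjacent pairs, or an associated inversion count. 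I would then assign to each monomial a pair $(\text{total level},\ \text{inversion number})$ ordered lexicographically, and check case by case from the table that every rewriting step strictly decreases this pair in the lexicographic order.

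With such a measure in hand, termination follows from the well-foundedness of the lexicographic order on $\mathbb Z_{\geqslant0}\times\mathbb Z_{\geqslant0}$: repeatedly applying elementary straightening moves to $O_i M$ must halt after finitely many steps, and it can only halt at a $\mathbf k$-linear combination of normally ordered monomials. This establishes closure of the span under left multiplication by generators, and hence that the normally ordered monomials \eqref{eq:NormallyOrderedMonomials} span $\mathcal A_{q,t}$. I emphasize that this lemma asserts only spanning, not linear independence; the PBW-type statement that these monomials form an actual \emph{basis} is the content of the flatness result proved later, so here I would not attempt to prove independence and would rely solely on the straightening-and-termination argument just described.
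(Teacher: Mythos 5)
Your overall strategy --- straightening out-of-order pairs via Table \ref{tab:QCommRel} and proving termination with a well-founded measure --- is exactly the paper's, and you correctly identified that termination is the only real issue. But the specific measure you propose does not work, and the case-by-case check you defer to would in fact fail. With level weights $(1,2,3)$ the correction terms do \emph{not} always drop in total level: the entry $[O_2,O_1]_{q^{-1}}=-O_{12}$ rewrites $O_2O_1$ (level $1+1=2$) with correction $O_{12}$ (level $2$), and likewise $[O_{12},O_3]_{q}=O_{123}$ rewrites $O_{12}O_3$ (level $2+1=3$) with correction $O_{123}$ (level $3$); this is forced by the relations (\ref{eq:LevelTwoGeneratorsRelation})--(\ref{eq:LevelThreeGeneratorRelation}), which make each higher generator ``cost'' exactly the sum of the levels of its constituents. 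Your lexicographic fallback $(\text{total level},\ \text{inversion number})$ does not rescue this: rewriting the pair $O_2O_1$ inside the word $O_2O_1O_5O_6$ (level $4$, one inversion) produces the correction term $O_{12}O_5O_6$ (level $4$, \emph{two} inversions, since $O_{12}\succ O_5$ and $O_{12}\succ O_6$), so your measure can strictly increase along a rewriting step, and termination does not follow.

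The repair is precisely the paper's choice of weights (\ref{eq:GeneratorWeights}): $\deg O_i=2$, $\deg O_{i,i+1}=3$, $\deg O_{i,i+1,i+2}=4$. Since $3<2+2$ and $4<3+2$, every entry of Table \ref{tab:QCommRel} --- including the two families above as well as the quadratic and cubic corrections --- has weighted degree strictly smaller than the sum of the degrees of the two generators being swapped. One then argues by induction on the induced filtration, as the paper does: at fixed degree, each swap preserves the degree of the top-degree monomial and strictly decreases its inversion count, while all correction terms land in strictly lower filtration degree and are absorbed by the inductive hypothesis. With this one change your argument goes through; as written, it has a genuine gap at the termination step.
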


\begin{proof}
Consider free associative algebra $F$ with the same collection of generators (\ref{eq:15Generators}). Introduce grading on $\mathcal F$ by assigning the following degrees to generators
\begin{align}
\deg O_i=2,\qquad\deg O_{i,i+1}=3,\qquad\deg O_{i,i+1,i+2}=4,\qquad 1\leqslant i\leqslant6.
\label{eq:GeneratorWeights}
\end{align}
We have an increasing filtration on $F$ defined by the grading
\begin{align*}
F^{(0)}\subseteq F^{(1)}\subseteq F^{(2)}\subseteq F^{(3)}\subseteq\dots\subseteq F=\bigcup_{m\in\mathbb Z_{\geqslant0}} F^{(m)}.
\end{align*}
This induces an increasing filtration on $\mathcal A_{q,t}$, where filtered components
\begin{align*}
\mathcal A_{q,t}^{(m)}=\pi\big(F^{(m)}\big)
\end{align*}
are given by images of $F^{(m)}$ under natural projection map $\pi:F\rightarrow \mathcal A_{q,t}$.

We will prove the statement of the lemma by induction in degree $m$. The base case $m=0$ is immediate. Now, by inductive assumption suppose that every $x\in A_{q,t}^{(m-1)}$ can be presented as a linear combination of normally ordered monomials. Consider $y\in\mathcal A_{q,t}^{(m)}$ and note that each entry in Table \ref{tab:QCommRel} has degree strictly smaller than the sum of the degrees of the corresponding generators which label row and column respectively. Hence, applying a sequence of relations from Table \ref{tab:QCommRel} we can bring top degree monomials in $y$ to the normally ordered form at the expense of getting some subleading monomials. Together with inductive assumption this means that $y\in\mathcal A_{q,t}^{(m)}$ can be presented as a linear combination of normally ordered monomials.
\end{proof}

\begin{lemma}
The following permutation of generators defines an order 6 automorphism of $\mathcal A_{q,t}$:
\begin{align}
I=\big(O_1,O_2,O_3,O_4,O_5,O_6\big) \big(O_{12},O_{23},O_{34},O_{45},O_{56},O_{61}\big)\big(O_{123},O_{234},O_{345}\big)
\label{eq:IActionAqt}
\end{align}
\label{lemm:IAutomorphism}
\end{lemma}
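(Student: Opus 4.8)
The plan is to lift $I$ to the free algebra and then descend. Write $\mathcal A_{q,t}=F/\mathcal I$, where $F$ is the free associative $\mathbf k$-algebra on the fifteen symbols (\ref{eq:15Generators}) and $\mathcal I$ is the two-sided ideal generated by the entries of Table \ref{tab:QCommRel} together with the $q$-Casimir (\ref{eq:qCasimirRelation}). The permutation (\ref{eq:IActionAqt}) extends uniquely to an algebra automorphism $\tilde I$ of $F$, and since it is a product of disjoint cycles of lengths $6,6,3$ we have $\tilde I^{6}=\mathrm{id}_F$. It therefore suffices to prove the single inclusion $\tilde I(\mathcal I)\subseteq\mathcal I$: combined with $\tilde I^{6}=\mathrm{id}_F$ this forces $\mathcal I=\tilde I^{6}(\mathcal I)\subseteq\cdots\subseteq\tilde I(\mathcal I)\subseteq\mathcal I$, hence $\tilde I(\mathcal I)=\mathcal I$, so $\tilde I$ descends to an automorphism $I$ of $\mathcal A_{q,t}$ with $I^{6}=\mathrm{id}$. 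The order is exactly $6$ because the images $I^{k}(O_1)=O_{1+k}$ are pairwise distinct generators for $0\le k\le 5$, distinctness being visible, for instance, in the faithful realization $\psi$ of (\ref{eq:QDifferencePresentation}). To establish $\tilde I(\mathcal I)\subseteq\mathcal I$ I would show that $\tilde I$ carries each listed generator of $\mathcal I$ into $\mathcal I$, organizing the check around the covariance of the relations under the cyclic index shift $i\mapsto i+1$ (indices read mod $6$), with the three level-three symbols permuted by the $3$-cycle $O_{123}\to O_{234}\to O_{345}\to O_{123}$.

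For the $q$-Casimir relation the invariance is immediate: every parenthesized group in (\ref{eq:qCasimirRelation}) is a sum over a full cyclic orbit -- for instance $O_1O_3O_5+O_2O_4O_6+\cdots$, the block $O_{12}^2+\cdots+O_{61}^2$, and $O_{123}O_{234}O_{345}+O_{234}O_{345}O_{123}+O_{345}O_{123}O_{234}$ -- and the scalar prefactors carry no indices, so $\tilde I$ merely permutes the summands inside each group and fixes the relation. For the normal-ordering entries of Table \ref{tab:QCommRel} I would verify, by finite inspection, that $\tilde I$ sends the relation in row $O_J$, column $O_K$ to the relation in row $O_{J+1}$, column $O_{K+1}$; the starred upper-triangular entries are recovered from the listed ones through the antisymmetry $[A,B]_{q^{j}}=-[B,A]_{q^{-j}}$, so they require no independent check.

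The one place where covariance is not literal, and which I expect to be the crux, is the closure of the three level-three commutator relations listed at the bottom of Table \ref{tab:QCommRel}; denote them $R_1,R_2,R_3$ in the order displayed. A direct substitution gives $\tilde I(R_1)=R_2$ and $\tilde I(R_2)=R_3$ on the nose, but $\tilde I(R_3)$ returns a relation with the \emph{same} left-hand side $[O_{123},O_{345}]_{q^2}$ as $R_1$ and a \emph{different} right-hand side: the wrap-around $O_{345}\mapsto O_{123}$ prevents the level-one factors from returning to themselves in a single step. Thus $\tilde I(R_3)\in\mathcal I$ is equivalent to the difference of the two right-hand sides lying in $\mathcal I$. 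I would collapse this difference using the commuting pairs $O_1O_5=O_5O_1$, $O_2O_4=O_4O_2$, $O_3O_6=O_6O_3$ and the defining $q$-commutators of $O_{23},O_{34},O_{56},O_{61}$ read off the table; after cancellation it reduces to the single identity $O_5O_6O_1-O_1O_6O_5=O_2O_3O_4-O_4O_3O_2$, equivalently $O_5[O_6,O_1]+O_1[O_5,O_6]=O_2[O_3,O_4]+O_4[O_2,O_3]$ in ordinary commutators. Verifying this last identity from the remaining table relations -- or simply reading it off from the $\mathbb Z/6$ rotation of the six cycles of Figure \ref{fig:qDiffGeneratingCycles} under $\psi$ -- is the only genuine computation. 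Once it is in hand, $\tilde I(\mathcal I)\subseteq\mathcal I$ and the lemma follows.
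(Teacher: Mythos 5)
Your global strategy is the same as the paper's: lift $I$ to the free algebra $F$, observe $\tilde I^{6}=\mathrm{id}_F$, and reduce everything to the single inclusion $\tilde I(\mathcal I)\subseteq\mathcal I$, checked generator by generator. Your observation that the $q$-Casimir (\ref{eq:qCasimirRelation}) is verbatim invariant is correct, and your identification of the level-three wrap-around as the crux is also correct: indeed $\tilde I(R_1)=R_2$ and $\tilde I(R_2)=R_3$ on the nose, and your reduction of $\tilde I(R_3)-R_1$ modulo the table relations to the identity $O_2O_3O_4-O_4O_3O_2=O_5O_6O_1-O_1O_6O_5$ checks out (modulo the normal-ordering relators this difference is a unit multiple of the relator $g_8$ of Appendix \ref{sec:qGroebnerBasis}, equivalently of the difference $\rho_{15}-\rho_{18}$ of two type-C $J$-relators). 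One inaccuracy along the way: it is \emph{not} true that every table entry maps verbatim to its index-shifted entry. The entries for $(O_{56},O_{45})$ and $(O_{61},O_{12})$ fail, because under $I$ the products $O_4O_6$ and $O_2O_6$ become $O_5O_1$ and $O_3O_1$, which are out of normal order; each image differs from the target relator by a multiple of a commutation relator ($\eta_{(5)(1)}$, resp.\ $\eta_{(3)(1)}$). The paper lists exactly these two, together with $\eta_{(345)(234)}$, as the three non-manifest cases.

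The genuine gap is that your final cubic identity is asserted, not proved, and neither of your proposed verification routes closes it. It does not follow from ``the remaining table relations'' by any finite sequence of reorderings: all reorderings of $O_2O_3O_4$ stay within the $\{2,3,4\}$ family up to level-two and level-three corrections, and the only bridges to the $\{5,6,1\}$ family supplied by the table ($[O_{23},O_4]_q=O_{234}=[O_{56},O_1]_q$ and $[O_2,O_{34}]_q=O_{234}=[O_5,O_{61}]_q$) are not sufficient by themselves. The identity is precisely a $J$-relation-type fact; for generic $q$ it does lie in the ideal generated by the table relators, but establishing this is the nontrivial ``divide by $(q-1)$'' computation of Lemma \ref{lemm:JRelations} and formulas (\ref{eq:JrelationsViaNormal}) — the paper's own proof of the present lemma handles it by an explicit decomposition of $I(\eta_{(345)(234)})$ into relators, involving $\rho_{15}$ and even the $q$-Casimir relator $\rho_0$. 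Your alternative route — ``reading it off'' from the realization in $q$-difference operators — is circular: the homomorphism (\ref{eq:QDifferencePresentation}) is only constructed in Proposition \ref{prop:qDifferenceHomomorphism}, whose proof invokes this very lemma, and in any case a verification inside $A_{q,t}$ only shows the element lies in the kernel of $F\rightarrow A_{q,t}$, not in the defining ideal $\mathcal I$; identifying the two is the faithfulness statement of Theorem \ref{th:BasisAqt}, proved much later and resting on this lemma. The same circularity affects your appeal to faithfulness of $\psi$ for the claim that the order is exactly $6$. So the plan correctly locates the crux, but the crux itself — ideal membership of the wrap-around image — is left unestablished, and it is the entire content of the lemma.
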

\begin{proof}
The set of defining relations (\ref{eq:NormalOrderingRelations})--(\ref{eq:qCasimirRelation}) falls short of being explicitly invariant under the action of $I$ by just three instances where the action of $I$ spoils the ordering. In these cases we use explicit calculations below
\begin{align*}
I(\eta_{(56)(45)})=&\eta _{(61)(56)} -q^{-1}(q-1) (q+1)\eta _{(5)(1)},\\
I(\eta_{(61)(12)})=& -\eta _{(23)(12)} +q^{-1}(q-1) (q+1)\eta _{(3)(1)},\\
I(\eta_{(345)(234)})=&q^{-\frac{5}{4}}t^{-\frac{1}{2}}(q-1) (q+t)O_1\eta _{(6)(5)} +q^{-\frac{3}{4}}t^{-\frac{1}{2}}(q-1) (q+t)O_2\eta _{(4)(3)} +q^{-1}t^{-\frac{1}{2}}(q-1) (q+t)\eta _{(4)(2)}O_3\\
&-q^{-1}t^{-\frac{1}{2}}(q-1) (q+t)O_3\eta _{(4)(2)} -q^{-\frac{3}{4}}t^{-\frac{1}{2}}(q-1) (q+t)\eta _{(3)(2)}O_4 +q^{-\frac{5}{4}}t^{-\frac{1}{2}}(q-1) (q+t)\eta _{(6)(1)}O_5\\
&-\eta _{(345)(123)} +q^{-\frac{3}{2}}t^{-\frac{1}{2}}(q-1)^2 (q+t)\eta _{(61)(5)} +(q-1)\eta _{(45)(12)} +q^{-\frac{3}{2}}t^{-\frac{1}{2}}(q-1)^2 (q+t)\eta _{(23)(4)}\\
&-q^{-1}(q-1) (q+2)\eta _{(6)(3)} +q^{-\frac{1}{2}}(q-1)\rho _{15} -q^{-\frac{1}{2}}(q-1)\rho _0.
\end{align*}
\end{proof}

\subsection{$J$-relations}

Spanning set (\ref{eq:NormallyOrderedMonomials}) is not a basis, because there are relations between normally ordered monomials. To obtain examples of such relations, consider a product $O_KO_JO_I$ of three of generators in reverse lexicographic order. There exist at least two distinct ways of bringing the leading term of $O_KO_JO_I$ to lexicographic order via a sequence of relations (\ref{eq:NormalOrderingRelations}), we have
\begin{subequations}
\begin{equation}
\begin{aligned}
(O_KO_J)O_I=&\left(q^{-\frac{c_{K,J}}2}O_JO_K +q^{-\frac{c_{K,J}}4}[O_K,O_J]_{q^{c_{K,J}}}\right)O_I\\ =&q^{-\frac{c_{K,J}+c_{K,I}+c_{J,I}}2}O_IO_JO_K +q^{-\frac{c_{K,J}+c_{K,I}}2-\frac{c_{J,I}}4} [O_J,O_I]_{q^{c_{J,I}}}O_K\\
 &+q^{-\frac{c_{K,J}}2-\frac{c_{K,I}}4}O_J[O_K,O_I]_{q^{c_{K,I}}} +q^{-\frac{c_{K,J}}4}[O_K,O_J]_{q^{c_{K,J}}}O_I,
\end{aligned}
\label{eq:TripleOrderingI}
\end{equation}
\vspace{4pt}
\begin{equation}
\begin{aligned}
O_K(O_JO_I)=&q^{-\frac{c_{J,I}+c_{K,I}+c_{K,J}}2}O_IO_JO_K +q^{-\frac{c_{J,I}+c_{K,I}}2-\frac{c_{K,J}}4}O_I[O_K,O_J]_{q^{c_{K,J}}}\\ &+q^{-\frac{c_{J,I}}2-\frac{c_{K,I}}4}[O_K,O_I]_{q^{c_{K,I}}}O_J +q^{-\frac{c_{J,I}}4}O_K[O_J,O_I]_{q^{c_{J,I}}}.
\end{aligned}
\label{eq:TripleOrderingII}
\end{equation}
\end{subequations}
Here $c_{I,J}$ stands for the integer coefficient in the $O_I$'th row and $O_J$'th column of Table \ref{tab:QCommRel}. Subtracting (\ref{eq:TripleOrderingII}) from (\ref{eq:TripleOrderingI}) we get the identity
\begin{align}
[[O_I,O_J]_{q^{c_{I,J}}},O_K]_{q^{c_{J,K}+c_{I,K}}} =[O_I,[O_J,O_K]_{q^{c_{J,K}}}]_{q^{c_{I,J}+c_{I,K}}} -[O_J,[O_I,O_K]_{q^{c_{I,K}}}]_{q^{c_{J,I}+c_{J,K}}},
\label{eq:QJacobiRelation}
\end{align}
where we have used the fact that $c_{I,J}=-c_{J,I}$ for all generators $O_I,O_J$.

By examining (\ref{eq:QJacobiRelation}) for all triples of generators we find 18 distinct relations between normally ordered monomials. For symmetry reasons, however, it will be more convenient for us to consider a closely related collection of 18 relations, which we call \textit{$J$-relations}. This collection would have an advantage of being manifestly invariant under the action of finite-order automorphism $I$, even though individual monomials involved in these relations are not necessarily in the normal form (\ref{eq:NormallyOrderedMonomials}). As we will show later in the text, $J$-relations play the same role in the solution of the word problem in $\mathcal A_{q,t}$ as Jacobi relations of the usual Lie algebra do in Poincare-Birkhoff-Witt theorem.
\begin{lemma}[\textit{$J$-relations}]
The following relations hold in $\mathcal A_{q,t}$ for all $1\leqslant i\leqslant 6$
\begin{subequations}
\begin{align}
q^{-\frac12}O_{i+2}O_{i+4}+q^{\frac12}O_{i+3}O_{i+2,i+3,i+4}-O_{i+2,i+3} O_{i+3,i+4}-(q^{\frac12}t^{-\frac12}+q^{-\frac12}t^{\frac12})O_i=0,
\label{eq:JRelationsA}
\end{align}
\begin{align}
-q^{-1}O_{i+3}O_{i+5,i} -O_{i+4}O_{i+1,i+2}+q^{-\frac12}O_{i+3,i+4} O_{i+1,i+2,i+3}-&(q^{\frac12}t^{-\frac12}+t^{\frac12}q^{-\frac12})\left( O_{i,i+1}-q^{-\frac14}O_iO_{i+1}\right)=0,
\label{eq:JRelationsB}
\end{align}
\begin{equation}
\begin{aligned}
-q^{\frac12}O_{i+1,i+2}O_{i+4,i+5} +O_{i,i+1,i+2} O_{i+1,i+2,i+3}-q^{-\frac12}O_iO_{i+3} -&\left(q^{\frac12}t^{-\frac12}+q^{-\frac12}t^{\frac12}\right)\times
\\
\big(-\left(q-1+q^{-1}\right) O_{i+2,i+3,i+4}+q^{-3/4} O_{i+1}O_{i+5,i}+&q^{3/4} O_{i+5}O_{i,i+1}-O_iO_{i+1}O_{i+5}\big)=0.
\end{aligned}
\label{eq:JRelationsC}
\end{equation}
\label{eq:JRelations}
\end{subequations}
Here by index $(i+a)$ we mean $((i+a-1)\bmod 6)+1$.
\label{lemm:JRelations}
\end{lemma}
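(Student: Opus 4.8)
The plan is to derive each of the three families \eqref{eq:JRelationsA}--\eqref{eq:JRelationsC} from a single instance of the $q$-Jacobi relation \eqref{eq:QJacobiRelation}, applied to a carefully chosen triple of generators, combined with exactly one quadratic (or cubic) normal-ordering relation from Table~\ref{tab:QCommRel}. Since \eqref{eq:QJacobiRelation} was already shown to hold in $\mathcal A_{q,t}$ for every ordered triple (by comparing the two reorderings \eqref{eq:TripleOrderingI} and \eqref{eq:TripleOrderingII} of a product $O_KO_JO_I$), no new input is required beyond the bookkeeping of the table entries.

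I would treat \eqref{eq:JRelationsA} first, as a template. For $1\leqslant i\leqslant 6$ apply \eqref{eq:QJacobiRelation} to the triple $(O_{i+2},O_{i+3},O_{i+3,i+4})$. Reading the structure constants $c_{\cdot,\cdot}$ off Table~\ref{tab:QCommRel}, the right-hand side of \eqref{eq:QJacobiRelation} collapses to $O_{i+3}O_{i+2,i+3,i+4}-O_{i+2}O_{i+4}$: one nested commutator produces the level-three generator $O_{i+2,i+3,i+4}$ and the other the single generator $O_{i+4}$, after which the two resulting pairs commute; the left-hand side stays as $[O_{i+2,i+3},O_{i+3,i+4}]_{q^0}$. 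This is an identity expressing one ordering of $O_{i+2,i+3}O_{i+3,i+4}$ in terms of $O_{i+2}O_{i+4}$ and $O_{i+3}O_{i+2,i+3,i+4}$. Eliminating the opposite ordering $O_{i+3,i+4}O_{i+2,i+3}$ by the quadratic table relation $[O_{i+3,i+4},O_{i+2,i+3}]_{q^2}=\frac{(q+1)}{\sqrt q}O_{i+2}O_{i+4}-\frac{(q+t)}{\sqrt q\sqrt t}O_i$ then leaves precisely \eqref{eq:JRelationsA}, after using $\frac{q+1}{\sqrt q}-\sqrt q=\frac1{\sqrt q}$ and $\frac{q+t}{\sqrt q\sqrt t}=q^{\frac12}t^{-\frac12}+q^{-\frac12}t^{\frac12}$.

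Relations \eqref{eq:JRelationsB} and \eqref{eq:JRelationsC} I would obtain by the same recipe applied one level higher. For \eqref{eq:JRelationsB}, whose leading term $O_{i+3,i+4}O_{i+1,i+2,i+3}$ pairs a level-two with a level-three generator, I would choose a triple whose $q$-Jacobi expansion produces this product and then eliminate the opposite ordering using the quadratic $+0|$ entries of Table~\ref{tab:QCommRel} (the commutators of level-two with level-three generators). For \eqref{eq:JRelationsC}, whose leading term $O_{i,i+1,i+2}O_{i+1,i+2,i+3}$ is a product of two adjacent level-three generators, the elimination is performed with the explicit relation $[O_{i+1,i+2,i+3},O_{i,i+1,i+2}]_{q^2}$ recorded at the bottom of Table~\ref{tab:QCommRel}. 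Rather than repeat the computation for all six values of $i$, I would establish each family at $i=1$ only and then propagate it by the order-six automorphism $I$ of Lemma~\ref{lemm:IAutomorphism}: the index shift $i\mapsto i+1$ is exactly the action of $I$ on the generators (with the cyclic convention $(i+a)=((i+a-1)\bmod 6)+1$ and the doubled definition of the three level-three generators), so that a single representative determines the whole orbit of six.

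The main obstacle is the bookkeeping in the two higher cases. For \eqref{eq:JRelationsA} only two terms survive on the right and the simplification is short, but for \eqref{eq:JRelationsB} and especially \eqref{eq:JRelationsC} several nested $q$-commutators remain nonzero, the quadratic and cubic table entries interact, and one must track the fractional powers of $q$ and $t$ together with the precise cyclic indices with care. The delicate point is to verify that after eliminating the single wrong-ordered product all remaining non-normal-ordered monomials reorganize into exactly the manifestly $I$-covariant combination displayed on the left-hand sides of \eqref{eq:JRelationsB}--\eqref{eq:JRelationsC}, with no residual terms; confirming this cancellation is where the explicit form of the $(q+t)$- and $(q+1)$-coefficients in Table~\ref{tab:QCommRel} is essential.
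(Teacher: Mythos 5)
Your proposal is correct and follows essentially the same route as the paper: the paper likewise reduces to a single representative of each family ($\rho_1$, $\rho_7$, $\rho_{13}$) via the order-six automorphism $I$ of Lemma~\ref{lemm:IAutomorphism}, and then certifies each representative from the relations of Table~\ref{tab:QCommRel}, exhibiting it as an explicit combination of the relators $\eta_{I,J}$ with coefficients involving $(q-1)^{-1}$. Your Jacobi-plus-elimination computation is a constructive repackaging of those certificates and does close up as you predict: for (\ref{eq:JRelationsA}) with your triple $(O_{i+2},O_{i+3},O_{i+3,i+4})$ exactly as you computed, and for (\ref{eq:JRelationsB}) and (\ref{eq:JRelationsC}) with the triples $(O_{i+3,i+4},O_{i+2,i+3},O_{i+1})$ and $(O_{i,i+1,i+2},O_{i+1,i+2},O_{i+3})$, the wrong-ordered products being eliminated against the $+0$ entry $[O_{i+1,i+2,i+3},O_{i+3,i+4}]_{q^0}$ and the bottom-of-table entry $[O_{i+1,i+2,i+3},O_{i,i+1,i+2}]_{q^2}$ respectively.
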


\begin{proof}
Note that by Lemma (\ref{lemm:IAutomorphism}) it is enough for us to prove only one relation of each type. Let $F$ be the free associative algebra with the same collection of generators. Denote the l.h.s. of (\ref{eq:JRelationsA}), (\ref{eq:JRelationsB}), and (\ref{eq:JRelationsC}) by $\rho_i, \rho_{6+i},\rho_{12+i}\in F$ respectively. In addition, for every pair of generators $O_I,O_J$, let $\eta_{I,J}\in F$ stand for the normal ordering relator from Table \ref{tab:QCommRel}:
\begin{align}
\eta_{I,J}=q^{\frac{c_{I,J}}4}O_IO_J-q^{-\frac{c_{I,J}}4}O_JO_I \mp(q^{\frac12}-q^{-\frac12})X.
\label{eq:EtaIJRelator}
\end{align}
We have the following identities in $F$:
\begin{subequations}
\begin{equation}
\begin{aligned}
\rho_1=&q^{-\frac12}O_{3}O_{5}+q^{\frac12}O_{4}O_{345}-O_{34} O_{45}-(q^{\frac12}t^{-\frac12}+q^{-\frac12}t^{\frac12})O_1\\[5pt]
=&\frac{q^{\frac1{2}}}{(q-1)}\Big(\big[\eta_{(5)(4)},O_{34}\big]_{q^0} -[\eta_{(34),(4)},O_5]_{q^1} -\big[O_4,\eta_{(34)(5)}\big]_{q^1}+\eta_{(345)(4)} -\eta_{(45)(34)} -\eta_{(5)(3)}\big),
\end{aligned}
\end{equation}
\begin{equation}
\begin{aligned}
\rho_7=&-q^{-1}O_{4}O_{61} -O_{5}O_{23}+q^{-\frac12}O_{45} O_{234}-(q^{\frac12}t^{-\frac12}+t^{\frac12}q^{-\frac12})\left( O_{12}-q^{-\frac14}O_1O_{2}\right)\\[5pt]
=&\frac1{q-1}\left(\big[O_{45},\eta_{(34)(2)}\big]_{q^2} +\big[\eta_{(45)(2)},O_{34}\big]_{q^3} +\big[O_2,\eta_{(45)(34)}\big]_{q^1}-t^{-\frac{1}{2}}(q+t)\eta_{(2)(1)} -q^{-\frac12}(q+1)\eta_{(3)(2)}O_5\right.\\
&\left.\qquad\qquad-q^{-\frac{3}{4}}(q+1)O_3\eta_{(5)(2)} +(q-1)\eta_{(23)(5)}
+q^{-1}(q-1)\eta_{(61)(4)} +q^{-\frac12}\eta_{(234)(45)}\right),
\end{aligned}
\end{equation}
\begin{equation}
\begin{aligned}
\rho_{13}=&-q^{\frac12}O_{23}O_{56} +O_{123} O_{234}-q^{-\frac12}O_1O_4 -\left(q^{\frac12}t^{-\frac12}+q^{-\frac12}t^{\frac12}\right)\\
&\qquad\qquad\times\big(-\left(q-1+q^{-1}\right) O_{345}+q^{-3/4} O_{2}O_{6,1}+q^{3/4} O_{6}O_{12}-O_iO_2O_6\big)\\[5pt]
=&\frac{q}{q-1}\left(\big[\eta_{(23)(4)},O_{123}\big]_{q^0} +\big[O_{23},\eta_{(123)(4)}\big]_{q^1}+[\eta_{(123)(23)},O_4]_{q^1} -q^{-\frac{7}{4}}t^{-\frac{1}{2}}(q-1)(q+t)\eta_{(2)(1)}O_6\right.\\
&\qquad\qquad+q^{-\frac{3}{2}}(q^2+q-1)\eta_{(4)(1)} -q^{-\frac{7}{4}}t^{-\frac{1}{2}}(q-1)(q+t)O_2\eta_{(6)(1)}\\
&\qquad\qquad\left.+q^{-2}t^{-\frac{1}{2}}(q-1)^2(q+t)\eta_{(12)(6)}-q^{-\frac{1}{2}}\eta_{(56)(23)} +q^{-\frac{1}{2}}\eta_{(234)(123)}\right).
\end{aligned}
\end{equation}
\label{eq:JrelationsViaNormal}
\end{subequations}
Hence $\rho_1,\rho_7, \rho_{13}$ belong to defining ideal. Applying Lemma \ref{lemm:IAutomorphism} we conclude the proof.
\end{proof}

\begin{remark}
It is worth noting that although $\rho_1,\dots,\rho_{18}$ belong to ideal in $F$ generated by normal ordering relations, the r.h.s. of expressions (\ref{eq:JrelationsViaNormal}) contain powers of $(q-1)$ in the denominator. In other words, $J$-relations become independent in the limit $q\rightarrow 1$.
\end{remark}

\subsection{Mapping Class Group Action}

In this section we show that Mapping Class Group $\mathrm{Mod}(\Sigma_2)$ of a genus two surface acts by automorphisms of the algebra $\mathcal A_{q,t}$. Finite presentation of genus two Mapping Class Group $\mathrm{Mod}(\Sigma_2)$ was originally constructed by J.~Birman and H.~Hilden \cite{BirmanHilden'1971}, it has five generators $d_1,d_2,d_3,d_4,d_5$ which are subject to
\begin{itemize}
\item Coxeter relations:
\begin{align*}
d_id_{i+1}d_i=&d_{i+1}d_id_{i+1},\qquad\textrm{for all}\quad 1\leqslant i\leqslant 4,\\
d_id_j=&d_jd_i,\qquad\textrm{when}\quad |i-j|>1.
\end{align*}
\item Finite order relations:
\begin{align*}
I^6=H^2=1,\quad\textrm{where}\quad I=d_1d_2d_3d_4d_5\quad\textrm{and}\quad H=d_1d_2d_3d_4d_5d_5d_4d_3d_2d_1.
\end{align*}
\end{itemize}
Five generators $d_1,d_2,d_3,d_4,d_5$ correspond respectively to Dehn twists along the cycles $A_1,B_{1,2},A_2,B_{2,3},A_3$ shown on Figure \ref{fig:qDiffGeneratingCycles}. At the first step, however, it will be easier for us to use a smaller generating set. It is known that Mapping Class Group of any closed surface can be generated by two elements \cite{Wajnryb'1996,Korkmaz'2005}. In the case of $\mathrm{Mod}(\Sigma_2)$ it is enough to take $d_1$ and $I$. Note that Coxeter relations imply
\begin{align}
d_i=I^{i-1}d_1I^{1-i}\qquad\textrm{for all}\quad 1\leqslant i\leqslant 5.
\label{eq:DehnTwistsViaD1}
\end{align}
We have already introduced the action of an order six automorphism $I$ in Lemma \ref{lemm:IAutomorphism}, so now we will introduce the action of $d_1$ on $\mathcal A_{q,t}$.

Let $F$ be the free algebra over $\mathbf k$ with 15 generators (\ref{eq:15Generators}). Define a homomorphism $d_1:F\rightarrow F$ by its action on generators
\begin{align}
d_1:\quad\left\{\begin{array}{cll}
 O_2 &\mapsto & q^{\frac14} O_1O_2-q^{\frac12} O_{12} \\[2pt]
 O_6 &\mapsto& O_{61} \\[2pt]
 O_{12} &\mapsto& O_2 \\[2pt]
 O_{23} &\mapsto& q^{\frac14} O_1O_{23}-q^{\frac12} O_{123} \\[2pt]
 O_{56} &\mapsto& O_{234} \\[2pt]
 O_{61} &\mapsto& q^{\frac14} O_1O_{61}-O_6 q^{\frac12} \\[2pt]
 O_{123} &\mapsto& O_{23} \\[2pt]
 O_{234} &\mapsto& q^{\frac14} O_1O_{234}-q^{\frac12} O_{56}
\end{array}\right.
\label{eq:d1Automorphism}
\end{align}
where we assume that the action of $d_1$ on the remaining generators is identical.
\begin{proposition}
Homomorphism $d_1:F\rightarrow F$ is invertible. Moreover, defining ideal (\ref{eq:DefiningRelationsAqt}) of $\mathcal A_{q,t}$ is invariant under the action of $d_1$.
\label{prop:d1Automorphism}
In other words, $d_1$ descends to an automorphism of $\mathcal A_{q,t}$.
\end{proposition}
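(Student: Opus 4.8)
The plan is to prove two separate claims: invertibility of $d_1$ as an endomorphism of the free algebra $F$, and preservation of the defining ideal. For invertibility, I would exhibit the inverse map explicitly on generators. Since $d_1$ acts as the identity on all generators except the eight listed in (\ref{eq:d1Automorphism}), it suffices to invert the restriction to these eight. Inspecting the eight assignments, they pair up naturally: $O_{12}\mapsto O_2$ together with $O_2\mapsto q^{\frac14}O_1O_2-q^{\frac12}O_{12}$; likewise $O_{123}\leftrightarrow O_{23}$, $O_{56}\mapsto O_{234}$ paired with $O_{234}\mapsto q^{\frac14}O_1O_{234}-q^{\frac12}O_{56}$, and $O_6\mapsto O_{61}$ paired with $O_{61}\mapsto q^{\frac14}O_1O_{61}-q^{\frac12}O_6$. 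Because $O_1$ is fixed by $d_1$, each pair gives an invertible linear-over-$\mathbf k[O_1]$ substitution, so I can solve algebraically: e.g. $d_1^{-1}(O_2)=O_{12}$ and $d_1^{-1}(O_{12})=q^{\frac12}(q^{\frac14}O_1O_{12}-O_2)$, with analogous formulas for the other three pairs. Verifying $d_1\circ d_1^{-1}=\mathrm{id}$ on these eight generators is then a direct substitution, establishing that $d_1$ is an automorphism of $F$.

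The substantial part is showing the defining ideal is $d_1$-invariant. The defining ideal is generated by the normal-ordering relators of Table \ref{tab:QCommRel} and the single $q$-Casimir relation (\ref{eq:qCasimirRelation}). I would proceed relator by relator: for each $\eta_{I,J}$ I must show $d_1(\eta_{I,J})$ lies in the ideal $\mathcal I$ generated by all the relators. Concretely, I would compute $d_1(\eta_{I,J})$ by substituting (\ref{eq:d1Automorphism}), then rewrite the result as an explicit $\mathbf k$-linear combination of the relators $\eta_{K,L}$ (with monomial coefficients) plus a multiple of the Casimir relator, exactly in the style of the expressions (\ref{eq:JrelationsViaNormal}) used in the proof of Lemma \ref{lemm:JRelations}. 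Many relators involve only generators fixed by $d_1$, and for these invariance is immediate or reduces to a shorter check; the nontrivial cases are precisely the relators involving $O_2,O_6,O_{12},O_{23},O_{56},O_{61},O_{123},O_{234}$. I would also need $d_1(\text{Casimir})\in\mathcal I$; since the Casimir is central-like and degree-homogeneous under the grading (\ref{eq:GeneratorWeights}), I expect it to map to itself modulo lower-order relator combinations.

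A useful reduction is to exploit the automorphism $I$ of Lemma \ref{lemm:IAutomorphism} together with the relation $d_i=I^{i-1}d_1I^{1-i}$ anticipated in (\ref{eq:DehnTwistsViaD1}): invariance of $\mathcal I$ under $d_1$ and under $I$ yields invariance under every $d_i$, but more importantly the six-fold symmetry lets me deduce invariance of an entire $I$-orbit of relators from a single verification, cutting the casework roughly by a factor of six. I would organize the relators into $I$-orbits and check one representative per orbit.

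The main obstacle will be the bookkeeping in expressing each $d_1(\eta_{I,J})$ as an explicit ideal combination: the images of the quadratic generators under $d_1$ produce cubic and quartic monomials in $O_1,\dots,O_6$, and collapsing these back into combinations of relators requires careful tracking of the $q^{\pm\frac14}$ and $(q+t)$ factors, much as in (\ref{eq:JrelationsViaNormal}). The genuinely delicate check is $d_1$-invariance of the $q$-Casimir relation (\ref{eq:qCasimirRelation}), since its image is a long cubic expression whose reduction to $\mathcal I$ is not manifest and must be verified by direct computation (conveniently automatable, cf. \cite{Arthamonov-GitHub-Flat}). Once every generating relator of $\mathcal I$ is shown to map into $\mathcal I$, it follows that $d_1(\mathcal I)\subseteq\mathcal I$, and applying the same argument to the explicitly constructed $d_1^{-1}$ gives the reverse inclusion, so $d_1(\mathcal I)=\mathcal I$ and $d_1$ descends to an automorphism of $\mathcal A_{q,t}$.
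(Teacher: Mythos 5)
Your overall strategy---exhibit $d_1^{-1}$ explicitly on the eight non-fixed generators, then verify relator-by-relator that the image of each generator of the defining ideal lands back in the ideal, in the style of (\ref{eq:JrelationsViaNormal})---is exactly the paper's proof: the inverse check is (\ref{eq:d1InverseAutomorphism})--(\ref{eq:d1d1IActionO2GenericQ}), and the roughly 85 nontrivial relator computations (plus the $q$-Casimir, delegated to \cite{Arthamonov-GitHub-Flat}) occupy Appendix \ref{sec:d1Automorphism}. The genuine gap is your proposed shortcut of checking only one representative per $I$-orbit of relators. The implication ``$d_1(\eta)\in\mathcal I$ and $I(\mathcal I)=\mathcal I$ imply $d_1(I\eta)\in\mathcal I$'' fails because $d_1$ and $I$ do not commute: one has
\begin{equation*}
d_1(I\eta)=I\big((I^{-1}d_1I)(\eta)\big),
\end{equation*}
and $I^{-1}d_1I$ is a \emph{different} endomorphism of $F$ (geometrically, the twist about the cycle $I^{-1}(c_1)$), about which nothing is known at this stage; concluding that it preserves $\mathcal I$ would require already knowing that $d_1$ does, which is precisely what is being proved. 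The relation $d_i=I^{i-1}d_1I^{1-i}$ of (\ref{eq:DehnTwistsViaD1}) can legitimately be exploited only \emph{after} Proposition \ref{prop:d1Automorphism} is established---this is how the paper uses it in Theorem \ref{th:MCGActionGeneric}---so it cannot cut the casework inside the proposition itself. As written, your plan leaves most of the 106 ideal generators unverified; you must check every relator that is not fixed outright by $d_1$, as the paper's appendix does.

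A smaller but real slip: your formula $d_1^{-1}(O_{12})=q^{\frac12}\big(q^{\frac14}O_1O_{12}-O_2\big)$ is off by a factor of $q$; with it, $d_1\circ d_1^{-1}(O_{12})=qO_{12}\neq O_{12}$. The correct assignment is $d_1^{-1}(O_{12})=q^{-\frac14}O_1O_{12}-q^{-\frac12}O_2$, as in (\ref{eq:d1InverseAutomorphism}). The method you describe for producing the inverse (solving each pair over $\mathbf k$ using that $O_1$ is fixed) is sound, and your expectation about the Casimir relator matches the paper's treatment, but both the inverse formulas and the full list of relator reductions have to be carried out and verified explicitly for the proof to close.
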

\begin{proof}
The inverse homomorphism $d_1^{-1}:F\rightarrow F$ can be described by its action on generators
\begin{align}
d_1^{-1}:\quad\left\{\begin{array}{cll}
 O_2 &\mapsto& O_{12} \\[2pt]
 O_6 &\mapsto& q^{-\frac14}O_1O_6-q^{-\frac12}O_{61} \\[2pt]
 O_{12} &\mapsto& q^{-\frac14}O_1O_{12}-q^{-\frac12}O_2\\[2pt]
 O_{23} &\mapsto& O_{123} \\[2pt]
 O_{56} &\mapsto& q^{-\frac14}O_1O_{56}-q^{-\frac12}O_{234} \\[2pt]
 O_{61} &\mapsto& O_6 \\[2pt]
 O_{123} &\mapsto& q^{-\frac14}O_1O_{123}-q^{-\frac12}O_{23} \\[2pt]
 O_{234} &\mapsto& O_{56}
\end{array}\right.
\label{eq:d1InverseAutomorphism}
\end{align}
Again, here we assume that the action is identical on the remaining generators. As a result, we get
\begin{equation}
\begin{aligned}
(d_1^{-1}\circ d_1) (O_2)=&d_1^{-1}\left(q^{\frac14}O_1O_2-q^{\frac12}O_{12}\right)=q^{\frac14}O_1O_{12} -q^{\frac12}\left(q^{-\frac14}O_1O_2-q^{-\frac12}O_2\right)=O_2,\\
(d_1\circ d_1^{-1})(O_2)=&d_1\left(O_{12}\right)=O_2.
\end{aligned}
\label{eq:d1d1IActionO2GenericQ}
\end{equation}
Similar computation for $O_6,O_{12},O_{23},O_{56},O_{61},O_{123},O_{234}$ shows that (\ref{eq:d1InverseAutomorphism}) is in fact the inverse for (\ref{eq:d1Automorphism}).

The proof of the second part amounts to tedious but straightforward examination of all 106 generators of defining ideal (\ref{eq:DefiningRelationsAqt}). We present this calculation in Appendix \ref{sec:d1Automorphism}.
\end{proof}

Now we are ready to prove the main theorem of the current subsection that the Mapping Class Group $\mathrm{Mod}(\Sigma_2)$ of a closed genus two surface acts by automorphisms of $\mathcal A_{q,t}$. It first appeared as Theorem 25 of \cite{ArthamonovShakirov'2019}, even before we had a complete presentation of the algebra $\mathcal A_{q,t}$, and originally the theorem was formulated for the algebra of difference operators (\ref{eq:AqtDifferenceOperatorsAlgebra}). Our proof essentially repeats the logic of \cite{ArthamonovShakirov'2019} in the new context of Definition \ref{def:Aqt}.
\begin{theorem}
Two automorphisms $d_1,I:\mathcal A_{q,t}\rightarrow\mathcal A_{q,t}$ define the action of Mapping Class Group $\mathrm{Mod}(\Sigma_2)$ of genus two surface on $\mathcal A_{q,t}$.
\label{th:MCGActionGeneric}
\end{theorem}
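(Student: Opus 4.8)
The plan is to use the two-element generating set $\{d_1, I\}$ of $\mathrm{Mod}(\Sigma_2)$ together with the relation $d_i = I^{i-1}d_1 I^{1-i}$ from (\ref{eq:DehnTwistsViaD1}) to rewrite the entire Birman--Hilden presentation as a short list of relations in $d_1$ and $I$ alone, and then to verify each of these for the two automorphisms $d_1,I\colon\mathcal A_{q,t}\to\mathcal A_{q,t}$ supplied by Proposition \ref{prop:d1Automorphism} and Lemma \ref{lemm:IAutomorphism}. Since the defining ideal is invariant under both maps, every such relation descends to an identity in $\mathrm{Aut}(\mathcal A_{q,t})$, and by Lemma \ref{lemm:SixGenerators} it suffices to check each identity on the six generators $O_1,\dots,O_6$.

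The key simplification is that conjugation by the order-six automorphism $I$ cyclically shifts indices, $I d_i I^{-1}=d_{i+1}$, so it permutes the Coxeter relations among themselves and collapses them into very few $I$-orbits. Concretely, the four braid relations $d_id_{i+1}d_i=d_{i+1}d_id_{i+1}$ are the $I$-conjugates of the single relation $d_1\,(Id_1I^{-1})\,d_1=(Id_1I^{-1})\,d_1\,(Id_1I^{-1})$, and the six commuting relations $d_id_j=d_jd_i$ with $|i-j|>1$ are, up to $I$-conjugation and $I^6=1$, generated by the two statements $[d_1,d_3]=0$ and $[d_1,d_4]=0$, where $d_3=I^2d_1I^{-2}$ and $d_4=I^3d_1I^{-3}$. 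Finally, substituting $d_i=I^{i-1}d_1I^{1-i}$ into the defining identity $I=d_1d_2d_3d_4d_5$ and using $I^6=1$ turns it into the single relation $(d_1I)^5=1$, while the hyperelliptic relation $H^2=1$ involves the length-ten word $H=d_1d_2d_3d_4d_5d_5d_4d_3d_2d_1$.

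Thus, beyond $I^6=1$ (already established in Lemma \ref{lemm:IAutomorphism}), the proof reduces to verifying five explicit relations: the braid relation above, the two commutation relations $[d_1,d_3]=[d_1,d_4]=0$, the relation $(d_1I)^5=1$, and $H^2=1$. Each is checked by composing the homomorphisms of (\ref{eq:d1Automorphism}) and Lemma \ref{lemm:IAutomorphism}, evaluating on $O_1,\dots,O_6$, and reducing the result to normal order via Table \ref{tab:QCommRel}. For $H$ I expect the computation to collapse: as the hyperelliptic involution, $H$ should act as the identity automorphism of $\mathcal A_{q,t}$ (consistent with $\mathrm{tr}(g)=\mathrm{tr}(g^{-1})$ for the trace coordinates it permutes), in which case $H^2=1$ is immediate.

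The main obstacle is purely computational and concentrated in the relation $(d_1I)^5=1$ (and, should it fail to collapse, in $H^2=1$). The difficulty is that $d_1$ does not preserve the generating set: it sends $O_2$, $O_{61}$, and similar generators to \emph{quadratic} combinations such as $q^{1/4}O_1O_2-q^{1/2}O_{12}$, so the intermediate expressions appearing in a length-five word grow rapidly and must be repeatedly brought back to normal order using the full Table \ref{tab:QCommRel} and the $q$-Casimir relation. Organizing this computation---either directly in $\mathcal A_{q,t}$ or by transporting it through the faithful representation in $q$-difference operators---is where the real work lies; the remaining braid and commutation relations are short by comparison.
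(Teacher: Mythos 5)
Your proposal is correct and follows essentially the same route as the paper's proof: pass to the two-element generating set $\{d_1,I\}$, use $I$-conjugation (together with $I^6=1$, which handles the pair $(d_1,d_5)$ via conjugation by $I^2$) to collapse the Coxeter relations to the single braid relation and the two commutations $[d_1,d_3]=[d_1,d_4]=0$, check everything on $O_1,\dots,O_6$ by Lemma \ref{lemm:SixGenerators}, and expect the hyperelliptic class $H$ to act trivially — which is exactly what the paper proves. The only genuine difference is the bookkeeping of the ``long'' relations. You verify $(d_1I)^5=1$ (i.e.\ $d_1d_2d_3d_4d_5=I$ as automorphisms) and then $H^2=1$ via $H=1$; the paper instead verifies the single identity $\widetilde I:=d_5d_4d_3d_2d_1=I^{-1}$ — equivalently $(d_1I^{-1})^5=1$ — on the six generators and asserts this is equivalent to $H=1$. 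That asserted equivalence tacitly uses $d_1d_2d_3d_4d_5=I$, which is precisely your relation $(d_1I)^5=1$, so your list of relations is the logically tighter one; the two verifications are of the same computational size. Indeed, your worry about expression blow-up in these length-five words is overstated: since $I$ merely permutes generators, $(d_1I)^5$ or $d_5d_4d_3d_2d_1$ applied to a generator is five applications of $d_1$ interleaved with permutations, and the normal-ordering reductions of Table \ref{tab:QCommRel} collapse each evaluation in a line or two, exactly as in the paper's short display for $\widetilde I$. One caution on your parenthetical alternative: transporting the check through the representation in $q$-difference operators requires faithfulness of that representation, which is only established much later (Theorem \ref{th:BasisAqt}); at this stage only the homomorphism property is available, so the direct check inside $\mathcal A_{q,t}$ using Proposition \ref{prop:d1Automorphism}, Lemma \ref{lemm:IAutomorphism}, and Table \ref{tab:QCommRel} is the appropriate one.
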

\begin{proof}
By (\ref{eq:DehnTwistsViaD1}), the action of the remaining four Dehn twists on generators of $\mathcal A_{q,t}$ can be obtained by a simple shift of indexes in formula (\ref{eq:d1Automorphism}). Combining this fact with Lemma \ref{lemm:IAutomorphism} we note that it is enough for us to prove only four relations
\begin{align}
d_1d_2d_1=d_2d_1d_2,\qquad d_1d_3=d_3d_1,\qquad d_1d_4=d_4d_1,\qquad H^2=1.
\label{eq:SufficientMCGRelations}
\end{align}
Moreover, by Lemma \ref{lemm:SixGenerators} it is enough to check that (\ref{eq:SufficientMCGRelations}) is satisfied only on six generators $O_1,\dots, O_6$. We get
\begin{subequations}
\begin{equation}
\begin{aligned}
d_1d_2d_1(O_1)=\;&O_2=d_2d_1d_2(O_1),\\[5pt]
d_1d_2d_1(O_2)=\;&q^{\frac{1}{2}}O_2O_1O_2 -qO_1O_2^2 +q^{\frac{5}{4}}O_{12}O_2 -q^{\frac{3}{4}}O_2O_{12} +qO_1\\
\stackrel{(\ref{eq:NormalOrderingRelations})}{=}& q^{\frac{1}{4}}O_{12}O_2-q^{\frac{3}{4}}O_2O_{12}+qO_1=d_2d_1d_2(O_2),\\[5pt]
d_1d_2d_1(O_3)=\;&q^{\frac{1}{2}}O_1O_2O_3 -q^{\frac{3}{4}}O_{12}O_3 -q^{\frac{3}{4}}O_1O_{23} +qO_{123}\\
\stackrel{(\ref{eq:NormalOrderingRelations})}{=}&q^{\frac{3}{4}}O_{12}O_2^2O_3 -q^{\frac{5}{4}}O_2O_{12}O_2O_3 -qO_{12}O_2O_{23} +q^{\frac{3}{2}}O_2O_{12}O_{23}\\ &+q^{\frac{3}{2}}O_1O_2O_3 -q^{\frac{3}{4}}O_{12}O_3 -q^{\frac{7}{4}}O_1O_{23} +qO_{123} =d_2d_1d_2(O_3),\\[5pt]
d_1d_2d_1(O_4)=\;&O_4=d_2d_1d_2(O_4),\\[5pt]
d_1d_2d_1(O_5)=\;&O_5=d_2d_1d_2(O_5),\\[5pt]
d_1d_2d_1(O_6)=\;&O_{345}=d_2d_1d_2(O_6).
\end{aligned}
\end{equation}
For the second relation in (\ref{eq:SufficientMCGRelations}) we have
\begin{equation}
\begin{aligned}
d_1d_3(O_1)=&O_1=d_3d_1(O_1),\\
d_1d_3(O_2)=&q^{\frac{1}{4}}O_1O_{23}-q^{\frac{1}{2}}O_{123}=d_3d_1(O_2),\\
d_1d_3(O_3)=&O_3=d_3d_1(O_3),\\
d_1d_3(O_4)=&q^{\frac{1}{4}}O_3O_4-q^{\frac{1}{2}}O_{34}=d_3d_1(O_4),\\
d_1d_3(O_5)=&O_5=d_3d_1(O_5),\\
d_1d_3(O_6)=&O_{61}=d_3d_1(O_6).
\end{aligned}
\end{equation}
Next, we have
\begin{equation}
\begin{aligned}
d_1d_4(O_1)=&O_1=d_4d_1(O_1),\\
d_1d_4(O_2)=&q^{\frac{1}{4}}O_1O_2-q^{\frac{1}{2}}O_{12}=d_4d_1(O_2),\\
d_1d_4(O_3)=&O_{34}=d_4d_1(O_3),\\
d_1d_4(O_4)=&O_4=d_4d_1(O_4),\\
d_1d_4(O_5)=&q^{\frac{1}{4}}O_4O_5-q^{\frac{1}{2}}O_{45}=d_4d_1(O_5),\\
d_1d_4(O_6)=&O_{61}=d_4d_1(O_6).
\end{aligned}
\end{equation}
\label{eq:MCGRelatorsOnGeneratorsGeneric}
\end{subequations}

Finally, for the last relation in (\ref{eq:SufficientMCGRelations}) we will prove a slightly stronger statement that $H=1$, or equivalently, that
\begin{align*}
\widetilde I=d_5d_4d_3d_2d_1=I^{-1}
\end{align*}
realizes the inverse of an order 6 automorphism $I$.\footnote{This is closely related to the fact that hyperelliptic involution $H$, although being nontrivial mapping class, acts trivially on the $SL(2,\mathbb C)$-character variety. As we can see, this property also translates to the quantized and deformed $SL(2,\mathbb C)$-character variety.} Computing the action of $\widetilde I$ on 6 generators we get
\begin{align*}
\widetilde I(O_1)=&O_6,\\
\widetilde I(O_2)=&-q^{\frac{3}{4}}O_{61}O_6+q^{\frac{1}{4}}O_6O_{61}+qO_1 \stackrel{(\ref{eq:NormalOrderingRelations})}{=}O_1,\\
\widetilde I(O_3)=&-q^{\frac{3}{4}}O_{345}O_{61}+q^{\frac{1}{4}}O_{61}O_{345}+qO_2 \stackrel{(\ref{eq:NormalOrderingRelations})}{=}O_2,\\
\widetilde I(O_4)=&q^{\frac{1}{4}}O_{345}O_{45}-q^{\frac{3}{4}}O_{45}O_{345}+qO_3
\stackrel{(\ref{eq:NormalOrderingRelations})}{=}O_3,\\
\widetilde I(O_5)=&q^{\frac{1}{4}}O_{45}O_5-q^{\frac{3}{4}}O_5O_{45}+qO_4 \stackrel{(\ref{eq:NormalOrderingRelations})}{=}O_4,\\
\widetilde I(O_6)=&O_5.
\end{align*}
\end{proof}

\subsection{Presentation of $\mathcal A_{q,t}$ in $q$-difference operators.}

In this section we show that algebra $\mathcal A_{q,t}$ admits presentation in $q$-difference operators. This result is not at all surprising, as we have come up with Definition \ref{def:Aqt} from examining relations between difference operators.

\begin{proposition}
We have an algebra homorphism
\begin{align*}
\widehat{\Delta}:\mathcal A_{q,t}\rightarrow A_{q,t},\qquad O_i\mapsto \hat O_i,\quad 1\leqslant i\leqslant 6.
\end{align*}
\label{prop:qDifferenceHomomorphism}
\end{proposition}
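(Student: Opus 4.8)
The plan is to verify that the assignment $O_i \mapsto \hat O_i$ for $1 \leqslant i \leqslant 6$ extends to a well-defined algebra homomorphism, which amounts to checking that every defining relation of $\mathcal A_{q,t}$ from Definition \ref{def:Aqt} is satisfied by the corresponding $q$-difference operators in $A_{q,t}$. Since by Lemma \ref{lemm:SixGenerators} the six elements $O_1,\dots,O_6$ generate the whole algebra, a homomorphism out of $\mathcal A_{q,t}$ is determined by the images of these six generators; what must be confirmed is that no defining relation is violated. Equivalently, letting $F$ be the free associative algebra on the $15$ symbols \eqref{eq:15Generators}, I would first send the nine higher generators to the prescribed $q$-commutators of $\hat O_1,\dots,\hat O_6$ exactly as in \eqref{eq:qdiff15generators}, obtaining a homomorphism $F \to A_{q,t}$, and then show that the defining ideal \eqref{eq:DefiningRelationsAqt} maps to zero.

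First I would dispatch the definitional relations: by construction \eqref{eq:qdifflevelonegenerators}--\eqref{eq:qdifflevelthreegenerators}, the images of $O_{i,i+1}$ and $O_{i,i+1,i+2}$ are precisely the $q$-commutators \eqref{eq:LevelTwoGeneratorsRelation}--\eqref{eq:LevelThreeGeneratorRelation}, so the entries of Table \ref{tab:QCommRel} that merely define higher generators hold automatically. The substantive content is then twofold: the remaining normal-ordering entries of Table \ref{tab:QCommRel} (including the three degree-preserving $[O_{123},O_{345}]_{q^2}$-type relations and the entries with quadratic right-hand sides), and the $q$-Casimir relation \eqref{eq:qCasimirRelation}. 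Each of these is a concrete identity among explicit $q$-difference operators built from \eqref{eq:Mult} and \eqref{eq:Hamiltonians}, so the verification reduces to a direct, if laborious, computation in the operator algebra $A_{q,t}$ acting on $\mathcal H$. In practice this is where the Mathematica realization \cite{Arthamonov-GitHub-Flat} does the work: one composes the shift operators $\hat\delta_{12},\hat\delta_{23},\hat\delta_{13}$ with the rational coefficients and checks that both sides agree as operators on Laurent polynomials.

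I expect the main obstacle to be the $q$-Casimir relation \eqref{eq:qCasimirRelation}, since it is a cubic identity in the generators with many terms and delicate $q,t$-dependent coefficients; verifying it by hand would require expanding products of the three-variable difference operators $\hat O_{A_i}$ and tracking cancellations among the shift operators, which is precisely the kind of step best delegated to symbolic computation. A cleaner conceptual route, if available, is to exploit the finite-order automorphism $I$ of Lemma \ref{lemm:IAutomorphism}: since the $q$-Casimir relation and the table of normal-ordering relations are manifestly $I$-invariant, and $A_{q,t}$ carries a corresponding $\mathrm{Mod}(\Sigma_2)$-action by Theorem \ref{th:MCGActionGeneric}, it suffices to verify one representative of each $I$-orbit of relations and then propagate by symmetry, cutting the computation down to a handful of genuinely independent checks. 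Once all relations are confirmed to vanish in $A_{q,t}$, the universal property of the quotient $\mathcal A_{q,t} = F / (\text{defining ideal})$ yields the desired homomorphism $\widehat\Delta$.
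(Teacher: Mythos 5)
Your proposal follows essentially the same route as the paper: reduce the statement to checking that every defining relation of Definition \ref{def:Aqt} is satisfied by the explicit operators \eqref{eq:Mult}--\eqref{eq:Hamiltonians}, treat the entries of Table \ref{tab:QCommRel} and the $q$-Casimir relation \eqref{eq:qCasimirRelation} as concrete operator identities, and delegate the heavy computation (in particular the $q$-Casimir, which the paper also verifies by applying the operator to a general function of three variables and collecting terms with equal shifts) to symbolic computation.

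One correction to your symmetry shortcut, which is in fact the shortcut the paper uses, but with a different justification. To verify one representative per orbit of relations and propagate, you need automorphisms of the difference-operator algebra $A_{q,t}$ permuting the $\hat O_i$ compatibly with $I$ and $d_1$, i.e.\ automorphisms of the \emph{target} of $\widehat{\Delta}$ intertwining with those of the source. Theorem \ref{th:MCGActionGeneric}, which you invoke, only gives the action on the abstract algebra $\mathcal A_{q,t}$; it says nothing about $A_{q,t}$ and cannot move a verified operator identity to another one (using it that way would also be circular, since $\widehat{\Delta}$ is what is being constructed). The required action on $A_{q,t}$ is Lemma 23 and Proposition 24 of \cite{ArthamonovShakirov'2019}, and this is what the paper's proof cites; with both the $I$- and $d_1$-symmetries available on the operator side, the normal-ordering checks collapse to the first column of Table \ref{tab:QCommRel}, whose entries are either identities by construction or were proved in Lemma 11 of \cite{ArthamonovShakirov'2019}. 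Since your primary route (direct verification of all relations) does not rely on the shortcut, your proposal is sound as written; only the attribution of the symmetry argument needs fixing.
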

\begin{proof}
Recall that by Lemma \ref{lemm:SixGenerators} we know that the remaining 9 generators in presentation of $\mathcal A_{q,t}$ are nothing but $q$-commutators of the six elements $O_i,\;1\leqslant i\leqslant 6$. To prove the statement of the Theorem we must show that all relations in defining ideal (\ref{eq:DefiningRelationsAqt}) are satisfied in difference operators.

First, note that both $\mathcal A_{q,t}$ and $A_{q,t}$ are equipped with the Mapping Class Group action compatible on generators. To this end compare Lemma \ref{lemm:IAutomorphism} and Proposition \ref{prop:d1Automorphism} with Lemma 23 and Proposition 24 of \cite{ArthamonovShakirov'2019}. As a result, when verifying normal ordering relations (\ref{eq:NormalOrderingRelations}) we can assume without loss of generality that the first generator is $O_1$. Hence, it is enough for us to examine only the first column of Table \ref{tab:QCommRel}. All these relations are either identity by construction, or were proved in Lemma 11 of \cite{ArthamonovShakirov'2019}.

As for the $q$-Casimir relation (\ref{eq:qCasimirRelation}), it can be proved by applying the corresponding $q$-difference operator to general function of three variables $f(x_{1,2},x_{2,3},x_{1,3})$ and combining the terms with the same $q$-shifts of the arguments. We omit cumbersome details of this calculation from the main text and invite careful reader to examine our Mathematica code \cite{Arthamonov-GitHub-Flat} which does such calculation.
\end{proof}

%\begin{remark}
%It is worth mentioning a minor difference in conventions between the current paper and \cite{ArthamonovShakirov'2019}. Here we think of $t^{\frac14}\in\mathbb C\backslash\{0\}$ as an arbitrary parameter, instead of a formal variable. One can note, however, that for the sake Definitions 9, 10 and Lemma 11 of \cite{ArthamonovShakirov'2019} we don't need to assume that $t^{\frac14}$ is generic and the statements are valid for all $t^{\frac14}\in\mathbb C\backslash\{0\}$.
%\end{remark}

Proposition \ref{prop:qDifferenceHomomorphism} implies that for all generators $O_I\in\mathcal A_{q,t}$, the image $\widehat{\Delta}(O_I)$ is a $q$-difference operator with coefficients in the field $\mathbf k(x_{12},x_{23},x_{13})=\mathbb C(q^\frac14,t^{\frac14})(x_{12},x_{23},x_{13})$. In particular, $\widehat{\Delta}(O_{12})$ a priori might contain $(q-1)$ factor in the denominator, coming from the $q$-commutator formula (\ref{eq:qCommutatorDef}). However, this doesn't actually happen for our choice of generators. In fact, we can prove the following property, which plays crucial role in computation of quasiclassical limit in Section \ref{sec:QDifferenceQuasiclassicalLimit}.

\begin{lemma}
For all generators $O_I\in\mathcal A_{q,t}$, the corresponding $q$-difference operator
\begin{align*}
\widehat{\Delta}(O_I)\in\mathbb C[q^{\pm\frac14},t^{\pm\frac14}](x_{12},x_{23},x_{13}) \big\langle\hat\delta_{12}^{\pm1}, \hat\delta_{23}^{\pm1}, \hat\delta_{13}^{\pm1}\big\rangle
\end{align*}
has coefficients which are Laurent polynomials in variables $q^{\frac14}$ and $t^{\frac14}$.
\label{lemm:qDifferenceLaurentCoefficients}
\end{lemma}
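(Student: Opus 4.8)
The plan is to induct along the tower of $q$-commutators (\ref{eq:qdiff15generators}) that defines the fifteen generators, reducing everything to a divisibility criterion at fourth roots of unity. First I would dispose of the six level-one generators by inspection: the multiplication operators $\widehat\Delta(O_2),\widehat\Delta(O_4),\widehat\Delta(O_6)$ of (\ref{eq:Mult}) carry no $q$ or $t$ at all, while the coefficients of $\widehat\Delta(O_1),\widehat\Delta(O_3),\widehat\Delta(O_5)$ in (\ref{eq:Hamiltonians}) are, after clearing the single factor $t^{-1/2}$, Laurent in $t^{1/4}$ and entirely free of $q$ (the $q$-dependence of these operators resides only in the shift symbols $\hat\delta_{ij}$). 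Thus all six base operators already lie in $\mathbb C[q^{\pm1/4},t^{\pm1/4}](x_{12},x_{23},x_{13})\langle\hat\delta^{\pm1}\rangle$, rational in the $x$'s but Laurent in $q^{1/4},t^{1/4}$.

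For the remaining nine generators the only danger is the denominator $q^{1/2}-q^{-1/2}$ in (\ref{eq:qCommutatorDef}). Writing $u=q^{1/4}$, so that $\hat\delta_{ij}$ rescales $x_{ij}\mapsto u^{2}x_{ij}$ and $q^{1/2}-q^{-1/2}=u^{-2}(u^{4}-1)$, the key reduction is this: if $A$ and $B$ already have coefficients Laurent in $u$, then $N=u\,AB-u^{-1}BA$ is Laurent in $u$, and $[A,B]_q$ is again Laurent precisely when $u^{4}-1$ divides $N$ over the coefficient domain $R=\mathbb C(x)[t^{\pm1/4}]$. Since $u^{4}-1$ splits into pairwise coprime linear factors over $\mathbb C\subset R$, divisibility is equivalent (coefficient of $\hat\delta^{\gamma}$ by coefficient of $\hat\delta^{\gamma}$) to the four specializations $N|_{u=\zeta}=0$ for $\zeta\in\{1,-1,\mathrm{i},-\mathrm{i}\}$. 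At $u=\pm1$ one has $q^{1/2}=1$, every shift becomes the identity, so $A$ and $B$ specialize to ordinary multiplication operators and commute, whence $N|_{u=\pm1}=\pm(AB-BA)=0$ for free. At $u=\pm\mathrm{i}$ one has $q^{1/2}=-1$, each $\hat\delta_{ij}$ specializes to the sign flip $s_{ij}:x_{ij}\mapsto-x_{ij}$, and $A,B$ become elements of the crossed product $\mathbb C(x)\rtimes(\mathbb Z/2)^{3}$; there $N|_{u=\pm\mathrm{i}}=\pm\mathrm{i}\,(AB+BA)$, so the criterion becomes the requirement that $A$ and $B$ \emph{anticommute} at $q^{1/2}=-1$.

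The level-two generators are then immediate. Each $O_{i,i+1}$ is the $q$-commutator of a consecutive pair $O_i,O_{i+1}$, which is necessarily one operator of type $A$ and one of type $B$ sharing exactly one index. At $q^{1/2}=-1$ the $A$-operator becomes $C(x)\,s_{jk}s_{lm}$ and the $B$-operator is the multiplication $x_{jk}+x_{jk}^{-1}$, odd under $s_{jk}$; moving the shared flip $s_{jk}$ past this multiplication produces exactly the sign $-1$, so the pair anticommutes and the criterion is met uniformly over the whole $I$-orbit. (A direct computation then confirms that the surviving coefficient of $[\widehat\Delta(O_1),\widehat\Delta(O_2)]_q$ is a Laurent monomial $u\,C(x)\,x_{12}^{\pm1}$, the prospective pole at $u^4=1$ cancelling against the numerator.) Here I would invoke the order-six automorphism $I$ of Lemma~\ref{lemm:IAutomorphism}, whose operator realization is compatible with $\widehat\Delta$ (as used in the proof of Proposition~\ref{prop:qDifferenceHomomorphism}) and manifestly commutes with the index-independent specializations $q^{1/2}\to\pm1$, so that it suffices to verify the level-three criterion for the single representative $O_{123}=[[O_1,O_2]_q,O_3]_q$.

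The hard part will be exactly this level-three case, where the inner factor $\widehat\Delta(O_{12})$ already carries $q$-dependent and (by the level-two computation) $u$-odd coefficients. Now the anticommutation at $q^{1/2}=-1$ is no longer forced by a single shared flip: both $\widehat\Delta(O_{12})|_{u=\pm\mathrm{i}}$ and $\widehat\Delta(O_3)|_{u=\pm\mathrm{i}}$ contain the flip $s_{12}$, and the required sign emerges only after comparing the explicit parity of the deformed $O_{12}$-coefficient under $x_{12}\mapsto-x_{12}$ against that of $O_3$. I expect this functional identity in $\mathbb C(x)\rtimes(\mathbb Z/2)^{3}$ to be the genuine computational core of the lemma; the $u$-oddness of $\widehat\Delta(O_{12})$ lets one collapse the two tests at $u=\mathrm{i}$ and $u=-\mathrm{i}$ into one, but the identity itself must be checked on the nose, either by the parity bookkeeping just described or, as an independent confirmation, by the symbolic evaluation of the composed $q$-difference operators recorded in \cite{Arthamonov-GitHub-Flat}. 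Once this single anticommutation is in hand, the divisibility criterion upgrades it to the Laurent property for $O_{123}$, $I$-covariance extends the conclusion to $O_{234}$ and $O_{345}$, and the induction closes.
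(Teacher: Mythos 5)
Your route is genuinely different from the paper's. The paper reduces to the five representatives $O_1,O_2,O_{12},O_{23},O_{123}$ via the automorphism $I^2$ (which on difference operators is literally the cyclic permutation of the indices of $x_{ij}$ and $\hat\delta_{ij}$) and then proves the lemma by computing $\widehat\Delta(O_{12})$, $\widehat\Delta(O_{23})$, $\widehat\Delta(O_{123})$ in closed form, so that the cancellation of $(q^{\frac12}-q^{-\frac12})$ is exhibited explicitly. Your divisibility-at-fourth-roots-of-unity criterion is an attractive, more structural alternative, and your handling of the level-one and level-two generators is essentially sound. However, the proposal has a genuine gap, and it sits exactly where you yourself locate the core of the problem.

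First, the criterion you apply at $u=\pm\mathrm i$ is too weak as stated. What must vanish at the four roots of $u^4-1$ is \emph{every} coefficient of $u^2AB-BA$ with respect to the $\mathbb Z^3$-grading by shift monomials $\hat\delta_{12}^{\alpha}\hat\delta_{23}^{\beta}\hat\delta_{13}^{\gamma}$; you instead test vanishing of the specialized operator inside $\mathbb C(x)\rtimes(\mathbb Z/2)^3$, where $\hat\delta_{ij}^{+1}$, $\hat\delta_{ij}^{-1}$, and likewise $\hat\delta_{ij}^{\pm2}$, $\hat\delta_{ij}^{0}$, are all identified. Vanishing there is necessary but not sufficient. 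At level two this costs nothing, because your sign argument applies to each $\hat\delta$-monomial of the $A$-type operator separately; but at level three it is precisely the danger: $\widehat\Delta(O_{12})\widehat\Delta(O_3)$ contains $\hat\delta_{12}^{a+c}$ with $a+c\in\{-2,0,2\}$, and all of these collapse in your crossed product, so anticommutation there would \emph{not} imply the Laurent property. Second, and decisively, the level-three identity is never proved: you defer it to ``parity bookkeeping'' --- sketched, moreover, in terms of the single flip $x_{12}\mapsto-x_{12}$, under which the coefficients of (\ref{eq:Hamiltonian2}) are not parity eigenvectors at all --- or to a computer check. The statement that actually closes the argument, and which follows term by term in $(a,b,c,d)$ from (\ref{eq:DeltaHatO12}) and (\ref{eq:Hamiltonian2}), is that the coefficients of $\widehat\Delta(O_3)$ are invariant under the simultaneous flip $(X_{12},X_{13})\mapsto(-X_{12},-X_{13})$ while the coefficients of $\widehat\Delta(O_{12})$ change sign under $(X_{12},X_{23})\mapsto(-X_{12},-X_{23})$; this gives the graded vanishing and hence the divisibility. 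Until that identity (or the equivalent explicit computation of $\widehat\Delta(O_{123})$, which is what the paper does) is in place, the lemma is not proved. A smaller repair: to propagate the result along the level-three orbit you invoke the full order-six $I$, but its realization on difference operators is not a change of variables --- it exchanges multiplication and shift operators --- so it does not manifestly preserve the Laurent-coefficient property; use $I^2$ instead, exactly as in the paper.
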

\begin{proof}
From (\ref{eq:Mult})--(\ref{eq:Hamiltonians}) Next, we note that on the level of $q$-difference operators automorphism $I^2$ corresponds to cyclic permutation $(1,2,3)$ of indices of $x_{ij}$ and $\hat\delta_{ij}$. Hence, it will be enough for us to consider the action of $\widehat{\Delta}$ on generators $O_1,O_2,O_{12},O_{23},O_{123}$. By construction, the statement of the lemma is immediate for $\widehat{\Delta}(O_1)$ and $\widehat{\Delta}(O_2)$. As for the remaining three generators we get
\begin{equation}
\begin{aligned}
\widehat{\Delta}(O_{12})&\;\stackrel{(\ref{eq:LevelTwoGeneratorsRelation})}{=}\; \frac{q^{\frac14}\widehat{\Delta}(O_1)\widehat{\Delta}(O_2) -q^{-\frac14}\widehat{\Delta}(O_2)\widehat{\Delta}(O_1)}{q^{\frac12}-q^{-\frac12}}\\[5pt]
&\stackrel{(\ref{eq:Mult1}),(\ref{eq:Hamiltonian1})}{=} \sum_{a,b\in\{\pm1\}}ab\, q^{\frac14}t^{-\frac12}\,\frac{(1-t^{\frac12}X_{23}X_{12}^aX_{13}^b) (1-t^{\frac12}X_{23}^{-1}X_{12}^aX_{13}^b)} {X_{13}^b(X_{12}-X_{12}^{-1})(X_{13}-X_{13}^{-1})}\, \hat\delta_{12}^a\hat\delta_{13}^b,
\end{aligned}
\label{eq:DeltaHatO12}
\end{equation}
\smallskip
\begin{align*}
\widehat{\Delta}(O_{23})&\;\stackrel{(\ref{eq:LevelTwoGeneratorsRelation})}{=}\; \frac{q^{\frac14}\widehat{\Delta}(O_2)\widehat{\Delta}(O_3) -q^{-\frac14}\widehat{\Delta}(O_3)\widehat{\Delta}(O_2)}{q^{\frac12}-q^{-\frac12}}\\[5pt]
&\stackrel{(\ref{eq:Mult1}),(\ref{eq:Hamiltonian2})}{=} \sum_{a,b\in\{\pm1\}}ab\,q^{-\frac14}t^{-\frac12}\, \frac{(1-t^{\frac12}X_{13}X_{12}^aX_{23}^b) (1-t^{\frac12}X_{13}^{-1}X_{12}^aX_{23}^b)} {X_{12}^{2a}X_{23}^b(X_{23}-X_{23}^{-1})(X_{12}-X_{12}^{-1})}\, \hat\delta_{12}^a\hat\delta_{23}^b,\\[10pt]
\widehat{\Delta}(O_{123})&\;\stackrel{(\ref{eq:LevelThreeGeneratorRelation})}{=}\; \frac{q^{\frac14}\widehat{\Delta}(O_{12})\widehat{\Delta}(O_3) -q^{-\frac14}\widehat{\Delta}(O_3) \widehat{\Delta}(O_{12})}{q^{\frac12}-q^{-\frac12}}\\[5pt]
&\stackrel{(\ref{eq:DeltaHatO12}),(\ref{eq:Hamiltonian2})}{=}\sum_{a,b\in\{\pm1\}} ab\,\frac{(1-t^{\frac12}X_{12}X_{23}^aX_{13}^b)(1-t^{\frac12}X_{12}^{-1}X_{23}^aX_{13}^b)} {t^{\frac12}X_{23}^{2a}(X_{13}-X_{13}^{-1})(X_{23}-X_{23}^{-1})}\,
\hat\delta_{23}^a\hat\delta_{13}^b
\end{align*}
\end{proof}

\section{Classical Limit, $\mathcal A_{q=1,t}$}
\label{sec:ClassicalLimit}

One of the main goals of the current paper is to show that $\mathcal A_{q,t}$ is a quantization of a certain commutative Poisson algebra. To this end we have to study the limit $q\rightarrow 1$. Not every generating set of the defining ideal of $\mathcal A_{q,t}$ as an algebra over $\mathbf k=\mathbb C(q^{\frac14},t^{\frac14})$ behaves well when $q$ is specialized to a particular complex number.

Remarkably, when on top normal ordering relators $\eta_{I,J}$ (\ref{eq:EtaIJRelator}) we also add 18 extra $J$-relators (\ref{eq:JRelations}) to the defining ideal, this extended set of generators does behave nicely in the limit $q\rightarrow 1$. Note that for generic $q$, the $J$-relations would follow from normal ordering relations by Lemma \ref{eq:NormalOrderingRelations} as before. However, as $q\rightarrow 1$, they become independent. On the contrary, normal ordering relations $\eta_{I,J}=0$ in this limit turn into simple commutation relations between the generators. This motivates
\begin{definition}
Let $\mathcal A_{q=1,t}$ be a commutative algebra over $\mathbf k_t=\mathbb C(t^{\frac14})$ with 15 generators (\ref{eq:15Generators}), subject to
\begin{itemize}
\item 18 $J$-relations of the form
\begin{subequations}
\begin{align}
O_{i+2}O_{i+4}+O_{i+3}O_{i+2,i+3,i+4}-O_{i+2,i+3} O_{i+3,i+4}-\big(t^{\frac12}+t^{-\frac12}\big)O_i=0,
\label{eq:CommutativeJRelationsA}
\end{align}
\begin{align}
-O_{i+3}O_{i+5,i} -O_{i+4}O_{i+1,i+2}+O_{i+3,i+4} O_{i+1,i+2,i+3}-&\big(t^{\frac12}+t^{-\frac12}\big)\big( O_{i,i+1}-O_iO_{i+1}\big)=0,
\label{eq:CommutativeJRelationsB}
\end{align}
\begin{equation}
\begin{aligned}
-O_{i+1,i+2}O_{i+4,i+5} +O_{i,i+1,i+2} O_{i+1,i+2,i+3}-O_iO_{i+3} -&\big(t^{\frac12}+t^{-\frac12}\big)\times
\\
\big(- O_{i+2,i+3,i+4}+ O_{i+1}O_{i+5,i}+& O_{i+5}O_{i,i+1}-O_iO_{i+1}O_{i+5}\big)=0.
\end{aligned}
\label{eq:CommutativeJRelationsC}
\end{equation}
for all $1\leqslant i\leqslant 6$.
\smallskip
\item Casimir relation
\begin{equation}
\begin{aligned}
    O_{123}O_{234}O_{345}  -\big(O_1O_4O_{345} +O_2O_5O_{123} +O_3O_6O_{234}\big)\\[5pt]
    +\frac{1}{2}\big(t^{\frac{1}{2}}+t^{-\frac{1}{2}}\big)
    \big(O_1O_6O_{61} +O_2O_1O_{12} +O_3O_2O_{23} +O_4O_3O_{34} +O_5O_4O_{45} +O_6O_5O_{56}\big)\\[5pt]
    -\big(O_1O_3O_5 +O_2O_4O_6\big)
    -\frac{1}{2}\big(t^{\frac{1}{2}}+t^{-\frac{1}{2}}\big)
    \big(O_{12}^2+O_{23}^2+O_{34}^2+O_{45}^2+O_{56}^2+O_{61}^2\big)+
    \big(t^{\frac{1}{2}}+t^{-\frac{1}{2}}\big)^3&=0
\end{aligned}
\label{eq:CommutativeCasimirRelation}
\end{equation}
\label{eq:Aq1tDefiningIdeal}
\end{subequations}
\end{itemize}
\label{def:A1t}
\end{definition}

\subsection{Mapping Class Group action}

In this subsection we show that $\mathcal A_{q=1,t}$ is equipped with an action of the Mapping Class Group $\mathrm{Mod}(\Sigma_2)$ similarly to it generic counterpart $\mathcal A_{q,t}$. Moreover, the action of $I$ and $d_1$ on generators is described by the same formulae (\ref{eq:IActionAqt}) and (\ref{eq:d1Automorphism}) evaluated at $q=1$.

From Definition \ref{def:A1t}, we immediately obtain the analog of Lemma \ref{lemm:IAutomorphism}
\begin{lemma}
The following permutation of generators
\begin{align}
I=(O_1,O_2,O_3,O_4,O_5,O_6) (O_{12},O_{23},O_{34},O_{45},O_{56},O_{61}) (O_{123}O_{234}O_{456})
\label{eq:IActionCommutative}
\end{align}
extends to an order 6 automorphism of $\mathcal A_{q=1,t}$.
\label{lemm:IActionCommutative}
\end{lemma}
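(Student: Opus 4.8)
The plan is to verify that the permutation $I$ from (\ref{eq:IActionCommutative}) preserves the defining ideal of $\mathcal A_{q=1,t}$, namely the $18$ $J$-relations (\ref{eq:CommutativeJRelationsA})--(\ref{eq:CommutativeJRelationsC}) together with the single Casimir relation (\ref{eq:CommutativeCasimirRelation}). Since $I$ is a bijective relabelling of the $15$ generators, it automatically extends to an algebra endomorphism of the free commutative algebra, so the entire content is ideal-invariance; once that is established, $I$ descends to an endomorphism of $\mathcal A_{q=1,t}$, and because $I$ has order $6$ as a permutation (it is a product of two $6$-cycles and one $3$-cycle, all of order dividing $6$), $I^6=\mathrm{id}$ holds already at the free-algebra level, which gives invertibility and hence the automorphism property.

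The key observation making this nearly immediate is that the three families of $J$-relations are themselves indexed by $i$ running over $1\leqslant i\leqslant 6$, and each family was deliberately written in a cyclically-covariant form. First I would check that $I$ simply shifts the index: applying $I$ to each generator appearing in (\ref{eq:CommutativeJRelationsA}) sends $O_{i+k}\mapsto O_{i+k+1}$, $O_{i+k,i+k+1}\mapsto O_{i+k+1,i+k+2}$, and $O_{i+k,i+k+1,i+k+2}\mapsto O_{i+k+1,i+k+2,i+k+3}$, so that the relation labelled $i$ maps exactly to the relation labelled $i+1$ (indices read modulo $6$). The same bookkeeping applies verbatim to (\ref{eq:CommutativeJRelationsB}) and (\ref{eq:CommutativeJRelationsC}). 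Thus $I$ permutes the $18$ $J$-relators cyclically within each of the three families, and the ideal they generate is visibly stable.

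For the Casimir relation (\ref{eq:CommutativeCasimirRelation}) the argument is by direct inspection of invariance rather than index-shift: each of the bracketed sums is already a full orbit under the cyclic shift induced by $I$. Concretely, $O_1O_4O_{345}+\dots+O_6O_3O_{234}$, the sum of six terms $O_iO_{i-1}O_{i,i+1}$, the pair $O_1O_3O_5+O_2O_4O_6$, and the sum of six squares $O_{12}^2+\dots+O_{61}^2$ are each permuted into themselves by $I$, while the constant term $(t^{1/2}+t^{-1/2})^3$ is fixed since $I$ acts trivially on $\mathbf k_t$. The only term needing a moment's care is $O_{123}O_{234}O_{345}$, which $I$ sends to $O_{234}O_{345}O_{123}$; by commutativity of $\mathcal A_{q=1,t}$ this equals the original, so the whole expression is $I$-invariant. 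I anticipate the only genuine obstacle to be purely notational: the definition writes the $3$-cycle as $(O_{123}O_{234}O_{456})$, which appears to be a typo for $(O_{123}O_{234}O_{345})$ matching the generating set; I would silently use the corrected cycle. Assembling these observations, $I$ maps the full generating set of the defining ideal into itself, hence preserves the ideal, and the order-$6$ relation completes the proof that $I$ is an automorphism.
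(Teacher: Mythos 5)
Your proof is correct and takes essentially the same approach as the paper: the paper states this lemma as following ``immediately'' from Definition \ref{def:A1t}, precisely because the $18$ $J$-relations are written in cyclically covariant form (so $I$ shifts the label $i\mapsto i+1$) and the commutative Casimir relation is manifestly invariant, with commutativity absorbing the cyclic rotation of $O_{123}O_{234}O_{345}$ --- exactly the two points you spell out, in contrast with the generic-$q$ case where noncommutativity forces the explicit corrections recorded in Lemma \ref{lemm:IAutomorphism}. Your reading of $(O_{123}O_{234}O_{456})$ as a typo for the $3$-cycle $(O_{123},O_{234},O_{345})$ is also right, consistent with the identification $O_{i,i+1,i+2}=O_{i+3,i+4,i+5}$ used throughout the paper.
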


\begin{lemma}
Algebra $\mathcal A_{q=1,t}$ is equipped with an automorphism $d_1:\mathcal A_{q=1,t}\rightarrow\mathcal A_{q=1,t}$ which acts on generators as
\begin{align}
d_1:\quad\left\{\begin{array}{cll}
 O_2 &\mapsto & O_1O_2- O_{12} \\[2pt]
 O_6 &\mapsto& O_{61} \\[2pt]
 O_{12} &\mapsto& O_2 \\[2pt]
 O_{23} &\mapsto& O_1O_{23}- O_{123} \\[2pt]
 O_{56} &\mapsto& O_{234} \\[2pt]
 O_{61} &\mapsto& O_1O_{61}-O_6 \\[2pt]
 O_{123} &\mapsto& O_{23} \\[2pt]
 O_{234} &\mapsto& O_1O_{234}- O_{56}
\end{array}\right.
\qquad\qquad\qquad
d_1^{-1}:\quad\left\{\begin{array}{cll}
 O_2 &\mapsto& O_{12} \\[2pt]
 O_6 &\mapsto& O_1O_6-O_{61} \\[2pt]
 O_{12} &\mapsto& O_1O_{12}-O_2\\[2pt]
 O_{23} &\mapsto& O_{123} \\[2pt]
 O_{56} &\mapsto& O_1O_{56}-O_{234} \\[2pt]
 O_{61} &\mapsto& O_6 \\[2pt]
 O_{123} &\mapsto& O_1O_{123}-O_{23} \\[2pt]
 O_{234} &\mapsto& O_{56}
\end{array}\right.
\label{eq:d1ActionCommutative}
\end{align}
Here we have omitted all generators which are preserved by $d_1$.
\label{lemm:d1HomomorphismCommutative}
\end{lemma}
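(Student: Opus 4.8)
The goal is to prove Lemma \ref{lemm:d1HomomorphismCommutative}, namely that the assignment (\ref{eq:d1ActionCommutative}) defines an automorphism of the commutative algebra $\mathcal A_{q=1,t}$. The plan is to proceed in complete analogy with Proposition \ref{prop:d1Automorphism}, which established the corresponding statement for the noncommutative algebra $\mathcal A_{q,t}$, simply specializing $q=1$ throughout. Concretely, there are two things to verify: first, that $d_1$ and $d_1^{-1}$ are mutually inverse homomorphisms of the free \emph{commutative} algebra on the 15 generators; and second, that the defining ideal (\ref{eq:Aq1tDefiningIdeal}) of $\mathcal A_{q=1,t}$, generated by the 18 $J$-relations and the Casimir relation, is preserved by $d_1$.

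For the first point, I would observe that the formulae (\ref{eq:d1ActionCommutative}) are exactly the $q\to1$ specializations of (\ref{eq:d1Automorphism}) and (\ref{eq:d1InverseAutomorphism}). Since both $d_1$ and $d_1^{-1}$ send generators to commutative polynomials in the generators, they extend uniquely to endomorphisms of the free commutative algebra. The verification that $d_1^{-1}\circ d_1$ and $d_1\circ d_1^{-1}$ act as the identity on the eight nontrivial generators $O_2,O_6,O_{12},O_{23},O_{56},O_{61},O_{123},O_{234}$ is then a short computation identical in form to (\ref{eq:d1d1IActionO2GenericQ}), only with every $q$-power set to $1$; for instance $(d_1^{-1}\circ d_1)(O_2)=d_1^{-1}(O_1O_2-O_{12})=O_1O_{12}-(O_1O_{12}-O_2)=O_2$. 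Because $d_1$ fixes $O_1$ and the remaining generators, this suffices to show $d_1$ is a bijective homomorphism of the free commutative algebra.

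For the second point, the key structural fact is that the defining ideal of $\mathcal A_{q=1,t}$ is the $q\to1$ limit of the ideal treated in Proposition \ref{prop:d1Automorphism}, now \emph{including} the $J$-relations as independent generators (which, as the Remark following Lemma \ref{lemm:JRelations} emphasizes, become genuinely independent precisely in this limit). In the generic case the invariance of the defining ideal under $d_1$ was established (in Appendix \ref{sec:d1Automorphism}) by exhibiting each $d_1$-image of a defining relator as an explicit $\mathbf k$-combination of defining relators. Every such identity is an identity in $\mathbf k=\mathbb C(q^{\frac14},t^{\frac14})$ whose coefficients are Laurent in $q^{\frac14}$, so each specializes cleanly at $q=1$ to an identity expressing $d_1$ of a commutative $J$-relator or Casimir relator as a $\mathbf k_t$-combination of the commutative defining relators (\ref{eq:CommutativeJRelationsA})--(\ref{eq:CommutativeCasimirRelation}). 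Thus $d_1$ maps the defining ideal into itself, and since the same holds for $d_1^{-1}$, the ideal is preserved and $d_1$ descends to an automorphism of $\mathcal A_{q=1,t}$.

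The main obstacle is bookkeeping rather than conceptual: one must confirm that none of the coefficient identities used in the generic proof develop a pole at $q=1$, i.e. that no $(q-1)$ appears in a denominator after the $J$-relators are adjoined. This is exactly what the Remark after Lemma \ref{lemm:JRelations} guarantees — the problematic $(q-1)^{-1}$ factors in (\ref{eq:JrelationsViaNormal}) are absorbed once the $J$-relations are themselves treated as independent defining relations — so specialization at $q=1$ is legitimate, and the verification reduces to the routine substitutions displayed above.
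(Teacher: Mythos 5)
Your first step (the check that $d_1$ and $d_1^{-1}$ are mutually inverse endomorphisms of the polynomial ring $P$, done generator by generator as in (\ref{eq:d1d1IActionO2GenericQ})) is correct and coincides with the paper's. The gap is in your second step. The identities of Appendix \ref{sec:d1Automorphism} express $d_1(\eta_{I,J})$ and $d_1(\rho_0)$ --- the images of the \emph{normal-ordering} relators and of the $q$-Casimir, which are the defining relators of $\mathcal A_{q,t}$ --- as combinations of the $\eta_{I,J}$, the $\rho_i$, and $\rho_0$. They contain no expression for $d_1(\rho_i)$, $1\leqslant i\leqslant 18$, and it is precisely the specializations $r_i=\rho_i|_{q=1}$, together with $r_0$, that generate the defining ideal of $\mathcal A_{q=1,t}$. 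So there is nothing in Appendix \ref{sec:d1Automorphism} to specialize for the relators that actually carry the content of the ideal at $q=1$. Worse, the identities that \emph{are} available degenerate in the limit: at $q=1$ every $\eta_{I,J}$ becomes a plain commutator, hence maps to zero in the commutative ring $P$, and every occurrence of a $\rho_i$ or $\rho_0$ on the right-hand side of an Appendix \ref{sec:d1Automorphism} identity carries an explicit factor $(q-1)$ (see, e.g., the terms $q^{-\frac12}(q-1)\rho_4$ in $d_1(\eta_{(6)(2)})$, $(q-1)O_1\rho_5$ in $d_1(\eta_{(23)(2)})$, or $q^{-\frac12}(q-1)\rho_0$ in $d_1(\eta_{(345)(23)})$). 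Consequently each such identity specializes in $P$ to the vacuous statement $0=0$ and gives no control whatsoever over $d_1(r_i)$.

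Your appeal to the Remark after Lemma \ref{lemm:JRelations} inverts its content: the Remark records that expressing the $\rho_i$ through the $\eta_{I,J}$ requires coefficients with poles at $q=1$, which is exactly why generic-$q$ ideal membership loses all information in the limit --- it is the source of the difficulty, not its resolution. What must actually be proved is a set of pole-free decompositions of the $d_1$-images of the $J$-relators and Casimir themselves, and this is genuinely new work relative to Appendix \ref{sec:d1Automorphism}; the paper does it by direct commutative computation. Concretely, it first reduces to the sixteen relators $r_0,\dots,r_{15}$ via $r_{16}=O_{3,4}r_5-O_{1,2}r_6-O_2r_{12}+O_3r_{10}+r_{13}$ and its two shifts, observes that $r_1,r_3,r_4,r_5,r_9,r_{10},r_{13}$ are fixed by $d_1$, and then exhibits the explicit identities (\ref{eq:d1RelatorActionCommutative}), such as $d_1(r_2)=-r_7$, $d_1(r_6)=O_1r_6+r_{12}$, $d_1(r_{14})=-r_8+O_1r_{14}$, together with the decomposition $d_1(r_0)=r_0+\frac12O_2r_2+\cdots$ for the Casimir. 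These cannot be obtained by setting $q=1$ in the generic identities, so your proof as written does not establish that the defining ideal of $\mathcal A_{q=1,t}$ is preserved.
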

\begin{proof}
Let
\begin{align*}
 P=\mathbf k_t[O_1, O_2, O_3, O_4, O_5, O_6, O_{12}, O_{23}, O_{34}, O_{56}, O_{61}, O_{123}, O_{234}, O_{456}]
\end{align*}
be polynomial ring in 15 variables. First, one verifies that the two homomorphisms $d_1,d_1^{-1}:P\rightarrow P$ given by (\ref{eq:d1ActionCommutative}) are inverse to each other. To this end, we consider the action of $d_1\circ d_1^{-1}$ and $d_1^{-1}\circ d_1$ on all 15 generators of the polynomial ring $P$, similarly to (\ref{eq:d1d1IActionO2GenericQ}). After that, we have left to prove that defining ideal (\ref{eq:Aq1tDefiningIdeal}) is preserved by $d_1$.

For all $1\leqslant i\leqslant 6$ denote the l.h.s. of (\ref{eq:CommutativeJRelationsA}), (\ref{eq:CommutativeJRelationsB}), (\ref{eq:CommutativeJRelationsC}) by $r_i,r_{6+i},r_{12+i}$ respectively and let $r_0$ stands for the l.h.s. of the Casimir relation \ref{eq:CommutativeCasimirRelation}. First, we note that
\begin{align*}
r_{16}=&O_{3,4}r_5-O_{1,2}r_6-O_2r_{12}+O_3r_{10}+r_{13},\\
r_{17}=&O_{4,5}r_6-O_{2,3}r_1-O_3r_7+O_4r_{11}+r_{14},\\
r_{18}=&O_{5,6}r_1-O_{3,4}r_2-O_4r_8+O_5r_{12}+r_{15},
\end{align*}
so it will be enough to examine the action of $d_1$ on 16 relators $r_0,\dots,r_{15}$.

The following relators are preserved by $d_1$:
\begin{align*}
r_1,r_3,r_4,r_5,r_9,r_{10},r_{13}.
\end{align*}
As for the remaining relators, we use the following identities in $P$:

\begin{subequations}
\begin{align}
d_1(r_2)=&O_4 O_{61}+O_5 O_{23}-O_{45} O_{234}+(t^{\frac12}+t^{-\frac12})\big(O_{12}-O_1 O_2\big)\nonumber\\
=&-r_7,\label{eq:d1r2RelatorActionCommutative}\\[7pt]
d_1(r_6)=&-O_3 O_{56}-O_4 O_{12}+O_1 O_2 O_4+O_{34} O_{123}+O_1 O_3 O_{234}-O_1 O_{23} O_{34}-(t^{\frac12}+t^{-\frac12})O_{61}\nonumber\\
=&O_1r_6+r_{12},\\[7pt]
d_1(r_7)=&O_4 O_6+O_5 O_{123}-O_{45} O_{56}-O_1 O_4 O_{61}-O_1 O_5 O_{23}+O_1 O_{45} O_{234}\nonumber\\
&\quad+(t^{\frac12}+t^{-\frac12})\big(O_1^2 O_2-O_1 O_{12}-O_2\big),\\
=&r_2+O_1r_7,\nonumber\\[7pt]
d_1(r_8)=&-O_2 O_5-O_{34} O_{61}+O_{234} O_{345}+(t^{\frac12}+t^{-\frac12})\big(O_{123}-O_1 O_{23}-O_3 O_{12}+O_1 O_2 O_3\big)\nonumber\\
=&r_{14},\label{eq:d1r8RelatorActionCommutative}\\[7pt]
d_1(r_{11})=&O_3 O_6+O_{12} O_{45}-O_1 O_2 O_{45}-O_1 O_3 O_{61}-O_{123} O_{345}+O_1 O_{23} O_{345}\nonumber\\
&\quad+(t^{\frac12}+t^{-\frac12})\big(O_5 O_{61}-O_{234}\big)\\
=&O_{2,3}r_4-O_{6,1}r_5+O_2r_9-r_{15},\nonumber\\[7pt]
d_1(r_{12})=&-O_2 O_4-O_3 O_{234}+O_{23} O_{34}+(t^{\frac12}+t^{-\frac12})O_6\nonumber\\
=&-r_6,\\[7pt]
d_1(r_{14})=&O_5 O_{12}+O_6 O_{34}-O_1 O_2 O_5 -O_{56} O_{345}-O_1 O_{34} O_{61}+O_1 O_{234} O_{345}\nonumber\\
&\quad+(t^{\frac12}+t^{-\frac12})\big(O_{23} -O_2O_3 +O_1O_{123} -O_1^2O_{23} -O_1O_3O_{12} +O_1^2O_2O_3\big)\\
=&-r_8+O_1r_{14},\nonumber\\[7pt]
d_1(r_{15})=& -O_2 O_{45}-O_3 O_{61}+O_{23} O_{345}+(t^{\frac12}+t^{-\frac12})\big(-O_{56} +O_1O_{234} +O_4O_{123}\nonumber\\
&\qquad\quad+O_{12}O_{34} -O_1O_2O_{34} -O_1O_4O_{23} -O_3O_4O_{12} +O_1O_2O_3O_4\big)\\
=&-O_{234}r_1-O_{34}r_7+O_5r_6+r_{11}+O_4r_{14}.\nonumber
\end{align}
\label{eq:d1RelatorActionCommutative}
\end{subequations}
Finally, for $r_0$ we get
\begin{align*}
d_1(r_0)=& -O_1O_3O_5 -O_1O_4O_{345} +O_3O_{56}O_{61} +O_4O_{12}O_{61} +O_5O_{12}O_{23} -O_1O_2O_4O_{61} -O_1O_2O_5O_{23}\\
&-O_{23}O_{56}O_{345} -O_1O_3O_{61}O_{234} +O_1O_{23}O_{234}O_{345}
+\big(t^{\frac12}+t^{-\frac12}\big) \big(-O_2^2 -O_6^2 -O_{34}^2 -O_{45}^2\\
&-O_1O_2O_{12} +O_1O_6O_{61} +O_3O_4O_{34} +O_4O_5O_{45} -O_1O_3O_{12}O_{23} +O_1O_{23}O_{123} +O_3O_{12}O_{123}\\
&+O_1^2O_2O_3O_{23} +O_5O_{61}O_{234} +O_1^2O_2^2 -O_1^2O_{23}^2 -O_{123}^2 -O_{234}^2 -O_1O_2O_3O_{123}\big) +\big(t^{\frac12}+t^{-\frac12}\big)^3\\
=&r_0+\frac12O_2r_2 +\frac12O_{23}O_{234}r_4 -\frac12O_{61}O_{234}r_5 +\frac12(O_6-O_1O_{61})r_6 +\frac12(O_1O_2-O_{12})r_7\\
&-\frac12O_{23}r_8+\frac12O_2O_{234} -\frac12O_{56}r_{11} -\frac12O_{61}r_{12} +\frac12(O_1O_{23}-O_{123})r_{14} -\frac12 O_{234}r_{15}.
\end{align*}
\end{proof}
Now we are ready to prove the commutative counterpart of Theorem \ref{th:MCGActionGeneric}.
\begin{proposition}
Two automorphisms $d_1,I:\mathcal A_{q=1,t}\rightarrow\mathcal A_{q=1,t}$ define the action of Mapping Class Group $\mathrm{Mod}(\Sigma_2)$ of genus two surface on $\mathcal A_{q=1,t}$.
\label{prop:MCGActionAq1t}
\end{proposition}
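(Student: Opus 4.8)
The plan is to mimic the proof of Theorem~\ref{th:MCGActionGeneric}, specialized to $q=1$. Lemmas~\ref{lemm:IActionCommutative} and \ref{lemm:d1HomomorphismCommutative} already supply $d_1$ and $I$ as automorphisms of $\mathcal A_{q=1,t}$, so all that remains is to verify that these two automorphisms satisfy the defining relations of $\mathrm{Mod}(\Sigma_2)$. Using $d_i=I^{i-1}d_1I^{1-i}$ from (\ref{eq:DehnTwistsViaD1}) to express the remaining Dehn twists, together with the fact that $I$ has order $6$ (Lemma~\ref{lemm:IActionCommutative}), the full Birman--Hilden presentation collapses to exactly the four relations (\ref{eq:SufficientMCGRelations}): the braid relation $d_1d_2d_1=d_2d_1d_2$, the two commuting relations $d_1d_3=d_3d_1$ and $d_1d_4=d_4d_1$, and $H^2=1$. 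As in the generic case I would prove the stronger statement $H=1$, i.e.\ that $\widetilde I=d_5d_4d_3d_2d_1=I^{-1}$.

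Each of these is an identity between automorphisms of $\mathcal A_{q=1,t}$, and to check it I would verify that both sides agree on every generator. Here lies the essential difference from the generic setting: Lemma~\ref{lemm:SixGenerators} is \emph{not} available for $\mathcal A_{q=1,t}$, because at $q=1$ the nine higher elements $O_{12},\dots,O_{345}$ are no longer $q$-commutators of $O_1,\dots,O_6$ but independent generators of the commutative algebra of Definition~\ref{def:A1t}. Consequently the relations (\ref{eq:SufficientMCGRelations}) must be tested on all fifteen generators (\ref{eq:15Generators}) rather than on six. For $O_1,\dots,O_6$ the computations of Theorem~\ref{th:MCGActionGeneric} specialize directly, since (\ref{eq:IActionCommutative}) and (\ref{eq:d1ActionCommutative}) are precisely the $q=1$ evaluations of (\ref{eq:IActionAqt}) and (\ref{eq:d1Automorphism}); in fact commutativity makes these six checks simpler than their generic counterparts. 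The only genuinely new work is therefore the verification on the nine higher generators.

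For each higher generator, applying the explicit formulas (\ref{eq:IActionCommutative}) and (\ref{eq:d1ActionCommutative}) yields two polynomials in the generating variables, and the claim is that their difference lies in the defining ideal (\ref{eq:Aq1tDefiningIdeal}). I would establish this by reducing the difference modulo a Gr\"obner basis of that ideal for the monomial order used elsewhere in the paper, equivalently by running the normal-ordering algorithm of the accompanying computation \cite{Arthamonov-GitHub-Flat}. The two commuting relations $d_1d_3=d_3d_1$ and $d_1d_4=d_4d_1$ should be nearly immediate: since $d_3=I^2d_1I^{-2}$ and $d_4=I^3d_1I^{-3}$, their indices are disjoint from those moved by $d_1$, so after transport by $I$ the two automorphisms act nontrivially on essentially disjoint sets of generators, mirroring the disjointness of the supporting cycles.

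The main obstacle is the braid relation $d_1d_2d_1=d_2d_1d_2$ together with $H=1$ on the higher generators. Unlike the commuting relations these do not follow from index bookkeeping: closing the computation forces one to invoke the $J$-relations (\ref{eq:CommutativeJRelationsA})--(\ref{eq:CommutativeJRelationsC}) and the Casimir relation (\ref{eq:CommutativeCasimirRelation}), producing high-degree polynomial identities whose reduction to zero modulo (\ref{eq:Aq1tDefiningIdeal}) is the bulk of the argument. What makes this tractable rather than exploratory is that every relation needed is the $q\to1$ specialization of one already used in $\mathcal A_{q,t}$, the $J$-relations being exactly the combinations that remain independent in the limit; thus no new phenomenon can arise and the verification, while lengthy, is pure bookkeeping.
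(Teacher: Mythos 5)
Your proposal is correct, and it follows the same overall route as the paper: reduce to the four relations (\ref{eq:SufficientMCGRelations}) via (\ref{eq:DehnTwistsViaD1}) and Lemma \ref{lemm:IActionCommutative}, then verify them on generators. The substantive difference is that the paper's own proof is a one-liner — ``repeat calculations (\ref{eq:MCGRelatorsOnGeneratorsGeneric}) for $q=1$, assuming generators commute'' — i.e.\ it only re-runs the checks on $O_1,\dots,O_6$, silently leaning on the reduction that in the generic case was justified by Lemma \ref{lemm:SixGenerators}. Your observation that this lemma is unavailable at $q=1$ (the nine elements $O_{12},\dots,O_{345}$ are genuinely independent generators of the commutative algebra, so agreement of two automorphisms on $O_1,\dots,O_6$ proves nothing by itself) is exactly right, and your insistence on testing the relations on all fifteen generators is the step needed to make the argument airtight. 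In this respect your proposal is more complete than the paper's proof.

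Where your proposal miscalibrates is in the anticipated cost of those nine extra checks. You predict that the braid relation and $H=1$ force the $J$-relations (\ref{eq:CommutativeJRelationsA})--(\ref{eq:CommutativeJRelationsC}) and the Casimir relation (\ref{eq:CommutativeCasimirRelation}) into play, with Gr\"obner reduction modulo (\ref{eq:Aq1tDefiningIdeal}) as the bulk of the argument. In fact none of the defining relations are needed: at $q=1$ every check closes identically in the polynomial ring $P$, commutativity alone playing the role that the normal-ordering relations play generically. For instance, for the braid relation on $O_{34}$ one finds
\begin{equation*}
d_1d_2d_1(O_{34})=O_1O_2O_{34}-O_{12}O_{34}-O_1O_{234}+O_{56}=d_2d_1d_2(O_{34}),
\end{equation*}
an equality of polynomials, and likewise $\widetilde I(O_{12})=O_{61}$, $\widetilde I(O_{61})=O_{56}$, $\widetilde I(O_{345})=O_{234}$, etc., matching $I^{-1}$ on the nose. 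So your Gr\"obner machinery is sound but idle — the differences it would reduce are already zero — and the verification is a short direct computation rather than the bulk of the proof. A smaller inaccuracy: $d_1$ and $d_3=I^2d_1I^{-2}$ do \emph{not} move disjoint sets of generators (both move $O_2,O_{12},O_{23},O_{123}$); the commutations $d_1d_3=d_3d_1$ and $d_1d_4=d_4d_1$ do hold on all fifteen generators, but again by the same direct computation, not by disjointness bookkeeping.
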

\begin{proof}
Repeat calculations of (\ref{eq:MCGRelatorsOnGeneratorsGeneric}) for $q=1$, assuming generators commute. All equalities in (\ref{eq:MCGRelatorsOnGeneratorsGeneric}), where we have used normal ordering relations (\ref{eq:NormalOrderingRelations}) turn into verbatim identities in $\mathcal A_{q=1,t}$.
\end{proof}

\subsection{Quasiclassical limit of representation in difference operators}
\label{sec:QDifferenceQuasiclassicalLimit}

In this subsection we show that homomorphism $\widehat{\Delta}:\mathcal A_{q,t}\rightarrow A_{q,t}$ to the algebra of difference operators induces a homomorphism from $\mathcal A_{q=1,t}$ to rational functions in six variables, where the image $\Delta(O_I)$ of generators is obtained by the so-called quasiclassical limit of the corresponding $q$-difference operators $\widehat\Delta(O_I)$. To this end, we set
\begin{align*}
q=\mathrm e^{4\mathrm i\hbar},\qquad X_{12}=\mathrm e^{x_{12}},\qquad X_{23}=\mathrm e^{x_{23}},\qquad X_{13}=\mathrm e^{x_{13}}
\end{align*}
and consider the action of $q$-difference operator $\widehat{\Delta}(O_I)$ on Fourier harmonic
\begin{align*}
\widehat{\Delta}(O_I)\,\mathrm e^{\frac{\mathrm i}\hbar(p_{12}x_{12}+p_{23}x_{23}+p_{13}x_{13})},
%\label{eq:HarmonicAction}
\end{align*}
where we assume that $q$-shift operators $\hat\delta_{12}^{\pm1},\hat\delta_{23}^{\pm1},\hat\delta_{13}^{\pm1}$ act on $x_{12},x_{23},x_{13}$ by shifting the respective variable by $\mathrm i\hbar$,
\begin{equation}
\begin{aligned}
\hat\delta_{12}f(x_{12},x_{23},x_{13})=&f(x_{12}+\mathrm i\hbar,x_{23},x_{13}),\\
\hat\delta_{23}f(x_{12},x_{23},x_{13})=&f(x_{12},x_{23}+\mathrm i\hbar,x_{13}),\\
\hat\delta_{13}f(x_{12},x_{23},x_{13})=&f(x_{12},x_{23},x_{13}+\mathrm i\hbar).
\end{aligned}
\label{eq:ShiftOperatorsAction}
\end{equation}
\begin{proposition}
\phantom\newline
\begin{itemize}
\item For all generators $O_I\in\mathcal A_{q,t}$, the following limit exists
\begin{subequations}
\begin{align}
\Delta(O_I):=\lim_{\hbar\rightarrow0}\dfrac{\widehat{\Delta}(O_I)\left(\mathrm e^{\frac{\mathrm i}\hbar(p_{12}x_{12}+p_{23}x_{23}+p_{13}x_{13})}\right)}{\mathrm e^{\frac{\mathrm i}\hbar(p_{12}x_{12}+p_{23}x_{23}+p_{13}x_{13})}}.
\end{align}
\item Moreover, $\Delta(O_I)$ is a Laurent polynomial in three variables $P_{12},P_{23},P_{13}$
\begin{align}
\Delta(O_I)\in\mathbb C[t^{\pm\frac14}](X_{12},X_{23},X_{13})\big[P_{12}^{\pm1}, P_{23}^{\pm1}, P_{13}^{\pm1}\big],
\label{eq:DeltaImageOfGeneratorsLaurent}
\end{align}
where
\begin{align}
P_{12}=\mathrm e^{p_{12}},\qquad P_{23}=\mathrm e^{p_{23}},\qquad P_{13}=\mathrm e^{p_{13}}.
\end{align}
\item The action of $\Delta$ on generators extends to a homomorphism of commutative algebras
\begin{align}
\Delta:\mathcal A_{q=1,t}\rightarrow \mathbf k_t(X_{12},X_{23},X_{13},P_{12},P_{23},P_{13}).
\end{align}
\end{subequations}
\end{itemize}
\label{prop:qDifferenceQuasiClassicalLimit}
\end{proposition}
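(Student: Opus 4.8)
The plan is to realize $\Delta$ as the principal symbol of the $q$-difference operators $\widehat{\Delta}(O_I)$ and to show that, in the quasiclassical limit, the symbol map is an algebra homomorphism. The starting point is that the shift operators act on the Fourier harmonic $f=\mathrm e^{\frac{\mathrm i}\hbar(p_{12}x_{12}+p_{23}x_{23}+p_{13}x_{13})}$ in an exact, $\hbar$-independent way: writing $q=\mathrm e^{4\mathrm i\hbar}$, $X_{ij}=\mathrm e^{x_{ij}}$, $P_{ij}=\mathrm e^{p_{ij}}$, a direct computation from (\ref{eq:ShiftOperatorsAction}) gives $\hat\delta_{12}^{a}\hat\delta_{23}^{b}\hat\delta_{13}^{c}f=P_{12}^{-a}P_{23}^{-b}P_{13}^{-c}f$. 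Consequently, for any $q$-difference operator $w=\sum_{\alpha}w_\alpha(X;q)\,\hat\delta^{\alpha}$ with coefficients regular at $q=1$ one has $\tfrac{wf}{f}=\sum_\alpha w_\alpha(X;q)P^{-\alpha}$, and I define its symbol $\sigma(w):=\lim_{\hbar\to0}\tfrac{wf}{f}=\sum_\alpha w_\alpha(X;1)P^{-\alpha}$.

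The first two bullet points are then immediate consequences of Lemma \ref{lemm:qDifferenceLaurentCoefficients}. Indeed $\Delta(O_I)=\sigma(\widehat{\Delta}(O_I))$, and since the coefficients $w_\alpha$ of $\widehat{\Delta}(O_I)$ are Laurent polynomials in $q^{\frac14}$ they are regular at $q=1$, so the limit exists. Because $\widehat{\Delta}(O_I)$ involves only finitely many shifts $\alpha$ (with entries in $\{0,\pm1\}$), $\Delta(O_I)$ is a genuine Laurent polynomial in $P_{12},P_{23},P_{13}$, and setting $q^{\frac14}=1$ leaves coefficients that are rational in the $X_{ij}$ and Laurent polynomial in $t^{\frac14}$, which is exactly (\ref{eq:DeltaImageOfGeneratorsLaurent}).

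The engine for the third bullet is that $\sigma$ is linear and multiplicative in the limit. Linearity is clear; for multiplicativity one computes $ww'=\sum_{\alpha,\beta}w_\alpha(X)\,w'_\beta(Xq^{\alpha/4})\,\hat\delta^{\alpha+\beta}$, so that $\tfrac{ww'f}{f}=\sum_\alpha w_\alpha(X;q)\,g(Xq^{\alpha/4},P;q)\,P^{-\alpha}$ with $g=\sum_\beta w'_\beta(X;q)P^{-\beta}$; since the argument shift $Xq^{\alpha/4}\to X$ and all coefficients converge as $q\to1$, the limit is $\sigma(w)\sigma(w')$. The only point requiring care is that the product $ww'$ remains regular at $q=1$: this holds because at $q=1$ the $q^{\frac14}$-shifts of the arguments become trivial and the $q=1$ coefficients are honest rational functions of the $X_{ij}$, so no spurious pole in $q$ is created at $q=1$.

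With $\sigma$ a homomorphism, the third bullet follows by transporting the $q$-relations of $\mathcal A_{q,t}$ through the limit. By Proposition \ref{prop:qDifferenceHomomorphism} and Lemma \ref{lemm:JRelations}, the images under $\widehat{\Delta}$ of the $q$-$J$-relations (\ref{eq:JRelationsA})--(\ref{eq:JRelationsC}) and of the $q$-Casimir relation (\ref{eq:qCasimirRelation}) vanish identically as operators in $A_{q,t}$. Applying $\sigma$ to these zero operators and expanding by linearity and multiplicativity replaces each product $O_IO_J$ by $\Delta(O_I)\Delta(O_J)$ and each scalar coefficient by its value at $q=1$. A short check, which one carries out termwise, shows that the $q\to1$ specializations of the scalar prefactors turn (\ref{eq:JRelationsA})--(\ref{eq:JRelationsC}) into the commutative $J$-relations (\ref{eq:CommutativeJRelationsA})--(\ref{eq:CommutativeJRelationsC}) and (\ref{eq:qCasimirRelation}) into the commutative Casimir relation (\ref{eq:CommutativeCasimirRelation}); in particular the three cyclic cubic terms collapse to $O_{123}O_{234}O_{345}$, the $(q-1)$-proportional $O_i^2$ terms drop out, and the constant $q^{-\frac32}t^{-\frac32}(t+1)(q+t)(q^2+t)$ becomes $(t^{\frac12}+t^{-\frac12})^3$. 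Hence the $\Delta(O_I)$ satisfy every defining relation (\ref{eq:Aq1tDefiningIdeal}) of $\mathcal A_{q=1,t}$, so $\Delta$ extends to an algebra homomorphism. The main obstacle is the careful justification of multiplicativity of the symbol map together with the regularity of all intermediate operators at $q=1$; once that is in place, the remainder is the bookkeeping of scalar limits already sketched.
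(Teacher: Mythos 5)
Your proposal is correct and follows essentially the same route as the paper: the shift operators act diagonally on the Fourier harmonic, the first two bullets follow from Lemma \ref{lemm:qDifferenceLaurentCoefficients}, and the homomorphism property is obtained by pushing the operator identities for the 18 $J$-relations and the $q$-Casimir relation (Proposition \ref{prop:qDifferenceHomomorphism}) through the multiplicative quasiclassical limit, whose $q\rightarrow1$ specializations are, by design, exactly the defining relations of $\mathcal A_{q=1,t}$. Your explicit justification of multiplicativity of the symbol map, and the sign $\hat\delta^{\alpha}f=P^{-\alpha}f$ (the paper states the immaterial opposite sign convention), are details the paper asserts rather than spells out, but the argument is the same.
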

\begin{proof}
From (\ref{eq:ShiftOperatorsAction}) we immediately get
\begin{align*}
\hat\delta_{ij}\mathrm e^{\frac{\mathrm i}\hbar(p_{12}x_{12}+p_{23}x_{23}+p_{13}x_{13})}=P_{ij}\mathrm e^{\frac{\mathrm i}\hbar(p_{12}x_{12}+p_{23}x_{23}+p_{13}x_{13})},\qquad\textrm{for all}\quad (ij)\in\{(12),(23),(13)\}.
\end{align*}
Combining it with Lemma \ref{lemm:qDifferenceLaurentCoefficients} we immediately get the first two statements of the Proposition.
%Explicitly, we obtain
%\begin{align*}
%\Delta(O_1)=&\sum\limits_{a,b \in \{\pm 1\}} \ a b \ \dfrac{(1 - t^{\frac{1}{2}} X_{23} X_{12}^a X_{13}^b)(1 - t^{\frac{1}{2}} X_{23}^{-1} X_{12}^a X_{13}^b)}{t^{\frac{1}{2}} X_{12}^{a} X_{13}^b (X_{12} - X_{12}^{-1})(X_{13} - X_{13}^{-1})} \ P_{12}^{a} P_{13}^{b},\\
%\Delta(O_2)=&X_{12}+X_{12}^{-1},\\
%\Delta(O_{12})=&
%\end{align*}

Now, in order to prove the last statement we need to show that all defining relations (\ref{eq:Aq1tDefiningIdeal}) of $\mathcal A_{q=1,t}$ are satisfied for $\Delta(O_I)$, images of generators. To this end, recall that each of the 19 defining relations of $\mathcal A_{q=1,t}$ has a noncommutative counterpart in $\mathcal A_{q,t}$. Namely, the 18 $J$-relations (\ref{eq:JRelations}) and the $q$-Casimir relation (\ref{eq:qCasimirRelation}). By Proposition \ref{prop:qDifferenceHomomorphism}, both (\ref{eq:JRelations}) and (\ref{eq:qCasimirRelation}) are satisfied for $q$-difference operators $\widehat{\Delta}(O_I)$. At the same time, the first two statements of the current Proposition imply that for every noncommutative monomial $O_{I_1}\dots O_{I_k}$ we have
\begin{align*}
\Delta(O_{I_1})\dots\Delta(O_{I_k}) =\lim_{\hbar\rightarrow0} \dfrac{\widehat{\Delta}(O_{I_1}\dots O_{I_k})\left(\mathrm e^{\frac{\mathrm i}\hbar(p_{12}x_{12}+p_{23}x_{23}+p_{13}x_{13})}\right)}{\mathrm e^{\frac{\mathrm i}\hbar(p_{12}x_{12}+p_{23}x_{23}+p_{13}x_{13})}}.
\end{align*}
As a corollary, all of the 19 defining relations of $\mathcal A_{q=1,t}$ must be satisfied for $\Delta(O_I)$.
\end{proof}

\section{Isomorphism between $\mathcal A_{q=t=1}$ and genus two character variety}
\label{sec:IsomorphismWithCharacterVariety}

Note that all Lemmas and Propositions of Section \ref{sec:ClassicalLimit} work verbatim if we replace formal variable $t^{\frac14}$ with a nonzero parameter. In this section we examine specialization of Definition \ref{def:A1t} corresponding to the value of parameter $t=1$. Indeed, consider a polynomial ring in 15 variables
\begin{align*}
 P=\mathbb C[O_1, O_2, O_3, O_4, O_5, O_6, O_{12}, O_{23}, O_{34}, O_{56}, O_{61}, O_{123}, O_{234}, O_{456}],
\end{align*}
and let $\mathcal I_{q=t=1}\subset P$ be an ideal generated by $t=1$ specialization of relators in (\ref{eq:Aq1tDefiningIdeal}). We define a quotient algebra
\begin{equation*}
\mathcal A_{q=t=1}:=\bigslant{P}{\mathcal I_{q=t=1}}.
\end{equation*}

The main goal of the current section is to prove that there exists a $\mathrm{Mod}(\Sigma_2)$-equivariant isomorphism
\begin{align}
\mathcal A_{q=t=1}\simeq\mathcal O(\mathrm{Hom}(\pi_1(\Sigma_2)),SL(2,\mathbb C))^{SL(2,\mathbb C)},
\label{eq:A1tIsomorphism}
\end{align}
between $\mathcal A_{q=t=1}$ and the coordinate ring of an $SL(2,\mathbb C)$-character variety of a closed genus two surface.

Turns out that we can compute Groebner basis of defining ideal $\mathcal I_{q=t=1}\subset P$ for some appropriate choice of monomial order. Without going into further details which won't be necessary until Section \ref{sec:WordProblem} we consider 61 polynomials in $P$ obtained by $q=t=1$ specialization of formulas listed in Appendix \ref{sec:qGroebnerBasis},
\begin{equation*}
g_j^{(q=t=1)}:=g_j\big|_{q=t=1}\in\mathcal I_{q=t=1},\qquad 1\leq j\leq 61.
\end{equation*}
\begin{proposition}
Collection of elements
\begin{align*}
\big\{g_i^{(q=t=1)}\;\big|\;1\leqslant i\leqslant61\big\}
\end{align*}
provides normalized Groebner basis for $\mathcal I_{q=t=1}\subset P$ w.r.t. weighted degree reverse lexicographic monomial order.
\label{prop:GroebnerbasisAqt1}
\end{proposition}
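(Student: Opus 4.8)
The plan is to verify the Gröbner basis property directly from its definition using Buchberger's criterion, specialized to this concrete setting. Since the claim is that a specific finite set $\{g_i^{(q=t=1)}\}_{i=1}^{61}$ is a normalized Gröbner basis of $\mathcal I_{q=t=1}$ with respect to the weighted degree reverse lexicographic order (with weights as in \eqref{eq:GeneratorWeights}), I would break the verification into three checks: first that each $g_i^{(q=t=1)}$ actually lies in the ideal; second that these elements generate $\mathcal I_{q=t=1}$; and third that their leading terms already account for every leading term in the ideal, i.e.\ that all $S$-polynomials reduce to zero.

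First I would confirm membership in the ideal. By construction the $g_j$ are obtained from the formulas in the appendix on $q$-Gröbner basis, and the $q=t=1$ specialization is taken; since those appendix formulas are (by their stated provenance) elements of the defining ideal of $\mathcal A_{q,t}$ expressed through the $19$ defining relators, their specializations are $\mathbb C$-linear combinations of the $t=1$ specializations of the $18$ $J$-relators \eqref{eq:CommutativeJRelationsA}--\eqref{eq:CommutativeJRelationsC} and the Casimir relator \eqref{eq:CommutativeCasimirRelation}, hence lie in $\mathcal I_{q=t=1}$. Conversely, to show the $g_j^{(q=t=1)}$ generate the whole ideal, it suffices to exhibit each of the $19$ original generators of $\mathcal I_{q=t=1}$ as a polynomial combination of the $g_j^{(q=t=1)}$; this is the converse direction of the same appendix computation, and in practice amounts to reducing each defining relator against the proposed basis and checking the remainder vanishes.

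The substantive content is the third check, which I expect to be the main obstacle: I would verify Buchberger's criterion, namely that for every pair $(g_i^{(q=t=1)}, g_k^{(q=t=1)})$ the $S$-polynomial
\begin{align*}
S(g_i,g_k)=\frac{\mathrm{lcm}(\mathrm{lt}(g_i),\mathrm{lt}(g_k))}{\mathrm{lt}(g_i)}\,g_i-\frac{\mathrm{lcm}(\mathrm{lt}(g_i),\mathrm{lt}(g_k))}{\mathrm{lt}(g_k)}\,g_k
\end{align*}
reduces to zero modulo the full set $\{g_j^{(q=t=1)}\}$. With $61$ generators there are on the order of a couple thousand $S$-pairs, so the practical strategy is to prune aggressively: pairs with coprime leading monomials reduce to zero automatically (the product criterion), and many remaining pairs are eliminated by Buchberger's second (chain) criterion. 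I would exploit the order-six automorphism $I$ of Lemma \ref{lemm:IActionCommutative}, which permutes the generators cyclically and hence acts on the basis, to organize the surviving $S$-pairs into $I$-orbits and verify only one representative per orbit. This is exactly the kind of finite, fully explicit computation that a computer algebra system settles definitively, and I would delegate the exhaustive reduction to the Mathematica implementation cited in \cite{Arthamonov-GitHub-Flat}, recording in the text only the statement that all $S$-polynomials reduce to zero.

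Finally, to upgrade ``a Gröbner basis'' to ``the normalized Gröbner basis'' I would check the normalization conditions: each $g_j^{(q=t=1)}$ is monic in its leading coefficient, and no leading monomial $\mathrm{lt}(g_i)$ divides any monomial appearing in $g_k^{(q=t=1)}$ for $k\neq i$ (reducedness). The weighted degree reverse lexicographic order must be pinned down precisely — using the weights $\deg O_i=2,\ \deg O_{i,i+1}=3,\ \deg O_{i,i+1,i+2}=4$ from \eqref{eq:GeneratorWeights} as the primary grading, with reverse lexicographic tie-breaking in the chosen variable order — since the leading-term structure, and therefore the entire verification, depends on this choice. Because the monomial order is fixed and the generating set is finite and explicit, uniqueness of the reduced Gröbner basis then makes the proposition an unconditional finite check rather than an existence argument.
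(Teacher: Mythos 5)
Your proposal is correct and is essentially the paper's own proof: the paper establishes this proposition (and its companion, Proposition \ref{prop:GroebnerBasisAq1t}) by exactly the finite Buchberger-criterion certification you describe — membership of the $g_i^{(q=t=1)}$ in $\mathcal I_{q=t=1}$, reduction of the $19$ defining relators to zero against them, and reduction of all $S$-polynomials to zero — delegated to computer algebra software (SINGULAR and Mathematica, code at \cite{Arthamonov-GitHub-Flat}). One caution on your optimization: checking only one $S$-pair per $I$-orbit is not automatically justified, since a cyclic permutation of the variables need not commute with (weighted) degree reverse lexicographic tie-breaking, so leading terms and reduction certificates are not guaranteed to be $I$-equivariant; the safe route, and what the paper does, is the exhaustive reduction of all $S$-pairs.
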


\subsection{Character variety}
\label{sec:CharacterVariety}
Genus two surface can be obtained as an identification space from an octagon with sides glued as shown on Figure \ref{fig:GenusTwoPiGenerators}.
\begin{figure}
\begin{tikzpicture}[scale=0.6]
%B_1 cycle
\draw [dashed,darkgreen] ($0.8*(135:6)+0.2*(180:6)$) to ($0.8*(90:6)+0.2*(45:6)$);
\draw [darkgreen] (100:4.6) node {$c_2$};
%A_1 cycle
\draw [dashed,darkgreen] ($0.8*(45:6)+0.2*(0:6)$) to ($0.8*(90:6)+0.2*(135:6)$);
\draw [darkgreen] (50:4.7) node {$c_1$};
%A_3 cycle
\draw [dashed,darkgreen] ($0.8*(-135:6)+0.2*(-180:6)$) to ($0.8*(-90:6)+0.2*(-45:6)$);
\draw [darkgreen] (-130:4.7) node {$c_5$};
%B_2 cycle
\draw [dashed,darkgreen] ($0.8*(-45:6)+0.2*(0:6)$) to ($0.8*(-90:6)+0.2*(-135:6)$);
\draw [darkgreen] (-80:4.6) node {$c_4$};
%A_2 cycle
\draw [ultra thick,dotted,darkgreen] ($0.8*(-45:6)+0.2*(-90:6)$) to ($0.8*(0:6)+0.2*(45:6)$);
\draw [ultra thick,dotted,darkgreen] ($0.8*(-180:6)+0.2*(-135:6)$) to ($0.8*(135:6)+0.2*(90:6)$);
\draw [darkgreen] (-5:4.6) node {$c_3$};
\draw [darkgreen] (175:4.6) node {$c_3$};
%basepoint
\fill (0,0) circle (0.15);
\draw (0,0.5) node[above] {$p$};
%identification edges
\draw[->>-] [ultra thick] (0:6) to (45:6);
\draw[->-] [ultra thick] (45:6) to (90:6);
\draw[->>-] [ultra thick] (135:6) to (90:6);
\draw[->-] [ultra thick] (180:6) to (135:6);
\draw[->>>-] [ultra thick] (0:6) to (-45:6);
\draw[->>>>-] [ultra thick] (-45:6) to (-90:6);
\draw[->>>-] [ultra thick] (-135:6) to (-90:6);
\draw[->>>>-] [ultra thick] (-180:6) to (-135:6);
\draw [fill=black] (0:6) circle (0.12);
\draw [fill=black] (45:6) circle (0.12);
\draw [fill=black] (90:6) circle (0.12);
\draw [fill=black] (135:6) circle (0.12);
\draw [fill=black] (180:6) circle (0.12);
\draw [fill=black] (225:6) circle (0.12);
\draw [fill=black] (270:6) circle (0.12);
\draw [fill=black] (315:6) circle (0.12);
%Y_1
\draw[-<-] (0,0) to ($0.5*(0:6)+0.5*(45:6)$);
\draw[-<-] ($0.5*(90:6)+0.5*(135:6)$) to (0,0);
\draw (32:3) node {$Y_1$};
\draw (122:3) node {$Y_1$};
%X_1
\draw[->-] (0,0) to ($0.5*(45:6)+0.5*(90:6)$);
\draw[->-] ($0.5*(180:6)+0.5*(135:6)$) to (0,0);
\draw (77:3) node {$X_1$};
\draw (167:3) node {$X_1$};
%Y_2
\draw[-<-] (0,0) to ($0.5*(-180:6)+0.5*(-135:6)$);
\draw[-<-] ($0.5*(-45:6)+0.5*(-90:6)$) to (0,0);
\draw (-58:3) node {$Y_2$};
\draw (-148:3) node {$Y_2$};
%X_2
\draw[->-] ($0.5*(0:6)+0.5*(-45:6)$) to (0,0);
\draw[->-] (0,0) to ($0.5*(-90:6)+0.5*(-135:6)$);
\draw (-20:3) node [above] {$X_2$};
\draw (-110:3) node [right] {$X_2$};
\end{tikzpicture}
\caption{Choice of generators for $\pi_1(\Sigma_2,p)$.}
\label{fig:GenusTwoPiGenerators}
\end{figure}
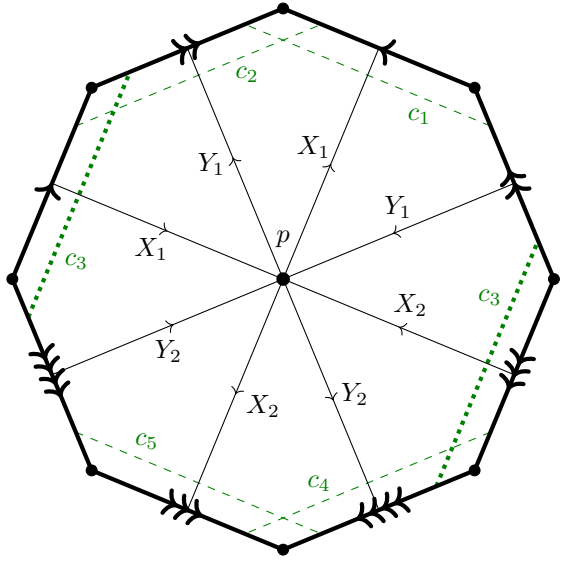
Let $p\in\Sigma_2$ be the center of an octagon and consider the fundamental group $\pi_1(\Sigma_2,p)$ based at $p$. We can choose generators $X_1,X_2,Y_1,Y_2$ of $\pi_1(\Sigma)$ corresponding to loops crossing the sides of an octagon exactly once. As a result, we obtain the following presentation for the fundamental group
\begin{align}
\pi_1(\Sigma_2,p)=\langle X_1,Y_1,X_2,Y_2\;|\; X_1Y_1X_1^{-1}Y_1^{-1}X_2Y_2X_2^{-1}Y_2^{-1}=1\rangle.
\label{eq:GenusTwoFundamentalGroup}
\end{align}
Hereinafter we use notation in which the composition of paths is read from right to left. Note that this is opposite to the standard convention in topology.

As follows from Corollary 40 in \cite{Sikora'2012} (see also \cite{RapinchukBenyash-KrivetzChernousov'1996}), the coordinate ring of the representation variety
$\mathcal O(\mathrm{Hom}(\pi_1(\Sigma_2),SL(2,\mathbb C)))$ is an integral domain. Hence, its invariant subring $\mathcal O(\mathrm{Hom}(\pi_1(\Sigma_2,p),SL(2,\mathbb C)))^{SL(2,\mathbb C)}$ must also be an integral domain. The Krull dimension of the latter can be computed by examining the general point of its spectrum, which corresponds to the orbit of an irreducible $SL(2,\mathbb C)$-representation of $\pi_1(\Sigma_2,p)$. This gives
\begin{align*}
\dim\; \mathrm{Hom}(\pi_1(\Sigma_2,p),SL(2,\mathbb C))\git SL(2,\mathbb C)=9-3=6.
\end{align*}

\subsection{$\mathrm{Mod}(\Sigma_2)$-equivariant homomorphism}
Mapping Class Group $\mathrm{Mod}(\Sigma_2)$ of a genus two surface acts on conjugacy classes of $\pi_1(\Sigma_2)$. This group is generated by five left Dehn twists $d_1,d_2,d_3,d_4,d_5$ about the cycles $c_1,c_2,c_3,c_4,c_5$ shown on Figure \ref{fig:GenusTwoPiGenerators}.\footnote{Note that cycles $c_1,c_2,c_3,c_4,c_5$ on Figure \ref{fig:GenusTwoPiGenerators} correspond to cycles $A_1,B_{12},A_2,B_{23},$ and $A_3$ on Figure \ref{fig:qDiffGeneratingCycles} respectively. So our notations for five generating Dehn twists are consistent with Theorem \ref{th:MCGActionGeneric} and Proposition \ref{prop:MCGActionAq1t}.} The action of the five Dehn twists on generators of the fundamental group is given by
\begin{equation}
\begin{aligned}
d_1:\;\left\{\begin{array}{ccl}
X_1&\mapsto& X_1Y_1\\
Y_1&\mapsto& Y_1\\
X_2&\mapsto& X_2\\
Y_2&\mapsto& Y_2
\end{array}\right.&,
\qquad
d_2:\;\left\{\begin{array}{ccl}
X_1&\mapsto& X_1\\
Y_1&\mapsto& Y_1X_1^{-1}\\
X_2&\mapsto& X_2\\
Y_2&\mapsto& Y_2
\end{array}\right.,
\qquad
d_3:\;\left\{\begin{array}{ccl}
X_1&\mapsto& Y_2Y_1X_1\\
Y_1&\mapsto& Y_1\\
X_2&\mapsto& Y_1Y_2X_2\\
Y_2&\mapsto& Y_2
\end{array}\right.,\\[10pt]
d_4:\;&\left\{\begin{array}{ccl}
X_1&\mapsto& X_1\\
Y_1&\mapsto& Y_1\\
X_2&\mapsto& X_2\\
Y_2&\mapsto& Y_2X_2^{-1}
\end{array}\right.,
\qquad
d_5:\;\left\{\begin{array}{ccl}
X_1&\mapsto& X_1\\
Y_1&\mapsto& Y_1\\
X_2&\mapsto& X_2Y_2\\
Y_2&\mapsto& Y_2
\end{array}\right..
\end{aligned}
\label{eq:DehnTwistsActionOnFundamentalGroup}
\end{equation}

Recall that (\ref{eq:IActionCommutative}) and (\ref{eq:d1ActionCommutative}) define a pair of automorphisms of the polynomial ring $P$ which we will denote by the same letters $d_1,I:P\rightarrow P$.
\begin{definition}
Let $\Psi$ be a homomorphism of the polynomial ring
\begin{align*}
\Psi:P\rightarrow \mathcal O(\mathrm{Hom}(\pi_1(\Sigma_2),SL(2,\mathbb C)))^{SL(2,\mathbb C)}
\end{align*}
defined on generators as
\begin{equation}
\begin{aligned}
    \Psi(O_1)=&\tau_{Y_1},
    &\Psi(O_{12})=&\tau_{Y_1X_1^{-1}},
    &\Psi(O_{123})=&\tau_{Y_1X_1^{-1}Y_1^{-1}Y_2^{-1}},\\
    \Psi(O_2)=&\tau_{X_1},
    &\Psi(O_{23})=&\tau_{X_1^{-1}Y_1^{-1}Y_2^{-1}},
    &\Psi(O_{234})=&\tau_{X_1Y_1Y_2^2X_2^{-1}Y_2^{-1}}\\
    \Psi(O_3)=&\tau_{Y_1Y_2},
    &\Psi(O_{34})=&\tau_{Y_2^{-1}Y_1^{-1}X_2},
    &\Psi(O_{345})=&\tau_{X_2Y_1^{-1}},\\
    \Psi(O_4)=&\tau_{X_2},
    &\Psi(O_{45})=&\tau_{Y_2X_2},\\
    \Psi(O_5)=&\tau_{Y_2},
    &\Psi(O_{56})=&\tau_{X_1Y_2^2X_2^{-1}Y_2^{-1}},\\
    \Psi(O_6)=&\tau_{X_1Y_2X_2^{-1}Y_2^{-1}},
    &\Psi(O_{61})=&\tau_{X_1Y_1Y_2X_2^{-1}Y_2^{-1}},
\end{aligned}
\label{eq:PsiActionOnGenerators}
\end{equation}
where for all $M\in\pi_1(\Sigma_2,p)$ we denote by
\begin{align*}
\tau_M\in \mathcal O(\mathrm{Hom}(\pi_1(\Sigma_2),SL(2,\mathbb C)))^{SL(2,\mathbb C)}
\end{align*}
the trace of the corresponding product of matrices.
\end{definition}
\begin{lemma}
Homomorphism $\Psi$ is equivariant with respect to the action of $d_1$ and $I$ on both sides.
\label{lemm:PsiMCGEquivariance}
\end{lemma}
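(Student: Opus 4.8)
The plan is to verify equivariance by checking the two generating relations of the abstract action against the topological action. Concretely, $\mathrm{Mod}(\Sigma_2)$ is generated by $d_1$ and $I$, and the homomorphism $\Psi$ is defined on the fifteen polynomial generators $O_\bullet$ by the trace functions $\tau_M$ listed in \eqref{eq:PsiActionOnGenerators}. On the source side $P$, the automorphisms $d_1,I:P\to P$ act by the explicit formulae \eqref{eq:d1ActionCommutative} and \eqref{eq:IActionCommutative}. On the target side, the mapping class group acts on conjugacy classes in $\pi_1(\Sigma_2,p)$ through \eqref{eq:DehnTwistsActionOnFundamentalGroup}, hence on trace functions by $\tau_M \mapsto \tau_{g(M)}$. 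Thus the two squares I must show commute are
\begin{align*}
\Psi\circ d_1 = d_1\circ\Psi,\qquad \Psi\circ I = I\circ\Psi,
\end{align*}
and since both sides of each are algebra homomorphisms out of the polynomial ring $P$, it suffices to check them on the fifteen generators $O_\bullet$.

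\emph{First} I would treat the generator $I$. The source action \eqref{eq:IActionCommutative} is the cyclic permutation sending $O_1\mapsto O_2$, $O_{12}\mapsto O_{23}$, $O_{123}\mapsto O_{234}$, and so on. So I must confirm that the word $M$ attached to $O_K$ by \eqref{eq:PsiActionOnGenerators} is carried, up to conjugacy and up to the fundamental-group relation \eqref{eq:GenusTwoFundamentalGroup}, to the word attached to $I(O_K)$ by the product $d_5 d_4 d_3 d_2 d_1$ acting via \eqref{eq:DehnTwistsActionOnFundamentalGroup}. For instance, to match $O_1\mapsto O_2$ I check that $\widetilde I(Y_1)$ is conjugate to $X_1$, where $\widetilde I=d_5d_4d_3d_2d_1$ realizes $I$; more precisely that the conjugacy class of the relevant word equals that of $X_1$ modulo the octagon relation. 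The analogous check runs through each of the fifteen generators, always working only with conjugacy classes of words in $\pi_1$ so that $\tau$ is well defined.

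\emph{Next} I would treat $d_1$. Here the source action \eqref{eq:d1ActionCommutative} sends, e.g., $O_2\mapsto O_1O_2-O_{12}$ and $O_{12}\mapsto O_2$. This is where the structure of the problem reveals itself: the fundamental-group Dehn twist $d_1$ in \eqref{eq:DehnTwistsActionOnFundamentalGroup} sends $X_1\mapsto X_1Y_1$ and fixes $Y_1,X_2,Y_2$, so on the target $\Psi(O_2)=\tau_{X_1}$ is carried to $\tau_{X_1Y_1}$. The matching identity I must verify is therefore the trace relation
\begin{align*}
\tau_{X_1Y_1}=\tau_{Y_1}\,\tau_{X_1}-\tau_{Y_1X_1^{-1}},
\end{align*}
which is exactly the fundamental $SL(2,\mathbb C)$ trace identity $\tau_{AB}+\tau_{AB^{-1}}=\tau_A\tau_B$ applied with $A=Y_1$, $B=X_1$. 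Every generator on which $d_1$ acts nontrivially produces, in the same way, one instance of this bilinear trace identity (or its inverse form), and the generators fixed by $d_1$ require only that the corresponding word be fixed up to conjugacy by the twist.

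\emph{The main obstacle} is bookkeeping rather than conceptual: for the generators carrying long words --- notably $O_{234}$, $O_{56}$, $O_{61}$, $O_{345}$ and their images under $d_1$ and $I$ --- one must reduce products of twist-images to the advertised conjugacy classes using only cyclic permutation and the single octagon relation \eqref{eq:GenusTwoFundamentalGroup}, which can require inserting $X_1Y_1X_1^{-1}Y_1^{-1}=Y_2^{-1}X_2Y_2X_2^{-1}$ in a nonobvious spot. I expect the cleanest route is to reduce the second-layer and third-layer generators to the first six via the source-side identities $O_{i,i+1}=O_iO_{i+1}-\ldots$ and $O_{i,i+1,i+2}=\ldots$ implicit in Definition \ref{def:A1t}, matching them against iterated applications of the basic trace identity on the target, so that all sixty-odd verifications collapse to the single identity $\tau_{AB}=\tau_A\tau_B-\tau_{AB^{-1}}$ together with invariance of $\tau$ under conjugation and the relation \eqref{eq:GenusTwoFundamentalGroup}. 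Once the checks on generators are complete, equivariance on all of $P$ follows because $\Psi$, $d_1$, and $I$ are algebra homomorphisms.
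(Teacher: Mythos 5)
Your proposal follows the paper's proof essentially step for step: reduce to the fifteen polynomial generators (legitimate since all maps are algebra homomorphisms out of $P$), verify $I$-equivariance by computing the induced action of $I$ on $\pi_1(\Sigma_2,p)$ and matching conjugacy classes of trace words against the cyclic permutation, and verify $d_1$-equivariance via the trace identity $\tau_{AB}+\tau_{AB^{-1}}=\tau_A\tau_B$ applied to the eight generators whose words involve $X_1$, the remaining seven being fixed outright. Two cautions on the details. First, in the paper's conventions $I=d_1d_2d_3d_4d_5$, whereas $\widetilde I=d_5d_4d_3d_2d_1$ is shown in the proof of Theorem \ref{th:MCGActionGeneric} to equal $I^{-1}$; your identification of $\widetilde I$ as realizing $I$ would send $O_1$ to $O_6$ rather than to $O_2$, so the cyclic check as you state it ("$\widetilde I(Y_1)$ is conjugate to $X_1$") fails until the composition order is reversed. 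Second, the streamlining you propose at the end is not available: in the commutative algebra $\mathcal A_{q=1,t}$ there are no identities expressing $O_{i,i+1}$ or $O_{i,i+1,i+2}$ through $O_1,\dots,O_6$ alone --- the $q$-commutator formulas $O_{i,i+1}=[O_i,O_{i+1}]_q$ hold only for generic $q$ and degenerate at $q=1$, and the $J$-relations of Definition \ref{def:A1t} express $O_{i,i+1}-O_iO_{i+1}$ only in terms of other level-two and level-three generators. So the "collapse to a single trace identity" does not go through; what works is exactly your primary plan (and the paper's): the direct check on all fifteen generators, with the long words for $O_{56},O_{61},O_{123},O_{234},O_{345}$ handled by conjugation invariance and the octagon relation.
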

\begin{proof}
From (\ref{eq:DehnTwistsActionOnFundamentalGroup}) we compute the action of $I=d_1d_2d_3d_4d_5$ on generators of the fundamental group:
\begin{align}
I:\quad\left\{\begin{array}{ccl}
X_1&\mapsto& Y_2Y_1,\\[5pt]
Y_1&\mapsto& X_1^{-1},\\[5pt]
X_2&\mapsto& (X_1^{-1}Y_2X_2)Y_2(X_1^{-1}Y_2X_2)^{-1},\\[5pt]
Y_2&\mapsto& Y_2(X_1^{-1}Y_2X_2)^{-1}.
\end{array}\right.
\label{eq:IActionOnFundamentalGroup}
\end{align}
On the character variety this translates to
\begin{align*}
I:\qquad\left\{\begin{array}{l}
 \tau_{Y_1}\quad\mapsto\quad \tau_{X_1^{-1}}=\tau_{X_1}\quad\mapsto\quad
 \tau_{Y_2Y_1}\quad\mapsto\quad \tau_{Y_2X_2^{-1}Y_2^{-1}}=\tau_{X_2}\quad\mapsto\\[5pt]
 \tau_{(X_1^{-1}Y_2X_2)Y_2(X_1^{-1}Y_2X_2)^{-1}}=\tau_{Y_2}\quad\mapsto\quad \tau_{Y_2X_2^{-1}Y_2^{-1}X_1}=\tau_{X_1Y_2X_2^{-1}Y_2^{-1}}\quad\mapsto\\[5pt] \tau_{Y_2Y_1Y_2(X_1^{-1}Y_2X_2)^{-1}(X_1^{-1}Y_2X_2)Y_2^{-1}(X_1^{-1}Y_2X_2)^{-1} (X_1^{-1}Y_2X_2)Y_2^{-1}}=\tau_{Y_1}
\end{array}
\right.
\end{align*}
The latter compares well to the cyclic permutation of elements in the first column of (\ref{eq:PsiActionOnGenerators}). Similar calculations verify the other two orbits of $I$ corresponding to second and third columns of (\ref{eq:PsiActionOnGenerators}). As a result, we conclude that
\begin{align*}
I(\Psi(O_J))=\Psi(I(O_J))\qquad\textrm{for all generators}\quad O_J\in P.
\end{align*}

Now we will prove that $\Psi$ is $d_1$-equivariant. From (\ref{eq:DehnTwistsActionOnFundamentalGroup}) we immediately note that $d_1$ acts identically on all $\tau$ not involving $X_1$, namely on
\begin{align*}
\tau_{Y_1},\quad \tau_{Y_1Y_2},\quad \tau_{X_2},\quad \tau_{Y_2},\quad  \tau_{Y_2^{-1}Y_1^{-1}X_2},\quad \tau_{Y_2X_2},\qquad \tau_{X_2Y_1^{-1}}.
\end{align*}
For the remaining 8 elements which appear on the r.h.s. of (\ref{eq:PsiActionOnGenerators}) we obtain
\begin{equation}
d_1:\qquad\left\{\begin{array}{ccl}
\tau_{X_1}&\mapsto& \tau_{X_1Y_1}=\tau_{X_1}\tau_{Y_1}-\tau_{Y_1X_1^{-1}},\\[5pt]
\tau_{X_1Y_2X_2^{-1}Y_2^{-1}}&\mapsto& \tau_{X_1Y_1Y_2X_2^{-1}Y_2^{-1},},\\[5pt]
\tau_{Y_1X_1^{-1}}&\mapsto& \tau_{X_1^{-1}}=\tau_{X_1},\\[5pt]
\tau_{X_1^{-1}Y_1^{-1}Y_2^{-1}}&\mapsto& \tau_{Y_1^{-1}X_1^{-1}Y_1^{-1}Y_2^{-1}} =\tau_{Y_1}\tau_{X_1^{-1}Y_1^{-1}Y_2^{-1}}-\tau_{Y_1X_1^{-1}Y_1^{-1}Y_2^{-1}}\\[5pt]
\tau_{X_1Y_2^2X_2^{-1}Y_2^{-1}}&\mapsto& \tau_{X_1Y_1Y_2^2X_2^{-1}Y_2^{-1}},\\[5pt]
\tau_{X_1Y_1Y_2X_2^{-1}Y_2^{-1}}&\mapsto& \tau_{X_1Y_1^2Y_2X_2^{-1}Y_2^{-1}} =\tau_{Y_1}\tau_{X_1Y_1Y_2X_2^{-1}Y_2^{-1}}-\tau_{X_1Y_2X_2^{-1}Y_2^{-1}},\\[5pt]
\tau_{Y_1X_1^{-1}Y_1^{-1}Y_2^{-1}}&\mapsto& \tau_{X_1^{-1}Y_1^{-1}Y_2^{-1}},\\[5pt]
\tau_{X_1Y_1Y_2^2X_2^{-1}Y_2^{-1}}&\mapsto& \tau_{X_1Y_1^2Y_2^2X_2^{-1}Y_2^{-1}} =\tau_{Y_1}\tau_{X_1Y_1Y_2^2X_2^{-1}Y_2^{-1}}-\tau_{X_1Y_2^2X_2^{-1}Y_2^{-1}}.
\end{array}\right.
\label{eq:d1ActionTau}
\end{equation}
Comparing (\ref{eq:d1ActionTau}) with (\ref{eq:d1ActionCommutative}) we conclude that
\begin{align*}
d_1(\Psi(O_J))=\Psi(d_1(O_J))\qquad\textrm{for all generators}\quad O_J\in P.
\end{align*}
\end{proof}

Now we are ready to prove that (\ref{eq:PsiActionOnGenerators}) defines a homomorphism from $\mathcal A_{q=t=1}$ to the coordinate ring of the genus two character variety.
\begin{proposition}
Defining ideal $\mathcal I_{q=t=1}\subset P$ of $\mathcal A_{q=t=1}$ belongs to the kernel of $\Psi$. In other words $\Psi$ descends to a homomorphism of the quotient algebra, which we denote by the same letter:
\begin{equation}
\Psi:\mathcal A_{q=t=1}\rightarrow\mathcal O(\mathrm{Hom}(\pi_1(\Sigma_2),SL(2,\mathbb C)))^{SL(2,\mathbb C)}.
\label{eq:PsiHomomorphismFromAq1t1}
\end{equation}
\label{prop:PsiHomomorphism}
\end{proposition}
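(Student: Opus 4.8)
The plan is to show that the full generating set of $\mathcal I_{q=t=1}$ lies in $\ker\Psi$, using the Mapping Class Group symmetry to reduce the work to four genuinely distinct trace identities. Following the notation in the proof of Lemma \ref{lemm:d1HomomorphismCommutative}, write $r_i,r_{6+i},r_{12+i}$ ($1\leqslant i\leqslant 6$) for the left-hand sides of the $t=1$ specializations of (\ref{eq:CommutativeJRelationsA}), (\ref{eq:CommutativeJRelationsB}), (\ref{eq:CommutativeJRelationsC}), and $r_0$ for that of (\ref{eq:CommutativeCasimirRelation}), so that (after discarding the dependent relators $r_{16},r_{17},r_{18}$) the ideal is generated by $r_0,\dots,r_{15}$. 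The automorphism $I$ of Lemma \ref{lemm:IActionCommutative} acts on these relators by the index shift $i\mapsto i+1$ inside each of the three $J$-families and fixes $r_0$; hence $r_0,\dots,r_{15}$ is contained in the $I$-orbits of $r_0,r_1,r_7,r_{13}$. Since $\Psi$ is $I$-equivariant by Lemma \ref{lemm:PsiMCGEquivariance} and $I$ acts on the target by the algebra automorphism induced from the character variety, the relation $\Psi(r_{i+1})=I(\Psi(r_i))$ propagates the vanishing along each orbit. Thus it suffices to establish the four identities $\Psi(r_1)=\Psi(r_7)=\Psi(r_{13})=\Psi(r_0)=0$.

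For each of these I would substitute the explicit trace functions (\ref{eq:PsiActionOnGenerators}) and reduce with the standard $SL(2,\mathbb C)$ trace calculus: the Cayley--Hamilton relation $\tau_A\tau_B=\tau_{AB}+\tau_{AB^{-1}}$, the inversion symmetry $\tau_A=\tau_{A^{-1}}$, cyclicity $\tau_{AB}=\tau_{BA}$, and, for the cubic terms in $r_{13}$ and $r_0$, the fundamental identity $\tau_{ABC}+\tau_{ACB}=\tau_A\tau_{BC}+\tau_B\tau_{CA}+\tau_C\tau_{AB}-\tau_A\tau_B\tau_C$. The decisive extra ingredient is the single defining relation $X_1Y_1X_1^{-1}Y_1^{-1}X_2Y_2X_2^{-1}Y_2^{-1}=1$ of (\ref{eq:GenusTwoFundamentalGroup}): one repeatedly invokes $\tau_{RW}=\tau_W$ for the relator word $R$ to shorten the long words on the right-hand side of (\ref{eq:PsiActionOnGenerators}). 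That this relation is genuinely needed, rather than being a universal trace identity, is already visible in the simplest case: expanding $\Psi(r_1)$ cancels every term except the requirement $\tau_{X_2^2Y_1^{-1}}=\tau_{Y_2^{-1}Y_1^{-1}X_2Y_2X_2}$, an equality that fails for a generic unconstrained four-tuple of matrices and holds only after the surface relation is applied.

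The main obstacle is the combinatorial bulk of the family-C relation $r_{13}$ and the Casimir relation $r_0$: both are triple products of traces of words of length up to seven, whose expansion generates many trace monomials that must be folded back through the surface relation before the cancellations emerge. I expect these reductions to be routine but long, so the natural way to organize and certify them is to perform the computation once inside the integral domain $\mathcal O(\mathrm{Hom}(\pi_1(\Sigma_2),SL(2,\mathbb C)))$ --- for instance on a generic parametrized quadruple $(X_1,Y_1,X_2,Y_2)$ subject to the relation, or symbolically --- as carried out in the accompanying computer algebra verification \cite{Arthamonov-GitHub-Flat}. Because the target ring is reduced, vanishing of the four functions $\Psi(r_1),\Psi(r_7),\Psi(r_{13}),\Psi(r_0)$ on all representations forces $\mathcal I_{q=t=1}\subseteq\ker\Psi$, whence $\Psi$ descends to the quotient $\mathcal A_{q=t=1}$ as claimed.
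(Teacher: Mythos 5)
Your proposal is correct and follows the same overall strategy as the paper --- MCG-equivariance of $\ker\Psi$ to cut the generating set of $\mathcal I_{q=t=1}$ down to a few representatives, followed by an explicit trace/computer-algebra verification in the (reduced, in fact integral) coordinate ring of the full representation variety. The one substantive difference is in the reduction step: you use only the $I$-equivariance, which leaves four representatives $r_0,r_1,r_7,r_{13}$ to check, whereas the paper also exploits $d_1$-equivariance. Since Lemma \ref{lemm:d1HomomorphismCommutative} already records $d_1(r_2)=-r_7$ and $d_1(r_8)=r_{14}$ (equations (\ref{eq:d1r2RelatorActionCommutative}) and (\ref{eq:d1r8RelatorActionCommutative})), the vanishing of $\Psi$ on the entire $B$-family follows from the $A$-family, and on the $C$-family from the $B$-family, so the paper only verifies $\Psi(r_0)=\Psi(r_1)=0$. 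This spares exactly the two computations you flag as the main obstacle: the bulky relators $r_7$ and especially $r_{13}$, whose expansion into long trace words never has to be carried out. Your four-identity version would still go through (your trace-calculus reduction of $\Psi(r_1)$ to $\tau_{X_2^2Y_1^{-1}}=\tau_{Y_2^{-1}Y_1^{-1}X_2Y_2X_2}$ matches the paper's expression (\ref{eq:PsiImageRelations01}), and the reducedness argument for passing from pointwise vanishing to vanishing in the ring is sound given Section \ref{sec:CharacterVariety}), but it is strictly more computational work than necessary; the lesson is that the $d_1$-identities proved for Lemma \ref{lemm:d1HomomorphismCommutative} are reusable here for free.
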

\begin{proof}
Because $\Psi$ is equivariant w.r.t. $d_1$ and $I$, so is $\ker\Psi$. Recall that 19 defining relations (\ref{eq:Aq1tDefiningIdeal}) split into 4 orbits of $I$. Moreover, from (\ref{eq:d1r2RelatorActionCommutative}) and (\ref{eq:d1r8RelatorActionCommutative}) we know that
\begin{align*}
d_1(r_2)=-r_7,\qquad d_1(r_8)=r_{14}.
\end{align*}
Hence, it will be enough for us to prove that $r_0,r_1\in\ker\Psi$ and the rest will follow by equivariance. We get
\begin{equation}
\begin{aligned}
\Psi(r_1)=&\tau_{Y_1Y_2}\tau_{Y_2}+\tau_{X_2}\tau_{X_2Y_1^{-1}} -\tau_{Y_2^{-1}Y_1^{-1}X_2}\tau_{Y_2X_2}-2\tau_{Y_1},\\[5pt]
\Psi(r_0)=&8-\tau_{Y_1X_1^{-1}}^2-\tau_{Y_2X_2}^2-\tau_{X_1^{-1}Y_1^{-1}Y_2^{-1}}^2 -\tau_{Y_2^{-1}Y_1^{-1}X_2}^2-\tau_{X_1Y_1Y_2X_2^{-1}Y_2^{-1}}^2\\ &-\tau_{Y_1Y_2}\tau_{X_1Y_2X_2^{-1}Y_2^{-1}}\tau_{X_1Y_1Y_2^2X_2^{-1}Y_2^{-1}}
+\tau_{X_2Y_1^{-1}}\tau_{Y_1X_1^{-1}Y_1^{-1}Y_2^{-1}}\tau_{X_1Y_1Y_2^2X_2^{-1}Y_2}\\
&+\tau_{Y_1Y_2}\tau_{X_1^{-1}Y_1^{-1}Y_2^{-1}}\tau_{X_1}
+\tau_{Y_1Y_2}\tau_{Y_2^{-1}Y_1^{-1}X_2}\tau_{X_2}
-\tau_{X_1Y_2X_2^{-1}Y_2^{-1}}\tau_{X_1}\tau_{X_2}\\
&+\tau_{X_1Y_2X_2^{-1}Y_2^{-1}}\tau_{X_1Y_1Y_2X_2^{-1}Y_2^{-1}}\tau_{Y_1}
+\tau_{Y_1X_1^{-1}}\tau_{X_1}\tau_{Y_1}
-\tau_{X_2Y_1^{-1}}\tau_{X_2}\tau_{Y_1}\\
&+\tau_{X_1Y_2X_2^{-1}Y_2^{-1}}\tau_{X_1Y_2^2X_2^{-1}Y_2^{-1}}\tau_{Y_2}
-\tau_{Y_1X_1^{-1}Y_1^{-1}Y_2^{-1}}\tau_{X_1}\tau_{Y_2}
+\tau_{Y_2X_2}\tau_{X_2}\tau_{Y_2}
-\tau_{Y_1Y_2}\tau_{Y_1}\tau_{Y_2}.
\end{aligned}
\label{eq:PsiImageRelations01}
\end{equation}

Both expressions on the right hand side of (\ref{eq:PsiImageRelations01}) give rise to invariant polynomials in the coordinates of the full representation variety $\mathrm{Hom}(\pi_1(\Sigma_2,SL(2,\mathbb C)))$. We have used Mathematica to compute Groebner basis for defining ideal of the coordinate ring $\mathcal O(\mathrm{Hom}(\pi_1(\Sigma_2,SL(2,\mathbb C))))$ of the full representation variety in degree reverse lexicographic order. This basis has 82 elements. Using this basis we have reduced both expressions to confirm that both of them have zero remainder modulo defining ideal. We omit routine details from the main text and invite careful reader to examine our calculations at \cite{Arthamonov-GitHub-Flat}.
\end{proof}

\subsection{The Isomorphism}

For an $SL(2,\mathbb C)$-character variety of one-relator group (\ref{eq:GenusTwoFundamentalGroup}) we can utilize the algorithm from \cite{AshleyBurelleLawton'2018}. This algorithm provides a complete set of generators of the coordinate ring of character variety together with, possibly incomplete, set of relations which are enough however to provide a set-theoretic cut-out of the variety of irreducible representations.

This algorithm provides us with 14 generators:
\begin{align}
    \tau _{X_1},\;\tau _{Y_1},\;\tau _{X_2},\;\tau _{Y_2},\;\tau _{X_1Y_1},\;\tau _{X_1X_2},\;\tau _{X_1Y_2},\;\tau _{Y_1X_2},\;\tau _{Y_1Y_2},\;\tau _{X_2Y_2},\;\tau _{X_1Y_1X_2},\;\tau _{X_1Y_1Y_2},\;\tau _{X_1X_2Y_2},\;\tau _{Y_1X_2Y_2},
    \label{eq:ABL18Generators}
\end{align}
subject to 19 relations which we omit from the main text for the sake of brevity. Now let
\begin{equation*}
P^{\mathrm{ABL}}:=\mathbb C[\tau_{X_1}, \tau_{Y_1}, \tau_{X_2}, \tau_{Y_2}, \tau_{X_1Y_1}, \tau_{X_1X_2}, \tau_{X_1Y_2}, \tau_{Y_1X_2}, \tau_{Y_1Y_2}, \tau_{X_2Y_2}, \tau_{X_1Y_1X_2}, \tau_{X_1Y_1Y_2}, \tau_{X_1X_2Y_2}, \tau_{Y_1X_2Y_2}]
\end{equation*}
be the polynomial ring in 14 variables (\ref{eq:ABL18Generators}) and let $\mathcal I^{\mathrm{ABL}}\subset P^{\mathrm{ABL}}$ be the ideal generated by 19 relations mentioned above. By Theorem 3.4 from \cite{AshleyBurelleLawton'2018} (see also Theorem 3.2 in \cite{GonzalezMontesinos'1993}), we have
\begin{equation}
\bigslant{P^{\mathrm{ABL}}}{\sqrt{\mathcal I^{\mathrm{ABL}}}}\;\simeq\; \bigslant{\mathcal O(\mathrm{Hom}(\pi_1(\Sigma_2),SL(2,\mathbb C)))^{SL(2,\mathbb C)}}{\sqrt{(0)}} \;\stackrel{\textrm{Sec.} \ref{sec:CharacterVariety}}=\; \mathcal O(\mathrm{Hom}(\pi_1(\Sigma_2),SL(2,\mathbb C)))^{SL(2,\mathbb C)}.
\label{eq:ABL18SetTheoreticCutout}
\end{equation}

\begin{definition}
Let $\Phi$ be a homomorphism of the polynomial ring
\begin{align*}
\Phi:P^{\mathrm{ABL}}\rightarrow \mathcal A_{q=t=1}
\end{align*}
defined on generators as
\begin{align}
\Phi(\tau _{X_1})=& O_2,&
\Phi(\tau _{Y_2})=& O_5,&
\Phi(\tau _{X_1Y_1})=& O_1 O_2-O_{1,2},\nonumber\\
\Phi(\tau _{Y_1})=& O_1,&
\Phi(\tau _{Y_1Y_2})=& O_3,&
\Phi(\tau _{Y_1X_2})=& O_1 O_4-O_{3,4,5},\nonumber\\
\Phi(\tau _{X_2})=& O_4,&\Phi(\tau _{X_2Y_2})=& O_{4,5},&
\Phi(\tau _{Y_1X_2Y_2})=& O_{3,4}+O_1 O_{4,5}-O_5 O_{3,4,5},
\label{eq:PhiActionOnGenerators}
\end{align}
\begin{align}
\Phi(\tau _{X_1X_2})=& -O_1 O_{6,1}+O_1 O_2 O_{3,4,5}-O_{1,2} O_{3,4,5}+O_6,\nonumber\\
\Phi(\tau _{X_1Y_2})=& O_3 O_{1,2}+O_1 O_{2,3}-O_{1,2,3}-O_1 O_2 O_3+O_2 O_5,\nonumber\\
\Phi(\tau _{X_1Y_1X_2})=&- O_1^2 O_{6,1}+O_2 O_1^2 O_{3,4,5}-O_1 O_{1,2} O_{3,4,5}+O_{6,1}-O_2 O_{3,4,5}+O_6 O_1,\nonumber\\
\Phi(\tau _{X_1Y_1Y_2})=& -O_5 O_{1,2}-O_4 O_5 O_{6,1}-O_{2,3}+O_{4,5} O_{6,1}+ O_4 O_{2,3,4}+O_1 O_2 O_5,\nonumber\\
\Phi(\tau _{X_1X_2Y_2})=& -O_1 O_2 O_{3,4}+O_{1,2} O_{3,4}-O_{5,6}-O_1 O_5 O_{6,1}+O_1 O_{2,3,4}+O_1 O_2 O_5 O_{3,4,5}-O_5 O_{1,2} O_{3,4,5}+O_5 O_6.\nonumber
\end{align}
\end{definition}
\begin{lemma}
Ideal $\mathcal I^{\mathrm{ABL}}\subset P^{\mathrm{ABL}}$ is annihilated by the above homomorphism:
\begin{equation*}
\mathcal I^{\mathrm{ABL}}\subset\ker\Phi.
\end{equation*}
\label{lemm:ABL18RelationsAnnihilatedByPhi}
\end{lemma}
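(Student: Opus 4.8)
The plan is to verify directly that each of the nineteen polynomials generating $\mathcal I^{\mathrm{ABL}}$ maps to zero under $\Phi$. Since $\Phi$ is a homomorphism of commutative polynomial rings, it annihilates the ideal $\mathcal I^{\mathrm{ABL}}$ as soon as it annihilates a generating set, so it suffices to fix the explicit list $f_1,\dots,f_{19}\in P^{\mathrm{ABL}}$ of relations produced by the algorithm of \cite{AshleyBurelleLawton'2018} (the ones omitted from the text) and to show that $\Phi(f_k)=0$ in $\mathcal A_{q=t=1}$ for each $k$. Concretely, I would substitute the formulas (\ref{eq:PhiActionOnGenerators}) for $\Phi(\tau_{X_1}),\dots,\Phi(\tau_{Y_1X_2Y_2})$ into each $f_k$ to obtain an element $\Phi(f_k)\in P$.

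The key point that turns this into a finite, mechanical check is Proposition \ref{prop:GroebnerbasisAqt1}: it presents $\mathcal A_{q=t=1}=P/\mathcal I_{q=t=1}$ together with an explicit normalized Groebner basis $\{g_i^{(q=t=1)}\}$ of $\mathcal I_{q=t=1}$ for the weighted degree reverse lexicographic order. Consequently, for any $p\in P$ the membership $p\in\mathcal I_{q=t=1}$ is decided by computing the normal form of $p$ against this basis. I would therefore reduce each $\Phi(f_k)$ modulo $\{g_i^{(q=t=1)}\}$ and confirm that the remainder vanishes; carrying this out for all nineteen relations establishes $\mathcal I^{\mathrm{ABL}}\subset\ker\Phi$.

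As an independent consistency check, and to organize the computation conceptually, I would verify that the composite $\Psi\circ\Phi\colon P^{\mathrm{ABL}}\to\mathcal O(\mathrm{Hom}(\pi_1(\Sigma_2),SL(2,\mathbb C)))^{SL(2,\mathbb C)}$ coincides with the tautological evaluation $\tau_M\mapsto\tau_M$. Each equality $\Psi(\Phi(\tau_M))=\tau_M$ reduces to the fundamental $SL(2,\mathbb C)$ trace identities $\tau_A\tau_B=\tau_{AB}+\tau_{AB^{-1}}$ and $\tau_{A^{-1}}=\tau_A$ applied to the words in (\ref{eq:PsiActionOnGenerators}); for instance $\Psi(\Phi(\tau_{X_1Y_1}))=\tau_{Y_1}\tau_{X_1}-\tau_{Y_1X_1^{-1}}=\tau_{X_1Y_1}$. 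Since the $f_k$ are genuine relations among traces, they lie in the kernel of this evaluation, so $\Psi(\Phi(f_k))=0$ and hence $\Phi(f_k)\in\ker\Psi$. This is a strong check, but it does not by itself close the argument, since $\Psi$ is not yet known to be injective at this stage; the actual vanishing in $\mathcal A_{q=t=1}$ must still come from the Groebner reduction.

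The main obstacle is logistical rather than conceptual: one must first make the nineteen ABL relations fully explicit in the fourteen trace variables (\ref{eq:ABL18Generators}), after which the substitution $\Phi$ produces sizeable polynomials in the fifteen variables $O_1,\dots,O_{345}$ whose reduction is only practical by computer, as in \cite{Arthamonov-GitHub-Flat}. I do not expect to shortcut the nineteen checks by the symmetries $I$ and $d_1$ in the way Proposition \ref{prop:PsiHomomorphism} exploited the four $I$-orbits of the $J$- and Casimir relations, because the generating set (\ref{eq:ABL18Generators}) is adapted to the one-relator presentation (\ref{eq:GenusTwoFundamentalGroup}) rather than to the hexagonal symmetry of the $O$-generators, so the nineteen relations need not fall into small mapping-class-group orbits.
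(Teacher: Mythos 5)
Your proposal is correct and follows essentially the same route as the paper: the paper's proof likewise applies $\Phi$ to the generators of $\mathcal I^{\mathrm{ABL}}$ and reduces the resulting polynomials modulo the Groebner basis $\{g_i^{(q=t=1)}\}$ of $\mathcal I_{q=t=1}$ from Proposition \ref{prop:GroebnerbasisAqt1}, exhibiting each image as an explicit combination of the $g_i^{(q=t=1)}$ (one sample computation is given in the text, the rest delegated to the Mathematica code in \cite{Arthamonov-GitHub-Flat}). Your added observation that the $\Psi\circ\Phi$ evaluation check cannot by itself close the argument, since injectivity of $\Psi$ is not yet known at this stage, is accurate and matches the paper's logic.
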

\begin{proof}
To this end we calculate the Groebner basis of defining ideal of $\mathcal A_{q=t=1}$ and reduce the $\Phi$-image of generators of $\mathcal I^{\mathrm{ABL}}$ modulo this ideal. In particular, for the first generator of $\mathcal I^{\mathrm{ABL}}$ we get
\begin{equation*}
\begin{aligned}
&\Phi\Big(\tau _{X_1X_2}^2+\tau _{X_1}^2+\tau _{X_2}^2+\tau _{X_1X_2} \left(-\tau _{X_1} \tau _{X_2}+\tau _{X_1Y_1} \tau _{Y_1X_2}-\tau _{Y_1} \tau _{X_1Y_1X_2}\right)+\tau _{X_1Y_1}^2+\tau _{Y_1X_2}^2+\tau _{X_1Y_1X_2}^2\\
&\quad\qquad-\tau _{X_1} \tau _{Y_1X_2} \tau _{X_1Y_1X_2}-\tau _{X_2} \tau _{Y_1} \tau _{Y_1X_2}+\tau _{X_1} \tau _{X_2} \tau _{Y_1} \tau _{X_1Y_1X_2}-\tau _{X_1Y_1} \left(\tau _{X_2} \tau _{X_1Y_1X_2}+\tau _{X_1} \tau _{Y_1}\right)+\tau _{Y_1}^2-4\Big)\\
&\quad=36\Big(O_1 O_2 O_6 O_{345} -O_6 O_{12} O_{345} -O_2 O_{61} O_{345} +O_4 O_{12} O_{61} -O_1 O_4 O_{345} +O_{345}^2\\
&\quad\qquad-O_1 O_2 O_{12} -O_1 O_6 O_{61} -O_2 O_4 O_6 +O_{12}^2 +O_{61}^2 +O_1^2 +O_2^2 +O_4^2 +O_6^2 -4\Big)\\
&\quad=36 \left(O_4 g_2^{(q=t=1)}-g_{18}^{(q=t=1)}+g_{47}^{(q=t=1)}\right)\equiv 0\bmod{\mathcal I}_{q=t=1}.
\end{aligned}
\end{equation*}

For the sake of brevity we omit similar calculations for the remaining generators of $\mathcal I^{\mathrm{ABL}}$. A complete set of formulas along with a Mathematica algorithm used to obtain them can be found in supplementary materials to this paper \cite{Arthamonov-GitHub-Flat}.
\end{proof}

\begin{lemma}
The nilradical $\sqrt{(0)}\subset\mathcal A_{q=t=1}$ of commutative algebra $\mathcal A_{q=t=1}$ is trivial.
\label{lemm:NilradicalAq1t1IsTrivial}
\end{lemma}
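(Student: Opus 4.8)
The statement is equivalent to the assertion that the defining ideal $\mathcal I_{q=t=1}\subset P$ is radical, since the nilradical of $\mathcal A_{q=t=1}=P/\mathcal I_{q=t=1}$ is precisely $\sqrt{\mathcal I_{q=t=1}}/\mathcal I_{q=t=1}$. The plan is to deduce radicality of $\mathcal I_{q=t=1}$ from the shape of its initial ideal, exploiting the explicit normalized Gr\"obner basis already produced in Proposition \ref{prop:GroebnerbasisAqt1}.

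First I would isolate the general principle on which everything rests: for any monomial order $<$, if the initial ideal $\mathrm{in}_<(\mathcal I)$ is squarefree (equivalently, radical as a monomial ideal), then $\mathcal I$ itself is radical. This follows by a short well-ordering descent on leading monomials. Suppose $f^n\in\mathcal I$ but $f\notin\mathcal I$, and choose such an $f$ with $\mathrm{in}_<(f)$ minimal. Then $\mathrm{in}_<(f)^n=\mathrm{in}_<(f^n)\in\mathrm{in}_<(\mathcal I)$, and squarefreeness forces $\mathrm{in}_<(f)\in\mathrm{in}_<(\mathcal I)$, so there is $g\in\mathcal I$ sharing the same leading monomial. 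Subtracting an appropriate scalar multiple of $g$ produces $f_1=f-cg\notin\mathcal I$ with $f_1^n\in\mathcal I$ and strictly smaller leading monomial, contradicting minimality. Hence no such $f$ exists and $\mathcal I$ is radical.

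With this reduction in place the concrete task becomes purely combinatorial: take the $61$ elements $g_j^{(q=t=1)}$ of the Gr\"obner basis with respect to the weighted degree reverse lexicographic order, read off their leading monomials, and verify that each one is squarefree. This is a finite inspection of the data in Appendix \ref{sec:qGroebnerBasis}, and I would carry it out in the accompanying Mathematica code \cite{Arthamonov-GitHub-Flat}. Once every leading monomial is confirmed squarefree, $\mathrm{in}_<(\mathcal I_{q=t=1})$ is a squarefree monomial ideal, hence radical, and the principle above yields that $\mathcal I_{q=t=1}$ is radical, so the nilradical is trivial.

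The main obstacle lies entirely in this last verification: the clean criterion depends on all $61$ leading terms being squarefree, which is a feature of the particular weighted order chosen in Proposition \ref{prop:GroebnerbasisAqt1} and is not guaranteed a priori. If some leading monomial failed to be squarefree, the criterion would break down and one would have to fall back on a direct radical computation for $\mathcal I_{q=t=1}$; for the order used here, however, the squarefree property does hold, so no detour is needed. I would stress that reducedness is exactly what is required at the next step: it makes $\ker\Phi$ a radical ideal, so that the inclusion $\mathcal I^{\mathrm{ABL}}\subset\ker\Phi$ of Lemma \ref{lemm:ABL18RelationsAnnihilatedByPhi} upgrades to $\sqrt{\mathcal I^{\mathrm{ABL}}}\subset\ker\Phi$, and $\Phi$ descends to a homomorphism out of $P^{\mathrm{ABL}}/\sqrt{\mathcal I^{\mathrm{ABL}}}$, which by \eqref{eq:ABL18SetTheoreticCutout} is the character variety coordinate ring. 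Establishing the present lemma therefore unlocks the identification of $\mathcal A_{q=t=1}$ with the genus two character variety.
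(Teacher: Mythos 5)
Your general principle is sound: if the initial ideal of $\mathcal I_{q=t=1}$ with respect to some monomial order is a squarefree monomial ideal, then $\mathcal I_{q=t=1}$ is radical, and your well-ordering descent proves that principle correctly. The proof collapses, however, at exactly the step you yourself flag as the main obstacle: the squarefree property does \emph{not} hold for the weighted degree reverse lexicographic order of Proposition \ref{prop:GroebnerbasisAqt1}. Reading off the leading (first) monomials of the relators in Appendix \ref{sec:qGroebnerBasis}, twelve of the sixty-one fail to be squarefree: for instance
$\mathrm{l.p.}\;g_{22}=O_4O_5O_6^2$, $\mathrm{l.p.}\;g_{23}=O_1O_2O_6^2$, $\mathrm{l.p.}\;g_{24}=O_1O_5^2O_6$, $\mathrm{l.p.}\;g_{25}=O_4^2O_5O_6$, $\mathrm{l.p.}\;g_{28}=O_1^2O_5O_6$, $\mathrm{l.p.}\;g_{29}=O_1O_2^2O_6$, and likewise $g_{36}$, $g_{39}$, $g_{40}$, $g_{43}$, $g_{44}$, $g_{46}$ all carry a squared variable in their leading term. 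Since your criterion is only sufficient and not necessary (a radical ideal may well have a non-squarefree initial ideal, which is precisely what happens here), this does not threaten the lemma itself, but it does reduce your argument to its own fallback clause: a direct radical computation is unavoidable, and your proposal contains no content for that case.

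That fallback is what the paper actually carries out, organized so as to be computationally feasible and symmetry-respecting. It passes to the ring $\mathcal R=\mathbb C(O_1,\dots,O_6)[O_{12},\dots,O_{345}]$ obtained by inverting the maximal algebraically independent set $O_1,\dots,O_6$, where the extended ideal $\mathcal I_{q=t=1}\mathcal R$ becomes zero-dimensional and the equality $\sqrt{\mathcal I_{q=t=1}\mathcal R}=\mathcal I_{q=t=1}\mathcal R$ can be verified by standard routines; it then shows that $\mathcal I_{q=t=1}$ is saturated with respect to the polynomial $h$ (the least common multiple of the leading coefficients arising when denominators are cleared from the Gr\"obner basis of $\mathcal I_{q=t=1}\mathcal R$), namely $\mathcal I_{q=t=1}=\mathcal I_{q=t=1}:\langle h^\infty\rangle$, which yields $\mathcal I_{q=t=1}=\mathcal I_{q=t=1}\mathcal R\cap P$ and hence $\sqrt{\mathcal I_{q=t=1}}=\mathcal I_{q=t=1}$. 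If you wish to salvage your cleaner criterion, you would need to exhibit a \emph{different} monomial order, or a different generating set, whose initial ideal is squarefree; with the order fixed in the paper, the finite inspection you propose refutes, rather than confirms, the hypothesis your argument needs.
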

\begin{proof}
To prove this statement we utilize computer algebra software SINGULAR with the source code of our computations available at \cite{Arthamonov-GitHub-Flat}. Due to the high computational complexity we cannot directly use the built-in function which involves making arbitrary choices while disrespecting the symmetry of the problem. Instead, we follow essentially the same algorithm but utilizing the symmetry of the defining ideal in our choices. This makes computations possible on a standard computer.

We reduce calculation of the nilradical to a zero-dimensional case as described in Section 4.2 of \cite{GreuelPfisterBachmannLossenSchonemann'2008}. On the first step we choose a maximal algebraically independent subset
\begin{equation}
O_1,O_2,O_3,O_4,O_5,O_6\quad\in\quad P.
\label{eq:MaximalAlgebraicallyIndependentSet}
\end{equation}
and consider an algebra
\begin{equation*}
\mathcal R:=\mathbb C(O_1,O_2,O_3,O_4,O_5,O_6)[O_{12},O_{23},O_{34},O_{56},O_{61},O_{123},O_{234},O_{345}].
\end{equation*}
over the field of rational functions in variables (\ref{eq:MaximalAlgebraicallyIndependentSet}).

Now, using built-in function we verify that zero-dimensional ideal $\mathcal I_{q=t=1}\mathcal R\subset\mathcal R$ equals to its own radical:
\begin{equation*}
\sqrt{\mathcal I_{q=t=1}\mathcal R}=\mathcal I_{q=t=1}\mathcal R\qquad\subset\quad\mathcal R.
\end{equation*}

Next, we compute the Groebner basis of $\mathcal I_{q=t=1}\mathcal R$ in weighted degree reverse lexicographic order and clear denominators to obtain a finite set of polynomials $S\subset P$. The least common multiple of leading coefficients in $S$ reads
\begin{equation*}
h=O_1O_2O_3O_4O_5O_6(O_1^2-O_4^2)(O_2^2-O_5^2)(O_3^2-O_6^2)(O_1O_2O_6-O_3O_4O_5)\qquad\in\quad P.
\end{equation*}
We verify using SINGULAR that
\begin{equation*}
\mathcal I_{q=t=1}=\mathcal I_{q=t=1}:\langle h\rangle=\mathcal I_{q=t=1}:\langle h^2\rangle,
\end{equation*}
so the original six-dimensional ideal equals to the saturation
\begin{equation*}
\mathcal I_{q=t=1}=\mathcal I_{q=t=1}:\langle h^\infty\rangle.
\end{equation*}
As a corollary, dimensional recursion terminates on the first step and we obtain
\begin{equation*}
\sqrt{\mathcal I_{q=t=1}}=\sqrt{\mathcal I_{q=t=1}\mathcal R}\cap P=\mathcal I_{q=t=1}\mathcal R\cap P=\mathcal I_{q=t=1}.
\end{equation*}
\end{proof}

\begin{proposition}
Radical ideal $\sqrt{I^{\mathrm{ABL}}}\subset P^{\mathrm{ABL}}$ is annihilated by $\Phi$. In other words $\Phi$ descends to a homomorphism of the quotient ring, which we denote by the same letter
\begin{equation}
\Phi: \bigslant{P^{\mathrm{ABL}}}{\sqrt{\mathcal I^{\mathrm{ABL}}}}\rightarrow \mathcal A_{q=t=1}.
\label{eq:PhiFromRadicalQuotient}
\end{equation}
\end{proposition}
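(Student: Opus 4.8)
The plan is to deduce this statement almost immediately from the two preceding lemmas, using the general fact that the kernel of a ring homomorphism into a reduced ring is a radical ideal. First I would recall from Lemma \ref{lemm:ABL18RelationsAnnihilatedByPhi} that the generators of $\mathcal I^{\mathrm{ABL}}$ lie in $\ker\Phi$, so that $\mathcal I^{\mathrm{ABL}}\subseteq\ker\Phi$. The whole point is then to upgrade this inclusion from $\mathcal I^{\mathrm{ABL}}$ to its radical, and the mechanism for doing so is the reducedness of the target ring established in Lemma \ref{lemm:NilradicalAq1t1IsTrivial}.

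The key observation is that $\ker\Phi$ is itself a radical ideal of $P^{\mathrm{ABL}}$. Indeed, suppose $f\in P^{\mathrm{ABL}}$ satisfies $f^n\in\ker\Phi$ for some $n\geqslant1$. Then $\Phi(f)^n=\Phi(f^n)=0$ in $\mathcal A_{q=t=1}$, so $\Phi(f)$ is a nilpotent element of $\mathcal A_{q=t=1}$. By Lemma \ref{lemm:NilradicalAq1t1IsTrivial} the nilradical $\sqrt{(0)}\subset\mathcal A_{q=t=1}$ is trivial, hence $\Phi(f)=0$, that is, $f\in\ker\Phi$. This shows $\sqrt{\ker\Phi}=\ker\Phi$. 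Combining the two steps, I would conclude $\sqrt{\mathcal I^{\mathrm{ABL}}}\subseteq\sqrt{\ker\Phi}=\ker\Phi$, using monotonicity of the radical operation together with the fact that $\ker\Phi$ is radical. Thus every element of $\sqrt{\mathcal I^{\mathrm{ABL}}}$ is annihilated by $\Phi$, and $\Phi$ descends to the quotient $\bigslant{P^{\mathrm{ABL}}}{\sqrt{\mathcal I^{\mathrm{ABL}}}}$ exactly as asserted in (\ref{eq:PhiFromRadicalQuotient}).

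There is essentially no computational obstacle in the present proposition itself; its real content has already been absorbed into the two earlier lemmas. The genuinely hard part — and the place where any difficulty is concentrated — is Lemma \ref{lemm:NilradicalAq1t1IsTrivial}, the reducedness of $\mathcal A_{q=t=1}$, whose proof required the reduction to a zero-dimensional ideal over $\mathbb C(O_1,\dots,O_6)$ together with the saturation argument in SINGULAR. Once reducedness is granted, the statement is a formal consequence of elementary commutative algebra, requiring no further work on the $\mathrm{ABL}$ side beyond the Gr\"obner-basis verification already carried out in Lemma \ref{lemm:ABL18RelationsAnnihilatedByPhi}.
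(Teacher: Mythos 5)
Your proof is correct and follows essentially the same route as the paper: both combine Lemma \ref{lemm:ABL18RelationsAnnihilatedByPhi} with Lemma \ref{lemm:NilradicalAq1t1IsTrivial}, the paper writing the deduction as $\Phi\big(\sqrt{\mathcal I^{\mathrm{ABL}}}\big)\subset\sqrt{\Phi(\mathcal I^{\mathrm{ABL}})}=\sqrt{(0)}=(0)$, which is just a compressed form of your observation that the kernel of a homomorphism into a reduced ring is radical. Your identification of Lemma \ref{lemm:NilradicalAq1t1IsTrivial} as the place where the real content lies matches the paper's structure exactly.
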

\begin{proof}
Combining Lemma \ref{lemm:ABL18RelationsAnnihilatedByPhi} with Lemma \ref{lemm:NilradicalAq1t1IsTrivial} we get
\begin{equation*}
\Phi\Big(\sqrt{\mathcal I^{\mathrm{ABL}}}\Big)\subset \sqrt{\Phi(\mathcal I^{\mathrm{ABL}})}= \sqrt{(0)}=(0).
\end{equation*}
\end{proof}

\begin{theorem}
We have an $\mathrm{Mod}(\Sigma_2)$-equivariant isomorphism of commutative algebras
\begin{equation*}
\begin{tikzcd}
\Psi:\mathcal A_{q=t=1}\ar[r,rightarrow,"\sim"]&\mathcal O(\mathrm{Hom}(\pi_1(\Sigma_2),SL(2,\mathbb C)))^{SL(2,\mathbb C)}.
\end{tikzcd}
\end{equation*}
\label{th:IsomorphismAq1t1CoordinateRingCharacterVariety}
\end{theorem}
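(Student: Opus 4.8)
The plan is to exhibit $\Psi$ as an isomorphism by producing an explicit two-sided inverse, for which the homomorphism $\Phi$ of (\ref{eq:PhiFromRadicalQuotient}) is the natural candidate. Composing $\Phi$ with the identification (\ref{eq:ABL18SetTheoreticCutout}) we regard it as a homomorphism $\Phi:\mathcal O(\mathrm{Hom}(\pi_1(\Sigma_2),SL(2,\mathbb C)))^{SL(2,\mathbb C)}\to\mathcal A_{q=t=1}$ running opposite to $\Psi$. Since $\Psi$ is already known to be $\mathrm{Mod}(\Sigma_2)$-equivariant by Lemma \ref{lemm:PsiMCGEquivariance}, it suffices to prove that $\Psi$ and $\Phi$ are mutually inverse as algebra maps; the equivariance of the resulting isomorphism is then automatic. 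I would establish this by checking both composites on generators.

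First I would verify $\Psi\circ\Phi=\mathrm{id}$ on the fourteen generators (\ref{eq:ABL18Generators}). For each $\tau_W$ the element $\Phi(\tau_W)$ is the explicit polynomial in $O_1,\dots,O_6,O_{12},\dots$ recorded in (\ref{eq:PhiActionOnGenerators}); applying $\Psi$ replaces every $O_J$ by the corresponding trace $\tau_{W_J}$ of (\ref{eq:PsiActionOnGenerators}), and the resulting polynomial in traces collapses back to $\tau_W$ upon repeated use of the fundamental $SL(2,\mathbb C)$ trace identities $\tau_{AB}+\tau_{AB^{-1}}=\tau_A\tau_B$ and $\tau_A=\tau_{A^{-1}}$ together with cyclic invariance. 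For instance $\Psi(\Phi(\tau_{X_1Y_1}))=\tau_{Y_1}\tau_{X_1}-\tau_{Y_1X_1^{-1}}=\tau_{X_1Y_1}$. This step already shows that $\Psi$ is surjective and $\Phi$ injective.

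Next I would verify $\Phi\circ\Psi=\mathrm{id}$ on the fifteen generators (\ref{eq:15Generators}) of $\mathcal A_{q=t=1}$. Here $\Psi(O_J)=\tau_{W_J}$ is the trace of a possibly long word; since the fourteen generators (\ref{eq:ABL18Generators}) generate $\mathcal O(\mathrm{Hom}(\pi_1(\Sigma_2),SL(2,\mathbb C)))^{SL(2,\mathbb C)}$ as an algebra, one first rewrites $\tau_{W_J}$ as a polynomial in them via the same trace identities, then applies $\Phi$ and reduces the outcome modulo the Groebner basis of $\mathcal I_{q=t=1}$ furnished by Proposition \ref{prop:GroebnerbasisAqt1}, recovering $O_J$. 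The equivariance of $\Psi$ from Lemma \ref{lemm:PsiMCGEquivariance} organizes the fifteen generators into the three $I$-orbits $\{O_1,\dots,O_6\}$, $\{O_{12},\dots,O_{61}\}$ and $\{O_{123},O_{234},O_{345}\}$, so that, after checking that $\Phi$ too intertwines the actions (which one verifies on the generators (\ref{eq:ABL18Generators}) exactly as in Lemma \ref{lemm:PsiMCGEquivariance}), it is enough to treat one representative, say $O_1$, $O_{12}$ and $O_{123}$, of each orbit.

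The main obstacle is purely computational: the trace of a long word such as $\tau_{X_1Y_1Y_2^2X_2^{-1}Y_2^{-1}}$ must be expanded into the fourteen generators by iterating the Cayley--Hamilton trace identities, and the resulting polynomials in the fifteen variables $O_J$ must be reduced against the sixty-one element Groebner basis of $\mathcal I_{q=t=1}$; both are mechanical but lengthy and are carried out in the Mathematica supplement \cite{Arthamonov-GitHub-Flat}. Once both composites are shown to be the identity, $\Psi$ is a bijection with inverse $\Phi$, and by Lemma \ref{lemm:PsiMCGEquivariance} it intertwines the $\mathrm{Mod}(\Sigma_2)$-actions, which proves the theorem. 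As an alternative to the second computation one could argue structurally: reducedness of $\mathcal A_{q=t=1}$ (Lemma \ref{lemm:NilradicalAq1t1IsTrivial}) together with $\dim\mathcal A_{q=t=1}=6$, read off from the initial ideal in Proposition \ref{prop:GroebnerbasisAqt1}, makes $\ker\Psi$ a minimal prime of the surjection onto the six-dimensional integral domain $\mathcal O(\mathrm{Hom}(\pi_1(\Sigma_2),SL(2,\mathbb C)))^{SL(2,\mathbb C)}$, and this vanishes as soon as one knows that $\mathcal A_{q=t=1}$ is irreducible — a point to which the generic-fibre analysis in the proof of Lemma \ref{lemm:NilradicalAq1t1IsTrivial} is already tailored.
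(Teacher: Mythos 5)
Your proposal is correct and takes essentially the same route as the paper: there, too, the theorem is proved by showing that $\Phi\circ\iota$ is a two-sided inverse of $\Psi$, verifying $\Phi\circ\iota\circ\Psi=\mathrm{Id}_{\mathcal A_{q=t=1}}$ and $\Psi\circ\Phi\circ\iota=\mathrm{Id}$ on generators via Groebner-basis reductions (Appendix \ref{sec:IdentiyCompositionPsiPhi}), with equivariance supplied by Lemma \ref{lemm:PsiMCGEquivariance}. One caution: your optional ``structural'' shortcut for injectivity presupposes irreducibility of $\mathcal A_{q=t=1}$, which the paper obtains only as Corollary \ref{cor:A1tIntegralDomain}, i.e.\ as a consequence of this very theorem (reducedness alone does not rule out several six-dimensional components), so that alternative would be circular as stated --- but since your main argument does not rely on it, this does not affect correctness.
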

\begin{proof}
Combining (\ref{eq:PsiHomomorphismFromAq1t1}), (\ref{eq:ABL18SetTheoreticCutout}), and (\ref{eq:PhiFromRadicalQuotient}) we get a system of homomorphisms which can be arranged on the following diagram
\begin{equation*}
\begin{tikzcd}
\bigslant{P^{\mathrm{ABL}}}{\sqrt{\mathcal I^{\mathrm{ABL}}}}\ar[r,rightarrow,"\Phi"]&\mathcal A_{q=t=1}\ar[dd,rightarrow,"\Psi"]\\\\
&\mathcal O(\mathrm{Hom}(\pi_1(\Sigma_2),SL(2,\mathbb C)))^{SL(2,\mathbb C)}\ar[luu,rightarrow,"\sim","\iota"']
\end{tikzcd}
\end{equation*}
Here $\iota$ stands for the isomorphism which identifies $SL(2,\mathbb C)$-invariant elements of the coordinate ring of representation variety with the elements of the quotient ring realizing its particular presentation.

From Lemma \ref{lemm:PsiMCGEquivariance} we already know that $\Psi$ is $\mathrm{Mod}(\Sigma_2)$-equivariant, so the only thing we have to prove is that it is bijective. To this end we verify that
\begin{equation}
\Phi\circ\iota\circ\Psi=\mathrm{Id}_{\mathcal A_{q=t=1}},\qquad \Psi\circ\Phi\circ\iota=\mathrm{Id}_{\mathcal O(\mathrm{Hom}(\pi_1(\Sigma_2)),SL(2,\mathbb C))^{SL(2,\mathbb C)}}.
\label{eq:IdentityCompositionPhiPsi}
\end{equation}
Note that we can compute Groebner bases for defining ideals of both $\mathcal A_{q=t=1}$ and $\mathcal O(\mathrm{Hom}(\pi_1(\Sigma_2),SL(2,\mathbb C)))$, the coordinate ring of the full representation variety. This, combined with the fact that (\ref{eq:ABL18Generators}) provides a complete set of generators of the invariant subring, allows us to perform the calculations. We omit the details of the verification from the main text and instead summarize them in Appendix \ref{sec:IdentiyCompositionPsiPhi}.
\end{proof}

\begin{corollary}
Algebra $\mathcal A_{q=t=1}$ is an integral domain of Krull dimension 6.
\label{cor:A1tIntegralDomain}
\end{corollary}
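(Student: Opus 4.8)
The plan is to deduce both assertions directly from the isomorphism established in Theorem~\ref{th:IsomorphismAq1t1CoordinateRingCharacterVariety} by transporting the corresponding structural properties of the character variety along $\Psi$. Since $\Psi$ is an isomorphism of commutative rings, it suffices to show that the target ring $\mathcal O(\mathrm{Hom}(\pi_1(\Sigma_2),SL(2,\mathbb C)))^{SL(2,\mathbb C)}$ is an integral domain of Krull dimension $6$, because both of these are ring-theoretic invariants preserved under isomorphism: a bijective ring homomorphism carries zero divisors to zero divisors and induces an inclusion-preserving bijection on the sets of prime ideals, hence preserves the supremum of lengths of chains of primes.

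First I would recall that, as observed in Section~\ref{sec:CharacterVariety}, the coordinate ring $\mathcal O(\mathrm{Hom}(\pi_1(\Sigma_2),SL(2,\mathbb C)))$ of the full representation variety is an integral domain by Corollary~40 of \cite{Sikora'2012}. Any subring of an integral domain that contains the identity is again an integral domain, so the invariant subring $\mathcal O(\mathrm{Hom}(\pi_1(\Sigma_2),SL(2,\mathbb C)))^{SL(2,\mathbb C)}$ is an integral domain as well. Transporting this property back along $\Psi^{-1}$ then shows that $\mathcal A_{q=t=1}$ is an integral domain.

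For the dimension, I would invoke the computation already carried out in Section~\ref{sec:CharacterVariety}, where the Krull dimension of the invariant ring is found to equal $9-3=6$ by examining the generic point of its spectrum, corresponding to the orbit of an irreducible $SL(2,\mathbb C)$-representation of $\pi_1(\Sigma_2,p)$. Because $\Psi$ is an isomorphism, the Krull dimension of $\mathcal A_{q=t=1}$ coincides with that of its target, namely $6$, which completes the argument.

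The point worth flagging is that no genuine obstacle remains at this stage. All of the substantive content, namely the explicit presentation of $\mathcal A_{q=t=1}$, the construction of the mutually inverse maps $\Psi$ and $\Phi$ (together with the triviality of the nilradical in Lemma~\ref{lemm:NilradicalAq1t1IsTrivial}), and the integral-domain and dimension facts for the character variety, has already been established. The corollary is therefore a purely formal consequence, amounting to the observation that being an integral domain and having a given Krull dimension are isomorphism invariants of commutative rings.
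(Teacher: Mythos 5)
Your proof is correct and follows essentially the same route as the paper, which states the corollary without an explicit proof precisely because it is the immediate transport of the integral-domain property (via Corollary 40 of \cite{Sikora'2012} and the subring argument) and the dimension count $9-3=6$ from Section \ref{sec:CharacterVariety} along the isomorphism $\Psi$ of Theorem \ref{th:IsomorphismAq1t1CoordinateRingCharacterVariety}. No gap here; your flagging of the formal nature of the deduction matches the paper's treatment.
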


\section{Word problem and monomial basis}
\label{sec:WordProblem}

In this section we prove that the three algebras $\mathcal A_{q,t}$, $\mathcal A_{q=1,t}$, and $\mathcal A_{q=t=1}$ all share the same monomial basis. Our approach is based on the computation of Groebner basis for defining ideal of $\mathcal A_{q=1,t}$. It turns out that the result specializes to the Groebner basis for defining ideal of $\mathcal A_{q=t=1}$. But, even more importantly, this Groebner basis admits a noncommutative deformation and gives rise to a family of relations in $\mathcal A_{q,t}$:
\begin{align*}
g_i=0,\qquad 1\leqslant i\leqslant 61.
\end{align*}
We refer to this deformation as \textit{$q$-Groebner basis}. Relators $g_i$ are listed in Appendix \ref{sec:qGroebnerBasis}, each of the relators
\begin{align}
g_i\;\in\;\mathbb C[q^{\pm\frac14},t^{\pm\frac14}]\langle O_1,O_2,O_3,O_4,O_5,O_6,O_{12},O_{23},O_{13},O_{123},O_{234},O_{345}\rangle
\label{eq:giDomain}
\end{align}
is a noncommutative polynomial in generators (\ref{eq:15Generators}) with coefficient 1 in the leading monomial for weighted degree reverse lexicographic order. The coefficients in subleading monomials are Laurent polynomials in $q^{\frac14},t^{\frac14}$ and hence specialize well to $q=1$ and to $q=t=1$. The relations $g_i|_{q=1}$ and $g_i|_{q=t=1}$ provide Groebner basis for $\mathcal A_{q=1,1}$ and $\mathcal A_{q=t=1}$ respectively.

\subsection{Monomial basis for $\mathcal A_{q=1,t}$ and $\mathcal A_{q=t=1}$}
%It follows from (\ref{eq:giDomain}) that all relators specialize nicely to $q=1$ and $q=t=1$.
Denote the natural images of relators $g_i$ in commutative polynomials by
\begin{align}
g_i\big|_{q=1}\;\in\;\mathbb C[t^{\pm\frac14}][O_1,\dots, O_{345}],\qquad\qquad
g_i\big|_{q=t=1}\;\in\;\mathbb C[O_1,\dots, O_{345}].
\label{eq:giSpecializedDomain}
\end{align}

In our presentations for commutative algebras $\mathcal A_{q=1,t}$ and $\mathcal A_{q=t=1}$ we fix weighted degree reverse lexicographic monomial order with weights assigned to generators by (\ref{eq:GeneratorWeights}).
\begin{proposition}
Collection of relators
\begin{align}
\big\{g_i|_{q=1}\;\big|\;1\leqslant i\leqslant61\big\}
\label{eq:GBAq1t}
\end{align}
provides normalized Groebner basis for defining ideal of $\mathcal A_{q=1,t}$ w.r.t. the fixed choice of monomial order.
\label{prop:GroebnerBasisAq1t}
\end{proposition}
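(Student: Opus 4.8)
The plan is to certify that $G:=\{\,g_i|_{q=1}:1\leqslant i\leqslant 61\,\}$ is a reduced, monic Gröbner basis of the defining ideal $\mathcal J\subset P$, where $P=\mathbf k_t[O_1,\dots,O_{345}]$ is the polynomial ring in the fifteen generators (\ref{eq:15Generators}) and $\mathcal J=\langle r_0,\dots,r_{18}\rangle$ is generated by the nineteen relators of Definition~\ref{def:A1t}. Two things must be shown: that $G$ generates exactly $\mathcal J$, and that $G$ passes Buchberger's $S$-polynomial test for the weighted degree reverse lexicographic order with weights (\ref{eq:GeneratorWeights}); monic-ness and reducedness are then read off the explicit list in Appendix~\ref{sec:qGroebnerBasis}. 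Throughout I work over the function field $\mathbf k_t=\mathbb C(t^{1/4})$ with $t$ transcendental, so that every nonzero coefficient --- in particular each leading coefficient $1$ and each factor $t^{1/2}+t^{-1/2}$ --- is a unit and cannot degenerate.

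For the ideal equality I would establish both inclusions by computer algebra. For $\langle G\rangle\subseteq\mathcal J$ I exhibit, for each $i$, explicit polynomial cofactors writing $g_i|_{q=1}$ as a $P$-combination of $r_0,\dots,r_{18}$; this cannot simply be inherited from the $q$-generic expression of $g_i$, because, as the Remark after Lemma~\ref{lemm:JRelations} warns, the normal-ordering cofactors carry $(q-1)$ in the denominator, which is exactly the reason the $J$-relations must be imposed independently at $q=1$. For the reverse inclusion $\mathcal J\subseteq\langle G\rangle$ I reduce each defining relator $r_j$ by multivariate division against $G$ and check that the remainder vanishes; once $G$ is known to be a Gröbner basis of $\langle G\rangle$, this reduction test is decisive and yields $\mathcal J=\langle G\rangle$.

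The computational heart is Buchberger's criterion: for every pair $g_i|_{q=1},g_j|_{q=1}$ whose leading monomials share a common factor, the $S$-polynomial must reduce to $0$ under division by $G$. I would organize the sixty-one relators along the orbits of the order-six automorphism $I$ of Lemma~\ref{lemm:IActionCommutative} to suppress redundant checks, while keeping in mind that $I$ preserves the weight grading (hence the degrees of the $S$-pairs) but only partially respects the reverse-lexicographic tie-break, so the symmetry reduction is an organizational aid rather than a complete shortcut. The remaining $S$-polynomial reductions I carry out in a computer algebra system, as in the rest of the paper.

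The main obstacle is entirely one of scale and bookkeeping: the number of $S$-pairs together with the size of the intermediate normal forms over the parametric field $\mathbf k_t$ makes hand computation infeasible, and one must be certain that clearing denominators in $t^{1/4}$ introduces no spurious vanishing. This caution also carries a dividend: since every element of $G$ is monic with a leading monomial independent of $t$, the entire verification specializes verbatim at $t=1$, in agreement with Proposition~\ref{prop:GroebnerbasisAqt1} for $\mathcal A_{q=t=1}$, which is then recovered as the $t=1$ shadow of the present computation rather than requiring a separate run.
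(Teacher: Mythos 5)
Your proposal is correct and is essentially the same as the paper's proof, which consists precisely of a computer-algebra verification (carried out in two independent programs, SINGULAR and Mathematica) that the collection $\{g_i|_{q=1}\}$ is the normalized Groebner basis of the defining ideal of $\mathcal A_{q=1,t}$ for the weighted degree reverse lexicographic order. Your breakdown into the two ideal inclusions plus Buchberger's $S$-pair criterion is exactly what such a machine verification amounts to, so the two arguments do not differ in substance.
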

\begin{proof}
We have verified the statement of the proposition using two independent programs in SINGULAR and Mathematica which are available at \cite{Arthamonov-GitHub-Flat}. With our choice of monomial ordering the computation doesn't require any significant resources and can be performed independently using virtually any computer algebra software.
\end{proof}

It is worth noting, that (\ref{eq:GBAq1t}) is a Groebner basis over $\mathbf k_t=\mathbb C(t^{\frac14})$, the ground field of $\mathcal A_{q=1,t}$. A priori, it doesn't necessarily provide Groebner basis when $t$ is specialized to some complex parameter. However, specialization $t=1$ turns out to be good due to the fact that all subleading coefficients in normalized elements $g_i\big|_{q=1}$ happen to be Laurent polynomials in $t^{\frac14}$. Namely, comparing Proposition \ref{prop:GroebnerBasisAq1t} with Proposition \ref{prop:GroebnerbasisAqt1} we get
\begin{theorem}
Algebra $\mathcal A_{q=1,t}$ is a flat deformation of $\mathcal A_{q=t=1}$.
\label{th:FlatDeformationAq1t}
\end{theorem}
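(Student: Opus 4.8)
The plan is to deduce the theorem directly from the two Groebner basis statements, Proposition \ref{prop:GroebnerBasisAq1t} and Proposition \ref{prop:GroebnerbasisAqt1}, by realizing both algebras as fibers of a single algebra defined over the ring $R:=\mathbb C[t^{\pm\frac14}]$. Concretely, I would let $P_R=R[O_1,\dots,O_{345}]$ be the polynomial ring over $R$ and $\mathcal I_R\subset P_R$ the ideal generated by the $J$-relations (\ref{eq:CommutativeJRelationsA})--(\ref{eq:CommutativeJRelationsC}) together with the Casimir relation (\ref{eq:CommutativeCasimirRelation}); this makes sense since all of their coefficients already lie in $R$. Set $\mathcal A^R:=P_R/\mathcal I_R$. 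Extending scalars to the fraction field recovers the generic fiber $\mathcal A^R\otimes_R\mathbf k_t=\mathcal A_{q=1,t}$, while reducing modulo $(t^{\frac14}-1)$ recovers the special fiber $\mathcal A^R\otimes_R R/(t^{\frac14}-1)=\mathcal A_{q=t=1}$, because $\mathcal I_R$ specializes at $t=1$ exactly to $\mathcal I_{q=t=1}$. Flatness of the deformation then means precisely that $\mathcal A^R$ is a free $R$-module whose basis descends to a basis of the special fiber.

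The key observation is that the $R$-coefficient relators $g_i\big|_{q=1}$ form a Groebner basis not only over $\mathbf k_t$ but already over $R$. By construction each $g_i$ has leading coefficient $1$ and subleading coefficients that are Laurent polynomials in $t^{\frac14}$, so every $g_i\big|_{q=1}$ lies in $P_R$ with unit leading coefficient. The $S$-polynomial reductions that certify the Groebner property over $\mathbf k_t$ in Proposition \ref{prop:GroebnerBasisAq1t} never require dividing by a non-unit, since at each step the leading coefficient being cancelled is $1$; hence those same reductions take place over $R$ and certify Buchberger's criterion over $R$. Consequently the division algorithm over $R$ produces, for every element of $P_R$, a unique remainder that is an $R$-linear combination of the standard monomials $\mathcal S$ --- those divisible by no $\mathrm{LT}(g_i\big|_{q=1})$. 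This exhibits $\mathcal S$ as an $R$-basis of $\mathcal A^R$, so $\mathcal A^R$ is $R$-free and in particular $R$-flat; restricting to the generic fiber recovers the assertion of Proposition \ref{prop:GroebnerBasisAq1t} that $\mathcal S$ is a $\mathbf k_t$-basis of $\mathcal A_{q=1,t}$.

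It remains only to match the two fibers. Because each $g_i\big|_{q=1}$ has leading coefficient $1$ and Laurent polynomial subleading coefficients, setting $t=1$ cannot lower the leading monomial, so $\mathrm{LT}\big(g_i\big|_{q=1}\big)=\mathrm{LT}\big(g_i^{(q=t=1)}\big)$ for every $i$; the two leading term ideals coincide and the standard set $\mathcal S$ is literally the same for $\mathcal A_{q=1,t}$ and for $\mathcal A_{q=t=1}$. By Proposition \ref{prop:GroebnerbasisAqt1} this same $\mathcal S$ is a $\mathbb C$-basis of the special fiber $\mathcal A_{q=t=1}$. Hence the specialization map $\mathcal A^R\otimes_R R/(t^{\frac14}-1)\to\mathcal A_{q=t=1}$ carries the $R$-basis $\mathcal S$ of $\mathcal A^R$ onto the $\mathbb C$-basis $\mathcal S$ of $\mathcal A_{q=t=1}$, which is exactly the statement that $\mathcal A_{q=1,t}$ is a flat deformation of $\mathcal A_{q=t=1}$. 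I expect the only genuine subtlety to be the ring-versus-field Groebner basis issue addressed above --- equivalently, the claim that the $t=1$ specialization is ``good'' in the sense of the remark preceding the theorem --- and this is guaranteed entirely by the normalization to leading coefficient $1$ together with the Laurent, as opposed to merely rational, dependence of the relators on $t^{\frac14}$, so no new computation beyond Propositions \ref{prop:GroebnerBasisAq1t} and \ref{prop:GroebnerbasisAqt1} is required.
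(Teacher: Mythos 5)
Your reduction to Propositions \ref{prop:GroebnerBasisAq1t} and \ref{prop:GroebnerbasisAqt1} via an integral model over $R=\mathbb C[t^{\pm\frac14}]$ is the right general idea, but there is a genuine gap at the sentence ``This exhibits $\mathcal S$ as an $R$-basis of $\mathcal A^R$.'' Your monic-division (Buchberger-over-$R$) argument proves that $\{g_i|_{q=1}\}$ is a Groebner basis over $R$ of the ideal $J:=\langle g_1|_{q=1},\dots,g_{61}|_{q=1}\rangle_R$ that these elements \emph{themselves} generate, and hence that $\mathcal S$ is an $R$-basis of $P_R/J$. It does not show that $\mathcal S$ is an $R$-basis of $\mathcal A^R=P_R/\mathcal I_R$, because that requires $J=\mathcal I_R$. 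One inclusion, $\mathcal I_R\subseteq J$, does follow from monicity: the defining relators have coefficients in $R$, so their reduction to zero by the monic $g_i|_{q=1}$ takes place inside $P_R$. The reverse inclusion --- that each $g_i|_{q=1}$ is a $P_R$-coefficient (i.e.\ denominator-free in $t$) combination of the $J$-relations and the Casimir relation --- is exactly the delicate point, and neither proposition gives it: Proposition \ref{prop:GroebnerBasisAq1t} only places $g_i|_{q=1}$ in $\mathcal I_R\otimes_R\mathbf k_t$. If this membership fails for even one $i$, then $\mathcal A^R$ has nonzero $R$-torsion $J/\mathcal I_R$, is not $R$-flat, and $\mathcal S$ does not span it. The worry is not hypothetical: specializing the paper's certificate (\ref{eq:g9Decomposition}) at $q=1$, where every $\eta$ becomes a commutator and so vanishes in the commutative ring, leaves $g_9|_{q=1}=\frac{t^{1/2}}{1+t}\,(r_{14}-r_{17})$ in the notation of Lemma \ref{lemm:d1HomomorphismCommutative}, with the non-unit denominator $1+t\in R$; the remark closing Appendix \ref{sec:qGroebnerBasis} records factors $(t+1)^2(q+t)^2$ in the common denominator of all such certificates. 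So your closing claim that monicity plus Laurent dependence guarantees everything ``with no new computation'' is not justified.

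The repair is small, and making it lands you essentially on the paper's own proof. Take the integral model to be $P_R/J$ rather than $P_R/\mathcal I_R$: your division argument gives $R$-freeness with basis $\mathcal S$; the generic fiber is $\mathcal A_{q=1,t}$ because the $g_i|_{q=1}$ generate the defining ideal over $\mathbf k_t$ (Proposition \ref{prop:GroebnerBasisAq1t}); and the special fiber is $\mathcal A_{q=t=1}$ because the $g_i|_{q=t=1}$ generate $\mathcal I_{q=t=1}$ (Proposition \ref{prop:GroebnerbasisAqt1}). The paper phrases this same content without any integral ideal: the leading terms of the two normalized Groebner bases agree, so the standard-monomial sets coincide; reduction of a product of basis monomials by the monic relators $g_i|_{q=1}$ produces structure constants in $\mathbb C[t^{\pm\frac14}]$; and applying $t\mapsto 1$ to the reduction identity --- legitimate precisely because $g_i|_{q=t=1}\in\mathcal I_{q=t=1}$ --- identifies their $t=1$ values with the structure constants of $\mathcal A_{q=t=1}$. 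Alternatively you could keep your $\mathcal I_R$ but localize $R$ at $(t^{\frac14}-1)$ and show the finitely generated $P_R$-module $J/\mathcal I_R$ dies there by Nakayama, using that its fiber at $t=1$ vanishes by Proposition \ref{prop:GroebnerbasisAqt1}; but that is an additional argument your write-up does not contain.
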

\begin{proof}
Indeed, both algebras have monomial basis which consists of monomials which are not divisible by the leading powers or normalized Groebner bases
\begin{align*}
\mathrm{l.p.}\; g_i|_{q=1}=\mathrm{l.p.}\; g_i|_{q=t=1},\qquad \textrm{for all}\quad 1\leqslant i\leqslant 61.
\end{align*}
We denote this common monomial basis by $B$.

Structure constants of $\mathcal A_{q=1,t}$ in basis $B$ can be computed using reduction of the product of two monomials by Groebner relators (\ref{eq:GBAq1t}). Recall that by (\ref{eq:giSpecializedDomain}) all subleading coefficients in Groebner relators are Laurent polynomials in $t^\frac14$. As a corollary, so are the structure constants
\begin{align}
C_{b_1b_2}^{b_3}\;\in\; \mathbb C[t^{\pm\frac14}] \qquad\textrm{for all}\quad b_1,b_2,b_3\in B.
\label{eq:StructureConstantsAq1t}
\end{align}
Because normalized Groebner relators $g_i|_{t=1}$ of $\mathcal A_{q=1,t}$ specialize to Groebner relators of $\mathcal A_{q=t=1}$ at $t=1$, we conclude that structure constants of $\mathcal A_{q=t=1}$ in monomial basis $B$ can be obtained
using the $t=1$ specialization of (\ref{eq:StructureConstantsAq1t}).
\end{proof}

\subsection{Embedding into rational functions}

Homomorphism of commutative algebras $\Delta$, which we introduced in Proposition \ref{prop:qDifferenceQuasiClassicalLimit}, specializes nicely to $t=1$. This is shown by
\begin{lemma}
The following action on generators
\begin{equation*}
\widetilde{\Delta}(O_I):=\Delta(O_I)\Big|_{t=1}
\end{equation*}
extends to a homoorphism of commutative algebras
\begin{align*}
\widetilde{\Delta}:\mathcal A_{q=t=1}\rightarrow\mathbb C(X_{12},X_{23},X_{13})[P_{12}^{\pm1},P_{23}^{\pm1},P_{13}^{\pm1}].
\end{align*}
\end{lemma}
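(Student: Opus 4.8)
The plan is to realize $\widetilde{\Delta}$ as the composition of the homomorphism $\Delta$ from Proposition \ref{prop:qDifferenceQuasiClassicalLimit} with evaluation at $t=1$, and then to check that this composition annihilates the defining ideal of $\mathcal{A}_{q=t=1}$. First I would record what Proposition \ref{prop:qDifferenceQuasiClassicalLimit} supplies: $\Delta$ is a homomorphism $\mathcal{A}_{q=1,t}\to\mathbf{k}_t(X_{12},X_{23},X_{13},P_{12},P_{23},P_{13})$, and by (\ref{eq:DeltaImageOfGeneratorsLaurent}) the image of every generator already lies in the subring
\[
R:=\mathbb{C}[t^{\pm\frac14}](X_{12},X_{23},X_{13})\big[P_{12}^{\pm1},P_{23}^{\pm1},P_{13}^{\pm1}\big],
\]
whose coefficients are Laurent polynomials in $t^{\frac14}$. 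Inspecting the explicit formulas for the generators, such as (\ref{eq:DeltaHatO12}), one sees that the denominators in the $X$-variables are products of $t$-free factors like $(X_{12}-X_{12}^{-1})$. Hence the substitution $t^{\frac14}\mapsto1$ is defined on all of $R$ and yields a ring homomorphism $\mathrm{ev}_{t=1}:R\to\mathbb{C}(X_{12},X_{23},X_{13})[P_{12}^{\pm1},P_{23}^{\pm1},P_{13}^{\pm1}]$ onto the target of the Lemma.

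Since $\widetilde{\Delta}(O_I)=\mathrm{ev}_{t=1}(\Delta(O_I))$ by definition, I would set $\widetilde{\Delta}$ on the $15$-variable polynomial ring $P$ underlying $\mathcal{A}_{q=t=1}$ by $O_I\mapsto\mathrm{ev}_{t=1}(\Delta(O_I))$; as the target is commutative this extends uniquely to an algebra homomorphism out of $P$. It then remains to show that $\widetilde{\Delta}$ kills the defining ideal $\mathcal{I}_{q=t=1}$, which by Definition \ref{def:A1t} is generated by the $t=1$ specializations of the $18$ $J$-relators and the Casimir relator (\ref{eq:Aq1tDefiningIdeal}).

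The crux is a one-line specialization argument. Let $r=\sum_j c_j(t)\,M_j$ be any of these relators of $\mathcal{A}_{q=1,t}$, with $c_j\in\mathbb{C}[t^{\pm\frac14}]$ and each $M_j$ a monomial in the $O_I$. Because $\Delta$ is a homomorphism, $\Delta(r)=\sum_j c_j(t)\,\Delta(M_j)=0$ in $R$. Applying the ring homomorphism $\mathrm{ev}_{t=1}$ and using $\mathrm{ev}_{t=1}(\Delta(O_I))=\widetilde{\Delta}(O_I)$ together with $\mathrm{ev}_{t=1}(c_j(t))=c_j(1)$ gives
\[
0=\mathrm{ev}_{t=1}\big(\Delta(r)\big)=\sum_j c_j(1)\,\widetilde{\Delta}(M_j)=\widetilde{\Delta}\big(r|_{t=1}\big).
\]
As $r$ runs over the defining relators of $\mathcal{A}_{q=1,t}$, the specializations $r|_{t=1}$ run over the generators of $\mathcal{I}_{q=t=1}$, so $\widetilde{\Delta}(\mathcal{I}_{q=t=1})=0$ and $\widetilde{\Delta}$ descends to the desired homomorphism on $\mathcal{A}_{q=t=1}$.

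I expect the only genuine subtlety to be the well-definedness of $\mathrm{ev}_{t=1}$, namely confirming that no generator acquires a $t$-dependent denominator in the quasiclassical limit; this is exactly the content of Lemma \ref{lemm:qDifferenceLaurentCoefficients} and (\ref{eq:DeltaImageOfGeneratorsLaurent}), so once those are invoked the argument reduces to bookkeeping of how evaluation commutes with the ring operations. In particular, no fresh computation with the relators is needed, since their vanishing over $\mathbb{C}[t^{\pm\frac14}]$ has already been established inside $\mathcal{A}_{q=1,t}$.
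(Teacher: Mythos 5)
Your proposal is correct and takes essentially the same route as the paper: the paper's (much terser) proof likewise invokes (\ref{eq:DeltaImageOfGeneratorsLaurent}) to note that each $\Delta(O_I)$ has coefficients that are Laurent polynomials in $t^{\frac14}$, and concludes that any relation holding among the $\Delta(O_I)$ specializes at $t=1$ to a relation among the $\widetilde{\Delta}(O_I)$. Your write-up simply makes explicit the evaluation homomorphism $\mathrm{ev}_{t=1}$ and the fact that the defining relators of $\mathcal A_{q=t=1}$ are the $t=1$ specializations of those of $\mathcal A_{q=1,t}$, which is exactly the implicit content of the paper's argument.
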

\begin{proof}
Note that image of generators in (\ref{eq:DeltaImageOfGeneratorsLaurent}) has coefficients which are Laurent polynomials in $t^{\frac14}$. As a corollary, any relation that holds between elements $\Delta(O_I)$, when specialized to $t=1$, must also hold between the $\widetilde{\Delta}(O_I)$.
\end{proof}

\begin{proposition}
We have an isomorphism
\begin{align}
\mathcal A_{q=t=1}\simeq \widetilde\Delta(\mathcal A_{q=t=1})\quad\subset\quad\mathbb C(X_{12},X_{23},X_{13})[P_{12}^{\pm1},P_{23}^{\pm1},P_{13}^{\pm1}]
\label{eq:Aqt1IsomorphismRationalFunctions}
\end{align}
between $\mathcal A_{q=t=1}$ and a subalgebra of rational functions in six variables.
\label{prop:Aqt1IsomorphismRationalFunctions}
\end{proposition}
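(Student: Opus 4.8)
The plan is to note that $\widetilde\Delta$ is by construction surjective onto its image $\widetilde\Delta(\mathcal A_{q=t=1})$, so the asserted isomorphism \eqref{eq:Aqt1IsomorphismRationalFunctions} reduces to the single claim that $\widetilde\Delta$ is injective. Both sides are integral domains: the source by Corollary~\ref{cor:A1tIntegralDomain}, and the target because $\mathbb C(X_{12},X_{23},X_{13})[P_{12}^{\pm1},P_{23}^{\pm1},P_{13}^{\pm1}]$ is a localization of a polynomial ring. Hence $\ker\widetilde\Delta$ is a prime ideal, and since $\mathcal A_{q=t=1}$ is a finitely generated domain of Krull dimension $6$ (Corollary~\ref{cor:A1tIntegralDomain}), one has $\dim\widetilde\Delta(\mathcal A_{q=t=1})=6-\operatorname{ht}(\ker\widetilde\Delta)$. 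It therefore suffices to prove $\operatorname{trdeg}_{\mathbb C}\widetilde\Delta(\mathcal A_{q=t=1})=6$; this forces $\operatorname{ht}(\ker\widetilde\Delta)=0$, whence $\ker\widetilde\Delta=(0)$ because $(0)$ is the only height-zero prime of a domain.

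To exhibit six algebraically independent elements of the image I would take the images $\widetilde\Delta(O_1),\dots,\widetilde\Delta(O_6)$ of a transcendence basis $O_1,\dots,O_6$ of the source (Lemma~\ref{lemm:NilradicalAq1t1IsTrivial}, \eqref{eq:MaximalAlgebraicallyIndependentSet}, together with Corollary~\ref{cor:A1tIntegralDomain}). Their explicit shape is read off from the quasiclassical limit of Section~\ref{sec:QDifferenceQuasiclassicalLimit} at $t=1$: the three multiplication generators give $\widetilde\Delta(O_2)=X_{12}+X_{12}^{-1}$, $\widetilde\Delta(O_4)=X_{23}+X_{23}^{-1}$, $\widetilde\Delta(O_6)=X_{13}+X_{13}^{-1}$, which lie in $K:=\mathbb C(X_{12},X_{23},X_{13})$, are manifestly algebraically independent over $\mathbb C$, and generate a subfield over which $K$ is finite algebraic. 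The three Hamiltonian generators are Laurent polynomials in the momenta obtained from \eqref{eq:Hamiltonian1}--\eqref{eq:Hamiltonian3} by setting $t=1$ and replacing each $\hat\delta^{a}\hat\delta^{b}$ by the corresponding $P^{a}P^{b}$; crucially, $\widetilde\Delta(O_1)$ involves only $P_{12},P_{13}$, the generator $\widetilde\Delta(O_3)$ only $P_{12},P_{23}$, and $\widetilde\Delta(O_5)$ only $P_{23},P_{13}$.

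The heart of the argument, and the step I expect to be the main obstacle, is to prove that these three Hamiltonian images are algebraically independent over $K$. In characteristic zero this is equivalent, by the Jacobian criterion, to the non-vanishing in $K(P_{12},P_{23},P_{13})$ of
\[
J=\det\begin{pmatrix}
\partial_{P_{12}}\widetilde\Delta(O_1) & 0 & \partial_{P_{13}}\widetilde\Delta(O_1)\\[3pt]
\partial_{P_{12}}\widetilde\Delta(O_3) & \partial_{P_{23}}\widetilde\Delta(O_3) & 0\\[3pt]
0 & \partial_{P_{23}}\widetilde\Delta(O_5) & \partial_{P_{13}}\widetilde\Delta(O_5)
\end{pmatrix}.
\]
The sparsity forced by the variable pattern collapses $J$ to the two-term sum $\partial_{P_{12}}\widetilde\Delta(O_1)\,\partial_{P_{23}}\widetilde\Delta(O_3)\,\partial_{P_{13}}\widetilde\Delta(O_5)+\partial_{P_{13}}\widetilde\Delta(O_1)\,\partial_{P_{12}}\widetilde\Delta(O_3)\,\partial_{P_{23}}\widetilde\Delta(O_5)$. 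I would then extract the leading coefficient of $J$ with respect to the lexicographic order on $P$-exponents (say $P_{12}\succ P_{23}\succ P_{13}$): the top monomial of each factor is the derivative of the $P_{12}P_{13}$, $P_{12}P_{23}$, resp. $P_{23}P_{13}$ term of the respective Hamiltonian, and both products then yield the same monomial $P_{12}P_{23}P_{13}$ with coefficient the product of the three top-degree coefficients of $\widetilde\Delta(O_1),\widetilde\Delta(O_3),\widetilde\Delta(O_5)$. Since these two contributions enter $J$ with the same sign, they reinforce rather than cancel, so the leading term of $J$ is a nonzero multiple of $P_{12}P_{23}P_{13}$ for generic $X$, giving $J\not\equiv0$. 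The only delicate point is precisely this non-cancellation, which I would settle by the leading-monomial computation rather than by expanding $J$ in full.

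Combining the two parts, $\widetilde\Delta(O_1),\widetilde\Delta(O_3),\widetilde\Delta(O_5)$ are algebraically independent over $K$, hence a fortiori over the subfield generated by $\widetilde\Delta(O_2),\widetilde\Delta(O_4),\widetilde\Delta(O_6)$, which supply three further independent transcendentals. Therefore $\operatorname{trdeg}_{\mathbb C}\widetilde\Delta(\mathcal A_{q=t=1})=6$, so by the dimension count above $\ker\widetilde\Delta=(0)$ and $\widetilde\Delta$ is an isomorphism onto its image, establishing \eqref{eq:Aqt1IsomorphismRationalFunctions}.
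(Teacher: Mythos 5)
Your proposal is correct, and its overall skeleton is the same as the paper's: reduce to injectivity, note that $\widetilde\Delta(\mathcal A_{q=t=1})$ is a finitely generated integral domain so that $\ker\widetilde\Delta$ is prime, exhibit the same six elements $\widetilde\Delta(O_1),\dots,\widetilde\Delta(O_6)$ as a transcendence set to force $\dim\widetilde\Delta(\mathcal A_{q=t=1})\geqslant 6$, and then combine Corollary \ref{cor:A1tIntegralDomain} with the height-zero-prime observation to conclude $\ker\widetilde\Delta=(0)$. The one point where you genuinely diverge is the sub-argument for algebraic independence of the three Hamiltonian images. The paper argues directly on a hypothetical polynomial relation $F$: since the leading $P$-monomials of $\widetilde\Delta(O_1),\widetilde\Delta(O_3),\widetilde\Delta(O_5)$ are $P_{12}P_{13}$, $P_{12}P_{23}$, $P_{13}P_{23}$, whose exponent vectors are linearly independent, it concludes that $F$ cannot involve these three elements at all, then kills $F$ using the manifest independence of $\widetilde\Delta(O_2),\widetilde\Delta(O_4),\widetilde\Delta(O_6)$, and invokes Noether normalization for the dimension bound. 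You instead use the Jacobian criterion (legitimate here, as everything is in characteristic zero) and check non-vanishing of the determinant by a leading-monomial computation; your handling of the delicate point is right: with the given sparsity pattern the two surviving products of the $3\times3$ determinant both correspond to even permutations, hence carry the same sign, and their leading terms coincide, so they add to $2c_1c_3c_5\,P_{12}P_{23}P_{13}\neq 0$ rather than cancel. The only detail you should make explicit is that the top coefficients $c_1,c_3,c_5$ are nonzero elements of $\mathbb C(X_{12},X_{23},X_{13})$, which is immediate from the $t=1$ specialization of (\ref{eq:Hamiltonians}). In the end both arguments rest on the identical structural fact — the triangular pattern of momenta in $\widetilde\Delta(O_1),\widetilde\Delta(O_3),\widetilde\Delta(O_5)$ — with the paper's version slightly more elementary (no derivatives, no characteristic hypothesis) and yours somewhat more mechanical and self-verifying, since the Jacobian computation replaces the paper's rather terse claim that $F$ cannot depend on the three Hamiltonian images.
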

\begin{proof}
Recall that $\widetilde\Delta$ is a homomorphism of commutative algebras by Proposition \ref{prop:qDifferenceQuasiClassicalLimit}. Since $\mathcal A_{q=t=1}$ is finitely generated, then so is $\widetilde\Delta(\mathcal A_{q=t=1})$. On the other hand, $\widetilde\Delta(\mathcal A_{q=t=1})$ is a subring of an integral domain, hence it is an integral domain on its own. Now we will show that Krull dimension of $\widetilde\Delta(\mathcal A_{q=t=1})$ is bounded from below
\begin{align}
\dim(\widetilde\Delta(\mathcal A_{q=t=1}))\geqslant 6.
\label{eq:DimLowerBoundDeltaAqt1}
\end{align}
To this end, consider
\begin{align*}
\widetilde\Delta(O_1)=&\sum\limits_{a,b \in \{\pm 1\}} \ a b \ \dfrac{(1 - X_{23} X_{12}^a X_{13}^b)(1 - X_{23}^{-1} X_{12}^a X_{13}^b)}{X_{12}^{a} X_{13}^b (X_{12} - X_{12}^{-1})(X_{13} - X_{13}^{-1})} \ P_{12}^{a} P_{13}^{b},\\[5pt]
\widetilde\Delta(O_2)=&X_{12}+X_{12}^{-1},\\[5pt]
\widetilde\Delta(O_3)=&\sum\limits_{a,b \in \{\pm 1\}} \ a b \ \dfrac{(1 - X_{13} X_{12}^a X_{23}^b)(1 - X_{13}^{-1} X_{12}^a X_{23}^b)}{X_{12}^{a} X_{23}^b (X_{12} - X_{12}^{-1})(X_{23} - X_{23}^{-1})} \ P_{12}^{a} P_{23}^{b},\\[5pt]
\widetilde\Delta(O_4)=&X_{23}+X_{23}^{-1},\\[5pt]
\widetilde\Delta(O_5)=& \ \sum\limits_{a,b \in \{\pm 1\}} \ a b \ \dfrac{(1 - X_{12} X_{13}^a X_{23}^b)(1 - X_{12}^{-1} X_{13}^a X_{23}^b)}{ X_{13}^{a} X_{23}^b (X_{13} - X_{13}^{-1})(X_{23} - X_{23}^{-1})} \ P_{13}^{a} P_{23}^{b},\\[5pt]
\widetilde\Delta(O_6)=&X_{13}+X_{13}^{-1}.
\end{align*}
We claim that the six elements above are algebraically independent. Indeed, suppose there is a polynomial relation
\begin{align*}
F\Big(\widetilde\Delta(O_1), \widetilde\Delta(O_2), \widetilde\Delta(O_3), \widetilde\Delta(O_4), \widetilde\Delta(O_5), \widetilde\Delta(O_6)\Big)=0.
\end{align*}
Consider the leading powers in $P_{12},P_{23},P_{13}$, we have
\begin{align}
\mathrm{l.p.}\,\widetilde\Delta(O_1)=&P_{12}P_{13},&
\mathrm{l.p.}\,\widetilde\Delta(O_3)=&P_{12}P_{23},&
\mathrm{l.p.}\,\widetilde\Delta(O_5)=&P_{13}P_{23}
\label{eq:LeadingPowersDeltaO}
\end{align}
From (\ref{eq:LeadingPowersDeltaO}) we conclude that $F$ cannot depend on $\widetilde\Delta(O_1), \widetilde\Delta(O_3), \widetilde\Delta(O_5)$. But then, because there is no polynomial relations on $\widetilde\Delta(O_2), \widetilde\Delta(O_4), \widetilde\Delta(O_6)$ we conclude that $F$ is a zero polynomial. Applying Noether Normalization Theorem we get (\ref{eq:DimLowerBoundDeltaAqt1}).

Because $\widetilde\Delta(\mathcal A_{q=t=1})$ is an integral domain, $\ker\widetilde\Delta\subseteq\mathcal A_{q=t=1}$ must be a prime ideal of $\mathcal A_{q=t=1}$. Combining Corollary \ref{cor:A1tIntegralDomain} with (\ref{eq:DimLowerBoundDeltaAqt1}) we conclude that $\ker\widetilde\Delta$ must be of height zero and thus $\ker\widetilde\Delta=\{0\}$.
\end{proof}

Now, using the common monomial basis for $\mathcal A_{q=1,t}$ and $\mathcal A_{q=t=1}$ we can prove the analog of Proposition \ref{prop:Aqt1IsomorphismRationalFunctions} for $\mathcal A_{q=1,t}$.
\begin{proposition}
We have an isomorphism
\begin{align*}
\mathcal A_{q=1,t}\simeq \Delta(\mathcal A_{q=1,t})\quad\subset\quad\mathbb C(X_{12},X_{23},X_{13})[P_{12}^{\pm1},P_{23}^{\pm1},P_{13}^{\pm1}]
\end{align*}
between $\mathcal A_{q=1,t}$ and a subalgebra of rational functions in six variables.
\label{prop:Aq1tIsomorphismRationalFunctions}
\end{proposition}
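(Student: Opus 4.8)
The goal is to show that the homomorphism $\Delta$ of Proposition \ref{prop:qDifferenceQuasiClassicalLimit} is injective, since surjectivity onto the image $\Delta(\mathcal A_{q=1,t})$ is automatic. The plan is to reduce the question to the already-established injectivity of $\widetilde\Delta$ on $\mathcal A_{q=t=1}$ (Proposition \ref{prop:Aqt1IsomorphismRationalFunctions}) by specializing the deformation parameter $t^{\frac14}\to 1$ and exploiting the common monomial basis $B$ furnished by Theorem \ref{th:FlatDeformationAq1t}.

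First I would record the integral structure that makes such a specialization legitimate. By (\ref{eq:DeltaImageOfGeneratorsLaurent}) every $\Delta(O_I)$ lies in the localization
\[
R=\mathbb C[t^{\pm\frac14},X_{12}^{\pm1},X_{23}^{\pm1},X_{13}^{\pm1},P_{12}^{\pm1},P_{23}^{\pm1},P_{13}^{\pm1}]\Big[\tfrac1{X_{12}-X_{12}^{-1}},\tfrac1{X_{23}-X_{23}^{-1}},\tfrac1{X_{13}-X_{13}^{-1}}\Big],
\]
whose only denominators are in the $X$-variables. Consequently the evaluation $t^{\frac14}\mapsto 1$ defines a ring homomorphism $\mathrm{ev}\colon R\to\mathbb C(X_{12},X_{23},X_{13})[P_{12}^{\pm1},P_{23}^{\pm1},P_{13}^{\pm1}]$, and by the definition of $\widetilde\Delta$ we have $\mathrm{ev}\big(\Delta(O_I)\big)=\widetilde\Delta(O_I)$ for every generator. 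Since $\Delta$ and $\widetilde\Delta$ are algebra homomorphisms and $\mathrm{ev}$ is a ring map, it follows that $\mathrm{ev}\big(\Delta(z)\big)=\widetilde\Delta\big(z|_{t=1}\big)$ for any $z\in\mathcal A_{q=1,t}$ whose coordinates in the basis $B$ lie in $\mathbb C[t^{\pm\frac14}]$, where $z|_{t=1}$ denotes the same $B$-combination with coefficients evaluated at $t^{\frac14}=1$, viewed in $\mathcal A_{q=t=1}$ via Theorem \ref{th:FlatDeformationAq1t}.

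Next I would run the contradiction argument. Suppose $0\neq x\in\ker\Delta$ and expand $x=\sum_{b\in B}c_b\,b$ with $c_b\in\mathbf k_t=\mathbb C(t^{\frac14})$. Multiplying by a common denominator (a unit of the field, hence preserving membership in $\ker\Delta$) we may assume $c_b\in\mathbb C[t^{\pm\frac14}]$. The ring $\mathbb C[t^{\pm\frac14}]=\mathbb C[s^{\pm1}]$ with $s=t^{\frac14}$ is a principal ideal domain, so after dividing by $\gcd_b(c_b)$ we may further assume the $c_b$ share no common factor; in particular $(s-1)$ does not divide all of them, so the evaluations $\bar c_b:=c_b|_{s=1}$ are not all zero. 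Applying $\mathrm{ev}$ to the identity $\Delta(x)=0$ yields $\widetilde\Delta(\bar x)=0$ for $\bar x=\sum_{b\in B}\bar c_b\,b\in\mathcal A_{q=t=1}$. Because $B$ is a basis of $\mathcal A_{q=t=1}$ and not all $\bar c_b$ vanish, $\bar x\neq 0$, contradicting the injectivity of $\widetilde\Delta$ established in Proposition \ref{prop:Aqt1IsomorphismRationalFunctions}. Hence $\ker\Delta=\{0\}$ and $\Delta$ is an isomorphism onto its image.

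I expect the only genuine subtlety to be bookkeeping rather than a conceptual obstacle: one must verify that the denominators appearing in $\Delta(O_1),\dots,\Delta(O_6)$ really are free of $t$ (so that $\mathrm{ev}$ is well defined on the subring they generate), and that clearing denominators followed by division by the $\gcd$ is carried out over $\mathbb C[s^{\pm1}]$ so that evaluation at $s=1$ commutes with the $B$-expansion. Both points follow directly from the explicit formulas for the $\Delta(O_i)$ together with the $\mathbb C[t^{\pm\frac14}]$-flatness of the basis $B$ recorded in Theorem \ref{th:FlatDeformationAq1t}.
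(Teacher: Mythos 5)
Your proposal is correct and follows essentially the same route as the paper's own proof: assume a nonzero element of $\ker\Delta$, normalize its coefficients in the common monomial basis $B$ so they are regular at $t^{\frac14}=1$ with at least one nonvanishing specialization, use the Laurent-polynomiality of the $\Delta(O_I)$ in $t^{\frac14}$ to pass the relation to the limit $t\to1$, and derive a contradiction with the injectivity of $\widetilde\Delta$ from Proposition \ref{prop:Aqt1IsomorphismRationalFunctions}. Your gcd-in-a-PID normalization and the explicit evaluation map $\mathrm{ev}$ are just slightly more formal packaging of the paper's ``multiply by a power of $(t-1)$'' and ``take the limit'' steps.
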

\begin{proof}
Consider a monomial basis $B\subset\mathcal A_{q=1,t}$ and suppose that there exists at least one nontrivial linear combination of basis elements $P=P(O_1,\dots,O_{345})\in\ker\Delta$. Multiplying it by an appropriate power of $(t-1)$ we can always make coefficients in all monomials regular with at least one coefficient having nonzero limit as $t\rightarrow 1$. So without loss of generality we can assume that $P_{t=1}=\lim_{t\rightarrow 1}P(O_1,\dots,O_{345})$ exists and nontrivial.

By definition of $\ker\Delta$ we have
\begin{align}
P(\Delta(O_1),\dots,\Delta(O_{345}))=0.
\label{eq:RelationInKerDelta}
\end{align}
On the other hand, by Lemma \ref{lemm:qDifferenceLaurentCoefficients}, $\Delta(O_I)$ is a Laurent polynomial in $t^{\frac14}$ for every generator $O_I\in\mathcal A_{q=1,t}$. Combining it with the fact that all coefficients of $P$ are regular at $t=1$ we conclude that the left hand side of (\ref{eq:RelationInKerDelta}) converges as $t\rightarrow 1$. As a corollary,
\begin{align*}
P_{t=1}(\widetilde\Delta(O_1),\dots\widetilde\Delta(O_{345}))=0
\end{align*}
holds in $\mathbb C(X_{12},X_{23},X_{12})[P_{12}^{\pm1},P_{23}^{\pm1},P_{13}^{\pm1}]$.

Next, by isomorphism (\ref{eq:Aqt1IsomorphismRationalFunctions}) we obtain a relation $P_{t=1}(O_1,\dots,O_{345})=0$ in $\mathcal A_{q=t=1}$. However, because $B$ is also a monomial basis for $\mathcal A_{q=t=1}$ here we come to contradiction with our initial assumption that $P$ is a regular linear combination of basis elements with nontrivial limit as $t\rightarrow 1$.
\end{proof}

\subsection{Monomial basis for $\mathcal A_{q,t}$}

Based on the existence of $q$-Groebner basis we introduce an algorithm which brings any monomial in $\mathcal A_{q,t}$ into normal form by reducing it with respect to the $q$-Groebner basis. Finally, we prove that this normal form is unique, using Lemma \ref{lemm:qDifferenceLaurentCoefficients} and Proposition \ref{prop:Aq1tIsomorphismRationalFunctions}.

By Lemma \ref{lemm:NormalOrderingLemma} we know that normally ordered monomials of the form (\ref{eq:NormallyOrderedMonomials}) provide a spanning set for $\mathcal A_{q,t}$. On the set of normally ordered monomials we can introduce a total order $\prec$ by comparing their commutative counterparts. The following notation will often be convenient for us when we deal with subleading terms.
\begin{definition}
Let $m=O_1^{k_1}O_2^{k_2}\dots O_{345}^{k_{345}}$ be a normally ordered product of generators. We say that $f\in\mathcal A_{q,t}$ is Laurent-subleading to $m$ if
\begin{subequations}
\begin{equation}
f=\sum_{j=1}^n\lambda_jO_1^{l_1^{(j)}}O_2^{l_2^{(j)}}\dots O_{345}^{l_{345}^{(j)}}
\end{equation}
where
\begin{equation}
\lambda_j\in\mathbb C[q^{\pm\frac14},t^{\pm\frac14}],\qquad O_1^{l_1^{(j)}}O_2^{l_2^{(j)}}\dots O_{345}^{l_{345}^{(j)}}\prec O_1^{k_1}O_2^{k_2}\dots O_{345}^{k_{345}}\qquad\textrm{for all}\quad 1\leq j\leq n.
\end{equation}
\label{eq:LaurentSubleadingElement}
\end{subequations}
In this case we write $f=\boldsymbol\sigma(O_1^{k_1}O_2^{k_2}\dots O_{345}^{k_{345}})$.
\label{def:LaurentSubleading}
\end{definition}
\begin{remark}
Note that we allowed only Laurent polynomials in coefficients of (\ref{eq:LaurentSubleadingElement}). This will be important for us later when we deal with various specializations of parameters $q^{\frac14},t^{\frac14}$.
\end{remark}

We start by stating an immediate consequence of Definition \ref{def:LaurentSubleading} which we will then use without further mentioning.
\begin{lemma}
For a pair of normally ordered products of generators.
\begin{equation*}
m=O_1^{k_1}O_2^{k_2}\dots O_{345}^{k_{345}},\qquad\mu=O_1^{\kappa_1}O_2^{\kappa_2}\dots O_{345}^{\kappa_{345}},\qquad m\preceq\mu
\end{equation*}
we have
\begin{equation*}
\boldsymbol\sigma(m)+\boldsymbol\sigma(\mu)=\boldsymbol\sigma(\mu).
\end{equation*}
\end{lemma}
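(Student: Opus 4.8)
The plan is to read the asserted identity as a statement about representatives rather than fixed elements: the symbol $\boldsymbol\sigma(m)$ does not denote one particular element but stands for \emph{any} element Laurent-subleading to $m$ in the sense of Definition \ref{def:LaurentSubleading}. Accordingly, the claimed equality $\boldsymbol\sigma(m)+\boldsymbol\sigma(\mu)=\boldsymbol\sigma(\mu)$ is to be understood as the assertion that the sum of an arbitrary element Laurent-subleading to $m$ and an arbitrary element Laurent-subleading to $\mu$ is again Laurent-subleading to $\mu$. Fixing this interpretation at the outset is the only genuinely delicate point; everything after it is bookkeeping.

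First I would choose explicit representatives. Write
\begin{equation*}
f=\sum_{j=1}^n\lambda_j\,w_j=\boldsymbol\sigma(m),\qquad g=\sum_{l=1}^{n'}c_l\,v_l=\boldsymbol\sigma(\mu),
\end{equation*}
where the $w_j$ and $v_l$ are normally ordered monomials of the form (\ref{eq:NormallyOrderedMonomials}), the coefficients $\lambda_j,c_l$ lie in $\mathbb C[q^{\pm\frac14},t^{\pm\frac14}]$, and by Definition \ref{def:LaurentSubleading} one has $w_j\prec m$ for every $j$ and $v_l\prec\mu$ for every $l$. Next I would invoke the hypothesis $m\preceq\mu$ together with transitivity of the total order $\prec$: from $w_j\prec m\preceq\mu$ we obtain $w_j\prec\mu$ for all $j$, while $v_l\prec\mu$ holds by assumption. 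Hence every normally ordered monomial occurring in $f+g$ strictly precedes $\mu$, so $f+g$ is a $\mathbb C[q^{\pm\frac14},t^{\pm\frac14}]$-linear combination of normally ordered monomials each $\prec\mu$, which is precisely the statement $f+g=\boldsymbol\sigma(\mu)$.

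The only point requiring any care is the closure condition on the coefficients. Because Definition \ref{def:LaurentSubleading} insists on Laurent-polynomial coefficients rather than arbitrary rational functions in $q^{\frac14},t^{\frac14}$, I would make explicit that collecting the coefficient of a monomial occurring in both sums amounts to adding two elements of $\mathbb C[q^{\pm\frac14},t^{\pm\frac14}]$, and that this ring is closed under addition; in particular no denominators in $(q-1)$ or $(t-1)$ can be introduced. This is exactly what keeps the $\boldsymbol\sigma$-calculus compatible with the later specializations $q\to1$ and $t\to1$, and it is the reason the lemma is isolated as a standalone consequence of the definition. There is no essential obstacle here: once the representative interpretation of $\boldsymbol\sigma$, the transitivity of $\prec$, and the additive closure of the coefficient ring are in place, the result is immediate.
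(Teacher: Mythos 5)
Your proposal is correct and matches the paper's treatment: the paper states this lemma as an immediate consequence of Definition \ref{def:LaurentSubleading} with no written proof, and your argument (interpreting $\boldsymbol\sigma$ as an arbitrary Laurent-subleading representative, applying transitivity of $\prec$ to get $w_j\prec m\preceq\mu$, and noting closure of $\mathbb C[q^{\pm\frac14},t^{\pm\frac14}]$ under addition) is precisely the intended unpacking. Nothing is missing.
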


\begin{proposition}
Let $m=O_1^{k_1}O_2^{k_2}\dots O_{345}^{k_{345}}$ be a normally ordered monomial and $O_I\in\mathcal A_{q,t}$ be any generator. The following properties hold in $\mathcal A_{q,t}$
\begin{subequations}
\begin{align}
O_Im=&q^{-\frac12(k_1c_{I,1}+\dots+k_{I'}c_{I,I'})}O_1^{k_1}\dots O_I^{k_I+1}\dots O_{345}^{k_{345}}\;+\;\boldsymbol\sigma(O_1^{k_1}\dots O_I^{k_I+1}\dots O_{345}^{k_{345}}),\\
mO_I=&q^{-\frac12(k_{I''}c_{I'',I}+\dots+k_{345}c_{345,I})}O_1^{k_1}\dots O_I^{k_I+1}\dots O_{345}^{k_{345}}\;+\;\boldsymbol\sigma(O_1^{k_1}\dots O_I^{k_I+1}\dots O_{345}^{k_{345}}),
\end{align}
\begin{align}
O_I\boldsymbol\sigma(m)=&\boldsymbol\sigma(O_1^{k_1}\dots O_I^{k_I+1}\dots O_{345}^{k_{345}})=\boldsymbol\sigma(m)O_I
\end{align}
\label{eq:SigmaPropertiesSingleGenerator}
\end{subequations}
here $I'$ stands for the index of a previous generator, while $I''$ stands for the next generator following $O_I$ in (\ref{eq:15Generators}).
\label{prop:LeadingTermMonomialProduct}
\end{proposition}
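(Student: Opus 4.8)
The plan is to reduce everything to the single elementary move of commuting one generator past another, and then to propagate that move through a whole monomial by induction while tracking which terms are subleading. The starting point is to rewrite each normal-ordering relation from Table~\ref{tab:QCommRel} in solved form: if $[O_I,O_J]_{q^{c_{I,J}}}=\pm X$, then unwinding the definition \eqref{eq:qCommutatorDef} of the $q$-commutator gives
\begin{equation*}
O_IO_J=q^{-\frac{c_{I,J}}2}O_JO_I\;\pm\;q^{-\frac{c_{I,J}}4}\big(q^{\frac12}-q^{-\frac12}\big)X,
\end{equation*}
and, using $c_{I,J}=-c_{J,I}$, the mirror identity $O_JO_I=q^{-\frac{c_{J,I}}2}O_IO_J\pm\dots$. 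Two features of these identities are decisive. First, the prefactor $q^{-c_{I,J}/4}(q^{1/2}-q^{-1/2})$ carries no denominator and the entry $X$ has coefficients in $\mathbb C[q^{\pm1/4},t^{\pm1/4}]$, so the correction term is a \emph{Laurent} combination of generators. Second, as already exploited in the proof of Lemma~\ref{lemm:NormalOrderingLemma}, every table entry $X$ has weighted degree \eqref{eq:GeneratorWeights} strictly smaller than $\deg O_I+\deg O_J$, so the correction is of strictly lower total degree than the leading term $O_JO_I$.

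Next I would establish the first identity of \eqref{eq:SigmaPropertiesSingleGenerator} by pushing the single factor $O_I$ rightward through the prefix $O_1^{k_1}\cdots O_{I'}^{k_{I'}}$ of $m$, one generator at a time. Each elementary swap $O_IO_J\mapsto q^{-c_{I,J}/2}O_JO_I$ contributes the scalar $q^{-c_{I,J}/2}$ to the leading monomial plus a strictly-lower-degree correction; commuting past the whole block $O_J^{k_J}$ multiplies the leading coefficient by $q^{-c_{I,J}k_J/2}$. Accumulating these scalars over $J=1,\dots,I'$ produces exactly $q^{-\frac12(k_1c_{I,1}+\dots+k_{I'}c_{I,I'})}$, the claimed prefactor. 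The corrections shed along the way are, by the two features above, Laurent combinations of lower-degree elements; applying Lemma~\ref{lemm:NormalOrderingLemma} rewrites each as a Laurent combination of normally ordered monomials, and since the fixed monomial order compares weighted degree first, every such monomial is $\prec O_1^{k_1}\cdots O_I^{k_I+1}\cdots O_{345}^{k_{345}}$. Hence all corrections collapse into a single $\boldsymbol\sigma$. The second identity is identical after replacing rightward moves by leftward moves through the suffix $O_{I''}^{k_{I''}}\cdots O_{345}^{k_{345}}$; the per-block scalar is now $q^{-c_{J,I}k_J/2}=q^{c_{I,J}k_J/2}$, and collecting them reproduces the stated exponent $-\frac12(k_{I''}c_{I'',I}+\dots+k_{345}c_{345,I})$.

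For the third identity I would write $\boldsymbol\sigma(m)=\sum_j\lambda_j\mu_j$ with $\lambda_j\in\mathbb C[q^{\pm1/4},t^{\pm1/4}]$ and each normally ordered monomial $\mu_j\prec m$, and apply the first identity to each product $O_I\mu_j$. This yields $O_I\mu_j=c_j\,\mu_j'+\boldsymbol\sigma(\mu_j')$, where $\mu_j'$ is $\mu_j$ with the exponent of $O_I$ raised by one. Because the weighted degree reverse lexicographic order is multiplicative, $\mu_j\prec m$ forces $\mu_j'\prec O_1^{k_1}\cdots O_I^{k_I+1}\cdots O_{345}^{k_{345}}$; consequently both $c_j\mu_j'$ and $\boldsymbol\sigma(\mu_j')$ lie strictly below the target monomial, and summing over $j$ (with Laurent coefficients preserved throughout) gives the result. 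The right-handed identity follows verbatim from the second identity.

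The main obstacle is \emph{bookkeeping} rather than conceptual: one must maintain, under each elementary swap and under the normalization of Lemma~\ref{lemm:NormalOrderingLemma}, that the shed corrections simultaneously (i) have strictly smaller weighted degree, so that they are $\prec$ the target monomial, and (ii) retain Laurent — not merely rational — coefficients, as required by Definition~\ref{def:LaurentSubleading}. Point (i) rests on the degree inequality for the table entries noted above, while point (ii) rests on the absence of a $(q-1)$ denominator in the solved normal-ordering relations, in contrast to the $J$-relations. Once these two invariants are verified to be preserved by the elementary move, the induction closes and all three identities of \eqref{eq:SigmaPropertiesSingleGenerator} follow.
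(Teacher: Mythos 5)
Your proof is correct and follows essentially the same route as the paper's: both rest on the solved form $O_KO_J=q^{-\frac{c_{K,J}}2}O_JO_K+\boldsymbol\sigma(O_JO_K)$ of the normal-ordering relations, with the two invariants you isolate (strict weighted-degree drop of the table entries and Laurent coefficients) doing the real work. The only difference is organizational — the paper runs a simultaneous induction on $|k|=k_1+\dots+k_{345}$ for all three identities at once, whereas you telescope the elementary swaps to get the first two and then derive the third from the first — which is the same induction, unrolled.
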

\begin{proof}
We will prove all three properties in (\ref{eq:SigmaPropertiesSingleGenerator}) by simultaneous induction in $|k|:=k_1+k_2+\dots+k_{345}$. The base case $|k|=0$ is a tautology. As for the step of induction, recall that normal ordering relations (\ref{eq:NormalOrderingRelations}) imply
\begin{align*}
O_KO_J=q^{-\frac{c_{K,J}}2}O_JO_K\;+\;\boldsymbol\sigma(O_JO_K)
\end{align*}
\end{proof}

\begin{corollary}
Let $p=O_{I_1}O_{I_2}\dots O_{I_n}$ be any product of generators, not necessarily normally ordered. There exists a unique integer $M(I_1,\dots,I_n)\in\mathbb Z$ such that
\begin{equation*}
p=q^{\frac{M(I_1,\dots,I_n)}2}O_1^{k_1}O_2^{k_2}\dots O_{345}^{k_{345}}\;+\;\boldsymbol\sigma(O_1^{k_1}O_2^{k_2}\dots O_{345}^{k_{345}}).
\end{equation*}
Here each $k_J$ stands for the number of occurrences of generator $O_J$ in $p$.
\label{cor:GeneralProductLeadingTerm}
\end{corollary}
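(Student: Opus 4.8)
The statement is a direct consequence of Proposition \ref{prop:LeadingTermMonomialProduct}, obtained by a second induction, this time on the length $n$ of the product rather than on the total degree. The plan is to peel the leftmost factor off $p = O_{I_1}\cdots O_{I_n}$ and feed the remainder through the Proposition, tracking the accumulated power of $q^{\frac12}$.

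\emph{Existence and integrality.} For $n\leqslant 1$ the product is already normally ordered and $M=0$ works. For the inductive step, set $p' = O_{I_2}\cdots O_{I_n}$ and let $k_J'$ count the occurrences of $O_J$ in $p'$. By the inductive hypothesis $p' = q^{M'/2}\nu' + \boldsymbol\sigma(\nu')$ with $\nu' = O_1^{k_1'}\cdots O_{345}^{k_{345}'}$ and $M'\in\mathbb Z$. Multiplying on the left by $O_{I_1}$ and applying part (a) of Proposition \ref{prop:LeadingTermMonomialProduct} to $O_{I_1}\nu'$ together with part (c) to $O_{I_1}\boldsymbol\sigma(\nu')$, I obtain
\[
p \;=\; q^{M'/2}\,q^{-\frac12\sum_{J<I_1} k_J' c_{I_1,J}}\,\nu \;+\; \boldsymbol\sigma(\nu),\qquad \nu = O_1^{k_1}\cdots O_{345}^{k_{345}},
\]
where $\nu$ is $\nu'$ with the exponent of $O_{I_1}$ raised by one, so that $k_J$ records the number of occurrences of $O_J$ in $p$ as required. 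Since every $c_{I,J}$ in Table \ref{tab:QCommRel} is an integer and the $k_J'$ are non-negative integers, $M := M' - \sum_{J<I_1} k_J' c_{I_1,J}$ is again an integer. This proves existence together with $M\in\mathbb Z$.

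\emph{Uniqueness.} Because $q^{\frac14}$ is transcendental over $\mathbb C(t^{\frac14})$ inside $\mathbf k$, the elements $q^{M/2}$ are pairwise distinct for distinct $M\in\mathbb Z$, so it suffices to know that the coefficient of the top monomial $\nu$ is unambiguous. This is where I would locate the only conceptual point. The coefficient of $\nu$ is governed solely by the leading commutation factors $O_KO_J = q^{-c_{K,J}/2}O_JO_K + \boldsymbol\sigma(O_JO_K)$, and since $c_{I,J}=-c_{J,I}$ these factors assemble into a consistent quantum affine space: reordering $O_{I_1}\cdots O_{I_n}$ to $\nu$ in that algebra produces one unambiguous power $q^{M/2}$, independent of the sequence of moves used. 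The corrections recorded in Table \ref{tab:QCommRel} all carry strictly smaller degree than the pair of generators they reorder, as noted in the proof of Lemma \ref{lemm:NormalOrderingLemma}; hence they never feed back into the top monomial $\nu$, and the quantum-affine-space computation reproduces exactly the coefficient appearing above. Thus $M$ is determined.

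The main obstacle is therefore not the existence bookkeeping, which is a mechanical induction, but the verification that the subleading terms generated along the way cannot contribute to $\nu$. This is controlled entirely by the degree drop in every entry of Table \ref{tab:QCommRel}: it is precisely this feature that makes the leading-order reordering factor through the quantum affine space determined by $q^{-c_{I,J}/2}$ and renders the exponent $M(I_1,\dots,I_n)$ well-defined.
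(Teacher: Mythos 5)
Your existence argument is correct and is exactly how the paper obtains this corollary: the corollary is stated there with no separate proof, as the evident iteration of Proposition \ref{prop:LeadingTermMonomialProduct}, and your induction on the length of the word, peeling the leftmost factor and combining the first and third properties in (\ref{eq:SigmaPropertiesSingleGenerator}), together with the bookkeeping $M=M'-\sum_{J<I_1}k_J'c_{I_1,J}$, is that proof written out. (The only implicit step, that $q^{M'/2}\boldsymbol\sigma(\nu)$ is again of the form $\boldsymbol\sigma(\nu)$, is harmless since $M'\in\mathbb Z$ keeps all coefficients Laurent.)

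The uniqueness half, however, has a genuine gap. You reduce uniqueness to the claim that ``the coefficient of the top monomial $\nu$ is unambiguous,'' but the displayed identity is an identity in $\mathcal A_{q,t}$, where normally ordered monomials are only a spanning set (Lemma \ref{lemm:NormalOrderingLemma}), not a basis: they satisfy the $J$-relations of Lemma \ref{lemm:JRelations}, so at this stage of the paper ``the coefficient of $\nu$'' of an element of $\mathcal A_{q,t}$ is not a well-defined quantity. Your cocycle/quantum-affine-space argument does prove something true and useful, namely that the reordering procedure always outputs the same exponent, so $M(I_1,\dots,I_n)$ is a well-defined function of the word; this procedural well-definedness is what the paper actually relies on (in the proof of Proposition \ref{prop:ReducibleNormallyOrderedProducts} only ``for some $M\in\mathbb Z$'' is invoked). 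But it does not rule out other integers satisfying the same identity, because relations of $\mathcal A_{q,t}$ can transfer coefficient from $\nu$ into $\boldsymbol\sigma(\nu)$. Indeed, literal uniqueness fails whenever $\nu$ is reducible: take $p=\nu=O_{34}O_{45}$. Relation (\ref{eq:JRelationsA}) with $i=1$ gives, in $\mathcal A_{q,t}$, $O_{34}O_{45}=q^{-\frac12}O_3O_5+q^{\frac12}O_4O_{345}-\big(q^{\frac12}t^{-\frac12}+q^{-\frac12}t^{\frac12}\big)O_1$, and every monomial on the right is strictly smaller than $O_{34}O_{45}$ in the weighted order (cf.\ the normalized relator $g_4$ in Appendix \ref{sec:qGroebnerBasis}, whose leading monomial is $O_{34}O_{45}$), so $\nu$ itself equals a Laurent-subleading element. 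Consequently $(1-q^{M/2})\nu$ is of the form $\boldsymbol\sigma(\nu)$ for every $M\in\mathbb Z$, and $p=q^{M/2}\nu+\boldsymbol\sigma(\nu)$ holds for all integers $M$. So the part of your proof that can be salvaged is precisely the well-definedness of the exponent produced by the reordering algorithm; uniqueness of the integer in the decomposition, as a statement about elements of $\mathcal A_{q,t}$, cannot be established by controlling leading terms alone, and in fact does not hold for reducible leading monomials.
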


\begin{definition}
We say that normally ordered product of generators $m=O_1^{k_1}O_2^{k_2}\dots O_{345}^{k_{345}}$ is reducible by $q$-Groebner basis if its commutative counterpart divides commutative counterpart of the leading term of $g_i$ for some $1\leqslant i\leqslant 61$.
\end{definition}
Hereinafter, let $\mathbf B$ denote the set of all normally ordered products of generators which are not reducible by the $q$-Groebner basis. Note that commutative counterpart of $\mathbf B$ is nothing but $B$, the common monomial basis of $\mathcal A_{q=1,t}$ and $\mathcal A_{q=t=1}$. Our main goal for this section is to prove that $\mathbf B$ provides a monomial basis for $\mathcal A_{q,t}$.

\begin{proposition}
Let $m=O_1^{k_1}O_2^{k_2}\dots O_{345}^{k_{345}}$ be a normally ordered product of generators which is reducible by $q$-Groebner basis. Then there exists the following decomposition of $m$ as element of $\mathcal A_{q,t}$
\begin{equation}
m=\sum_{i=1}^n\lambda_ib_i,\qquad{where}\qquad \lambda_i\in\mathbb C[q^{\pm\frac14},t^{\pm\frac14}],\quad b_i\in\mathbf B\quad\textrm{for all}\quad 1\leq i\leq n.
\label{eq:NormallyOrderedProductBasisDecomposition}
\end{equation}
\label{prop:ReducibleNormallyOrderedProducts}
\end{proposition}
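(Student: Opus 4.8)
The plan is to reduce $m$ by the single relation $g_i=0$ coming from the $q$-Groebner basis element whose leading monomial divides the commutative counterpart $\bar m$ of $m$, and then to iterate via Noetherian induction on the monomial order $\prec$, which is legitimate because the weighted degree reverse lexicographic order is a well-order on normally ordered monomials. Concretely, I would fix the relator $g_i$ with $\mathrm{l.p.}(g_i)\mid\bar m$ and, using Lemma \ref{lemm:NormalOrderingLemma}, write it in normally ordered form as $g_i=\ell_i+\boldsymbol\sigma(\ell_i)$, where $\ell_i$ is the normally ordered leading monomial with coefficient $1$. Since $g_i=0$ in $\mathcal A_{q,t}$, this yields $\ell_i=-\boldsymbol\sigma(\ell_i)$, expressing $\ell_i$ as a Laurent-polynomial combination of normally ordered monomials strictly below $\ell_i$. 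Writing $\bar m=\bar\ell_i\,\bar w$, I let $w$ be the normally ordered monomial with commutative counterpart $\bar w$.

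Next I would evaluate the product $\ell_i w$ in two ways. On one hand, the content of $\ell_i w$ coincides with the content of $m$, so Corollary \ref{cor:GeneralProductLeadingTerm} gives $\ell_i w=q^{M/2}m+\boldsymbol\sigma(m)$ for some integer $M$. On the other hand, substituting $\ell_i=-\boldsymbol\sigma(\ell_i)$ and applying Corollary \ref{cor:GeneralProductLeadingTerm} to each product $m_\alpha w$ (where $m_\alpha$ runs over the monomials of $\boldsymbol\sigma(\ell_i)$) shows that $\ell_i w=-\boldsymbol\sigma(\ell_i)\,w$ is a Laurent-polynomial combination of normally ordered monomials whose commutative counterparts are strictly below $\bar\ell_i\,\bar w=\bar m$, since $\overline{m_\alpha}\prec\bar\ell_i$ implies $\overline{m_\alpha}\,\bar w\prec\bar m$ by compatibility of the order with multiplication. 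Comparing the two expressions and clearing the factor $q^{-M/2}\in\mathbb{C}[q^{\pm\frac14},t^{\pm\frac14}]$, I obtain
\begin{equation*}
m=\sum_{\overline{m_\alpha}\prec\bar m}\nu_\alpha\,m_\alpha,\qquad \nu_\alpha\in\mathbb{C}[q^{\pm\frac14},t^{\pm\frac14}],
\end{equation*}
a finite expansion of $m$ in normally ordered monomials strictly smaller than $m$.

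The induction then closes the argument: by the well-foundedness of $\prec$ I may assume every reducible normally ordered monomial strictly below $m$ already admits a decomposition of the form (\ref{eq:NormallyOrderedProductBasisDecomposition}), while the irreducible ones lie in $\mathbf B$ by definition; applying this to each $m_\alpha$ above and collecting terms produces the desired decomposition of $m$.

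The step I expect to be the main obstacle is not the combinatorics of the reduction but the bookkeeping that keeps all coefficients inside $\mathbb{C}[q^{\pm\frac14},t^{\pm\frac14}]$ rather than merely in the fraction field $\mathbf k$. This is exactly what the $\boldsymbol\sigma$-formalism of Proposition \ref{prop:LeadingTermMonomialProduct}, together with the fact that the renormalizing factors supplied by Corollary \ref{cor:GeneralProductLeadingTerm} are integer powers of $q^{\frac14}$ and that the $q$-Groebner relators $g_i$ have Laurent-polynomial subleading coefficients, is designed to guarantee. Maintaining this integrality throughout the recursion is the delicate point, and it is precisely what will later permit the specialization $q\to1$ and the comparison with the monomial basis $B$ of $\mathcal A_{q=1,t}$ and $\mathcal A_{q=t=1}$.
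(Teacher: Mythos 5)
Your proposal is correct and follows essentially the same route as the paper's own proof: both reduce the leading monomial by multiplying the relevant $q$-Groebner relator by the complementary normally ordered monomial (the paper multiplies by $\mu$ on the left, you by $w$ on the right), invoke Corollary \ref{cor:GeneralProductLeadingTerm} to identify the product with $q^{\frac M2}m$ plus Laurent-subleading terms, and close by induction on the well-founded monomial order $\prec$. The coefficient integrality you single out as the delicate point is handled identically in the paper via the $\boldsymbol\sigma$-formalism and the fact that the subleading coefficients of the $g_i$ are Laurent polynomials in $q^{\frac14},t^{\frac14}$.
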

\begin{proof}
There is only finitely many normally ordered products of generators with a given total weight. As a consequence, weighted degree reverse lexicographic monomial ordering allows us to enumerate all such products by natural numbers in the increasing order
\begin{equation*}
\nu_i\prec\nu_{i+1}\qquad\textrm{for all}\quad i\in\mathbb N.
\end{equation*}

We will use induction by the index $i$ in order to prove the statement of the Lemma. There is only one normally ordered monomial of degree 0, so $\nu_1=1$ is the unity element of $\mathcal A_{q,t}$. The latter belongs to $\mathbf B$, hence we get the base of our induction.

For the step of induction we assume that the statement of the lemma holds for all $\nu_i$ with $i< j$ for some $j\in\mathbb N$. Now let $\nu_j=O_1^{\kappa_1}O_2^{\kappa_2}\dots O_{345}^{\kappa_{345}}$. If $\nu_j\in\mathbf B$ then we are done, so without loss of generality we can assume that $\nu_j\not\in\mathbf B$ is reducible by $q$-Groebner basis. In other words, there exists at least one element $g_r$ with the leading term
\begin{equation*}
\mathrm{l.p.}(g_r)=O_1^{l_1}O_2^{l_2}\dots O_{345}^{l_{345}}\qquad\textrm{such that}\qquad
l_1\leq\kappa_1,\quad l_2\leq\kappa_2,\quad\dots,\quad l_{345}<\kappa_{345}.
\end{equation*}
Consider a normally ordered product of generators
\begin{equation*}
\mu:=O_1^{\kappa_1-l_1}O_2^{\kappa_2-l_2}\dots O_{345}^{\kappa_{345}-l_{345}}.
\end{equation*}
By Corollary \ref{cor:GeneralProductLeadingTerm} we have the following identity in $\mathcal A_{q,t}$
\begin{equation*}
0=\mu g_r=q^{\frac M2}\nu_j+\boldsymbol\sigma(\nu_j)\qquad\textrm{for some}\quad M\in\mathbb Z.
\end{equation*}
In other words, as element of $\mathcal A_{q,t}$, the product $\nu_j$ must be equal to a linear combination of subleading monomials $\nu_i,i<j$ with coefficients in $\mathbb C[q^{\pm\frac14},t^{\pm\frac14}]$. Using the inductive assumption we conclude that $\nu_j$ itself must be of the form (\ref{eq:NormallyOrderedProductBasisDecomposition}).
\end{proof}

Combining Proposition \ref{prop:ReducibleNormallyOrderedProducts} with Corollary \ref{cor:GeneralProductLeadingTerm} we get
\begin{corollary}
Let $p=O_{I_1}O_{I_2}\dots O_{I_n}$ be any product of generators, not necessarily normally ordered. There exists the following decomposition of $p$ as element of $\mathcal A_{q,t}$
\begin{equation}
p=\sum_{i=1}^n\lambda_ib_i,\qquad\textrm{where}\qquad\lambda_i\in\mathbb C[q^{\pm\frac14},t^{\pm\frac14}],\quad b_i\in\mathbf B\quad\textrm{for all}\quad 1\leq i\leq n.
\label{eq:GeneralProductBasisDecomposition}
\end{equation}
\label{cor:GeneralProductBasisDecomposition}
\end{corollary}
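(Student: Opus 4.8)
The plan is to compose the two preceding results, since the decomposition for an arbitrary product reduces immediately to the normally ordered case already handled in Proposition~\ref{prop:ReducibleNormallyOrderedProducts}. First I would apply Corollary~\ref{cor:GeneralProductLeadingTerm} to the given product $p=O_{I_1}O_{I_2}\dots O_{I_n}$, which writes
\begin{equation*}
p=q^{\frac{M}{2}}\,O_1^{k_1}O_2^{k_2}\dots O_{345}^{k_{345}} +\boldsymbol\sigma\big(O_1^{k_1}O_2^{k_2}\dots O_{345}^{k_{345}}\big),
\end{equation*}
where each $k_J$ records the number of occurrences of $O_J$ in $p$ and $M\in\mathbb Z$. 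By Definition~\ref{def:LaurentSubleading}, the remainder $\boldsymbol\sigma(\cdot)$ is a $\mathbb C[q^{\pm\frac14},t^{\pm\frac14}]$-linear combination of normally ordered monomials strictly preceding the leading one in the order $\prec$. Thus this first step already expresses $p$ as a Laurent-polynomial combination of \emph{normally ordered} monomials, and it remains only to rewrite each such monomial in terms of $\mathbf B$.

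For the second step I would invoke the dichotomy recorded just before the statement: every normally ordered product of generators is either an element of $\mathbf B$, or it is reducible by the $q$-Groebner basis. In the former case the monomial is trivially in the $\mathbb C[q^{\pm\frac14},t^{\pm\frac14}]$-span of $\mathbf B$; in the latter case Proposition~\ref{prop:ReducibleNormallyOrderedProducts} rewrites it as a $\mathbb C[q^{\pm\frac14},t^{\pm\frac14}]$-linear combination of elements of $\mathbf B$. Applying this observation to the leading monomial $O_1^{k_1}\dots O_{345}^{k_{345}}$ and to each monomial occurring in $\boldsymbol\sigma(\cdot)$, and then collecting terms, produces the asserted decomposition $p=\sum_i\lambda_i b_i$ with $\lambda_i\in\mathbb C[q^{\pm\frac14},t^{\pm\frac14}]$ and $b_i\in\mathbf B$.

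There is no substantive obstacle here, as the corollary is genuinely a formal consequence of the two cited results; the only point meriting attention is the bookkeeping of coefficients. Both Corollary~\ref{cor:GeneralProductLeadingTerm} and Proposition~\ref{prop:ReducibleNormallyOrderedProducts} yield coefficients lying in $\mathbb C[q^{\pm\frac14},t^{\pm\frac14}]$, so their composition introduces no spurious $(q-1)$ or $(t-1)$ denominators and stays within the Laurent-polynomial ring, exactly as required for the later specialization arguments. Likewise, no termination issue arises at this stage, since the entire reduction of a normally ordered monomial down to $\mathbf B$ is already packaged inside Proposition~\ref{prop:ReducibleNormallyOrderedProducts}, whose own proof carries out the induction along $\prec$.
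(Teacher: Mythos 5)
Your proposal is correct and matches the paper's own argument: the paper derives Corollary~\ref{cor:GeneralProductBasisDecomposition} precisely by combining Corollary~\ref{cor:GeneralProductLeadingTerm} with Proposition~\ref{prop:ReducibleNormallyOrderedProducts}, exactly the two-step reduction you describe. Your additional remarks on the coefficient bookkeeping (all steps staying inside $\mathbb C[q^{\pm\frac14},t^{\pm\frac14}]$) only make explicit what the paper leaves implicit.
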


The latter, in particular, implies that $\mathbf B$ provides a spanning set for $\mathcal A_{q,t}$. We are now ready to prove the main theorem of this section.
\begin{theorem}
\phantom\newline
\begin{enumerate}[(i)]
\item\label{it:BasisB} Set $\mathbf B$ provides a monomial basis for $\mathcal A_{q,t}$.
\smallskip
\item\label{it:StructureConstantsAreLaurentPolynomials} Structure constants of multiplication in basis $\mathbf B$ are Laurent polynomials in parameters $q^{\frac14},t^{\frac14}$.
\smallskip
\item\label{it:IsomorphismWithAlgebraOfqDifferenceOperators} We have a $\mathbf k$-algebra isomorphism
\begin{align*}
\widehat\Delta: \mathcal A_{q,t}\xrightarrow{\sim} A_{q,t}
\end{align*}
between $\mathcal A_{q,t}$ and the algebra of $q$-difference operators (\ref{eq:AqtDifferenceOperatorsAlgebra}) introduced in \cite{ArthamonovShakirov'2019}.
\end{enumerate}
\label{th:BasisAqt}
\end{theorem}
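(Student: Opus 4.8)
The plan is to establish the three claims in order, with the statement that $\mathbf B$ is a basis carrying essentially all the content; the other two follow quickly once it is in hand. Spanning is already provided by Corollary \ref{cor:GeneralProductBasisDecomposition}, so the heart of the matter is the linear independence of $\mathbf B$ over $\mathbf k$. The key idea is to transport a hypothetical relation in $\mathcal A_{q,t}$ through the homomorphism $\widehat\Delta$ of Proposition \ref{prop:qDifferenceHomomorphism} and then pass to the quasiclassical $q\to1$ limit, where Proposition \ref{prop:Aq1tIsomorphismRationalFunctions} and the basis property of $B$ can be brought to bear.

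Concretely, suppose $\sum_i \lambda_i b_i = 0$ in $\mathcal A_{q,t}$ with $\lambda_i \in \mathbf k$ not all zero and $b_i \in \mathbf B$. After clearing denominators and dividing through by the largest power of $(q^{\frac14}-1)$ common to all coefficients, I may assume every $\lambda_i \in \mathbb C[q^{\pm\frac14},t^{\pm\frac14}]$ is regular at $q=1$ with at least one $\lambda_{i_0}|_{q=1}\neq 0$. Applying $\widehat\Delta$ gives $\sum_i \lambda_i\,\widehat\Delta(b_i)=0$ in $A_{q,t}$. I then apply both sides to the Fourier harmonic $\mathrm e^{\frac{\mathrm i}{\hbar}(p_{12}x_{12}+p_{23}x_{23}+p_{13}x_{13})}$, divide by it, and let $\hbar\to0$ exactly as in the proof of Proposition \ref{prop:qDifferenceQuasiClassicalLimit}. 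By Lemma \ref{lemm:qDifferenceLaurentCoefficients} each $\widehat\Delta(b_i)$ has coefficients that are Laurent polynomials in $q^{\frac14}$; combined with the regularity of the $\lambda_i$ at $q=1$, the limit exists term by term and yields $\sum_i (\lambda_i|_{q=1})\,\Delta(b_i|_{q=1})=0$ in $\mathbf k_t(X_{12},X_{23},X_{13},P_{12},P_{23},P_{13})$, where $b_i|_{q=1}\in B$ is the commutative counterpart. Since $\Delta$ is injective on $\mathcal A_{q=1,t}$ by Proposition \ref{prop:Aq1tIsomorphismRationalFunctions}, this forces $\sum_i (\lambda_i|_{q=1})(b_i|_{q=1})=0$ in $\mathcal A_{q=1,t}$; and since $B$ is a basis of $\mathcal A_{q=1,t}$ by Theorem \ref{th:FlatDeformationAq1t}, every $\lambda_i|_{q=1}=0$, contradicting the normalization. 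Hence $\mathbf B$ is linearly independent and therefore a basis, which proves the first claim.

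The remaining two claims follow with little extra work. Once $\mathbf B$ is a basis, expanding the product $b_ib_j$ of two basis elements by Corollary \ref{cor:GeneralProductBasisDecomposition} produces structure constants lying in $\mathbb C[q^{\pm\frac14},t^{\pm\frac14}]$, and uniqueness of the basis expansion identifies them with the genuine structure constants, giving the second claim. For the isomorphism, the computation above shows in particular that the family $\{\widehat\Delta(b_i)\}_{b_i\in\mathbf B}$ is linearly independent over $\mathbf k$, so $\widehat\Delta$ is injective: any $x\in\ker\widehat\Delta$ expands in $\mathbf B$ and its coefficients must all vanish. Surjectivity holds because $\widehat\Delta(O_i)=\hat O_i$ for $1\leqslant i\leqslant 6$ already exhausts the six generators of $A_{q,t}$ in \eqref{eq:AqtDifferenceOperatorsAlgebra}. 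Thus $\widehat\Delta$ is a $\mathbf k$-algebra isomorphism.

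The main obstacle is the interchange of the purely algebraic relation in $\mathcal A_{q,t}$ with the analytic $\hbar\to0$ limit: one must guarantee that after normalization the quasiclassical limit of $\sum_i \lambda_i\,\widehat\Delta(b_i)$ may be computed term by term and coincides with $\sum_i (\lambda_i|_{q=1})\,\Delta(b_i|_{q=1})$. This is precisely where Lemma \ref{lemm:qDifferenceLaurentCoefficients} is indispensable, since Laurent-polynomial dependence on $q^{\frac14}$ rules out poles at $q=1$ that would otherwise obstruct passing to the limit; the injectivity of $\Delta$ together with the basis property of $B$ then converts the limiting identity into the required vanishing of coefficients.
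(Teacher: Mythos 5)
Your proposal is correct and follows essentially the same route as the paper's own proof: spanning via Corollary \ref{cor:GeneralProductBasisDecomposition}, then linear independence (proved jointly with injectivity of $\widehat\Delta$) by normalizing coefficients to be regular at $q=1$, applying $\widehat\Delta$, passing to the quasiclassical limit via Lemma \ref{lemm:qDifferenceLaurentCoefficients}, and invoking Proposition \ref{prop:Aq1tIsomorphismRationalFunctions} together with the basis property of $B$ in $\mathcal A_{q=1,t}$ to force a contradiction. Your treatment of parts (ii) and (iii) — uniqueness of the basis expansion for the structure constants, and injectivity plus surjectivity on the six generators for the isomorphism — also matches the paper.
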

\begin{proof}
Note that by Corollary \ref{cor:GeneralProductBasisDecomposition} we know that $\mathbf B$ is a spanning set, so the only thing we have to prove for part (\ref{it:BasisB}) is that there is no nontrivial linear combinations of elements of $\mathbf B$ which vanish in $\mathcal A_{q,t}$. We will prove parts (\ref{it:BasisB}) and (\ref{it:IsomorphismWithAlgebraOfqDifferenceOperators}) of the theorem simultaneously by showing that there is no linear combination of elements of $\mathbf B$ which corresponds to the trivial $q$-difference operator in $A_{q,t}$. The logic of the proof will be similar to one of Propostion \ref{prop:Aq1tIsomorphismRationalFunctions}.

For the sake of contradiction, suppose there is a linear combination
\begin{align*}
P=P(O_1,\dots,O_{345})=c_1b_1+\dots c_lb_l,\qquad c_i\in\mathbf k,\quad b_i\in\mathbf B,\quad\textrm{for all}\quad 1\leqslant i\leqslant l
\end{align*}
of elements of $\mathbf B$ which corresponds to a zero element of $\mathcal A_{q,t}$. Without loss of generality we can assume that all $c_i,\;1\leqslant i\leqslant l$ are regular at $q=1$ and at least one of the coefficients has nontrivial limit as $q\rightarrow 1$. In other words, the limit
\begin{align*}
P_{q=1}=\lim_{q\rightarrow 1}P(O_1,\dots, O_{345})
\end{align*}
exists and is nontrivial linear combination of monomials.

By Proposition \ref{prop:qDifferenceHomomorphism} we conclude that the following identity holds in $\mathbf k(X_{12},X_{23},X_{13}) \langle\hat\delta_{12}, \hat\delta_{23}, \hat\delta_{13}\rangle$
\begin{align}
P(\hat O_1,\dots, \hat O_{345})=0.
\label{eq:PVanishingQDifference}
\end{align}
Because all $c_i$ are regular at $q=1$ and by Lemma \ref{lemm:qDifferenceLaurentCoefficients} all images of generators are Laurent polynomials in $q^{\frac14}$, the left hand side of (\ref{eq:PVanishingQDifference}) converges as $q\rightarrow1$ and we get the identity
\begin{align*}
P_{q=1}(\Delta(O_1),\dots,\Delta(O_{345}))=0
\end{align*}
in $\mathbf k_t(X_{12},X_{23},X_{13})[P_{12}^{\pm1},P_{23}^{\pm1},P_{13}^{\pm1}]$. By Proposition \ref{prop:Aq1tIsomorphismRationalFunctions} we conclude that the following relation holds in $\mathcal A_{q=1,t}$
\begin{align*}
P_{q=1}(O_1,\dots,O_{345})=0.
\end{align*}
In other words, there exists nontrivial linear combination of elements of $B$ which vanish in $\mathcal A_{q=1,t}$. Here we come to the contradiction with our initial assumption, because $B$ is a monomial basis for $\mathcal A_{q=1,t}$.

Finally, part (\ref{it:StructureConstantsAreLaurentPolynomials}) of the theorem now follows from Corollary \ref{cor:GeneralProductBasisDecomposition}. Indeed, let $b_\alpha,b_\beta\in\mathbf B$ be a pair of basis elements. By part (\ref{it:BasisB}) of the theorem their product $b_1b_2=\sum_{i=1}^n{C_{b_\alpha,b_\beta}^{b_i}}b_i$ has a unique decomposition in basis $\mathbf B$ which must coincide with (\ref{eq:GeneralProductBasisDecomposition}) and thus all $C_{b_{\alpha},b_\beta}^{b_i}\in\mathbb C[q^{\frac14},t^{\frac14}]$ are Laurent polynomials.
\end{proof}

\begin{corollary}
For all triples of basis elements $b_1,b_2,b_3\in\mathbf B$, the structure constant $C_{b_1,b_2}^{b_3}\in\mathbb C[q^{\pm\frac14},t^{\pm\frac14}]$ of associative multiplication in $\mathcal A_{q,t}$ specialize to
\begin{enumerate}[(i)]
\item\label{it:StructureConstantsq1Specialization} The structure constant of commutative multiplication in $\mathcal A_{q=1,t}$ as $q^{\frac14}\rightarrow1$.
\item The structure constant of commutative multiplication in $\mathcal A_{q=t=1}$ as $q^{\frac14},t^{\frac14}\rightarrow1$.
\end{enumerate}
\label{cor:SpecializationOfStructureConstants}
\end{corollary}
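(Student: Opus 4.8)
The plan is to read off the two specializations directly from the multiplication relation in $\mathcal A_{q,t}$ by transporting it into $q$-difference operators and then taking the quasiclassical limit. By part (\ref{it:StructureConstantsAreLaurentPolynomials}) of Theorem \ref{th:BasisAqt} each structure constant $C_{b_1,b_2}^{b_3}$ is a Laurent polynomial in $q^{\frac14}$ and $t^{\frac14}$, so its value at $q^{\frac14}=1$ (and at $q^{\frac14}=t^{\frac14}=1$) is simply its evaluation; the only content is to identify these values with the structure constants of the two commutative algebras. Fix $b_1,b_2\in\mathbf B$ and write the defining expansion $b_1b_2=\sum_{b_3}C_{b_1,b_2}^{b_3}\,b_3$ in $\mathcal A_{q,t}$.

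First I would apply the homomorphism $\widehat\Delta$ of Proposition \ref{prop:qDifferenceHomomorphism}, obtaining the identity $\widehat\Delta(b_1)\widehat\Delta(b_2)=\sum_{b_3}C_{b_1,b_2}^{b_3}\,\widehat\Delta(b_3)$ among $q$-difference operators. Since $b_1,b_2,b_3$ and the product $b_1b_2$ are normally ordered monomials, hence products of generators, Proposition \ref{prop:qDifferenceQuasiClassicalLimit} applies: acting on the Fourier harmonic, dividing by it, and letting $\hbar\to0$ sends the normalized action of $\widehat\Delta(b_j)$ to $\Delta(\bar b_j)$, where $\bar b_j\in B$ is the commutative counterpart of $b_j$. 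Because the sum is finite and each coefficient is a Laurent polynomial in $q^{\frac14}=\mathrm e^{\mathrm i\hbar}$, the limit commutes with the sum and with scalar multiplication, yielding
\[
\Delta(\bar b_1)\,\Delta(\bar b_2)=\sum_{b_3}C_{b_1,b_2}^{b_3}\big|_{q=1}\,\Delta(\bar b_3)
\]
in $\mathbf k_t(X_{12},X_{23},X_{13})[P_{12}^{\pm1},P_{23}^{\pm1},P_{13}^{\pm1}]$.

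Next I would invoke the injectivity of $\Delta$ from Proposition \ref{prop:Aq1tIsomorphismRationalFunctions}: since $\Delta$ identifies $\mathcal A_{q=1,t}$ with its image, the displayed relation lifts to $\bar b_1\bar b_2=\sum_{b_3}C_{b_1,b_2}^{b_3}\big|_{q=1}\,\bar b_3$ in $\mathcal A_{q=1,t}$. As $B$ is a monomial basis of $\mathcal A_{q=1,t}$, this expansion is unique, so $C_{b_1,b_2}^{b_3}\big|_{q=1}$ is exactly the structure constant of commutative multiplication in $\mathcal A_{q=1,t}$, proving (\ref{it:StructureConstantsq1Specialization}). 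For part (ii) I would combine this with Theorem \ref{th:FlatDeformationAq1t}, whose proof shows that the structure constants of $\mathcal A_{q=1,t}$ are Laurent polynomials in $t^{\frac14}$ whose $t=1$ specialization recovers the structure constants of $\mathcal A_{q=t=1}$. Since $C_{b_1,b_2}^{b_3}$ is a Laurent polynomial in both variables, evaluation commutes and $C_{b_1,b_2}^{b_3}\big|_{q=t=1}=\big(C_{b_1,b_2}^{b_3}\big|_{q=1}\big)\big|_{t=1}$ equals that structure constant.

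The weight of the argument is carried entirely by the two inputs already established — the homomorphism $\widehat\Delta$ together with the multiplicativity of the quasiclassical limit, and the injectivity of $\Delta$ — so I do not expect a genuine obstacle; the only point requiring care is the passage from a relation valid in the field of rational functions back to an honest relation in the commutative algebra, which is precisely what Proposition \ref{prop:Aq1tIsomorphismRationalFunctions} supplies. If one prefers to avoid appealing to Theorem \ref{th:FlatDeformationAq1t} for part (ii), the same limiting argument can be repeated with $\widetilde\Delta$ in place of $\Delta$, using the injectivity of Proposition \ref{prop:Aqt1IsomorphismRationalFunctions}, the only thing to verify being that evaluation at $q=1$ and at $t=1$ commute, which is automatic for Laurent polynomials.
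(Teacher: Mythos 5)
Your proof is correct and follows essentially the same route the paper relies on: the corollary is left without an explicit argument precisely because it is obtained by re-running the mechanism of the proof of Theorem \ref{th:BasisAqt} (push the expansion $b_1b_2=\sum_{b_3}C_{b_1,b_2}^{b_3}b_3$ through $\widehat\Delta$, take the quasiclassical limit using Lemma \ref{lemm:qDifferenceLaurentCoefficients} and Proposition \ref{prop:qDifferenceQuasiClassicalLimit}, and lift back via the injectivity of $\Delta$ from Proposition \ref{prop:Aq1tIsomorphismRationalFunctions}), with part (ii) then read off from the specialization of structure constants established in the proof of Theorem \ref{th:FlatDeformationAq1t}. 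Both your main argument and your alternative via $\widetilde\Delta$ and Proposition \ref{prop:Aqt1IsomorphismRationalFunctions} are sound.
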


\section{Poisson brackets}

As a byproduct of Theorem \ref{th:BasisAqt} we can equip our deformed commutative algebra $\mathcal A_{q=1,t}$ with a Poisson bracket. Indeed, let $b_1,b_2\in\mathbf B$ be a pair of normally ordered products of generators. According to the Theorem \ref{th:BasisAqt}, the products of the corresponding elements of $\mathcal A_{q,t}$ have the following form
\begin{equation}
b_1b_2=\sum_{b_3\in\mathbf B}C_{b_1,b_2}^{b_3}b_3,\qquad b_2b_1=\sum_{b_3\in\mathbf B}C_{b_2,b_1}^{b_3}b_3
\label{eq:BasisElementsMultiplicationInTwoDifferentOrders}
\end{equation}
where structure constants $C_{b_1,b_2}^{b_3},C_{b_2,b_1}^{b_3}\in\mathbb C[q^{\frac14},t^{\frac14}]$ are Laurent polynomials in parameters. Also note, that only finitely many terms on the right hand sides of (\ref{eq:BasisElementsMultiplicationInTwoDifferentOrders}) are nonzero.

\begin{lemma}
For all triples of elements $b_1,b_2,b_3\in\mathbf B$, the following limit exists and is a Laurent polynomial in $t^{\frac14}$
\begin{equation*}
\lim_{q^{\frac14}\rightarrow1}\frac{C_{b_1,b_2}^{b_3}-C_{b_2,b_1}^{b_3}}{q^{\frac14}-1}\quad\in\quad \mathbb C[t^{\pm\frac14}].
\end{equation*}
\end{lemma}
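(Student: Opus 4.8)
The plan is to exploit the fact that the numerator is already a genuine Laurent polynomial in $q^{\frac14}$ and $t^{\frac14}$ which, moreover, vanishes identically on the locus $q^{\frac14}=1$; once both facts are in hand, the existence of the limit and its polynomiality reduce to an elementary divisibility argument, with no serious obstacle to overcome.

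First I would invoke part~(\ref{it:StructureConstantsAreLaurentPolynomials}) of Theorem~\ref{th:BasisAqt} to record that both structure constants $C_{b_1,b_2}^{b_3}$ and $C_{b_2,b_1}^{b_3}$ lie in $\mathbb{C}[q^{\pm\frac14},t^{\pm\frac14}]$, so that their difference $D:=C_{b_1,b_2}^{b_3}-C_{b_2,b_1}^{b_3}$ is again a Laurent polynomial in $q^{\frac14}$ and $t^{\frac14}$. Next, I would use Corollary~\ref{cor:SpecializationOfStructureConstants}(\ref{it:StructureConstantsq1Specialization}): as $q^{\frac14}\rightarrow1$ the constants $C_{b_1,b_2}^{b_3}$ specialize to the structure constants of the \emph{commutative} algebra $\mathcal{A}_{q=1,t}$ in the basis $\mathbf B$. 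Since multiplication in $\mathcal{A}_{q=1,t}$ is commutative, $b_1b_2=b_2b_1$ there and the specialized constants are symmetric in $b_1,b_2$; hence $C_{b_1,b_2}^{b_3}\big|_{q^{\frac14}=1}=C_{b_2,b_1}^{b_3}\big|_{q^{\frac14}=1}$ as elements of $\mathbb{C}[t^{\pm\frac14}]$, i.e.\ $D\big|_{q^{\frac14}=1}=0$.

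Finally I would regard $D$ as a Laurent polynomial in the single variable $u:=q^{\frac14}$ with coefficients in the ring $\mathbb{C}[t^{\pm\frac14}]$, and clear its finitely many negative powers of $u$ by multiplying by a suitable $u^{m}$ to obtain an honest polynomial $g(u)\in\mathbb{C}[t^{\pm\frac14}][u]$ with $g(1)=0$. Because $(u-1)$ is monic, division works over the commutative coefficient ring and yields $g(u)=(u-1)\,h(u)$ with $h\in\mathbb{C}[t^{\pm\frac14}][u]$, whence $D/(u-1)=u^{-m}h(u)$. Passing to the limit $u\rightarrow1$ gives $h(1)\in\mathbb{C}[t^{\pm\frac14}]$, which is precisely the asserted Laurent polynomial. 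The only point requiring a line of care is the symmetry of the $q^{\frac14}=1$ specialization, which is exactly where commutativity of $\mathcal{A}_{q=1,t}$ enters; everything else is formal.
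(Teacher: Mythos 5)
Your proposal is correct and follows essentially the same route as the paper: both use the Laurent polynomiality of the structure constants together with Corollary \ref{cor:SpecializationOfStructureConstants}(\ref{it:StructureConstantsq1Specialization}) (commutativity of $\mathcal A_{q=1,t}$ forcing the two specializations to agree), and then conclude by divisibility of the difference by $q^{\frac14}-1$. The only difference is that you spell out the elementary division step (clearing negative powers and dividing by the monic factor $u-1$) which the paper compresses into one sentence.
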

\begin{proof}
By Corollary \ref{cor:SpecializationOfStructureConstants}, part (\ref{it:StructureConstantsq1Specialization}) we know
\begin{equation*}
\lim_{q^{\frac14}\rightarrow 1}C_{b_1,b_2}^{b_3}=\lim_{q^{\frac14}\rightarrow1}C_{b_2,b_1}^{b_3},\qquad\textrm{for all}\quad b_1,b_2,b_3\in\mathbf B.
\end{equation*}
On the other hand, both $C_{b_1,b_2}^{b_3},C_{b_2,b_1}^{b_3}\in\mathbb C[q^{\pm\frac14},t^{\pm\frac14}]$ are Laurent polynomials, so their difference must divide $q^{\frac14}-1$.
\end{proof}

\begin{definition}
Let $\{,\}:\mathbf B\times\mathbf B\rightarrow \mathbb C[t^{\pm\frac14}]\mathbf B$ be a map given by
\begin{equation}
\{b_1,b_2\}:=\frac14\lim_{q^{\frac14}\rightarrow 1}\frac{b_1b_2-b_2b_1}{q^{\frac14}-1}=\sum_{b_3\in\mathbf B}\left(\lim_{q^{\frac14}\rightarrow1} \frac{C_{b_1,b_2}^{b_3}-C_{b_2,b_1}^{b_3}}{q^{\frac14}-1}\right)b_3.
\label{eq:BracketOfBasisElements}
\end{equation}
\label{def:BracketOfBasisElements}
\end{definition}

Recall that the collection $\mathbf B$ of normally ordered products of generators providing basis of $\mathcal A_{q,t}$ is in bijection with collection $B$ of commutative products of generators providing basis on $\mathcal A_{q=1,t}$. This allows us to extend Definition \ref{eq:BracketOfBasisElements} to $\mathbb C(t^{\frac14})$-linear map
\begin{equation*}
\{,\}:\mathcal A_{q=1,t}\otimes\mathcal A_{q=1,t}\rightarrow\mathcal A_{q=1,t},
\end{equation*}
which we denote by the same figure brackets.

\begin{proposition}
Map $\{,\}$ defines a $\mathrm{Mod}(\Sigma_2)$-equivariant Poisson bracket on $\mathcal A_{q=1,t}$.
\label{prop:Aq1tPoissonBracket}
\end{proposition}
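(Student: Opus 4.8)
The plan is to recognize $\{,\}$ as the \emph{semiclassical limit} of the commutator in $\mathcal A_{q,t}$ and to deduce the Poisson axioms formally from the corresponding identities for the associative commutator. Let $R=\mathbb C(t^{\frac14})[q^{\pm\frac14}]$ and let $\mathcal A_R$ be the free $R$-module on the basis $\mathbf B$ equipped with the multiplication whose structure constants are the Laurent polynomials $C_{b_1,b_2}^{b_3}\in\mathbb C[q^{\pm\frac14},t^{\pm\frac14}]$ from Theorem \ref{th:BasisAqt}(\ref{it:StructureConstantsAreLaurentPolynomials}). Then $\mathcal A_R$ is an associative $R$-subalgebra of $\mathcal A_{q,t}$ with $\mathcal A_R\otimes_R\mathbf k\simeq\mathcal A_{q,t}$, and by Corollary \ref{cor:SpecializationOfStructureConstants}(\ref{it:StructureConstantsq1Specialization}) reduction modulo $(q^{\frac14}-1)$ gives a surjective algebra homomorphism $\pi\colon\mathcal A_R\to\mathcal A_{q=1,t}$ onto the commutative algebra, identifying $\mathcal A_R/(q^{\frac14}-1)\mathcal A_R\simeq\mathcal A_{q=1,t}$.

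By the Lemma preceding Definition \ref{def:BracketOfBasisElements}, for $a,b\in\mathcal A_R$ the commutator $[a,b]=ab-ba$ lies in $(q^{\frac14}-1)\mathcal A_R$, so $\langle a,b\rangle:=(q^{\frac14}-1)^{-1}[a,b]$ is a well-defined element of $\mathcal A_R$. I would first record the key statement: $\pi\langle a,b\rangle$ depends only on $\pi(a)$ and $\pi(b)$. Indeed, if $a'=a+(q^{\frac14}-1)u$ then $\langle a',b\rangle=\langle a,b\rangle+[u,b]$ and $\pi[u,b]=[\pi u,\pi b]=0$ since $\mathcal A_{q=1,t}$ is commutative. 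Comparing with Definition \ref{def:BracketOfBasisElements} (the limit $q^{\frac14}\to1$ of a Laurent polynomial is its reduction modulo $(q^{\frac14}-1)$) gives $\{\pi a,\pi b\}=\tfrac14\pi\langle a,b\rangle$ for arbitrary lifts $a,b$. Bilinearity is built into the definition and antisymmetry is immediate from $[a,b]=-[b,a]$.

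For the Leibniz rule I would start from the exact identity $[a,bc]=[a,b]c+b[a,c]$ in $\mathcal A_R$; dividing by $(q^{\frac14}-1)$ gives $\langle a,bc\rangle=\langle a,b\rangle c+b\langle a,c\rangle$ in $\mathcal A_R$. Applying $\pi$, and using that $bc$ is a lift of $\pi(b)\pi(c)$ together with the well-definedness above, yields $4\{\pi a,\pi b\,\pi c\}=4\{\pi a,\pi b\}\,\pi c+4\,\pi b\,\{\pi a,\pi c\}$, which is the derivation property. For Jacobi, note $[b,c]=(q^{\frac14}-1)\langle b,c\rangle$, so the associative Jacobi identity $[a,[b,c]]+[b,[c,a]]+[c,[a,b]]=0$ divided by $(q^{\frac14}-1)^2$ becomes the honest identity $\langle a,\langle b,c\rangle\rangle+\langle b,\langle c,a\rangle\rangle+\langle c,\langle a,b\rangle\rangle=0$ in $\mathcal A_R$; applying $\pi$ and invoking the well-definedness statement twice turns each summand into $16\{\pi a,\{\pi b,\pi c\}\}$, giving the Jacobi identity for $\{,\}$. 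The overall factor $\tfrac14$ is a harmless rescaling and does not affect the axioms.

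Finally, for $\mathrm{Mod}(\Sigma_2)$-equivariance I would observe that the generating automorphisms $I$ and $d_1$ of $\mathcal A_{q,t}$ are defined over the integral form $\mathcal A_R$: the permutation $I$ preserves $\mathbf B$, while the formulas (\ref{eq:d1Automorphism}) for $d_1$ expand into $\mathbf B$ with Laurent coefficients by Corollary \ref{cor:GeneralProductBasisDecomposition}. Being $R$-algebra automorphisms they commute with $[\cdot,\cdot]$ and with multiplication by $(q^{\frac14}-1)^{-1}$, hence with $\langle\cdot,\cdot\rangle$; moreover $\pi\circ\phi=\bar\phi\circ\pi$, where $\bar\phi$ is the induced automorphism of $\mathcal A_{q=1,t}$ from Proposition \ref{prop:MCGActionAq1t}. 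Combining these gives $\bar\phi\{x,y\}=\{\bar\phi x,\bar\phi y\}$ for $\phi\in\{I,d_1\}$, and hence for all of $\mathrm{Mod}(\Sigma_2)$. The one genuinely delicate point is the existence of the integral form $\mathcal A_R$ with its reduction to $\mathcal A_{q=1,t}$ and the well-definedness of $\pi\langle a,b\rangle$; these rest entirely on Theorem \ref{th:BasisAqt}(\ref{it:StructureConstantsAreLaurentPolynomials}) and Corollary \ref{cor:SpecializationOfStructureConstants}, after which the Poisson axioms are purely formal consequences of the associative commutator identities and require no further computation.
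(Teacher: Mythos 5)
Your proposal is correct and takes essentially the same route as the paper: the paper's (very terse) proof asserts exactly what you prove in detail — skew-symmetry and Leibniz follow from Definition \ref{def:BracketOfBasisElements}, Jacobi from the fact that the $C_{b_1,b_2}^{b_3}$ are structure constants of the associative algebra $\mathcal A_{q,t}$, and $\mathrm{Mod}(\Sigma_2)$-equivariance from Theorem \ref{th:MCGActionGeneric} — and your integral form $\mathcal A_R$ with reduction mod $(q^{\frac14}-1)$ is the standard way to make those assertions rigorous. One minor slip: $I$ does not literally preserve $\mathbf B$ (cyclic permutation destroys normal ordering, e.g.\ $I(O_5O_6)=O_6O_1$), but your own appeal to Corollary \ref{cor:GeneralProductBasisDecomposition} applies to $I$ just as to $d_1$, so the equivariance argument is unaffected.
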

\begin{proof}
The skew-symmetry and Leibnitz identity follow immediately from Definition \ref{def:BracketOfBasisElements}. The Jacobi identity is a corollary of the fact that $C_{b_1,b_2}^{b_3}$ are the structure constants of the associative algebra. Finally, the $\mathrm{Mod}(\Sigma_2)$-equivariance follows by Theorem \ref{th:MCGActionGeneric}.
\end{proof}

One can read off the action of the Poisson bracket on generators of $\mathcal A_{q=1,t}$ from Table \ref{tab:QCommRel}. Recall that $(\pm c_{J,K}|X)$ entry in the $O_J$'th row and and $O_K$'th column corresponds to a normal ordering relation in $\mathcal A_{q,t}$
\begin{equation*}
q^{\frac{c_{J,K}}4}O_JO_K-q^{-\frac{c_{J,K}}4}O_KO_J\mp(q^{\frac12}-q^{-\frac12})X=0
\end{equation*}
Note that by linearity, we can use the the left equality in (\ref{eq:BracketOfBasisElements}) for an arbitrary linear combination of basis elements, in particular, for any normally ordered product of generators. Without loss of generality we assume that $O_KO_J$ is a normally ordered monomial, then the Poisson bracket between the two is given by
\begin{equation*}
\{O_J,O_K\}=\frac14\lim_{q^{\frac14}\rightarrow1}\frac{O_JO_K-O_KO_J}{q^{\frac14}-1} =\frac14\lim_{q^{\frac14}\rightarrow1}\frac{(-1+q^{-\frac{c_{J,K}}2})O_KO_J\pm q^{-\frac{c_{J,K}}4}(q^{\frac12}-q^{-\frac12})X}{q^{\frac14}-1}=-\frac{c_{J,K}}2O_KO_J\pm X.
\end{equation*}

Finally, note that coefficients in (\ref{eq:BracketOfBasisElements}) are Laurent polynomials in $t^{\frac14}$, hence specialize well to $t=1$, this allows one to introduce a $\mathbb C$-linear map
\begin{equation*}
\{,\}_{t=1}:\mathcal A_{q=t=1}\otimes\mathcal A_{q=t=1}\rightarrow \mathcal A_{q=t=1}
\end{equation*}
\begin{corollary}
Map $\{,\}_{t=1}$ is a $\mathrm{Mod}(\Sigma_2)$-equivariant Poisson bracket which coincides with the Goldman Poisson bracket on
\begin{equation*}
\mathcal A_{q=t=1}\simeq\mathcal O[\mathrm{Hom}(\pi_1(\Sigma_2),SL(2,\mathbb C))]^{SL(2,\mathbb C)}.
\end{equation*}
\label{cor:Aq1t1BracketAgreesWithGoldman}
\end{corollary}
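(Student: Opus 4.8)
The plan is to establish the two assertions of the corollary in turn: that $\{,\}_{t=1}$ is a $\mathrm{Mod}(\Sigma_2)$-equivariant Poisson bracket, and that under the isomorphism $\Psi$ of Theorem \ref{th:IsomorphismAq1t1CoordinateRingCharacterVariety} it becomes the Goldman bracket. The first part is a specialization argument. By Proposition \ref{prop:Aq1tPoissonBracket} the map $\{,\}$ already satisfies skew-symmetry, the Leibniz rule and the Jacobi identity on $\mathcal A_{q=1,t}$, and each of these is a polynomial identity among the bracket structure constants, which by Definition \ref{def:BracketOfBasisElements} lie in $\mathbb C[t^{\pm\frac14}]$. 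Every such identity therefore survives the substitution $t=1$, so $\{,\}_{t=1}$ is a Poisson bracket on $\mathcal A_{q=t=1}$. For equivariance I note that the generators of $\mathrm{Mod}(\Sigma_2)$ act on $\mathcal A_{q=1,t}$ by formulas (\ref{eq:IActionCommutative}) and (\ref{eq:d1ActionCommutative}) whose coefficients do not involve $t$, so they coincide with the action on $\mathcal A_{q=t=1}$ of Proposition \ref{prop:MCGActionAq1t}; the equivariance relation $\phi(\{a,b\})=\{\phi(a),\phi(b)\}$ for $\phi\in\{d_1,I\}$, valid over $\mathbb C(t^{\frac14})$ with $t^{\frac14}$-Laurent-polynomial coefficients, then specializes verbatim.

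The substance is the comparison with the Goldman bracket, which I would carry out at the level of structure constants. For $b_1,b_2\in\mathbf B$ write the coefficient of $b_3$ in $[b_1,b_2]$ as $D^{b_3}(q,t)=C_{b_1,b_2}^{b_3}-C_{b_2,b_1}^{b_3}\in\mathbb C[q^{\pm\frac14},t^{\pm\frac14}]$, using Theorem \ref{th:BasisAqt}(\ref{it:StructureConstantsAreLaurentPolynomials}). Since $\mathcal A_{q=1,t}$ is commutative, Corollary \ref{cor:SpecializationOfStructureConstants}(\ref{it:StructureConstantsq1Specialization}) gives $D^{b_3}(1,t)=0$ identically in $t$, so $q^{\frac14}-1$ divides $D^{b_3}$ and we may write $D^{b_3}(q,t)=(q^{\frac14}-1)E^{b_3}(q,t)$ with $E^{b_3}$ again Laurent. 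By Definition \ref{def:BracketOfBasisElements} the coefficient of $b_3$ in $\{b_1,b_2\}_{t=1}$ is exactly $\tfrac14 E^{b_3}(1,1)$.

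Now I would specialize instead to the diagonal $q=t$, where $A_{q=t}\simeq Sk_q(\Sigma_2)$ by \cite{CookeSamuelson'2021}. Setting $u=q^{\frac14}=t^{\frac14}$, the coefficient of $b_3$ in $[b_1,b_2]$ equals $(u-1)E^{b_3}(u^4,u^4)$ in the skein algebra, and since $q-1=(u-1)(u^3+u^2+u+1)$ the coefficient of $b_3$ in $\lim_{q\to1}\tfrac{[b_1,b_2]}{q-1}$ is $\tfrac14 E^{b_3}(1,1)$, the same value as in $\{b_1,b_2\}_{t=1}$. This identifies $\{,\}_{t=1}$ with the standard semiclassical limit of the Kauffman-bracket skein commutator, which by the classical theorem relating the skein algebra to the character ring is the Goldman bracket \cite{Turaev'1991,Goldman'1986}; the factor $\tfrac14$ in Definition \ref{def:BracketOfBasisElements} is precisely what reconciles the two normalizations, via $u^4-1=4(u-1)+O((u-1)^2)$. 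The main obstacle here is the compatibility of identifications: one must check that the Cooke--Samuelson isomorphism sends each $O_i$ to the skein loop whose $q=1$ trace function is exactly $\Psi(O_i)$ of (\ref{eq:PsiActionOnGenerators}), which holds because both realize the geometric cycles of Figure \ref{fig:qDiffGeneratingCycles}, and pinning down the skein-parameter normalization.

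As an independent confirmation, and a fallback that avoids the skein input entirely, I would compare the two brackets directly on generators. Both $\Psi_*\{,\}_{t=1}$ and the Goldman bracket $\{,\}_G$ are $\mathrm{Mod}(\Sigma_2)$-equivariant biderivations of the character ring, so they agree everywhere once they agree on pairs of the generating trace functions $\Psi(O_J)$; by $I$-equivariance the fifteen generators fall into three orbits, so it is enough to bracket $\Psi(O_1)=\tau_{Y_1}$ against representatives using Goldman's intersection formula $\{\tau_\alpha,\tau_\beta\}_G=\tfrac12\sum_{p}\epsilon_p(\tau_{\alpha_p\beta_p}-\tau_{\alpha_p\beta_p^{-1}})$ and to match the outcome against Table \ref{tab:QCommRel} read through $\{O_J,O_K\}_{t=1}=-\tfrac{c_{J,K}}2 O_KO_J\pm X|_{t=1}$. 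This route is computationally heavy and is best relegated to the computer verification used throughout the paper, whereas the skein-theoretic argument above is the conceptually clean one.
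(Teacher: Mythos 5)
Your proposal is correct, and its backbone is the same as the paper's: part one is proved by the identical specialization argument (Proposition \ref{prop:Aq1tPoissonBracket} plus the fact that the bracket's structure constants lie in $\mathbb C[t^{\pm\frac14}]$), and part two rests on the same two external inputs the paper uses, namely Corollary 5.11 of \cite{CookeSamuelson'2021} giving $A_{q=t}\simeq Sk_q(\Sigma_2)$ and the classical fact that the skein algebra quantizes the Goldman bracket. Where you genuinely add value is in the middle step: the paper's proof is two sentences and silently identifies the bracket $\{,\}_{t=1}$ (defined by first taking $q^{\frac14}\to1$ at generic $t$, then setting $t=1$) with the semiclassical limit taken along the diagonal $q=t$ inside the skein algebra. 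Your factorization $C_{b_1,b_2}^{b_3}-C_{b_2,b_1}^{b_3}=(q^{\frac14}-1)E^{b_3}(q,t)$ with $E^{b_3}$ Laurent (using Theorem \ref{th:BasisAqt} and Corollary \ref{cor:SpecializationOfStructureConstants}), and the observation that both the iterated limit and the diagonal limit evaluate to $\tfrac14E^{b_3}(1,1)$, is exactly the bookkeeping needed to justify that identification, and it is the right argument: it is the Laurent property that makes the two limits interchangeable. You also correctly flag the remaining compatibility issue — that the Cooke--Samuelson isomorphism must match the generators $O_i$ with the loops whose $q=1$ trace functions are $\Psi(O_i)$ of (\ref{eq:PsiActionOnGenerators}) — which the paper glosses over as well; the paper's post-corollary remark about an independent check of Table \ref{tab:QCommRel} against double quasi-Poisson brackets \cite{MassuyeauTuraev'2014} plays precisely the role of the generator-by-generator fallback you sketch, so your proposal is, if anything, more complete than the printed proof on this point.
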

\begin{proof}
The first part of the statement follows immediately by $t^{\frac14}=1$ specialization of Proposition \ref{prop:Aq1tPoissonBracket} because all the structure constants involved are Laurent polynomials in $t^{\frac14}$.

As for the agreement with the Goldman Poisson bracket, we can use Corollary 5.11 from \cite{CookeSamuelson'2021}, which states that $q=t$ specialization $A_{q=t}\simeq\mathrm{Sk}_{\Sigma_2}$ of the algebra of difference operators is isomorphic to the skein algebra of the genus two surface. Because the latter is the quantization of the Poisson algebra $\mathcal A_{q=t=1}$ we finalize the proof.
\end{proof}

As an illustration of Corollary \ref{cor:Aq1t1BracketAgreesWithGoldman} we have also computed the Poisson brackets between the images of generators of $A_{q=t=1}$ in the invariant subring of $\mathcal O[\mathrm{Hom}(\pi_1(\Sigma_2),SL(2,\mathbb C))]$ using completely independent method of Double Quasi Poisson brackets \cite{MassuyeauTuraev'2014}. We have verified that the result agrees with entries in Table \ref{tab:QCommRel}. The Mathematica code for this illustration can be found at \cite{Arthamonov-GitHub-Flat}.

\section{Discussion and future directions}

\subsection{Symplectic resolutions of character varieties}

Remarkably enough, the $SL(2,\mathbb C)$-character variety of a genus two surface appears to be very special when character varieties are considered in a seemingly unrelated context. It was shown in \cite{BellamySchedler'2019} that $SL(n,\mathbb C)$ character varieties of closed genus $g>0$ surfaces are irreducible symplectic singularities. A symplectic singularity admits symplectic resolution if the symplectic form on the smooth locus can be extended to a symplectic form on the resolution. This turns out to be a rather strong requirement and symplectic singularities are rare. In the same paper \cite{BellamySchedler'2019} G.~Bellamy and T.~Schedler have shown that $SL(n,\mathbb C)$-character varieties admit symplectic resolutions only when $g=1$ or $(g,n)=(2,2)$.

The case $g=1$ is well-known and symplectic resolution can be described explicitly using (a closed subscheme of) the Hilbert Scheme $\mathrm{Hilb}^n(\mathbb C^\times\times\mathbb C^\times)$ \cite{Nakajima'1999}. One-parameter deformation of $\mathrm{Hilb}^n(\mathbb C^\times\times\mathbb C^\times)$ is nothing but completed configuration space of the trigonometric Ruijsenaars-Shneider integrable system \cite{Oblomkov'2004-CM-Spaces}, the same integrable system which in the quantum case gives rise to sDAHA.

It is natural to expect that our deformed commutative algebra $\mathcal A_{q=1,t}$ appears in the $(g,n)=(2,2)$ case of \cite{BellamySchedler'2019} and thus provides a multiplicative analogue of \cite{LehnSorger'2006}. We are planning to investigate this question in a sequel publication.

\subsection{Fourier Duality and root systems}

In \cite{DiFrancescoKedem'2023} P.~Di~Franceso and R.~Kedem have studied Fourier duality of Macdonald $q$-Difference operators in a family of examples which includes the genus two algebra $A_{q,t}$. In particular, authors proposed the analogue of an affine root system which can be used to define the difference operators and duality.

It would be extremely interesting to investigate the potential role of this ``root-like`` system in the ``PBW-like`` approach to the solution of the word problem in $\mathcal A_{q,t}\simeq A_{q,t}$ that we present in the current manuscript. Of course, by analogy with the usual Double Affine Hecke Algebra one would expect both roots and co-roots to play equal role. Without going into much details we would like to conclude this comment by highlighting the striking, although rather indirect, similarity between formulas for extra generators (\ref{eq:qdiffleveltwogenerators}), (\ref{eq:qdifflevelthreegenerators}) and how non-simple roots are expressed through the simple roots in the usual Lie algebra.

\subsection{Relation to DAHA polynomials for double torus knots}

In \cite{Hikami'2019} K.~Hikami has constructed a DAHA representation of the skein algebra of the genus two surface and has shown in \cite{Hikami'2024} that this representation is equivariant under the action of the Mapping Class Group of a genus two surface. This approach is slightly different from the deformation of the skein algebra itself that we examine in this manuscript. In particular, the construction from our join papers with Sh.~Shakirov \cite{ArthamonovShakirov'2019, ArthamonovShakirov'2020} does not produce any embedding of the undeformed algebra $A_{q=t}\simeq Sk_q(\Sigma_2)$ into the $A_{q,t}$. Establishing the precise relation between $A_{q,t}$ from \cite{ArthamonovShakirov'2019} and $SH^{gen}_{q,{\mathbf t}^*}$ from \cite{Hikami'2019,Hikami'2024} is a very interesting problem.

It is worth noting that both algebras can be used to compute amplitudes for each embedding of a knot on a genus two surface. These amplitudes refine Jones amplitudes $|J_n(q)|:=J_n(q)J_n(q^{-1})$ and it would be very interesting to compare the them. It is still an open question, however, whether the knot amplitude defined in Conjecture II of \cite{ArthamonovShakirov'2020} is independent of the choice of embedding of a given knot.

\subsection{Relation to cluster algebras}

Last, but not the least, in \cite{ChekhovShapiro'2023} L.~Chekhov and M.~Shapiro have proposed a cluster algebra construction for an $SL(2,\mathbb C)$ character variety of a closed genus two surface. The authors have shown that one can introduce an extra parameter in such cluster algebra. Based on that, the cluster interpretation for the algebra of $q$-difference operators $A_{q,t}$ was constructed in \cite{ArthamonovChekhovDiFrancescoKedemShapiroShapiro'2024}. In particular, the common eigenfunctions $\Psi_{j_1,j_2,j_3}$ for $q$-difference operators $\widehat{O}_{A_1},\widehat{O}_{A_2},\widehat{O}_{A_3}\in A_{q,t}$ defined in \cite{ArthamonovShakirov'2019} as genus two Macdonald polynomials potentially can be extended beyond integral weights using the cluster algebra approach. The algebraic interpretation of such potential extension can be a very interesting question for further research.

\section*{Acknowledgements}

I am very grateful to Shamil Shakirov, this paper would have not been possible without several years of prior joint work with Shamil on \cite{ArthamonovShakirov'2020} and \cite{ArthamonovShakirov'2019}. I am also grateful to Pavel Etingof, Andrey Okounkov, and Nicolai Reshetikhin for many fruitful discussions and remarks.

\appendix

\section{Calculations for Proposition \ref{prop:d1Automorphism}}
\label{sec:d1Automorphism}

In this section we prove the second part of Proposition \ref{prop:d1Automorphism}, namely that homomorphism $d_1:F\rightarrow F$ introduced in (\ref{eq:d1Automorphism}) preserves defining ideal (\ref{eq:DefiningRelationsAqt}) of $\mathcal A_{q,t}$. Throughout the section we will utilize the same notations as in the proof of Lemma \ref{lemm:JRelations}. It will be convenient for us to express the right hand side of the action using both the original relators $\eta_{I,J}$ introduced in (\ref{eq:EtaIJRelator}) as well as their special combinations $\rho_i,1\leq i\leq 18$ as in (\ref{eq:JrelationsViaNormal}). Also, by $\rho_0\in F$ we denote the q-Casimir relator which appears on the left hand side of (\ref{eq:qCasimirRelation}).

First we note that $d_1$ acts trivially on the following relators:
\begin{align*}
&\eta_{(3)(1)}, \eta_{(4)(1)}, \eta_{(4)(3)}, \eta_{(5)(1)}, \eta_{(5)(3)}, \eta_{(5)(4)},\\ &\eta_{(34)(1)}, \eta_{(34)(3)}, \eta_{(34)(4)}, \eta_{(34)(5)},\\
&\eta_{(45)(1)}, \eta_{(45)(3)}, \eta_{(45)(4)}, \eta_{(45)(5)}, \eta_{(45)(34)},\\
&\eta_{(345)(1)}, \eta_{(345)(3)}, \eta_{(345)(4)}, \eta_{(345)(5)}, \eta_{(345)(34)}, \eta_{(345)(45)}.
\end{align*}
As for the remaining relators we get
\begin{align*}
d_1(\eta_{(2)(1)})=&O_1O_2O_1 -q^{\frac{1}{2}}O_1^2O_2 -q^{\frac{1}{4}}O_{12}O_1 +q^{\frac{3}{4}}O_1O_{12} +q^{-\frac{1}{2}}(q-1)O_2=q^{\frac{1}{4}}O_1\eta _{(2)(1)} -\eta _{(12)(1)},\\[0.3em]
d_1(\eta_{(3)(2)})=&O_3O_1O_2 -q^{\frac{1}{2}}O_1O_2O_3 +q^{\frac{3}{4}}O_{12}O_3 -q^{\frac{1}{4}}O_3O_{12} +q^{-\frac{1}{4}}(q-1)O_1O_{23} -(q-1)O_{123}\\
=&q^{\frac{1}{4}}O_1\eta _{(3)(2)} +\eta _{(3)(1)}O_2 +q^{\frac{1}{2}}\eta _{(12)(3)},\\[0.3em]
d_1(\eta_{(4)(2)})=&q^{\frac{1}{4}}O_4O_1O_2 -q^{\frac{1}{4}}O_1O_2O_4 +q^{\frac{1}{2}}O_{12}O_4 -q^{\frac{1}{2}}O_4O_{12}=q^{\frac{1}{4}}O_1\eta _{(4)(2)} +q^{\frac{1}{4}}\eta _{(4)(1)}O_2 +q^{\frac{1}{2}}\eta _{(12)(4)},\\[0.3em]
d_1(\eta_{(5)(2)})=&q^{\frac{1}{4}}O_5O_1O_2 -q^{\frac{1}{4}}O_1O_2O_5 +q^{\frac{1}{2}}O_{12}O_5 -q^{\frac{1}{2}}O_5O_{12}=q^{\frac{1}{4}}O_1\eta _{(5)(2)} +q^{\frac{1}{4}}\eta _{(5)(1)}O_2 +q^{\frac{1}{2}}\eta _{(12)(5)},\\[0.3em]
d_1(\eta_{(6)(1)})=&q^{\frac{1}{4}}O_{61}O_1 -q^{\frac{3}{4}}O_1O_{61} +(q-1)O_6=q^{\frac{1}{2}}\eta _{(61)(1)},\\[0.3em]
d_1(\eta_{(6)(2)})=&q^{\frac{1}{4}}O_{61}O_1O_2 -q^{\frac{1}{4}}O_1O_2O_{61} -q^{\frac{1}{2}}O_{61}O_{12} +q^{\frac{1}{2}}O_{12}O_{61}\\
=&q^{\frac{1}{2}}O_1\eta _{(61)(2)} +q^{\frac{1}{2}}\eta _{(61)(1)}O_2 -\eta _{(61)(12)} -q^{-1}(q-1) (q+1)\eta _{(6)(2)} +q^{-\frac{1}{2}}(q-1)\rho _4,\\[0.3em]
d_1(\eta_{(6)(3)})=&O_{61}O_3 -O_3O_{61}=\eta _{(61)(3)},\\[0.3em]
d_1(\eta_{(6)(4)})=&O_{61}O_4 -O_4O_{61}=\eta _{(61)(4)},\\[0.3em]
d_1(\eta_{(6)(5)})=&q^{-\frac{1}{4}}O_{61}O_5 -q^{\frac{1}{4}}O_5O_{61} +q^{-\frac{1}{2}}(q-1)O_{234}=\eta _{(61)(5)},\\[0.3em]
d_1(\eta_{(12)(1)})=&q^{\frac{1}{4}}O_2O_1 -q^{\frac{3}{4}}O_1O_2 +(q-1)O_{12}=q^{\frac{1}{2}}\eta _{(2)(1)},\\[0.3em]
d_1(\eta_{(12)(2)})=&O_2O_1O_2 -q^{\frac{1}{2}}O_1O_2^2 +q^{\frac{3}{4}}O_{12}O_2 -q^{\frac{1}{4}}O_2O_{12} +q^{-\frac{1}{2}}(q-1)O_1=q^{\frac{1}{4}}\eta _{(2)(1)}O_2 +\eta _{(12)(2)},\\[0.3em]
d_1(\eta_{(12)(3)})=& -q^{-\frac{1}{4}}O_3O_2 +q^{\frac{1}{4}}O_2O_3 -q^{-\frac{1}{2}}(q-1)O_{23}= -\eta _{(3)(2)},\\[0.3em]
d_1(\eta_{(12)(4)})=& -O_4O_2 +O_2O_4= -\eta _{(4)(2)},\\[0.3em]
d_1(\eta_{(12)(5)})=& -O_5O_2 +O_2O_5= -\eta _{(5)(2)},\\[0.3em]
d_1(\eta_{(12)(6)})=& -q^{\frac{1}{4}}O_{61}O_2 +q^{-\frac{1}{4}}O_2O_{61} +q^{-\frac{1}{2}}(q-1)O_{345}= -\eta _{(61)(2)},\\[0.3em]
d_1(\eta_{(23)(1)})=&O_1O_{23}O_1 -q^{\frac{1}{2}}O_1^2O_{23} -q^{\frac{1}{4}}O_{123}O_1 +q^{\frac{3}{4}}O_1O_{123} +q^{-\frac{1}{2}}(q-1)O_{23}=q^{\frac{1}{4}}O_1\eta _{(23)(1)} -\eta _{(123)(1)},\\[0.3em]
d_1(\eta_{(23)(2)})=&q^{\frac{3}{4}}O_1O_{23}O_1O_2 -q^{\frac{1}{4}}O_1O_2O_1O_{23} +q^{\frac{1}{2}}O_{12}O_1O_{23} -qO_1O_{23}O_{12} -qO_{123}O_1O_2 +q^{\frac{1}{2}}O_1O_2O_{123}\\
&+q^{\frac{5}{4}}O_{123}O_{12} -q^{\frac{3}{4}}O_{12}O_{123} -q^{-\frac{1}{2}}(q-1)O_3\\
=& -q^{\frac{1}{2}}O_1\eta _{(2)(1)}O_{23} +qO_1^2\eta _{(23)(2)} +qO_1\eta _{(23)(1)}O_2 +q^{\frac{1}{4}}\eta _{(12)(1)}O_{23} -q^{\frac{3}{2}}O_1\eta _{(123)(2)} -q^{\frac{1}{2}}O_1\eta _{(23)(12)}\\
&-(q-1)O_1\rho _5 -q^{\frac{3}{4}}\eta _{(123)(1)}O_2 +q\eta _{(123)(12)} -(q-1)\eta _{(23)(2)},\\[0.3em]
d_1(\eta_{(23)(3)})=& -q^{\frac{1}{2}}O_3O_1O_{23} +O_1O_{23}O_3 -q^{\frac{1}{4}}O_{123}O_3 +q^{\frac{3}{4}}O_3O_{123} +q^{-\frac{1}{4}}(q-1)O_1O_2 -(q-1)O_{12}\\
=& -q^{\frac{1}{2}}\eta _{(3)(1)}O_{23} +q^{\frac{1}{4}}O_1\eta _{(23)(3)} -q^{\frac{1}{2}}\eta _{(123)(3)},\\[0.3em]
d_1(\eta_{(23)(4)})=& -O_4O_1O_{23} +q^{\frac{1}{2}}O_1O_{23}O_4 -q^{\frac{3}{4}}O_{123}O_4 +q^{\frac{1}{4}}O_4O_{123} -q^{-\frac{1}{4}}(q-1)O_1O_{234} +(q-1)O_{56}\\
=& -\eta _{(4)(1)}O_{23} +q^{\frac{1}{4}}O_1\eta _{(23)(4)} -q^{\frac{1}{2}}\eta _{(123)(4)},\\[0.3em]
d_1(\eta_{(23)(5)})=& -q^{\frac{1}{4}}O_5O_1O_{23} +q^{\frac{1}{4}}O_1O_{23}O_5 -q^{\frac{1}{2}}O_{123}O_5 +q^{\frac{1}{2}}O_5O_{123}= -q^{\frac{1}{4}}\eta _{(5)(1)}O_{23} +q^{\frac{1}{4}}O_1\eta _{(23)(5)} -q^{\frac{1}{2}}\eta _{(123)(5)},\\[0.3em]
d_1(\eta_{(23)(6)})=& -q^{\frac{1}{4}}O_{61}O_1O_{23} +q^{\frac{1}{4}}O_1O_{23}O_{61} -q^{\frac{1}{2}}O_{123}O_{61} +q^{\frac{1}{2}}O_{61}O_{123}\\
=& -q^{\frac{1}{2}}\eta _{(61)(1)}O_{23} -q^{\frac{1}{2}}O_1\eta _{(61)(23)} -q^{\frac{1}{2}}\eta _{(123)(61)} +(q-1)\eta _{(45)(1)} -(q-1)\eta _{(23)(6)},\\[0.3em]
d_1(\eta_{(23)(12)})=& -q^{-\frac{1}{4}}O_2O_1O_{23} +q^{\frac{3}{4}}O_1O_{23}O_2 -qO_{123}O_2 +O_2O_{123} -q^{-1}(q-1) (q+1)O_1O_3\\
&+q^{-1}t^{-\frac{1}{2}}(q-1) (q+t)O_5\\
=& -\eta _{(2)(1)}O_{23} +q^{\frac{1}{2}}O_1\eta _{(23)(2)} -q\eta _{(123)(2)} -q^{-\frac{1}{2}}(q-1)\rho _5,\\[0.3em]
d_1(\eta_{(34)(2)})=&O_{34}O_1O_2 -q^{\frac{1}{2}}O_1O_2O_{34} -q^{\frac{1}{4}}O_{34}O_{12} +q^{\frac{3}{4}}O_{12}O_{34} +q^{-\frac{1}{4}}(q-1)O_1O_{234} -(q-1)O_{56}\\
=&q^{\frac{1}{4}}O_1\eta _{(34)(2)} +\eta _{(34)(1)}O_2 -q^{\frac{1}{2}}\eta _{(34)(12)},\\[0.3em]
d_1(\eta_{(34)(6)})=& -O_{61}O_{34} +O_{34}O_{61}= -\eta _{(61)(34)},\\[0.3em]
d_1(\eta_{(34)(12)})=&q^{-\frac{1}{4}}O_{34}O_2 -q^{\frac{1}{4}}O_2O_{34} +q^{-\frac{1}{2}}(q-1)O_{234}=\eta _{(34)(2)},\\[0.3em]
d_1(\eta_{(34)(23)})=&q^{\frac{3}{4}}O_{34}O_1O_{23} -q^{-\frac{1}{4}}O_1O_{23}O_{34} +O_{123}O_{34} -qO_{34}O_{123} -q^{-\frac{3}{4}}(q-1) (q+1)O_1O_2O_4\\ &+q^{-\frac{1}{2}}(q-1) (q+1)O_{12}O_4 +q^{-1}t^{-\frac{1}{2}}(q-1) (q+t)O_{61}\\
=&q^{\frac{3}{4}}\eta _{(34)(1)}O_{23} +q^{\frac{1}{4}}O_1\eta _{(34)(23)} +\eta _{(123)(34)} +q^{-\frac{1}{2}}(q-1)\eta _{(56)(3)} +q^{\frac{1}{2}}(q-1)\eta _{(12)(4)}\\ &+q^{-\frac{1}{2}}t^{-\frac{1}{2}}(q-1) (q+t)\eta _{(6)(1)} -q^{\frac{1}{2}}(q-1)\rho _{12},\\[0.3em]
d_1(\eta_{(45)(2)})=&q^{\frac{1}{4}}O_{45}O_1O_2 -q^{\frac{1}{4}}O_1O_2O_{45} -q^{\frac{1}{2}}O_{45}O_{12} +q^{\frac{1}{2}}O_{12}O_{45}=q^{\frac{1}{4}}O_1\eta _{(45)(2)} +q^{\frac{1}{4}}\eta _{(45)(1)}O_2 -q^{\frac{1}{2}}\eta _{(45)(12)},\\[0.3em]
d_1(\eta_{(45)(6)})=& -q^{-\frac{1}{4}}O_{61}O_{45} +q^{\frac{1}{4}}O_{45}O_{61} -q^{-\frac{1}{2}}(q-1)O_{23}= -\eta _{(61)(45)},\\[0.3em]
d_1(\eta_{(45)(12)})=&O_{45}O_2 -O_2O_{45}=\eta _{(45)(2)},\\[0.3em]
d_1(\eta_{(45)(23)})=&O_{45}O_1O_{23} -q^{\frac{1}{2}}O_1O_{23}O_{45} +q^{\frac{3}{4}}O_{123}O_{45} -q^{\frac{1}{4}}O_{45}O_{123} +q^{-\frac{1}{4}}(q-1)O_1O_{61} -(q-1)O_6\\
=&\eta _{(45)(1)}O_{23} +q^{\frac{1}{4}}O_1\eta _{(45)(23)} +q^{\frac{1}{2}}\eta _{(123)(45)},\\[0.3em]
d_1(\eta_{(56)(1)})=&q^{\frac{1}{4}}O_{234}O_1 -q^{\frac{3}{4}}O_1O_{234} +(q-1)O_{56}=q^{\frac{1}{2}}\eta _{(234)(1)},\\[0.3em]
d_1(\eta_{(56)(2)})=&q^{\frac{1}{4}}O_{234}O_1O_2 -q^{\frac{1}{4}}O_1O_2O_{234} -q^{\frac{1}{2}}O_{234}O_{12} +q^{\frac{1}{2}}O_{12}O_{234}\\
=&q^{\frac{1}{2}}O_1\eta _{(234)(2)} +q^{\frac{1}{2}}\eta _{(234)(1)}O_2 -q^{\frac{1}{2}}\eta _{(234)(12)} -(q-1)\eta _{(34)(1)},\\[0.3em]
d_1(\eta_{(56)(3)})=&O_{234}O_3 -O_3O_{234}=\eta _{(234)(3)},\\[0.3em]
d_1(\eta_{(56)(4)})=&q^{-\frac{1}{4}}O_{234}O_4 -q^{\frac{1}{4}}O_4O_{234} +q^{-\frac{1}{2}}(q-1)O_{23}=\eta _{(234)(4)},\\[0.3em]
d_1(\eta_{(56)(5)})=&q^{\frac{1}{4}}O_{234}O_5 -q^{-\frac{1}{4}}O_5O_{234} -q^{-\frac{1}{2}}(q-1)O_{61}=\eta _{(234)(5)},\\[0.3em]
d_1(\eta_{(56)(6)})=&q^{-\frac{1}{4}}O_{234}O_{61} -q^{\frac{1}{4}}O_{61}O_{234} +q^{-\frac{1}{2}}(q-1)O_5=\eta _{(234)(61)},\\[0.3em]
d_1(\eta_{(56)(12)})=&q^{\frac{1}{4}}O_{234}O_2 -q^{-\frac{1}{4}}O_2O_{234} -q^{-\frac{1}{2}}(q-1)O_{34}=\eta _{(234)(2)},\\[0.3em]
d_1(\eta_{(56)(23)})=&q^{\frac{1}{4}}O_{234}O_1O_{23} -q^{\frac{1}{4}}O_1O_{23}O_{234} -q^{\frac{1}{2}}O_{234}O_{123} +q^{\frac{1}{2}}O_{123}O_{234}\\
=&q^{\frac{1}{2}}\eta _{(234)(1)}O_{23} +q^{\frac{1}{2}}O_1\eta _{(234)(23)} +q^{-\frac{5}{4}}t^{-\frac{1}{2}}(q-1) (q+t)O_2\eta _{(6)(1)} +q^{-\frac{5}{4}}t^{-\frac{1}{2}}(q-1) (q+t)\eta _{(2)(1)}O_6\\
&-\eta _{(234)(123)} -(q-1)\eta _{(56)(23)} -q^{-\frac{3}{2}}t^{-\frac{1}{2}}(q-1)^2 (q+t)\eta _{(12)(6)} -q^{-1}(q-1) (q+1)\eta _{(4)(1)}\\
&+q^{-\frac{1}{2}}(q-1)\rho _{13},\\[0.3em]
d_1(\eta_{(56)(34)})=&q^{-\frac{1}{4}}O_{234}O_{34} -q^{\frac{1}{4}}O_{34}O_{234} +q^{-\frac{1}{2}}(q-1)O_2=\eta _{(234)(34)},\\[0.3em]
d_1(\eta_{(56)(45)})=&q^{\frac{1}{2}}O_{234}O_{45} -q^{-\frac{1}{2}}O_{45}O_{234} -q^{-1}(q-1) (q+1)O_4O_{61} +q^{-\frac{3}{4}}t^{-\frac{1}{2}}(q-1) (q+t)O_1O_2\\ &-q^{-\frac{1}{2}}t^{-\frac{1}{2}}(q-1) (q+t)O_{12}\\
=&q^{\frac{1}{2}}\eta _{(234)(45)} +(q-1)\eta _{(61)(4)} -(q-1)\eta _{(23)(5)} +(q-1)\rho _7,\\[0.3em]
d_1(\eta_{(61)(1)})=&O_1O_{61}O_1 -q^{\frac{1}{2}}O_1^2O_{61} -q^{\frac{1}{4}}O_6O_1 +q^{\frac{3}{4}}O_1O_6 +q^{-\frac{1}{2}}(q-1)O_{61}=q^{\frac{1}{4}}O_1\eta _{(61)(1)} -\eta _{(6)(1)},\\[0.3em]
d_1(\eta_{(61)(2)})=&q^{\frac{3}{4}}O_1O_{61}O_1O_2 -q^{\frac{1}{4}}O_1O_2O_1O_{61} +q^{\frac{1}{2}}O_{12}O_1O_{61} -qO_1O_{61}O_{12} -qO_6O_1O_2 +q^{\frac{1}{2}}O_1O_2O_6 -q^{\frac{3}{4}}O_{12}O_6\\
&+q^{\frac{5}{4}}O_6O_{12} -q^{-\frac{1}{2}}(q-1)O_{345}\\
=& -q^{\frac{1}{2}}O_1\eta _{(2)(1)}O_{61} +qO_1^2\eta _{(61)(2)} +qO_1\eta _{(61)(1)}O_2 +q^{\frac{1}{4}}\eta _{(12)(1)}O_{61} -q^{\frac{1}{2}}O_1\eta _{(61)(12)}\\ &-q^{-\frac{1}{2}}(q^2+q-1)O_1\eta _{(6)(2)} +(q-1)O_1\rho _4 -q^{\frac{3}{4}}\eta _{(6)(1)}O_2 -(q-1)\eta _{(61)(2)} -q\eta _{(12)(6)},\\[0.3em]
d_1(\eta_{(61)(3)})=& -q^{\frac{1}{4}}O_3O_1O_{61} +q^{\frac{1}{4}}O_1O_{61}O_3 -q^{\frac{1}{2}}O_6O_3 +q^{\frac{1}{2}}O_3O_6= -q^{\frac{1}{4}}\eta _{(3)(1)}O_{61} +q^{\frac{1}{4}}O_1\eta _{(61)(3)} -q^{\frac{1}{2}}\eta _{(6)(3)},\\[0.3em]
d_1(\eta_{(61)(4)})=& -q^{\frac{1}{4}}O_4O_1O_{61} +q^{\frac{1}{4}}O_1O_{61}O_4 -q^{\frac{1}{2}}O_6O_4 +q^{\frac{1}{2}}O_4O_6= -q^{\frac{1}{4}}\eta _{(4)(1)}O_{61} +q^{\frac{1}{4}}O_1\eta _{(61)(4)} -q^{\frac{1}{2}}\eta _{(6)(4)},\\[0.3em]
d_1(\eta_{(61)(5)})=& -q^{\frac{1}{2}}O_5O_1O_{61} +O_1O_{61}O_5 +q^{-\frac{1}{4}}(q-1)O_1O_{234} -q^{\frac{1}{4}}O_6O_5 +q^{\frac{3}{4}}O_5O_6 -(q-1)O_{56}\\
=& -q^{\frac{1}{2}}\eta _{(5)(1)}O_{61} +q^{\frac{1}{4}}O_1\eta _{(61)(5)} -q^{\frac{1}{2}}\eta _{(6)(5)},\\[0.3em]
d_1(\eta_{(61)(6)})=& -O_{61}O_1O_{61} +q^{\frac{1}{2}}O_1O_{61}^2 +q^{\frac{1}{4}}O_{61}O_6 -q^{\frac{3}{4}}O_6O_{61} -q^{-\frac{1}{2}}(q-1)O_1= -q^{\frac{1}{4}}\eta _{(61)(1)}O_{61} +\eta _{(61)(6)},\\[0.3em]
d_1(\eta_{(61)(12)})=& -q^{\frac{3}{4}}O_2O_1O_{61} +q^{-\frac{1}{4}}O_1O_{61}O_2 +q^{-\frac{3}{4}}(q-1) (q+1)O_1O_2O_{61} -q^{-\frac{1}{2}}(q-1) (q+1)O_{12}O_{61} -O_6O_2\\
&+qO_2O_6 -q^{-1}t^{-\frac{1}{2}}(q-1) (q+t)O_4\\
=& -q\eta _{(2)(1)}O_{61} +q^{-\frac{1}{2}}O_1\eta _{(61)(2)} +q^{-1}(q-1)\eta _{(61)(12)} -q^{-2}(q^2+q-1)\eta _{(6)(2)} +q^{-\frac{3}{2}}(q-1)\rho _4,\\[0.3em]
d_1(\eta_{(61)(23)})=&q^{\frac{3}{4}}O_1O_{61}O_1O_{23} -q^{\frac{1}{4}}O_1O_{23}O_1O_{61} +q^{\frac{1}{2}}O_{123}O_1O_{61} -qO_1O_{61}O_{123} -qO_6O_1O_{23} +q^{\frac{1}{2}}O_1O_{23}O_6\\
&-q^{\frac{3}{4}}O_{123}O_6 +q^{\frac{5}{4}}O_6O_{123} -q^{-\frac{1}{2}}(q-1)O_{45}\\
=&qO_1\eta _{(61)(1)}O_{23} -q^{\frac{1}{2}}O_1\eta _{(23)(1)}O_{61} +qO_1^2\eta _{(61)(23)} -q^{\frac{3}{4}}\eta _{(6)(1)}O_{23} +q^{\frac{1}{4}}\eta _{(123)(1)}O_{61} +qO_1\eta _{(123)(61)}\\
&-q^{\frac{1}{2}}(q-1)O_1\eta _{(45)(1)} +q^{\frac{3}{2}}O_1\eta _{(23)(6)} -q\eta _{(123)(6)} -(q-1)\eta _{(61)(23)},\\[0.3em]
d_1(\eta_{(61)(34)})=& -q^{\frac{1}{4}}O_{34}O_1O_{61} +q^{\frac{1}{4}}O_1O_{61}O_{34} +q^{\frac{1}{2}}O_{34}O_6 -q^{\frac{1}{2}}O_6O_{34}= -q^{\frac{1}{4}}\eta _{(34)(1)}O_{61} +q^{\frac{1}{4}}O_1\eta _{(61)(34)} +q^{\frac{1}{2}}\eta _{(34)(6)},\\[0.3em]
d_1(\eta_{(61)(45)})=& -q^{\frac{1}{2}}O_{45}O_1O_{61} +O_1O_{61}O_{45} +q^{-\frac{1}{4}}(q-1)O_1O_{23} +q^{\frac{3}{4}}O_{45}O_6 -q^{\frac{1}{4}}O_6O_{45} -(q-1)O_{123}\\
=& -q^{\frac{1}{2}}\eta _{(45)(1)}O_{61} +q^{\frac{1}{4}}O_1\eta _{(61)(45)} +q^{\frac{1}{2}}\eta _{(45)(6)},\\[0.3em]
d_1(\eta_{(61)(56)})=& -q^{-\frac{1}{4}}O_{234}O_1O_{61} +q^{\frac{3}{4}}O_1O_{61}O_{234} +O_{234}O_6 -qO_6O_{234} -q^{-1}(q-1) (q+1)O_1O_5\\
&+q^{-1}t^{-\frac{1}{2}}(q-1) (q+t)O_3\\
=& -\eta _{(234)(1)}O_{61} -q^{\frac{1}{2}}O_1\eta _{(234)(61)} +\eta _{(234)(6)} +q^{-1}(q-1)\eta _{(5)(1)} -q^{-\frac{1}{2}}(q-1)\rho _3,\\[0.3em]
d_1(\eta_{(123)(1)})=&q^{\frac{1}{4}}O_{23}O_1 -q^{\frac{3}{4}}O_1O_{23} +(q-1)O_{123}=q^{\frac{1}{2}}\eta _{(23)(1)},\\[0.3em]
d_1(\eta_{(123)(2)})=&q^{\frac{1}{4}}O_{23}O_1O_2 -q^{\frac{1}{4}}O_1O_2O_{23} -q^{\frac{1}{2}}O_{23}O_{12} +q^{\frac{1}{2}}O_{12}O_{23}\\
=&q^{\frac{1}{2}}O_1\eta _{(23)(2)} +q^{\frac{1}{2}}\eta _{(23)(1)}O_2 -(q-1)\eta _{(123)(2)} -\eta _{(23)(12)} -q^{-\frac{1}{2}}(q-1)\rho _5,\\[0.3em]
d_1(\eta_{(123)(3)})=&q^{-\frac{1}{4}}O_{23}O_3 -q^{\frac{1}{4}}O_3O_{23} +q^{-\frac{1}{2}}(q-1)O_2=\eta _{(23)(3)},\\[0.3em]
d_1(\eta_{(123)(4)})=&q^{\frac{1}{4}}O_{23}O_4 -q^{-\frac{1}{4}}O_4O_{23} -q^{-\frac{1}{2}}(q-1)O_{234}=\eta _{(23)(4)},\\[0.3em]
d_1(\eta_{(123)(5)})=&O_{23}O_5 -O_5O_{23}=\eta _{(23)(5)},\\[0.3em]
d_1(\eta_{(123)(6)})=& -q^{\frac{1}{4}}O_{61}O_{23} +q^{-\frac{1}{4}}O_{23}O_{61} +q^{-\frac{1}{2}}(q-1)O_{45}= -\eta _{(61)(23)},\\[0.3em]
d_1(\eta_{(123)(12)})=&q^{\frac{1}{4}}O_{23}O_2 -q^{-\frac{1}{4}}O_2O_{23} -q^{-\frac{1}{2}}(q-1)O_3=\eta _{(23)(2)},\\[0.3em]
d_1(\eta_{(123)(23)})=&O_{23}O_1O_{23} -q^{\frac{1}{2}}O_1O_{23}^2 +q^{\frac{3}{4}}O_{123}O_{23} -q^{\frac{1}{4}}O_{23}O_{123} +q^{-\frac{1}{2}}(q-1)O_1=q^{\frac{1}{4}}\eta _{(23)(1)}O_{23} +\eta _{(123)(23)},\\[0.3em]
d_1(\eta_{(123)(34)})=& -O_{34}O_{23} +O_{23}O_{34} -q^{-\frac{1}{2}}(q-1)O_{234}O_3 +q^{-\frac{1}{2}}(q-1)O_2O_4\\
=& -q^{-\frac{1}{2}}(q-1)\eta _{(234)(3)} -q^{-\frac{1}{2}}\eta _{(34)(23)} -q^{-1}(q-1)\rho _6,\\[0.3em]
d_1(\eta_{(123)(45)})=& -q^{-\frac{1}{4}}O_{45}O_{23} +q^{\frac{1}{4}}O_{23}O_{45} -q^{-\frac{1}{2}}(q-1)O_{61}= -\eta _{(45)(23)},\\[0.3em]
d_1(\eta_{(123)(56)})=& -q^{\frac{1}{4}}O_{234}O_{23} +q^{-\frac{1}{4}}O_{23}O_{234} +q^{-\frac{1}{2}}(q-1)O_4= -\eta _{(234)(23)},\\[0.3em]
d_1(\eta_{(123)(61)})=&q^{\frac{1}{4}}O_{23}O_1O_{61} -q^{\frac{1}{4}}O_1O_{61}O_{23} -q^{-\frac{1}{4}}(q-1)O_1O_{23}O_{61} +(q-1)O_{123}O_{61} -q^{\frac{1}{2}}O_{23}O_6 +q^{\frac{1}{2}}O_6O_{23}\\
&+q^{-\frac{1}{2}}(q-1)O_{45}O_1\\
=&q^{\frac{1}{2}}\eta _{(23)(1)}O_{61} -O_1\eta _{(61)(23)} +q^{-\frac{1}{2}}(q-1)\eta _{(45)(1)} -q^{\frac{1}{2}}\eta _{(23)(6)},\\[0.3em]
d_1(\eta_{(234)(1)})=&O_1O_{234}O_1 -q^{\frac{1}{2}}O_1^2O_{234} -q^{\frac{1}{4}}O_{56}O_1 +q^{\frac{3}{4}}O_1O_{56} +q^{-\frac{1}{2}}(q-1)O_{234}=q^{\frac{1}{4}}O_1\eta _{(234)(1)} -\eta _{(56)(1)},\\[0.3em]
d_1(\eta_{(234)(2)})=&q^{\frac{3}{4}}O_1O_{234}O_1O_2 -q^{\frac{1}{4}}O_1O_2O_1O_{234} +q^{\frac{1}{2}}O_{12}O_1O_{234} -qO_1O_{234}O_{12} -qO_{56}O_1O_2 +q^{\frac{1}{2}}O_1O_2O_{56}\\
&+q^{\frac{5}{4}}O_{56}O_{12} -q^{\frac{3}{4}}O_{12}O_{56} -q^{-\frac{1}{2}}(q-1)O_{34}\\
=& -q^{\frac{1}{2}}O_1\eta _{(2)(1)}O_{234} +qO_1^2\eta _{(234)(2)} +qO_1\eta _{(234)(1)}O_2 +q^{\frac{1}{4}}\eta _{(12)(1)}O_{234} -qO_1\eta _{(234)(12)} -q^{\frac{1}{2}}O_1\eta _{(56)(2)}\\
&-q^{\frac{1}{2}}(q-1)O_1\eta _{(34)(1)} -q^{\frac{3}{4}}\eta _{(56)(1)}O_2 -(q-1)\eta _{(234)(2)} +q\eta _{(56)(12)},\\[0.3em]
d_1(\eta_{(234)(3)})=& -q^{\frac{1}{4}}O_3O_1O_{234} +q^{\frac{1}{4}}O_1O_{234}O_3 -q^{\frac{1}{2}}O_{56}O_3 +q^{\frac{1}{2}}O_3O_{56}= -q^{\frac{1}{4}}\eta _{(3)(1)}O_{234} +q^{\frac{1}{4}}O_1\eta _{(234)(3)} -q^{\frac{1}{2}}\eta _{(56)(3)},\\[0.3em]
d_1(\eta_{(234)(4)})=& -q^{\frac{1}{2}}O_4O_1O_{234} +O_1O_{234}O_4 +q^{-\frac{1}{4}}(q-1)O_1O_{23} -q^{\frac{1}{4}}O_{56}O_4 +q^{\frac{3}{4}}O_4O_{56} -(q-1)O_{123}\\
=& -q^{\frac{1}{2}}\eta _{(4)(1)}O_{234} +q^{\frac{1}{4}}O_1\eta _{(234)(4)} -q^{\frac{1}{2}}\eta _{(56)(4)},\\[0.3em]
d_1(\eta_{(234)(5)})=& -O_5O_1O_{234} +q^{\frac{1}{2}}O_1O_{234}O_5 -q^{\frac{3}{4}}O_{56}O_5 +q^{\frac{1}{4}}O_5O_{56} -q^{-\frac{1}{4}}(q-1)O_1O_{61} +(q-1)O_6\\
=& -\eta _{(5)(1)}O_{234} +q^{\frac{1}{4}}O_1\eta _{(234)(5)} -q^{\frac{1}{2}}\eta _{(56)(5)},\\[0.3em]
d_1(\eta_{(234)(6)})=& -q^{\frac{1}{4}}O_{61}O_1O_{234} +q^{\frac{1}{4}}O_1O_{234}O_{61} +q^{\frac{1}{2}}O_{61}O_{56} -q^{\frac{1}{2}}O_{56}O_{61}\\
=& -q^{\frac{1}{2}}\eta _{(61)(1)}O_{234} +q^{\frac{1}{2}}O_1\eta _{(234)(61)} +\eta _{(61)(56)} -q^{-1}(q-1)\eta _{(5)(1)} +q^{-\frac{1}{2}}(q-1)\rho _3,\\[0.3em]
d_1(\eta_{(234)(12)})=&q^{-\frac{1}{4}}(q-1)O_{234}O_1O_2 -q^{\frac{1}{4}}O_2O_1O_{234} +q^{\frac{1}{4}}O_1O_{234}O_2 -(q-1)O_{234}O_{12} -q^{-\frac{1}{2}}(q-1)O_{34}O_1\\
& -q^{\frac{1}{2}}O_{56}O_2 +q^{\frac{1}{2}}O_2O_{56}\\
=& -q^{\frac{1}{2}}\eta _{(2)(1)}O_{234} +qO_1\eta _{(234)(2)} +(q-1)\eta _{(234)(1)}O_2 -(q-1)\eta _{(234)(12)} -q^{\frac{1}{2}}\eta _{(56)(2)} -q^{\frac{1}{2}}(q-1)\eta _{(34)(1)},\\[0.3em]
d_1(\eta_{(234)(23)})=&q^{\frac{3}{4}}O_1O_{234}O_1O_{23} -q^{\frac{1}{4}}O_1O_{23}O_1O_{234} +q^{\frac{1}{2}}O_{123}O_1O_{234} -qO_1O_{234}O_{123} -qO_{56}O_1O_{23} +q^{\frac{1}{2}}O_1O_{23}O_{56}\\
&-q^{\frac{3}{4}}O_{123}O_{56} +q^{\frac{5}{4}}O_{56}O_{123} -q^{-\frac{1}{2}}(q-1)O_4\\
=& -q^{\frac{1}{2}}O_1\eta _{(23)(1)}O_{234} +qO_1\eta _{(234)(1)}O_{23} +qO_1^2\eta _{(234)(23)} +q^{-\frac{3}{4}}t^{-\frac{1}{2}}(q-1) (q+t)O_1O_2\eta _{(6)(1)}\\ &+q^{-\frac{3}{4}}t^{-\frac{1}{2}}(q-1) (q+t)O_1\eta _{(2)(1)}O_6 +q^{\frac{1}{4}}\eta _{(123)(1)}O_{234} -q^{\frac{3}{4}}\eta _{(56)(1)}O_{23} -q^{\frac{1}{2}}O_1\eta _{(234)(123)}\\ &-q^{\frac{3}{2}}O_1\eta _{(56)(23)}-q^{-1}t^{-\frac{1}{2}}(q-1)^2 (q+t)O_1\eta _{(12)(6)} -q^{-\frac{1}{2}}(q-1) (q+1)O_1\eta _{(4)(1)} +(q-1)O_1\rho _{13}\\
&-(q-1)\eta _{(234)(23)}-q\eta _{(123)(56)},\\[0.3em]
d_1(\eta_{(234)(34)})=& -q^{\frac{1}{2}}O_{34}O_1O_{234} +O_1O_{234}O_{34} -q^{\frac{1}{4}}O_{56}O_{34} +q^{\frac{3}{4}}O_{34}O_{56} +q^{-\frac{1}{4}}(q-1)O_1O_2 -(q-1)O_{12}\\
=& -q^{\frac{1}{2}}\eta _{(34)(1)}O_{234} +q^{\frac{1}{4}}O_1\eta _{(234)(34)} -q^{\frac{1}{2}}\eta _{(56)(34)},\\[0.3em]
d_1(\eta_{(234)(45)})=& -q^{\frac{1}{4}}O_{45}O_1O_{234} +q^{\frac{1}{4}}O_1O_{234}O_{45} +q^{-\frac{1}{4}}(q-1)O_1O_{23}O_5 -q^{-\frac{1}{4}}(q-1)O_1O_{61}O_4 -q^{\frac{1}{2}}O_{56}O_{45}\\
&+q^{\frac{1}{2}}O_{45}O_{56} -(q-1)O_{123}O_5 +(q-1)O_6O_4\\
=& -q^{\frac{1}{4}}\eta _{(45)(1)}O_{234} +q^{\frac{1}{4}}O_1\eta _{(234)(45)} -(q-1)\eta _{(123)(5)} -\eta _{(56)(45)} +(q-1)\eta _{(6)(4)} -q^{-\frac{1}{2}}(q-1)\rho _2,\\[0.3em]
d_1(\eta_{(234)(56)})=& -O_{234}O_1O_{234} +q^{\frac{1}{2}}O_1O_{234}^2 +q^{\frac{1}{4}}O_{234}O_{56} -q^{\frac{3}{4}}O_{56}O_{234} -q^{-\frac{1}{2}}(q-1)O_1\\
=& -q^{\frac{1}{4}}\eta _{(234)(1)}O_{234} +\eta _{(234)(56)},\\[0.3em]
d_1(\eta_{(234)(61)})=&q^{\frac{1}{4}}O_1O_{234}O_1O_{61} -q^{\frac{3}{4}}O_1O_{61}O_1O_{234} -q^{\frac{1}{2}}O_{56}O_1O_{61} +qO_1O_{61}O_{56} +qO_6O_1O_{234} -q^{\frac{1}{2}}O_1O_{234}O_6\\
&+q^{\frac{3}{4}}O_{56}O_6 -q^{\frac{5}{4}}O_6O_{56} +q^{-\frac{1}{2}}(q-1)O_5\\
=& -qO_1\eta _{(61)(1)}O_{234} +q^{\frac{1}{2}}O_1\eta _{(234)(1)}O_{61} +qO_1^2\eta _{(234)(61)} +q^{\frac{3}{4}}\eta _{(6)(1)}O_{234} -q^{\frac{1}{4}}\eta _{(56)(1)}O_{61} -q^{\frac{1}{2}}O_1\eta _{(234)(6)}\\
&+q^{\frac{1}{2}}O_1\eta _{(61)(56)} -q^{-\frac{1}{2}}(q-1)O_1\eta _{(5)(1)} +(q-1)O_1\rho _3 -(q-1)\eta _{(234)(61)} +q\eta _{(56)(6)},\\[0.3em]
d_1(\eta_{(234)(123)})=&q^{-\frac{3}{4}}t^{-\frac{1}{2}}(q-1) (q+t)O_1O_2O_{61}O_1 -q^{-\frac{1}{4}}t^{-\frac{1}{2}}(q-1) (q+t)O_1O_2O_1O_{61} -q^{-\frac{1}{4}}O_{23}O_1O_{234} \\
&+q^{\frac{3}{4}}O_1O_{234}O_{23}-q^{-\frac{1}{2}}t^{-\frac{1}{2}}(q-1) (q+t)O_{12}O_{61}O_1 +t^{-\frac{1}{2}}(q-1) (q+t)O_{12}O_1O_{61} \\
& +t^{-\frac{1}{2}}(q-1) (q+t)O_1O_2O_6 -qO_{56}O_{23} +O_{23}O_{56} -q^{\frac{1}{4}}t^{-\frac{1}{2}}(q-1) (q+t)O_{12}O_6 \\
&-q^{-\frac{5}{4}}t^{-\frac{1}{2}}(q-1) (q+t)O_{61}O_2 -q^{-1}(q-1) (q+1)O_4O_1 +q^{-1}t^{-\frac{1}{2}}(q-1) (q+t)O_{345}\\
=&q^{-\frac{1}{2}}t^{-\frac{1}{2}}(q-1) (q+t)O_1O_2\eta _{(61)(1)} -\eta _{(23)(1)}O_{234} -q^{-\frac{1}{4}}t^{-\frac{1}{2}}(q-1) (q+t)O_{12}\eta _{(61)(1)} +q^{\frac{1}{2}}O_1\eta _{(234)(23)}\\
&-q^{-\frac{3}{2}}t^{-\frac{1}{2}}(q-1) (q+t)\eta _{(61)(2)} -q\eta _{(56)(23)} -q^{-\frac{1}{2}}t^{-\frac{1}{2}}(q-1) (q+t)\eta _{(12)(6)} -q^{-1}(q-1) (q+1)\eta _{(4)(1)}\\
& +q^{-\frac{1}{2}}(q-1)\rho _{13},\\[0.3em]
d_1(\eta_{(345)(2)})=&O_{345}O_1O_2 -q^{\frac{1}{2}}O_1O_2O_{345} -q^{\frac{1}{4}}O_{345}O_{12} +q^{\frac{3}{4}}O_{12}O_{345} +q^{-\frac{1}{4}}(q-1)O_1O_{61} -(q-1)O_6\\
=&q^{\frac{1}{4}}O_1\eta _{(345)(2)} +\eta _{(345)(1)}O_2 -q^{\frac{1}{2}}\eta _{(345)(12)},\\[0.3em]
d_1(\eta_{(345)(6)})=&q^{\frac{1}{4}}O_{345}O_{61} -q^{-\frac{1}{4}}O_{61}O_{345} -q^{-\frac{1}{2}}(q-1)O_2=\eta _{(345)(61)},\\[0.3em]
d_1(\eta_{(345)(12)})=&q^{-\frac{1}{4}}O_{345}O_2 -q^{\frac{1}{4}}O_2O_{345} +q^{-\frac{1}{2}}(q-1)O_{61}=\eta _{(345)(2)},\\[0.3em]
d_1(\eta_{(345)(23)})=&q^{\frac{1}{4}}O_{345}O_1O_{23} -q^{\frac{1}{4}}O_1O_{23}O_{345} -q^{\frac{1}{2}}O_{345}O_{123} +q^{\frac{1}{2}}O_{123}O_{345} -q^{-\frac{1}{4}}(q-1)O_{45}O_1O_2\\
&+q^{-\frac{1}{4}}(q-1)O_1O_{61}O_3 +(q-1)O_{45}O_{12} -(q-1)O_6O_3\\
=&q^{\frac{1}{4}}\eta _{(345)(1)}O_{23} +q^{\frac{1}{4}}O_1\eta _{(345)(23)} +q^{-\frac{5}{4}}t^{-\frac{1}{2}}(q-1) (q+t)O_1\eta _{(6)(5)} -q^{-\frac{1}{4}}(q-1)\eta _{(45)(1)}O_2\\
&+q^{-\frac{5}{4}}t^{-\frac{1}{2}}(q-1) (q+t)\eta _{(6)(1)}O_5 -\eta _{(345)(123)} +q^{-\frac{3}{2}}t^{-\frac{1}{2}}(q-1)^2 (q+t)\eta _{(61)(5)} +(q-1)\eta _{(45)(12)}\\
&-q^{-1}(q-1) (q+1)\eta _{(6)(3)} -q^{-\frac{1}{2}}(q-1)\rho _0,\\[0.3em]
d_1(\eta_{(345)(56)})=&O_{345}O_{234} -O_{234}O_{345} +q^{-\frac{1}{2}}(q-1)O_{34}O_{61} -q^{-\frac{1}{2}}(q-1)O_2O_5\\
=& -q^{-\frac{5}{4}}t^{-\frac{1}{2}}(q-1) (q+t)O_1\eta _{(3)(2)} -q^{-\frac{3}{2}}t^{-\frac{1}{2}}(q-1) (q+t)\eta _{(3)(1)}O_2 +q^{-\frac{3}{2}}t^{-\frac{1}{2}}(q-1) (q+t)O_2\eta _{(3)(1)}\\ &+q^{-\frac{5}{4}}t^{-\frac{1}{2}}(q-1) (q+t)\eta _{(2)(1)}O_3 +q^{-\frac{1}{2}}\eta _{(345)(234)} -q^{-2}t^{-\frac{1}{2}}(q-1)^2 (q+t)\eta _{(12)(3)}\\
&+q^{-\frac{3}{2}}(q-1) (q+1)\eta _{(5)(2)} -q^{-1}(q-1)\rho _{14},\\[0.3em]
d_1(\eta_{(345)(61)})=&q^{\frac{1}{2}}O_{345}O_1O_{61} -O_1O_{61}O_{345} -q^{\frac{3}{4}}O_{345}O_6 +q^{\frac{1}{4}}O_6O_{345} -q^{-\frac{1}{4}}(q-1)O_1O_2 +(q-1)O_{12}\\
=&q^{\frac{1}{2}}\eta _{(345)(1)}O_{61} +q^{\frac{1}{4}}O_1\eta _{(345)(61)} -q^{\frac{1}{2}}\eta _{(345)(6)},\\[0.3em]
d_1(\eta_{(345)(123)})=&q^{-1}t^{-\frac{1}{2}}(q-1) (q+t)O_5O_1O_{61} -q^{-1}t^{-\frac{1}{2}}(q-1) (q+t)O_1O_5O_{61} +q^{-\frac{1}{2}}O_{345}O_{23} -q^{\frac{1}{2}}O_{23}O_{345}\\
&+q^{-1}(q-1) (q+1)O_3O_{61} -q^{-\frac{3}{4}}t^{-\frac{1}{2}}(q-1) (q+t)O_5O_6 +q^{-\frac{1}{2}}t^{-\frac{1}{2}}(q-1) (q+t)O_{56}\\
=&q^{-1}t^{-\frac{1}{2}}(q-1) (q+t)\eta _{(5)(1)}O_{61} +q^{-\frac{1}{2}}\eta _{(345)(23)} -q^{-1}(q-1)\eta _{(61)(3)} +q^{-1}(q-1)\eta _{(45)(2)} -(q-1)\rho _{11},\\[0.3em]
d_1(\eta_{(345)(234)})=&q^{\frac{3}{4}}O_{345}O_1O_{234} -q^{-\frac{1}{4}}O_1O_{234}O_{345} +q^{-\frac{3}{4}}t^{-\frac{1}{2}}(q-1) (q+t)O_3O_1^2O_2 -q^{-\frac{1}{2}}t^{-\frac{1}{2}}(q-1) (q+t)O_3O_1O_{12}\\
&-q^{-1}t^{-\frac{1}{2}}(q-1) (q+t)O_1^2O_{23} -qO_{345}O_{56} +O_{56}O_{345} -q^{-\frac{3}{4}}(q-1) (q+1)O_5O_1O_2\\
&+q^{-\frac{3}{4}}t^{-\frac{1}{2}}(q-1) (q+t)O_1O_{123}+q^{-\frac{1}{2}}(q-1) (q+1)O_5O_{12} -q^{-\frac{3}{4}}t^{-\frac{1}{2}}(q-1) (q+t)O_3O_2\\ &+q^{-1}t^{-\frac{1}{2}}(q-1) (q+t)O_{23}\\
=&q^{-\frac{3}{4}}t^{-\frac{1}{2}}(q-1) (q+t)\eta _{(3)(1)}O_1O_2 +q^{\frac{3}{4}}\eta _{(345)(1)}O_{234} -q^{-\frac{1}{2}}t^{-\frac{1}{2}}(q-1) (q+t)\eta _{(3)(1)}O_{12}+q^{\frac{1}{4}}O_1\eta _{(345)(234)}\\
&-q^{-\frac{3}{4}}(q-1) (q+1)\eta _{(5)(1)}O_2 -q\eta _{(345)(56)} +q^{\frac{1}{2}}(q-1)\eta _{(34)(6)} -q^{\frac{1}{2}}(q-1)\eta _{(12)(5)}\\
&-q^{-\frac{1}{2}}t^{-\frac{1}{2}}(q-1) (q+t)\eta _{(3)(2)} -q^{\frac{1}{2}}(q-1)\rho _8.
\end{align*}

We omit the explicit action of $d_1$ on the q-Casimir relator $\rho_0$ which can be found at \cite{Arthamonov-GitHub-Flat}.

\section{Calculations for Theorem \ref{th:IsomorphismAq1t1CoordinateRingCharacterVariety}}
\label{sec:IdentiyCompositionPsiPhi}

In this section we present details of verification of identities (\ref{eq:IdentityCompositionPhiPsi}) used in the proof of Theorem \ref{th:IsomorphismAq1t1CoordinateRingCharacterVariety}. We start with a technical Lemma, which allows us to express the $SL(2,\mathbb C)$-invariant polynomials appearing on the right hand side of (\ref{eq:PsiActionOnGenerators}) in terms of generators (\ref{eq:ABL18Generators}).
\begin{lemma}
The following identities hold in $\mathcal O(\mathrm{Hom}(\pi_1(\Sigma_2),SL(2,\mathbb C)))$
\begin{align*}
\tau _{X_1Y_2X_2^{-1}Y_2^{-1}}=&\tau _{X_1X_2}-\tau _{X_1X_2} \tau _{Y_2}^2+\tau _{X_1} \tau _{X_2} \tau _{Y_2}^2+\tau _{Y_2} \tau _{X_1X_2Y_2}-\tau _{X_1} \tau _{Y_2} \tau _{X_2Y_2}-\tau _{X_2} \tau _{Y_2} \tau _{X_1Y_2}+\tau _{X_1Y_2} \tau _{X_2Y_2},\\
\tau _{Y_1X_1^{-1}}=&\tau _{X_1} \tau _{Y_1}-\tau _{X_1Y_1},\\
\tau _{X_1^{-1}Y_1^{-1}Y_2^{-1}}=&-\tau _{X_1Y_1Y_2}+\tau _{X_1} \tau _{Y_1Y_2}+\tau _{Y_1} \tau _{X_1Y_2}+\tau _{Y_2} \tau _{X_1Y_1}-\tau _{X_1} \tau _{Y_1} \tau _{Y_2},\\
\tau _{Y_2^{-1}Y_1^{-1}X_2}=&\tau _{Y_1X_2Y_2}-\tau _{Y_1} \tau _{X_2Y_2}-\tau _{Y_2} \tau _{Y_1X_2}+\tau _{X_2} \tau _{Y_1} \tau _{Y_2},\\
\tau _{X_1Y_2Y_2X_2^{-1}Y_2^{-1}}=&-\tau _{X_1X_2} \tau _{Y_2}^3+\tau _{X_1} \tau _{X_2} \tau _{Y_2}^3+\tau _{Y_2}^2 \tau _{X_1X_2Y_2}-\tau _{X_1} \tau _{Y_2}^2 \tau _{X_2Y_2}-\tau _{X_2} \tau _{Y_2}^2 \tau _{X_1Y_2}+2 \tau _{X_1X_2} \tau _{Y_2}\\
&\qquad+\tau _{Y_2} \tau _{X_1Y_2} \tau _{X_2Y_2}-\tau _{X_1} \tau _{X_2} \tau _{Y_2}-\tau _{X_1X_2Y_2}+\tau _{X_2} \tau _{X_1Y_2},\\
\tau _{X_1Y_1Y_2X_2^{-1}Y_2^{-1}}=&-\tau _{Y_1X_2}\tau _{X_1} \tau _{Y_2}^2 +\tau _{X_1} \tau _{X_2} \tau _{Y_1} \tau _{Y_2}^2+\tau _{Y_2} \tau _{X_1Y_2} \tau _{Y_1X_2}-\tau _{X_1X_2} \tau _{Y_1Y_2} \tau _{Y_2}+\tau _{X_1} \tau _{Y_2} \tau _{Y_1X_2Y_2}\\
&\qquad-\tau _{X_1} \tau _{Y_1} \tau _{Y_2} \tau _{X_2Y_2}-\tau _{X_2} \tau _{Y_1} \tau _{Y_2} \tau _{X_1Y_2}+\tau _{Y_1Y_2} \tau _{X_1X_2Y_2}-\tau _{X_1Y_1X_2}-\tau _{X_1Y_2} \tau _{Y_1X_2Y_2}\\
&\qquad+\tau _{X_1} \tau _{Y_1X_2}+\tau _{X_2} \tau _{X_1Y_1}+\tau _{X_1X_2} \tau _{Y_1}+\tau _{Y_1} \tau _{X_1Y_2} \tau _{X_2Y_2}-\tau _{X_1} \tau _{X_2} \tau _{Y_1},\\
\tau _{Y_1X_1^{-1}Y_1^{-1}Y_2^{-1}}=&\tau _{Y_1}^2 \tau _{X_1Y_2}-\tau _{X_1} \tau _{Y_2} \tau _{Y_1}^2-\tau _{Y_1} \tau _{X_1Y_1Y_2}+\tau _{X_1} \tau _{Y_1Y_2} \tau _{Y_1}+\tau _{Y_2} \tau _{Y_1} \tau _{X_1Y_1}-\tau _{X_1Y_2}-\tau _{Y_1Y_2} \tau _{X_1Y_1}+\tau _{X_1} \tau _{Y_2},\\
\tau _{X_1Y_1Y_2Y_2X_2^{-1}Y_2^{-1}}=&\frac{1}{2} \big(-2 \tau _{X_1} \tau _{Y_2}^3 \tau _{Y_1X_2}+2 \tau _{X_1} \tau _{X_2} \tau _{Y_1} \tau _{Y_2}^3+2 \tau _{Y_2}^2 \tau _{X_1Y_2} \tau _{Y_1X_2}-2 \tau _{X_1X_2} \tau _{Y_1Y_2} \tau _{Y_2}^2+2 \tau _{X_1} \tau _{Y_2}^2 \tau _{Y_1X_2Y_2}\\
&\qquad-2 \tau _{X_1} \tau _{Y_1} \tau _{Y_2}^2 \tau _{X_2Y_2}-2 \tau _{X_2} \tau _{Y_1} \tau _{Y_2}^2 \tau _{X_1Y_2}+2 \tau _{Y_1Y_2} \tau _{Y_2} \tau _{X_1X_2Y_2}-\tau _{Y_2} \tau _{X_1Y_1X_2}\\
&\qquad-2 \tau _{Y_2} \tau _{X_1Y_2} \tau _{Y_1X_2Y_2}+3 \tau _{X_1} \tau _{Y_2} \tau _{Y_1X_2}+\tau _{X_2} \tau _{Y_2} \tau _{X_1Y_1}+2 \tau _{X_1X_2} \tau _{Y_1} \tau _{Y_2}+2 \tau _{Y_1} \tau _{Y_2} \tau _{X_1Y_2} \tau _{X_2Y_2}\\
&\qquad-3 \tau _{X_1} \tau _{X_2} \tau _{Y_1} \tau _{Y_2}-\tau _{X_1Y_1} \tau _{X_2Y_2}-\tau _{X_1Y_2} \tau _{Y_1X_2}+\tau _{X_1X_2} \tau _{Y_1Y_2}-\tau _{X_1} \tau _{Y_1X_2Y_2}+\tau _{X_2} \tau _{X_1Y_1Y_2}\\
&\qquad-\tau _{Y_1} \tau _{X_1X_2Y_2}+\tau _{X_1} \tau _{Y_1} \tau _{X_2Y_2}+\tau _{X_2} \tau _{Y_1} \tau _{X_1Y_2}\big),\\
\tau _{X_2Y_1^{-1}}=&\tau _{X_2} \tau _{Y_1}-\tau _{Y_1X_2}.
\end{align*}
\label{lemm:FullRepresentationVarietyRelationsAux1}
\end{lemma}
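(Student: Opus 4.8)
The plan is to prove each identity as an instance of the universal $SL(2,\mathbb{C})$ trace identities, which hold for arbitrary matrices in $SL(2,\mathbb{C})$ and therefore require neither the surface relation (\ref{eq:GenusTwoFundamentalGroup}) nor any information beyond Cayley--Hamilton. In fact each identity involves only traces of words in the four letters $X_1,Y_1,X_2,Y_2$, so it is already valid in the trace ring of the free group $F_4$; since $\mathcal O(\mathrm{Hom}(\pi_1(\Sigma_2),SL(2,\mathbb C)))$ is a quotient of $\mathcal O(\mathrm{Hom}(F_4,SL(2,\mathbb C)))$, it suffices to establish them upstairs. The three tools I would use are: (i) the Cayley--Hamilton relation $A+A^{-1}=\tau_A\,\mathrm{Id}$ for $A\in SL(2,\mathbb{C})$, which gives both $\tau_{A^{-1}}=\tau_A$ and, after multiplying by $B$ and tracing, the fundamental two-letter identity $\tau_{AB}+\tau_{A^{-1}B}=\tau_A\tau_B$; (ii) cyclicity $\tau_{AB}=\tau_{BA}$; and (iii) the three-letter identity $\tau_{ABC}+\tau_{ACB}=\tau_A\tau_{BC}+\tau_B\tau_{CA}+\tau_C\tau_{AB}-\tau_A\tau_B\tau_C$, itself a consequence of (i).

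First I would dispose of the two short identities directly. For $\tau_{Y_1X_1^{-1}}$, tool (i) with $A=X_1$, $B=Y_1$ gives $\tau_{X_1^{-1}Y_1}=\tau_{X_1}\tau_{Y_1}-\tau_{X_1Y_1}$, and cyclicity turns the left side into $\tau_{Y_1X_1^{-1}}$; the case of $\tau_{X_2Y_1^{-1}}$ is identical. For three-letter words such as $\tau_{X_1^{-1}Y_1^{-1}Y_2^{-1}}$ I would first apply $\tau_M=\tau_{M^{-1}}$ to rewrite it as $\tau_{Y_2Y_1X_1}$, then expand by (iii) with $(A,B,C)=(Y_2,Y_1,X_1)$ and use cyclicity on each factor; this yields $\tau_{Y_2}\tau_{X_1Y_1}+\tau_{Y_1}\tau_{X_1Y_2}+\tau_{X_1}\tau_{Y_1Y_2}-\tau_{X_1}\tau_{Y_1}\tau_{Y_2}-\tau_{X_1Y_1Y_2}$, which matches the claimed right-hand side verbatim. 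The word $\tau_{Y_2^{-1}Y_1^{-1}X_2}$ reduces the same way.

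For the longer words — $\tau_{X_1Y_2X_2^{-1}Y_2^{-1}}$, $\tau_{X_1Y_1Y_2X_2^{-1}Y_2^{-1}}$, $\tau_{Y_1X_1^{-1}Y_1^{-1}Y_2^{-1}}$, and especially the two carrying a square, $\tau_{X_1Y_2^2X_2^{-1}Y_2^{-1}}$ and $\tau_{X_1Y_1Y_2^2X_2^{-1}Y_2^{-1}}$ — the strategy is the same but the bookkeeping is heavier. Here I would first linearize every repeated letter and every inverse using (i): for a squared generator, $Y_2^2=\tau_{Y_2}Y_2-\mathrm{Id}$, so $\tau_{WY_2^2}=\tau_{Y_2}\tau_{WY_2}-\tau_W$ for any word $W$; and for each remaining $X_2^{-1}$ or $Y_2^{-1}$ I would peel it off with the two-letter identity, always moving the inverse next to its companion by cyclicity before eliminating it. Iterating this reduces each left-hand trace to a polynomial in words of length at most three in the letters actually appearing, all of which lie among the fourteen generators (\ref{eq:ABL18Generators}); collecting terms then gives the stated formula.

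The main obstacle is purely the combinatorial explosion in the last two identities: a systematic reduction of a length-six word branches into many length-three trace monomials, and the final collection of terms is error-prone by hand. This is exactly the kind of reduction automated by the algorithms of \cite{GonzalezMontesinos'1993} and \cite{AshleyBurelleLawton'2018} used elsewhere in this section, so I would both organize the hand computation along a fixed reduction order (eliminate inverses, then squares, then descend in word length) and cross-check the output against a direct $2\times2$ matrix substitution in the supplementary code \cite{Arthamonov-GitHub-Flat}. No genuinely new idea beyond the three trace identities is needed; the content of the lemma is the explicit normal form, not its existence.
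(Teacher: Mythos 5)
Your proposal is correct, but it takes a genuinely different route from the paper. The paper's proof of Lemma \ref{lemm:FullRepresentationVarietyRelationsAux1} is a machine verification \emph{downstairs}: it computes a Groebner basis (in degree reverse lexicographic order) for the defining ideal of the representation variety inside the polynomial ring on the sixteen matrix entries $(X_i)_{jk},(Y_i)_{jk}$, and reduces the difference of the two sides of each identity to zero modulo that basis, so it only establishes the identities modulo the surface-group ideal --- which is all the lemma asserts. You work \emph{upstairs} in the invariant ring of $\mathrm{Hom}(F_4,SL(2,\mathbb C))=SL(2,\mathbb C)^4$, deriving the identities from the Cayley--Hamilton trace calculus alone and never invoking the relation in (\ref{eq:GenusTwoFundamentalGroup}); this proves a strictly stronger, universal statement that then descends along the quotient map, it is more conceptual, and it is in principle computer-free. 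It also explains a feature the paper's proof leaves opaque: the factor $\frac12$ in the formula for $\tau_{X_1Y_1Y_2Y_2X_2^{-1}Y_2^{-1}}$ is exactly the fingerprint of the universal reduction, where individual traces of long multilinear words are recovered from pairwise necklace sums and a division by $2$ is forced. As a sanity check, your tools give the closed form $\tau_{ABC^{-1}B^{-1}}=\tau_{AC}+\tau_{AB}\tau_{BC}-\tau_B\tau_{ABC}$, and substituting the three-letter sum rule for $\tau_{X_1Y_2X_2}$ reproduces the paper's first identity verbatim; the same happens for the short identities. One caveat you should state explicitly: the character ring of $F_4$ is not a free polynomial ring on the fourteen generators (\ref{eq:ABL18Generators}), so your reduction is only guaranteed to produce \emph{some} universal expression, which could a priori differ from the printed right-hand side by a relation of the free-group character ring (or, worse, the printed formula could have been simplified using the surface relation, in which case the discrepancy would only vanish modulo the ideal and you would owe a reconciliation step). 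Spot checks show the printed formulas are in fact the universal ones, so this is a gap of exposition rather than substance, but for the two length-six words you should either carry the trace reduction to completion or fall back on a verification of the type the paper performs, as you yourself propose.
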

\begin{proof}
We compute the Groebner basis in the coordinate ring $\mathcal O(\mathrm{Hom}(\pi_1(\Sigma_2),SL(2,\mathbb C)))$ of the full representation variety in degree reverse lexicographic order and reduce both sides of the equation modulo this basis. The source code of the program in Mathematica is available at \cite{Arthamonov-GitHub-Flat}.
\end{proof}

Combining $\Psi$-action on generators (\ref{eq:PsiActionOnGenerators}) with Lemma \ref{lemm:FullRepresentationVarietyRelationsAux1} we obtain formulas for the action of $\iota\circ\Psi$. Now we are ready to verify the first of the identities in (\ref{eq:IdentityCompositionPhiPsi}).
\begin{lemma}
The following composition of homomorphisms represents and identity map on $\mathcal A_{q=t=1}$.
\begin{equation*}
\Phi\circ\iota\circ\Psi=\mathrm{Id}_{\mathcal A_{q=t=1}}
\end{equation*}
\end{lemma}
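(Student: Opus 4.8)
The plan is to exploit that $\Psi$, $\iota$, and $\Phi$ are all homomorphisms of commutative algebras, so their composition is again an algebra homomorphism with source $\mathcal A_{q=t=1}$. To prove it equals $\mathrm{Id}_{\mathcal A_{q=t=1}}$ it therefore suffices to verify that it fixes a generating set. Unlike the noncommutative algebra $\mathcal A_{q,t}$, where Lemma \ref{lemm:SixGenerators} lets us reduce to the six elements $O_1,\dots,O_6$, here the $q$-commutators degenerate at $q=1$ and the nine higher generators $O_{12},\dots,O_{345}$ are genuinely independent. Hence I would check the identity on all fifteen generators listed in (\ref{eq:15Generators}).

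For the generators whose $\Psi$-image is already one of the fourteen Ashley--Burelle--Lawton generators (\ref{eq:ABL18Generators}), the verification is immediate from the definitions. Comparing (\ref{eq:PsiActionOnGenerators}) with (\ref{eq:PhiActionOnGenerators}) gives, for instance,
\begin{equation*}
\Phi(\iota(\Psi(O_1)))=\Phi(\tau_{Y_1})=O_1,\qquad \Phi(\iota(\Psi(O_3)))=\Phi(\tau_{Y_1Y_2})=O_3,
\end{equation*}
and likewise for $O_2,O_4,O_5$, since $\iota$ acts as the identity on the ABL generators. These cases require no computation.

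The remaining generators are those whose $\Psi$-image is a trace $\tau_M$ that is not one of the ABL generators; these are $O_6$ together with all of $O_{12},\dots,O_{61}$ and $O_{123},O_{234},O_{345}$. For each such generator I would first apply Lemma \ref{lemm:FullRepresentationVarietyRelationsAux1} to rewrite $\iota(\Psi(O_J))=\tau_M$ as a polynomial in the fourteen ABL generators, then substitute the formulas (\ref{eq:PhiActionOnGenerators}) for $\Phi$, obtaining an element of the polynomial ring $P$. For the simplest level-two case Lemma \ref{lemm:FullRepresentationVarietyRelationsAux1} gives $\tau_{Y_1X_1^{-1}}=\tau_{X_1}\tau_{Y_1}-\tau_{X_1Y_1}$, so that
\begin{equation*}
\Phi(\iota(\Psi(O_{12})))=O_2 O_1-\big(O_1O_2-O_{12}\big)=O_{12},
\end{equation*}
which already holds at the level of the free polynomial ring. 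The generator $O_6$ and the level-three generators, however, produce long polynomial expressions that equal $O_J$ only after reduction.

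The main obstacle is precisely this final reduction: for $O_6$, the level-three generators, and the remaining level-two generators, the polynomial $\Phi(\iota(\Psi(O_J)))$ differs from $O_J$ by an element of the defining ideal $\mathcal I_{q=t=1}$, and one must reduce it to normal form using the Groebner basis furnished by Proposition \ref{prop:GroebnerbasisAqt1} to confirm that it represents $O_J$ in the quotient $\mathcal A_{q=t=1}$. This is a genuine computer-algebra computation rather than a formal manipulation, so I would carry it out in SINGULAR or Mathematica, checking that each reduced remainder equals the corresponding generator; the source code and the explicit intermediate polynomials are recorded in \cite{Arthamonov-GitHub-Flat}. Since the composition fixes every generator of $\mathcal A_{q=t=1}$, it is the identity.
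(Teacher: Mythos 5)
Your proposal is correct and follows essentially the same route as the paper: check the identity on all fifteen generators, use Lemma \ref{lemm:FullRepresentationVarietyRelationsAux1} to rewrite each $\iota\circ\Psi$-image in the ABL generators, apply $\Phi$, and reduce the result modulo $\mathcal I_{q=t=1}$ using the Groebner basis $g_i^{(q=t=1)}$, which is exactly what the paper does by exhibiting explicit decompositions in those relators. The only (harmless) discrepancy is bookkeeping: the paper finds that $O_{34}$, $O_{45}$ and the level-three generator $O_{345}$ are also preserved verbatim, so only $O_6,O_{23},O_{56},O_{61},O_{123},O_{234}$ actually require reduction.
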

\begin{proof}
Generators
\begin{equation*}
O_1,O_2,O_3,O_4,O_5,O_{12},O_{34},O_{45},O_{345}
\end{equation*}
are preserved verbatim, as for the rest we get
\begin{align*}
\Phi\circ\iota\circ\Psi(O_6)=&O_1 O_2 O_3 O_4 O_5 -O_3 O_4 O_5 O_{12} -O_1 O_4 O_5 O_{23} -O_1 O_2 O_5 O_{34} -O_1 O_2 O_3 O_{45} +O_5 O_{12} O_{34}\\
&\qquad +O_3 O_{12} O_{45} +O_1 O_{23} O_{45} +O_4 O_5 O_{123} +O_1 O_5 O_{234} +O_1 O_2 O_{345} -O_{45} O_{123}\\
&\qquad -O_{12} O_{345} -O_5 O_{56} -O_1 O_{61} +O_6\\
=&O_6+ \left(O_1 O_2-O_{1,2}\right)g_7^{(q=t=1)} +O_1g_{29}^{(q=t=1)} +O_2g_2^{(q=t=1)} -O_6g_{18}^{(q=t=1)} -g_{34}^{(q=t=1)}\\
\equiv&O_6\bmod \mathcal I_{q=t=1},\\[5pt]
\Phi\circ\iota\circ\Psi(O_{23})=&-O_1^2 O_2 O_3 +O_1 O_3 O_{12} +O_1^2 O_{23} +O_4 O_5 O_{61} -O_{45} O_{61} -O_1 O_{123} -O_4 O_{234} +O_2 O_3 +O_{23}\\
=&O_{23} -O_1g_9^{(q=t=1)} -g_{26}^{(q=t=1)}\\
\equiv&O_{23}\bmod \mathcal I_{q=t=1},\\[5pt]
\Phi\circ\iota\circ\Psi(O_{56})=&O_1 O_2 O_3 O_4 O_5^2 -O_3 O_4 O_5^2 O_{12} -O_1 O_4 O_5^2 O_{23} -O_1 O_2 O_5^2 O_{34} -O_1 O_2 O_3 O_5 O_{45} +O_5^2 O_{12} O_{34}\\
&\qquad +O_3 O_5 O_{12} O_{45} +O_1 O_5 O_{23} O_{45} +O_4 O_5^2 O_{123} +O_1 O_5^2 O_{234} +O_1 O_2 O_5 O_{345} -O_5 O_{45} O_{123}\\
&\qquad -O_5 O_{12} O_{345} -O_1 O_2 O_3 O_4 +O_3 O_4 O_{12} +O_1 O_4 O_{23} +O_1 O_2 O_{34} -O_5^2 O_{56} -O_1 O_5 O_{61}\\
&\qquad -O_{12} O_{34} -O_4 O_{123} -O_1 O_{234} +O_5 O_6 +O_{56}\\
=&O_{56} +\left(O_1 O_2 O_5-O_5 O_{12}\right)g_7^{(q=t=1)} -O_1g_{43}^{(q=t=1)} +O_1O_2g_{27}^{(q=t=1)} -O_5g_{34}^{(q=t=1)}\\
&\qquad +O_2O_5g_2^{(q=t=1)} -O_5O_6g_{18}^{(q=t=1)} -g_{28}^{(q=t=1)}\\
\equiv&O_{56}\bmod \mathcal I_{q=t=1},\\[5pt]
\Phi\circ\iota\circ\Psi(O_{61})=&-O_1 O_{23} O_{34} +O_1 O_3 O_{234} +O_{34} O_{123} +O_1 O_2 O_4 -O_4 O_{12} -O_3 O_{56} -O_{61}\\
=&O_{61} -O_1g_5^{(q=t=1)} +g_{15}^{(q=t=1)}\\
\equiv&O_{61}\bmod \mathcal I_{q=t=1},\\[5pt]
\Phi\circ\iota\circ\Psi(O_{123})=&-O_1^3 O_2 O_3 +O_1^2 O_3 O_{12} +O_1^3 O_{23} +O_1 O_4 O_5 O_{61} -O_1 O_{45} O_{61} -O_1^2 O_{123} -O_1 O_4 O_{234}\\
&\qquad +O_1 O_2 O_3 +O_{123}\\
=&O_{123} -O_1^2g_9^{(q=t=1)} -O_1g_{26}^{(q=t=1)}\\
\equiv&O_{123}\bmod \mathcal I_{q=t=1},\\[5pt]
\Phi\circ\iota\circ\Psi(O_{234})=&-O_1 O_5 O_{23} O_{34} +O_1 O_3 O_5 O_{234} +\frac{1}{2}O_1^2 O_2 O_{34} -\frac{1}{2}O_4^2 O_5 O_{61} +O_5 O_{34} O_{123} +\frac{1}{2}O_1 O_{23} O_{345}\\
&\qquad +O_1 O_2 O_4 O_5 -\frac{1}{2}O_1 O_{12} O_{34} +\frac{1}{2}O_4 O_{45} O_{61} -\frac{1}{2}O_1^2 O_{234} +\frac{1}{2}O_4^2 O_{234} -\frac{1}{2}O_{123} O_{345}\\
&\qquad -O_4 O_5 O_{12} -\frac{1}{2}O_1 O_2 O_{45} -O_3 O_5 O_{56} -\frac{1}{2}O_1 O_3 O_{61} +\frac{1}{2}O_{12} O_{45} -\frac{1}{2}O_4 O_{23} -\frac{1}{2}O_2 O_{34}\\
&\qquad +\frac{1}{2}O_1 O_{56} -\frac{1}{2}O_5 O_{61} +\frac{1}{2}O_3 O_6\\
=&O_{2,3,4} +\frac{1}{2}O_1g_{11}^{(q=t=1)} +O_5g_{15}^{(q=t=1)} -O_1O_5g_5^{(q=t=1)} -\frac{1}{2}g_{20}^{(q=t=1)} +\frac{1}{2}g_{39}^{(q=t=1)}\\
\equiv&O_{234}\bmod \mathcal I_{q=t=1}.
\end{align*}
\end{proof}

\begin{lemma}
The following combination of homomorphisms represents an identity map on the coordinate ring of the character variety
\begin{equation*}
\Psi\circ\Phi\circ\iota=\mathrm{Id}_{\mathcal O(\mathrm{Hom}(\pi_1(\Sigma_2),SL(2,\mathbb C)))^{SL(2,\mathbb C)}}.
\end{equation*}
\end{lemma}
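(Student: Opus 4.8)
The plan is to exploit that the fourteen trace functions (\ref{eq:ABL18Generators}) generate the coordinate ring $\mathcal O(\mathrm{Hom}(\pi_1(\Sigma_2),SL(2,\mathbb C)))^{SL(2,\mathbb C)}$, as recorded in (\ref{eq:ABL18SetTheoreticCutout}). Since a homomorphism out of the coordinate ring is determined by its values on a generating set, it suffices to show that $\Psi\circ\Phi\circ\iota$ fixes each of the fourteen generators $\tau_{X_1},\dots,\tau_{Y_1X_2Y_2}$. For each such generator, $\iota$ sends it to the corresponding variable of $P^{\mathrm{ABL}}$, then $\Phi$ produces an explicit element of $\mathcal A_{q=t=1}$ through (\ref{eq:PhiActionOnGenerators}), and finally $\Psi$ returns an $SL(2,\mathbb C)$-invariant polynomial via (\ref{eq:PsiActionOnGenerators}).

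First I would dispatch the generators whose image under $\Phi$ is a single generator $O_I$: for instance $\Phi(\tau_{X_1})=O_2$, $\Phi(\tau_{Y_1})=O_1$, $\Phi(\tau_{X_2})=O_4$, $\Phi(\tau_{Y_2})=O_5$, $\Phi(\tau_{Y_1Y_2})=O_3$ and $\Phi(\tau_{X_2Y_2})=O_{45}$. Applying $\Psi$ returns the generator verbatim, up to the elementary trace identities $\tau_M=\tau_{M^{-1}}$ and $\tau_A\tau_B=\tau_{AB}+\tau_{AB^{-1}}$; e.g.\ $\Psi(O_{45})=\tau_{Y_2X_2}=\tau_{X_2Y_2}$. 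For the remaining generators, whose $\Phi$-images are genuine polynomials in the $O$'s, the image under $\Psi$ is a polynomial in trace functions, several of which (such as $\tau_{Y_1X_1^{-1}}$, $\tau_{X_1^{-1}Y_1^{-1}Y_2^{-1}}$, $\tau_{Y_2^{-1}Y_1^{-1}X_2}$, $\tau_{X_1Y_2X_2^{-1}Y_2^{-1}}$ and the longer words) are not among the fourteen generators. These are precisely the traces rewritten in Lemma \ref{lemm:FullRepresentationVarietyRelationsAux1}, so I would substitute those expressions to obtain a polynomial purely in the fourteen generators, and then verify that it reduces to the target generator modulo the defining ideal of the full representation variety, using the $82$-element Groebner basis already computed in the proofs of Proposition \ref{prop:PsiHomomorphism} and Lemma \ref{lemm:FullRepresentationVarietyRelationsAux1}.

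The main obstacle will be the sheer size of the computation for the three-letter generators $\tau_{X_1Y_1X_2}$, $\tau_{X_1Y_1Y_2}$, $\tau_{X_1X_2Y_2}$, $\tau_{Y_1X_2Y_2}$ and for $\tau_{X_1X_2}$, $\tau_{X_1Y_2}$: here $\Phi$ produces cubic (and higher) expressions in the $O$'s, so that after applying $\Psi$ and the substitutions of Lemma \ref{lemm:FullRepresentationVarietyRelationsAux1} one faces a bulky invariant polynomial whose reduction modulo the representation-variety ideal is only feasible with computer algebra. One cannot shortcut this by equivariance, since $\Phi$ is not manifestly compatible with the $I$- and $d_1$-actions and the fourteen generators do not fall into clean $I$-orbits; the check must be carried out for all of them. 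I would therefore delegate the reductions to the routines of \cite{Arthamonov-GitHub-Flat}, recording for each generator the explicit combination of Groebner relators witnessing the vanishing, exactly as was done for the companion identity $\Phi\circ\iota\circ\Psi=\mathrm{Id}_{\mathcal A_{q=t=1}}$.

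Finally, I would note an essentially computation-free alternative that leverages the identity just established. Writing $\beta=\Phi\circ\iota$, the relation $\beta\circ\Psi=\mathrm{Id}_{\mathcal A_{q=t=1}}$ exhibits $\beta$ as a surjection of the $6$-dimensional integral domain $\mathcal O(\mathrm{Hom}(\pi_1(\Sigma_2),SL(2,\mathbb C)))^{SL(2,\mathbb C)}$ onto $\mathcal A_{q=t=1}$. Since a surjection from a finitely generated domain onto an algebra of the same Krull dimension has kernel of height zero, and the only such prime in a domain is $(0)$, the map $\beta$ would be an isomorphism, whence $\Psi=\beta^{-1}$ and $\Psi\circ\beta=\mathrm{Id}$. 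The caveat is that this argument invokes the dimension count for $\mathcal A_{q=t=1}$; to avoid circularity with Corollary \ref{cor:A1tIntegralDomain} I would use the $6$-dimensionality established inside the proof of Lemma \ref{lemm:NilradicalAq1t1IsTrivial}.
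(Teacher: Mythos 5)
Your primary route is exactly the paper's proof: since the fourteen trace functions (\ref{eq:ABL18Generators}) generate the invariant ring, one checks $\Psi\circ\Phi\circ\iota$ on them; the six generators $\tau_{X_1},\tau_{Y_1},\tau_{X_2},\tau_{Y_2},\tau_{Y_1Y_2},\tau_{X_2Y_2}$ are fixed verbatim, and the remaining eight are handled by expanding $\Psi(\Phi(\cdot))$ into trace functions and reducing modulo the defining ideal of the full representation variety with a precomputed Groebner basis. The only cosmetic difference is that the paper performs this reduction directly in the matrix entries $(X_i)_{jk},(Y_i)_{jk}$, whereas you first substitute the expressions of Lemma \ref{lemm:FullRepresentationVarietyRelationsAux1}; these are the same computation.

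Your closing alternative, however, is a genuinely different argument, and it is sound once the dimension input is supplied non-circularly, as you indicate. From $\beta\circ\Psi=\mathrm{Id}_{\mathcal A_{q=t=1}}$ with $\beta=\Phi\circ\iota$, surjectivity of $\beta$ together with $\dim\mathcal A_{q=t=1}\geqslant 6$ forces $\ker\beta=(0)$ in the six-dimensional domain $\mathcal O(\mathrm{Hom}(\pi_1(\Sigma_2),SL(2,\mathbb C)))^{SL(2,\mathbb C)}$, because killing a nonzero ideal of a finitely generated domain strictly drops Krull dimension; hence $\beta$ is bijective, $\Psi=\beta^{-1}$, and $\Psi\circ\beta=\mathrm{Id}$. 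The bound $\dim\mathcal A_{q=t=1}\geqslant 6$ indeed comes out of the proof of Lemma \ref{lemm:NilradicalAq1t1IsTrivial}: properness (in particular zero-dimensionality) of $\mathcal I_{q=t=1}\mathcal R$ gives $\mathcal I_{q=t=1}\cap\mathbb C[O_1,\dots,O_6]=(0)$, so a polynomial ring in six variables embeds into $\mathcal A_{q=t=1}$. What this buys is substantial: it replaces the eight heavy Groebner reductions by a few lines of commutative algebra, and it actually delivers bijectivity of $\Psi$ — the non-equivariance content of Theorem \ref{th:IsomorphismAq1t1CoordinateRingCharacterVariety} — directly from the companion identity $\Phi\circ\iota\circ\Psi=\mathrm{Id}_{\mathcal A_{q=t=1}}$; it is also the same height-zero device the paper uses in Proposition \ref{prop:Aqt1IsomorphismRationalFunctions}, so it fits the paper's toolkit. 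What it loses is the explicit certificates (combinations of Groebner relators witnessing each identity) that the paper's computational proof records.
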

\begin{proof}
Recall that invariant polynomials associated to (\ref{eq:ABL18Generators}) provide a complete set of generators of the $SL(2,\mathbb C)$-invariant subring. We will show that $\Psi\circ\Phi\circ\iota$ maps all elements (\ref{eq:ABL18Generators}) into the equivalent invariant polynomials modulo defining ideal of the coordinate ring of representation variety. The action on generators given by (\ref{eq:PsiActionOnGenerators}) and (\ref{eq:PhiActionOnGenerators}) implies that this property holds verbatim for
\begin{equation*}
\tau_{X_1},\tau_{Y_1},\tau_{X_2},\tau_{Y_2},\tau_{Y_1Y_2},\tau_{X_2Y_2}.
\end{equation*}
As for the rest of the generators we describe our calculation below.

Denote the 16 generators of $\mathcal O(\mathrm{Hom}(\pi_1(\Sigma_2)),SL(2,\mathbb C))$ by
\begin{equation*}
\left(X_i\right)_{jk},\left(Y_i\right)_{jk},\qquad 1\leq i,j,k\leq2.
\end{equation*}
and let $\mathcal I_{\mathrm{Hom}(\pi_1(\Sigma_2),SL(2,\mathbb C))}$ be defining ideal of the coordinate ring of representation variety. We get
\begin{align*}
\Psi\circ\Phi\circ\iota(\tau_{X_1Y_1})=&\Psi(O_1 O_2 -O_{12})\\
=&\tau _{X_1} \tau _{Y_1}-\tau _{Y_1X_1^{-1}}\\
=&\left(X_1\right)_{11} \left(Y_1\right)_{11}+\left(X_1\right)_{21} \left(Y_1\right)_{12}+\left(X_1\right)_{12} \left(Y_1\right)_{21}+\left(X_1\right)_{22} \left(Y_1\right)_{22}\\
=&\tau_{X_1Y_1},\\[7pt]
\Psi\circ\Phi\circ\iota(\tau _{X_1X_2})=&\Psi(O_1 O_2 O_{345} -O_{12} O_{345} -O_1 O_{61} +O_6)\\
=&\tau _{X_1Y_2X_2^{-1}Y_2^{-1}}-\tau _{Y_1} \tau _{X_1Y_1Y_2X_2^{-1}Y_2^{-1}}+\tau _{X_2Y_1^{-1}} \left(\tau _{X_1} \tau _{Y_1}-\tau _{Y_1X_1^{-1}}\right)\\
\equiv&\left(X_1\right)_{11} \left(X_2\right)_{11}+\left(X_1\right)_{21} \left(X_2\right)_{12}+\left(X_1\right)_{12} \left(X_2\right)_{21}+\left(X_1\right)_{22} \left(X_2\right)_{22}\bmod \mathcal I_{\mathrm{Hom}(\pi_1(\Sigma_2),SL(2,\mathbb C))}\\
\equiv&\tau_{X_1X_2}\bmod \mathcal I_{\mathrm{Hom}(\pi_1(\Sigma_2),SL(2,\mathbb C))},\\[7pt]
\Psi\circ\Phi\circ\iota(\tau _{X_1Y_2})=&\Psi(-O_1 O_2 O_3 +O_3 O_{12} +O_1 O_{23} +O_2 O_5 -O_{123})\\
=&\tau _{Y_1Y_2} \tau _{Y_1X_1^{-1}}-\tau _{Y_1X_1^{-1}Y_1^{-1}Y_2^{-1}}+\tau _{Y_1} \tau _{X_1^{-1}Y_1^{-1}Y_2^{-1}}-\tau _{X_1} \tau _{Y_1} \tau _{Y_1Y_2}+\tau _{X_1} \tau _{Y_2}\\
\equiv&\left(X_1\right)_{11} \left(Y_2\right)_{11}+\left(X_1\right)_{21} \left(Y_2\right)_{12}+\left(X_1\right)_{12} \left(Y_2\right)_{21}+\left(X_1\right)_{22} \left(Y_2\right)_{22}\bmod \mathcal I_{\mathrm{Hom}(\pi_1(\Sigma_2),SL(2,\mathbb C))}\\
\equiv&\tau _{X_1Y_2}\bmod \mathcal I_{\mathrm{Hom}(\pi_1(\Sigma_2),SL(2,\mathbb C))},\\[7pt]
\Psi\circ\Phi\circ\iota(\tau _{Y_1X_2})=&\Psi(O_1 O_4 -O_{345})\\
=&\tau _{X_2} \tau _{Y_1}-\tau _{X_2Y_1^{-1}}\\
=&\left(X_2\right)_{11} \left(Y_1\right)_{11}+\left(X_2\right)_{21} \left(Y_1\right)_{12}+\left(X_2\right)_{12} \left(Y_1\right)_{21}+\left(X_2\right)_{22} \left(Y_1\right)_{22}\\
=&\tau _{Y_1X_2},\\[7pt]
\Psi\circ\Phi\circ\iota(\tau _{X_1Y_1X_2})=&\Psi(O_1^2 O_2 O_{345} -O_1 O_{12} O_{345} -O_1^2 O_{61} -O_2 O_{345} +O_1 O_6 +O_{61})\\
=&\tau _{Y_1} \tau _{X_1Y_2X_2^{-1}Y_2^{-1}}-\left(\tau _{Y_1}^2-1\right) \tau _{X_1Y_1Y_2X_2^{-1}Y_2^{-1}}+\tau _{X_2Y_1^{-1}} \left(\tau _{X_1} \left(\tau _{Y_1}^2-1\right)-\tau _{Y_1} \tau _{Y_1X_1^{-1}}\right)\\
\equiv&\left(X_1\right)_{11} \left(X_2\right)_{11} \left(Y_1\right)_{11}+\left(X_1\right)_{21} \left(X_2\right)_{12} \left(Y_1\right)_{11}+\left(X_1\right)_{11} \left(X_2\right)_{21} \left(Y_1\right)_{12}+\left(X_1\right)_{21} \left(X_2\right)_{22} \left(Y_1\right)_{12}\\
&\quad+\left(X_1\right)_{12} \left(X_2\right)_{11} \left(Y_1\right)_{21}+\left(X_1\right)_{22} \left(X_2\right)_{12} \left(Y_1\right)_{21}+\left(X_1\right)_{12} \left(X_2\right)_{21} \left(Y_1\right)_{22}\\
&\quad+\left(X_1\right)_{22} \left(X_2\right)_{22} \left(Y_1\right)_{22}\bmod \mathcal I_{\mathrm{Hom}(\pi_1(\Sigma(2)),SL(2,\mathbb C))}\\
\equiv&\tau _{X_1Y_1X_2}\bmod \mathcal I_{\mathrm{Hom}(\pi_1(\Sigma_2),SL(2,\mathbb C))},\\[7pt]
\Psi\circ\Phi\circ\iota(\tau _{X_1Y_1Y_2})=&\Psi( -O_4 O_5 O_{61} +O_1 O_2 O_5 +O_{45} O_{61} +O_4 O_{234} -O_5 O_{12} -O_{23})\\
=&-\tau _{X_1^{-1}Y_1^{-1}Y_2^{-1}}+\tau _{X_2Y_2} \tau _{X_1Y_1Y_2X_2^{-1}Y_2^{-1}}+\tau _{X_2} \tau _{X_1Y_1Y_2Y_2X_2^{-1}Y_2^{-1}}-\tau _{Y_2} \tau _{Y_1X_1^{-1}}\\
&\quad-\tau _{X_2} \tau _{Y_2} \tau _{X_1Y_1Y_2X_2^{-1}Y_2^{-1}}+\tau _{X_1} \tau _{Y_1} \tau _{Y_2}\\
\equiv&\left(X_1\right)_{11} \left(Y_1\right)_{11} \left(Y_2\right)_{11}+\left(X_1\right)_{12} \left(Y_1\right)_{21} \left(Y_2\right)_{11}+\left(X_1\right)_{21} \left(Y_1\right)_{11} \left(Y_2\right)_{12}+\left(X_1\right)_{22} \left(Y_1\right)_{21} \left(Y_2\right)_{12}\\
&\quad+\left(X_1\right)_{11} \left(Y_1\right)_{12} \left(Y_2\right)_{21}+\left(X_1\right)_{12} \left(Y_1\right)_{22} \left(Y_2\right)_{21}+\left(X_1\right)_{21} \left(Y_1\right)_{12} \left(Y_2\right)_{22}\\
&\quad+\left(X_1\right)_{22} \left(Y_1\right)_{22} \left(Y_2\right)_{22}\bmod \mathcal I_{\mathrm{Hom}(\pi_1(\Sigma_2),SL(2,\mathbb C))}\\
\equiv&\tau _{X_1Y_1Y_2}\bmod \mathcal I_{\mathrm{Hom}(\pi_1(\Sigma_2),SL(2,\mathbb C))},\\[7pt]
\Psi\circ\Phi\circ\iota(\tau _{X_1X_2Y_2})=&\Psi(O_1 O_2 O_5 O_{345} -O_5 O_{12} O_{345} -O_1 O_2 O_{34} -O_1 O_5 O_{61} +O_{12} O_{34} +O_1 O_{234} +O_5 O_6 -O_{56})\\
=&-\tau _{X_1Y_2Y_2X_2^{-1}Y_2^{-1}}+\tau _{Y_1} \tau _{X_1Y_1Y_2Y_2X_2^{-1}Y_2^{-1}}-\tau _{X_1} \tau _{Y_1} \tau _{Y_2^{-1}Y_1^{-1}X_2}+\tau _{Y_2} \tau _{X_1Y_2X_2^{-1}Y_2^{-1}}\\
&\quad-\tau _{Y_1} \tau _{Y_2} \tau _{X_1Y_1Y_2X_2^{-1}Y_2^{-1}}+\tau _{X_1} \tau _{Y_1} \tau _{Y_2} \tau _{X_2Y_1^{-1}}+\tau _{Y_1X_1^{-1}} \left(\tau _{Y_2^{-1}Y_1^{-1}X_2}-\tau _{Y_2} \tau _{X_2Y_1^{-1}}\right)\\
\equiv&\left(X_1\right)_{11} \left(X_2\right)_{11} \left(Y_2\right)_{11}+\left(X_1\right)_{12} \left(X_2\right)_{21} \left(Y_2\right)_{11}+\left(X_1\right)_{21} \left(X_2\right)_{11} \left(Y_2\right)_{12}+\left(X_1\right)_{22} \left(X_2\right)_{21} \left(Y_2\right)_{12}\\
&\quad +\left(X_1\right)_{11} \left(X_2\right)_{12} \left(Y_2\right)_{21}+\left(X_1\right)_{12} \left(X_2\right)_{22} \left(Y_2\right)_{21}+\left(X_1\right)_{21} \left(X_2\right)_{12} \left(Y_2\right)_{22}\\
&\quad+\left(X_1\right)_{22} \left(X_2\right)_{22} \left(Y_2\right)_{22}\bmod \mathcal I_{\mathrm{Hom}(\pi_1(\Sigma_2),SL(2,\mathbb C))}\\
\equiv&\tau _{X_1X_2Y_2}\bmod \mathcal I_{\mathrm{Hom}(\pi_1(\Sigma_2),SL(2,\mathbb C))},\\[7pt]
\Psi\circ\Phi\circ\iota(\tau _{Y_1X_2Y_2})=&\Psi(-O_5 O_{345} +O_1 O_{45} +O_{34})\\
=&\tau _{Y_2^{-1}Y_1^{-1}X_2}-\tau _{Y_2} \tau _{X_2Y_1^{-1}}+\tau _{Y_1} \tau _{X_2Y_2}\\
=&\left(X_2\right)_{11} \left(Y_1\right)_{11} \left(Y_2\right)_{11}+\left(X_2\right)_{21} \left(Y_1\right)_{12} \left(Y_2\right)_{11}+\left(X_2\right)_{11} \left(Y_1\right)_{21} \left(Y_2\right)_{12}+\left(X_2\right)_{21} \left(Y_1\right)_{22} \left(Y_2\right)_{12}\\
&\quad+\left(X_2\right)_{12} \left(Y_1\right)_{11} \left(Y_2\right)_{21}+\left(X_2\right)_{22} \left(Y_1\right)_{12} \left(Y_2\right)_{21}+\left(X_2\right)_{12} \left(Y_1\right)_{21} \left(Y_2\right)_{22}+\left(X_2\right)_{22} \left(Y_1\right)_{22} \left(Y_2\right)_{22}\\
=&\tau _{Y_1X_2Y_2}
\end{align*}

\end{proof}

\section{$q$-Groebner basis}
\label{sec:qGroebnerBasis}

In this subsection we introduce 61 relations in $\mathcal A_{q,t}$ which present a deformation of the Groebner basis for defining ideal of $\mathcal A_{q=1,t}$. We assign weights to generators as in (\ref{eq:GeneratorWeights}) and present every relator as a combination of normally ordered monomials sorted according to weighted degree reverse lexicographic monomial order. In our formulas the leading monomial always comes first and we normalize the relator so that the coefficient in leading term is 1. This simplifies the comparison with commutative Groebner bases for defining ideals of $\mathcal A_{q=1,t}$ and $\mathcal A_{q=t=1}$.

Recall that by $F$ we denote a free $\mathbf k$-algebra with 15 generators. (\ref{eq:15Generators})
\begin{proposition}
The following 61 elements of $F$ belong to defining ideal of $\mathcal A_{q,t}$.
\begin{align*}
%GB element #1
g_{1}=&O_{56}O_{61} -q^{\frac{1}{2}}O_6O_{234} -q^{-\frac{1}{2}}O_1O_5 +q^{-\frac{1}{2}}t^{-\frac{1}{2}}(q+t)O_3,\\
%GB element #2
g_{2}=&O_{12}O_{61} -q^{-\frac{1}{2}}O_1O_{345} -q^{\frac{1}{2}}O_2O_6 +q^{-\frac{1}{2}}t^{-\frac{1}{2}}(q+t)O_4,\\
%GB element #3
g_{3}=&O_{45}O_{56} -q^{\frac{1}{2}}O_5O_{123} -q^{-\frac{1}{2}}O_4O_6 +q^{-\frac{1}{2}}t^{-\frac{1}{2}}(q+t)O_2,\\
%GB element #4
g_{4}=&O_{34}O_{45} -q^{\frac{1}{2}}O_4O_{345} -q^{-\frac{1}{2}}O_3O_5 +q^{-\frac{1}{2}}t^{-\frac{1}{2}}(q+t)O_1,\\
%GB element #5
g_{5}=&O_{23}O_{34} -q^{\frac{1}{2}}O_3O_{234} -q^{-\frac{1}{2}}O_2O_4 +q^{-\frac{1}{2}}t^{-\frac{1}{2}}(q+t)O_6,\\
%GB element #6
g_{6}=&O_{12}O_{23} -q^{\frac{1}{2}}O_2O_{123} -q^{-\frac{1}{2}}O_1O_3 +q^{-\frac{1}{2}}t^{-\frac{1}{2}}(q+t)O_5,\\
%GB element #7
g_{7}=&O_3O_4O_5 -q^{-\frac{1}{2}}O_1O_2O_6 +q^{\frac{1}{4}}O_6O_{12} -q^{-\frac{1}{4}}O_5O_{34} -q^{\frac{1}{4}}O_3O_{45} +q^{-\frac{5}{4}}O_2O_{61} -q^{-\frac{3}{2}}(q-1)^2O_{345},\\
%GB element #8
g_{8}=&O_2O_3O_4 -q^{-\frac{1}{2}}O_1O_5O_6 -q^{-\frac{1}{4}}O_4O_{23} -q^{\frac{1}{4}}O_2O_{34} +q^{-\frac{1}{4}}O_1O_{56} +q^{-\frac{3}{4}}O_5O_{61},\\
%GB element #9
g_{9}=&O_1O_2O_3 -O_4O_5O_6 -q^{-\frac{1}{4}}O_3O_{12} -q^{\frac{1}{4}}O_1O_{23} +q^{-\frac{1}{4}}O_6O_{45} +q^{\frac{1}{4}}O_4O_{56},\\
%GB element #10
g_{10}=&O_{56}O_{345} -q^{-\frac{1}{2}}O_5O_{12} -q^{\frac{1}{2}}O_6O_{34} +q^{-\frac{1}{4}}t^{-\frac{1}{2}}(q+t)O_2O_3 -t^{-\frac{1}{2}}(q+t)O_{23},\\
%GB element #11
g_{11}=&O_{23}O_{345} -q^{-\frac{1}{2}}O_2O_{45} -q^{\frac{1}{2}}O_3O_{61} +q^{-\frac{1}{4}}t^{-\frac{1}{2}}(q+t)O_5O_6 -t^{-\frac{1}{2}}(q+t)O_{56},\\
%GB element #12
g_{12}=&O_{45}O_{234} -q^{\frac{1}{2}}O_5O_{23} -q^{-\frac{1}{2}}O_4O_{61} +q^{-\frac{1}{4}}t^{-\frac{1}{2}}(q+t)O_1O_2 -t^{-\frac{1}{2}}(q+t)O_{12},\\
%GB element #13
g_{13}=&O_{12}O_{234} -q^{-\frac{1}{2}}O_1O_{34} -q^{\frac{1}{2}}O_2O_{56} +q^{-\frac{1}{4}}t^{-\frac{1}{2}}(q+t)O_4O_5 -t^{-\frac{1}{2}}(q+t)O_{45},\\
%GB element #14
g_{14}=&O_{61}O_{123} -q^{-\frac{1}{2}}O_6O_{23} -q^{\frac{1}{2}}O_1O_{45} +q^{-\frac{1}{4}}t^{-\frac{1}{2}}(q+t)O_3O_4 -t^{-\frac{1}{2}}(q+t)O_{34},\\
%GB element #15
g_{15}=&O_{34}O_{123} -q^{\frac{1}{2}}O_4O_{12} -q^{-\frac{1}{2}}O_3O_{56} +q^{-\frac{3}{4}}t^{-\frac{1}{2}}(q+t)O_1O_6 -q^{-1}t^{-\frac{1}{2}}(q+t)O_{61},\\
%GB element #16
g_{16}=&O_3O_4O_{34} -q^{-\frac{1}{2}}O_1O_6O_{61} -q^{\frac{1}{4}}O_{34}^2 +q^{-\frac{3}{4}}O_{61}^2 +q^{-\frac{3}{4}}O_1^2 -q^{\frac{1}{4}}O_3^2 -q^{-\frac{3}{4}}O_4^2 +q^{\frac{1}{4}}O_6^2,\\
%GB element #17
g_{17}=&O_2O_3O_{23} -O_5O_6O_{56} -q^{\frac{1}{4}}O_{23}^2 +q^{\frac{1}{4}}O_{56}^2 -q^{\frac{1}{4}}O_2^2 -q^{-\frac{3}{4}}O_3^2 +q^{\frac{1}{4}}O_5^2 +q^{-\frac{3}{4}}O_6^2,\\
%GB element #18
g_{18}=&O_1O_2O_{12} -O_4O_5O_{45} -q^{\frac{1}{4}}O_{12}^2 +q^{\frac{1}{4}}O_{45}^2 -q^{\frac{1}{4}}O_1^2 -q^{-\frac{3}{4}}O_2^2 +q^{\frac{1}{4}}O_4^2 +q^{-\frac{3}{4}}O_5^2,\\
%GB element #19
g_{19}=&O_{234}O_{345} +t^{-\frac{1}{2}}(q+t)O_4O_5O_6 -q^{\frac{1}{2}}O_{34}O_{61} -q^{-\frac{1}{4}}t^{-\frac{1}{2}}(q+t)O_6O_{45} -q^{\frac{1}{4}}t^{-\frac{1}{2}}(q+t)O_4O_{56}\\
&-q^{-\frac{1}{2}}O_2O_5 +q^{-\frac{1}{2}}t^{-\frac{1}{2}}(q+t)O_{123},\\
%GB element #20
g_{20}=&O_{123}O_{345} +q^{-\frac{1}{2}}t^{-\frac{1}{2}}(q+t)O_1O_5O_6 -q^{-\frac{1}{2}}O_{12}O_{45} -q^{-\frac{1}{4}}t^{-\frac{1}{2}}(q+t)O_1O_{56} -q^{-\frac{3}{4}}t^{-\frac{1}{2}}(q+t)O_5O_{61}\\
&-q^{\frac{1}{2}}O_3O_6 +q^{-\frac{1}{2}}t^{-\frac{1}{2}}(q+t)O_{234},\\
%GB element #21
g_{21}=&O_{123}O_{234} +q^{-\frac{1}{2}}t^{-\frac{1}{2}}(q+t)O_1O_2O_6 -q^{\frac{1}{2}}O_{23}O_{56} -q^{\frac{1}{4}}t^{-\frac{1}{2}}(q+t)O_6O_{12} -q^{-\frac{5}{4}}t^{-\frac{1}{2}}(q+t)O_2O_{61}\\
&-q^{-\frac{1}{2}}O_1O_4 +q^{-\frac{3}{2}}t^{-\frac{1}{2}}\left(q^2-q+1\right) (q+t)O_{345},\\
%GB element #22
g_{22}=&O_4O_5O_6^2 -q^{-\frac{3}{4}}O_6^2O_{45} -q^{\frac{3}{4}}O_4O_6O_{56} -q^{-\frac{3}{4}}O_2O_3O_{61} +q^{-\frac{1}{2}}O_{23}O_{61} +q^{-1}O_6O_{123} +q^{-\frac{3}{2}}O_3O_{345}\\
&+(q-2)O_4O_5 +q^{-\frac{7}{4}}(q-1)O_{45},\\
%GB element #23
g_{23}=&O_1O_2O_6^2 -q^{\frac{5}{4}}O_6^2O_{12} -q^{\frac{3}{4}}O_3O_4O_{56} -q^{-\frac{5}{4}}O_2O_6O_{61} +qO_{34}O_{56} +qO_3O_{123}\\
&+q^{-\frac{3}{2}}(q^3-q+1)O_6O_{345} -q^{-1}(2 q-1)O_1O_2,\\
%GB element #24
g_{24}=&O_1O_5^2O_6 -q^{\frac{1}{4}}O_2O_3O_{45} -q^{\frac{1}{4}}O_1O_5O_{56} -q^{-\frac{1}{4}}O_5^2O_{61} +q^{\frac{1}{2}}O_{23}O_{45} +O_5O_{234} +q^{\frac{1}{2}}O_2O_{345}\\
&-O_1O_6 -q^{-\frac{1}{4}}(q-1)O_{61},\\
%GB element #25
g_{25}=&O_4^2O_5O_6 -q^{-\frac{1}{4}}O_1O_2O_{34} -q^{-\frac{1}{4}}O_4O_6O_{45} -q^{\frac{1}{4}}O_4^2O_{56} +O_{12}O_{34} +q^{-\frac{1}{2}}O_4O_{123} +O_1O_{234} -O_5O_6,\\
%GB element #26
g_{26}=&O_1O_4O_5O_6 -q^{-\frac{1}{4}}O_1O_6O_{45} -q^{\frac{1}{4}}O_1O_4O_{56} -q^{-\frac{1}{4}}O_4O_5O_{61} +O_{45}O_{61} +q^{-\frac{1}{2}}O_1O_{123} +O_4O_{234} -O_2O_3,\\
%GB element #27
g_{27}=&O_1O_2O_5O_6 -q^{\frac{3}{4}}O_5O_6O_{12} -q^{\frac{1}{4}}O_1O_2O_{56} -q^{-\frac{3}{4}}O_2O_5O_{61} +q^{\frac{1}{2}}O_{12}O_{56} +q^{-\frac{1}{2}}O_2O_{234} +q^{-1}(q^2-q+1)O_5O_{345}\\
&-O_3O_4 +q^{-\frac{3}{4}}(q-1)O_{34},\\
%GB element #28
g_{28}=&O_1^2O_5O_6 -q^{\frac{1}{4}}O_3O_4O_{12} -q^{\frac{1}{4}}O_1^2O_{56} -q^{-\frac{1}{4}}O_1O_5O_{61} +qO_{12}O_{34} +q^{-\frac{1}{2}}O_4O_{123} +O_1O_{234}\\ &-O_5O_6 -q^{-\frac{3}{4}}(q-1)^2O_{56},\\
%GB element #29
g_{29}=&O_1O_2^2O_6 -q^{\frac{5}{4}}O_2O_6O_{12} -q^{-\frac{1}{4}}O_4O_5O_{23} -q^{-\frac{5}{4}}O_2^2O_{61} +q^{\frac{1}{2}}O_{23}O_{45} +q^{-1}O_5O_{234} +q^{-\frac{3}{2}}(q^3-q^2+1)O_2O_{345}\\
&+(q-2)O_1O_6 -q^{-\frac{1}{4}}(q-1)O_{61},\\
%GB element #30
g_{30}=&O_1O_5O_6O_{61} -q^{-\frac{1}{4}}O_5O_{61}^2 -q^{\frac{3}{4}}O_1O_6O_{234} -q^{-\frac{1}{4}}O_3O_4O_{345} +q^{\frac{1}{2}}O_{61}O_{234} +O_{34}O_{345} -q^{-\frac{1}{4}}O_1^2O_5 -q^{\frac{3}{4}}O_5O_6^2\\
&+q^{-1}O_4O_{45} +q^{\frac{3}{2}}O_6O_{56} +q^{-\frac{1}{4}}t^{-\frac{1}{2}}(q+t)O_1O_3 -q^{-\frac{5}{4}}(q-1)^2 (q+1)O_5,\\
%GB element #31
g_{31}=&O_1O_2O_6O_{61} -q^{-\frac{3}{4}}O_2O_{61}^2 -q^{\frac{3}{4}}O_3O_4O_{234} -q^{-\frac{1}{4}}O_1O_6O_{345} +qO_{34}O_{234} +q^{-\frac{3}{2}}O_{61}O_{345} -q^{\frac{1}{4}}O_1^2O_2\\
&-q^{\frac{5}{4}}O_2O_6^2 +q^{\frac{1}{2}}O_1O_{12} +qO_3O_{23} +q^{\frac{1}{4}}t^{-\frac{1}{2}}(q+t)O_4O_6 -q^{-\frac{7}{4}}(q-1)^2O_2,\\
%GB element #32
g_{32}=&O_4O_5O_6O_{56} -q^{\frac{1}{4}}O_4O_{56}^2 -q^{\frac{3}{4}}O_5O_6O_{123} -q^{-\frac{3}{4}}O_2O_3O_{234} +qO_{56}O_{123} +q^{-\frac{1}{2}}O_{23}O_{234} -q^{\frac{1}{4}}O_4O_5^2 -q^{-\frac{3}{4}}O_4O_6^2\\
&+q^{-\frac{3}{2}}O_3O_{34} +qO_5O_{45} +q^{-\frac{3}{4}}t^{-\frac{1}{2}}(q+t)O_2O_6 -q^{-\frac{7}{4}}(q-1)^2 (q+1)O_4,\\
%GB element #33
g_{33}=&O_1O_5O_6O_{56} -q^{\frac{1}{4}}O_1O_{56}^2 -q^{\frac{3}{4}}O_2O_3O_{123} -q^{-\frac{3}{4}}O_5O_6O_{234} +qO_{23}O_{123} +q^{-\frac{1}{2}}O_{56}O_{234} -q^{\frac{1}{4}}O_1O_5^2 -q^{-\frac{3}{4}}O_1O_6^2\\
&+qO_2O_{12} +q^{-\frac{3}{2}}O_6O_{61} +q^{-\frac{3}{4}}t^{-\frac{1}{2}}(q+t)O_3O_5 -q^{-\frac{7}{4}}(q-1)^2 (q+1)O_1,\\
%GB element #34
g_{34}=&O_4O_5O_6O_{45} -q^{-\frac{1}{4}}O_6O_{45}^2 -q^{-\frac{1}{4}}O_4O_5O_{123} -q^{\frac{1}{4}}O_1O_2O_{345} +q^{-1}O_{45}O_{123} +q^{\frac{1}{2}}O_{12}O_{345} -q^{\frac{3}{4}}O_4^2O_6 -q^{-\frac{1}{4}}O_5^2O_6\\
&+O_5O_{56} +q^{\frac{1}{2}}O_1O_{61} +q^{-\frac{1}{4}}t^{-\frac{1}{2}}(q+t)O_2O_4 -q^{-\frac{5}{4}}(q-1)^2O_6,\\
%GB element #35
g_{35}=&O_1O_2O_6O_{45} -q^{\frac{3}{4}}O_6O_{12}O_{45} -q^{-\frac{1}{4}}O_2O_{45}O_{61} -q^{-\frac{1}{4}}O_1O_2O_{123} +O_{12}O_{123} +q^{-\frac{1}{2}}(q^2-q+1)O_{45}O_{345}\\ &-q^{\frac{3}{4}}O_3O_4^2+O_2O_{23} +q^{\frac{3}{2}}O_4O_{34} -2q^{-\frac{1}{4}}(q-1)^2O_3,\\
%GB element #36
g_{36}=&O_5O_6^2O_{34} -O_2O_3^2O_{61} -q^{\frac{5}{4}}O_6O_{34}O_{56} +q^{\frac{3}{4}}O_3O_{23}O_{61} +q^{-\frac{5}{4}}O_3^2O_{345} -q^{-\frac{5}{4}}O_6^2O_{345} +q^{-\frac{3}{2}}(q^3-q^2+1)O_6O_{12}\\
&+(q-2)O_5O_{34} -q^{-\frac{3}{2}}O_3O_{45} -(q-2)O_2O_{61},\\
%GB element #37
g_{37}=&O_1O_5O_6O_{34} -q^{\frac{3}{4}}O_1O_{34}O_{56} -q^{-\frac{1}{4}}O_5O_{34}O_{61} -q^{-\frac{3}{4}}O_1O_6O_{345} +q^{\frac{1}{2}}O_{34}O_{234} +q^{-1}O_{61}O_{345} -q^{\frac{3}{4}}O_2O_3^2\\
&+q^{-1}(q^2-q+1)O_1O_{12} +q^{\frac{3}{2}}O_3O_{23} -q^{-\frac{5}{4}}(q-1)^2 (q+1)O_2,\\
%GB element #38
g_{38}=&O_1O_2O_6O_{34} -q^{\frac{3}{4}}O_6O_{12}O_{34} -q^{-\frac{3}{4}}O_2O_{34}O_{61} -q^{\frac{1}{4}}O_1O_6O_{234} +O_{61}O_{234} +q^{-\frac{3}{2}}(q^2-q+1)O_{34}O_{345} \\
&-q^{\frac{1}{4}}O_4^2O_5+q^{\frac{1}{2}}O_4O_{45} +qO_6O_{56} -q^{-\frac{7}{4}}(q-1)^2O_5,\\
%GB element #39
g_{39}=&O_1^2O_2O_{34} -O_4^2O_5O_{61} -q^{\frac{1}{4}}O_1O_{12}O_{34} +q^{\frac{1}{4}}O_4O_{45}O_{61} -q^{\frac{1}{4}}O_1^2O_{234} +q^{\frac{1}{4}}O_4^2O_{234} -q^{\frac{1}{2}}O_4O_{23} -O_2O_{34}\\
& +q^{\frac{1}{2}}O_1O_{56} +O_5O_{61},\\
%GB element #40
g_{40}=&O_1O_6^2O_{23} -qO_3^2O_4O_{56} +q^{\frac{5}{4}}O_3O_{34}O_{56} -q^{-\frac{5}{4}}O_6O_{23}O_{61} +q^{\frac{5}{4}}O_3^2O_{123} -q^{\frac{5}{4}}O_6^2O_{123} -q^{\frac{3}{2}}O_3O_{12}\\
&-q^{-1}(2 q-1)O_1O_{23} +q^{-\frac{3}{2}}(q^3-q+1)O_6O_{45} +qO_4O_{56},\\
%GB element #41
g_{41}=&O_4O_5O_6O_{23} -q^{\frac{1}{4}}O_6O_{23}O_{45} -q^{\frac{1}{4}}O_4O_{23}O_{56} -q^{-\frac{3}{4}}O_5O_6O_{234} +O_{23}O_{123} +q^{-\frac{1}{2}}O_{56}O_{234} -q^{\frac{1}{4}}O_1O_2^2 \\
&+qO_2O_{12} +q^{-\frac{3}{2}}(q^2-q+1)O_6O_{61} -q^{-\frac{7}{4}}(q-1)^2 (q+1)O_1,\\
%GB element #42
g_{42}=&O_1O_5O_6O_{23} -q^{\frac{1}{4}}O_1O_{23}O_{56} -q^{-\frac{3}{4}}O_5O_{23}O_{61} -q^{\frac{3}{4}}O_5O_6O_{123} +qO_{56}O_{123} +q^{-\frac{1}{2}}O_{23}O_{234} -q^{\frac{1}{4}}O_3^2O_4 \\
&+q^{\frac{1}{2}}O_3O_{34} +q^{-1}(q^2-q+1)O_5O_{45} -q^{-\frac{3}{4}}(q-1)^2O_4,\\
%GB element #43
g_{43}=&O_4O_5^2O_{23} -O_1O_2^2O_{56} -q^{\frac{5}{4}}O_5O_{23}O_{45} +q^{\frac{3}{4}}O_2O_{12}O_{56} +q^{-\frac{5}{4}}O_2^2O_{234} -q^{-\frac{5}{4}}O_5^2O_{234} +(q-2)O_4O_{23} \\
&-q^{-\frac{3}{2}}O_2O_{34} -(q-2)O_1O_{56} +q^{-\frac{3}{2}}(q^3-q^2+1)O_5O_{61},\\
%GB element #44
g_{44}=&O_5^2O_6O_{12} -O_2^2O_3O_{45} +q^{\frac{1}{4}}O_2O_{23}O_{45} -q^{-\frac{1}{4}}O_5O_{12}O_{56} +q^{\frac{1}{4}}O_2^2O_{345} -q^{\frac{1}{4}}O_5^2O_{345} -O_6O_{12} +q^{-\frac{1}{2}}O_5O_{34}\\
&+O_3O_{45} -q^{\frac{1}{2}}O_2O_{61},\\
%GB element #45
g_{45}=&O_4O_5O_6O_{12} -q^{-\frac{1}{4}}O_6O_{12}O_{45} -q^{-\frac{1}{4}}O_4O_{12}O_{56} -q^{\frac{1}{4}}O_4O_5O_{345} +q^{-1}O_{12}O_{123} +q^{\frac{1}{2}}O_{45}O_{345} -q^{-\frac{1}{4}}O_2^2O_3,\\
&+O_2O_{23} +q^{-\frac{1}{2}}O_4O_{34} -q^{-\frac{5}{4}}(q-1)^2O_3\\
%GB element #46
g_{46}=&O_3O_4^2O_{12} -q^{-1}O_1^2O_6O_{45} -q^{\frac{5}{4}}O_4O_{12}O_{34} +q^{-\frac{3}{4}}O_1O_{45}O_{61} +q^{-\frac{5}{4}}O_1^2O_{123} -q^{-\frac{5}{4}}O_4^2O_{123} +(q-2)O_3O_{12}\\
&-q^{-\frac{1}{2}}O_1O_{23} +q^{-1}O_6O_{45} +q^{-\frac{3}{2}}(q^3-q^2+1)O_4O_{56},\\
%GB element #47
g_{47}=&O_1O_2O_6O_{345} -q^{\frac{3}{4}}O_6O_{12}O_{345} -q^{-\frac{3}{4}}O_2O_{61}O_{345} +q^{-1}(q^2-q+1)O_{345}^2 -q^{-\frac{1}{4}}O_4O_5O_{45} -q^{\frac{1}{4}}O_1O_6O_{61}\\
&+O_{45}^2 +O_{61}^2 +q^{-1}O_5^2 +qO_6^2 -q^{-1}t^{-1}(t+1) \left(q^2+t\right),\\
%GB element #48
g_{48}=&O_1O_5O_6O_{234} -q^{\frac{1}{4}}O_1O_{56}O_{234} -q^{-\frac{1}{4}}O_5O_{61}O_{234} +O_{234}^2 -q^{\frac{3}{4}}O_5O_6O_{56} -q^{-\frac{3}{4}}O_1O_6O_{61} +qO_{56}^2 +q^{-1}O_{61}^2\\
&+q^{-1}O_1^2 -O_3^2 +qO_5^2 +O_6^2 -q^{-1}t^{-1}(t+1) \left(q^2+t\right),\\
%GB element #49
g_{49}=&O_4O_5O_6O_{123} -q^{-\frac{1}{4}}O_6O_{45}O_{123} -q^{\frac{1}{4}}O_4O_{56}O_{123} +q^{-\frac{1}{2}}O_{123}^2 -q^{\frac{1}{4}}O_4O_5O_{45} -q^{-\frac{3}{4}}O_5O_6O_{56} +q^{\frac{1}{2}}O_{45}^2\\
&+q^{-\frac{1}{2}}O_{56}^2 -q^{-\frac{1}{2}}O_2^2 +q^{\frac{1}{2}}O_4^2 +q^{-\frac{1}{2}}O_5^2 +q^{-\frac{3}{2}}O_6^2 -q^{-\frac{3}{2}}t^{-1}(t+1) \left(q^2+t\right),\\
%GB element #50
g_{50}=&O_1O_6O_{45}O_{61} -q^{\frac{1}{2}}O_3O_4^2O_{345} -q^{\frac{1}{4}}O_{45}O_{61}^2 +q^{\frac{5}{4}}O_4O_{34}O_{345} -q^{\frac{1}{4}}O_1O_6O_{23} -q^{-\frac{3}{4}}O_1^2O_{45} +q^{-\frac{3}{4}}O_4^2O_{45}\\
&-q^{\frac{1}{4}}O_6^2O_{45} +q^{-\frac{1}{2}}t^{-\frac{1}{2}}(q+t)O_1O_3O_4 +q^{\frac{1}{2}}O_{23}O_{61} +O_6O_{123} -q^{\frac{1}{2}}(q-2)O_3O_{345} -q^{-\frac{1}{4}}t^{-\frac{1}{2}}(q+t)O_1O_{34}\\
&-q^{-1}O_4O_5 +q^{-\frac{3}{4}}(q-1)O_{45},\\
%GB element #51
g_{51}=&O_4O_5O_{45}O_{61} -O_1^2O_2O_{345} -q^{\frac{1}{4}}O_{45}^2O_{61} +q^{\frac{1}{4}}O_1O_{12}O_{345} -q^{\frac{1}{4}}O_4O_5O_{23} +q^{\frac{1}{4}}O_1^2O_{61} -q^{\frac{1}{4}}O_4^2O_{61} -q^{-\frac{3}{4}}O_5^2O_{61}\\ &+q^{-\frac{1}{2}}t^{-\frac{1}{2}}(q+t)O_1O_2O_4 +qO_{23}O_{45} +q^{-\frac{1}{2}}O_5O_{234} +O_2O_{345} -q^{-\frac{1}{4}}t^{-\frac{1}{2}}(q+t)O_4O_{12} -q^{\frac{1}{2}}O_1O_6 -q^{\frac{1}{4}}(q-1)O_{61},\\
%GB element #52
g_{52}=&O_2O_3O_{45}O_{61} -q^{\frac{1}{4}}O_{23}O_{45}O_{61} -q^{-\frac{3}{4}}O_3O_{45}O_{345} -q^{-\frac{1}{4}}O_2O_{61}O_{345} -q^{\frac{1}{2}}O_5^2O_6^2 +q^{-\frac{1}{2}}O_{345}^2 +q^{\frac{5}{4}}O_5O_6O_{56}\\ &+q^{-\frac{3}{2}}O_{45}^2 +q^{\frac{1}{2}}O_{61}^2 -q^{-\frac{1}{2}}(q-1)O_2^2 -q^{\frac{1}{2}}(q-2)O_5^2 +q^{\frac{1}{2}}O_6^2 -q^{-\frac{3}{2}}t^{-1}(t+1) \left(q^2+t\right),\\
%GB element #53
g_{53}=&O_1O_6O_{23}O_{61} -qO_3^2O_4O_{234} -q^{-\frac{3}{4}}O_{23}O_{61}^2 +q^{\frac{5}{4}}O_3O_{34}O_{234} -q^{\frac{1}{4}}O_1^2O_{23} +q^{\frac{5}{4}}O_3^2O_{23} -q^{\frac{5}{4}}O_6^2O_{23} -q^{-\frac{1}{4}}O_1O_6O_{45}\\ &+q^{\frac{1}{2}}t^{-\frac{1}{2}}(q+t)O_3O_4O_6 +q^{-1}O_{45}O_{61} +q^{\frac{1}{2}}O_1O_{123} +qO_4O_{234} -q^{\frac{3}{4}}t^{-\frac{1}{2}}(q+t)O_6O_{34}-q^{2}O_2O_3\\
&+q^{-\frac{3}{4}}(q-1)^2 (q+1)O_{23},\\
%GB element #54
g_{54}=&O_4O_5O_{23}O_{61} -q^{\frac{3}{4}}O_{23}O_{45}O_{61} -q^{\frac{1}{4}}O_4O_{23}O_{234} -q^{-\frac{1}{4}}O_5O_{61}O_{234} -qO_1^2O_2^2 +O_{234}^2 +q^{-\frac{1}{4}}(q^2+q-1)O_4O_5O_{45}\\
&+q^{2}O_{12}^2 +qO_{23}^2 -(q^2+q-1)O_{45}^2 +q^{-1}(q^2-q+1)O_{61}^2 +2qO_1^2 +2qO_2^2 -(q^2+q-1)O_4^2\\
&-qO_5^2 -q^{-1}t^{-1}(t+1) \left(q^2+t\right),\\
%GB element #55
g_{55}=&O_5O_6O_{34}O_{56} -O_2O_3^2O_{234} -q^{\frac{3}{4}}O_{34}O_{56}^2 +q^{\frac{3}{4}}O_3O_{23}O_{234} -q^{\frac{1}{4}}O_5O_6O_{12} +q^{-\frac{5}{4}}O_3^2O_{34} -q^{-\frac{1}{4}}O_5^2O_{34}\\
&-q^{-\frac{5}{4}}O_6^2O_{34} +q^{-1}t^{-\frac{1}{2}}(q+t)O_2O_3O_6 +qO_{12}O_{56} -(q-2)O_2O_{234} +q^{-\frac{1}{2}}O_5O_{345} -q^{-\frac{3}{4}}t^{-\frac{1}{2}}(q+t)O_6O_{23}\\
&-q^{-\frac{3}{2}}O_3O_4 +q^{-\frac{1}{4}}(q-1)O_{34},\\
%GB element #56
g_{56}=&O_1O_2O_{34}O_{56} -q^{\frac{1}{4}}O_{12}O_{34}O_{56} -q^{-\frac{3}{4}}O_2O_{34}O_{234} -q^{-\frac{1}{4}}O_1O_{56}O_{234} -q^{\frac{1}{2}}O_4^2O_5^2 +q^{-\frac{1}{2}}O_{234}^2 +q^{\frac{5}{4}}O_4O_5O_{45}\\ &+q^{-\frac{3}{2}}O_{34}^2 +q^{\frac{1}{2}}O_{56}^2 -q^{-\frac{1}{2}}(q-1)O_1^2 -q^{\frac{1}{2}}(q-2)O_4^2 +q^{\frac{1}{2}}O_5^2 -q^{-\frac{3}{2}}t^{-1}(t+1) \left(q^2+t\right),\\
%GB element #57
g_{57}=&O_5O_6O_{12}O_{56} -q^{\frac{1}{2}}O_2^2O_3O_{123} -q^{-\frac{1}{4}}O_{12}O_{56}^2 +q^{\frac{3}{4}}O_2O_{23}O_{123} +q^{\frac{3}{4}}O_2^2O_{12} -q^{\frac{3}{4}}O_5^2O_{12} -q^{-\frac{1}{4}}O_6^2O_{12}\\
&-q^{-\frac{1}{4}}O_5O_6O_{34} +t^{-\frac{1}{2}}(q+t)O_2O_3O_5 +q^{-\frac{1}{2}}O_{34}O_{56} +q^{\frac{1}{2}}O_3O_{123} +O_6O_{345} -q^{\frac{1}{4}}t^{-\frac{1}{2}}(q+t)O_5O_{23} -q^{\frac{3}{2}}O_1O_2\\ &+q^{-\frac{1}{4}}(q-1)^2O_{12},\\
%GB element #58
g_{58}=&O_3O_4O_{12}O_{56} -q^{\frac{3}{4}}O_{12}O_{34}O_{56} -q^{\frac{1}{4}}O_3O_{12}O_{123} -q^{-\frac{1}{4}}O_4O_{56}O_{123} -q^{-1}O_1^2O_6^2 +O_{123}^2 +q^{-\frac{7}{4}}O_1O_6O_{61} +qO_{12}^2\\
&+q^{-1}(q^2-q+1)O_{56}^2 +q^{-2}(2 q-1)O_1^2 +q^{-1}(q-1)O_4^2 +q^{-1}O_6^2 -q^{-1}t^{-1}(t+1) \left(q^2+t\right),\\
%GB element #59
g_{59}=&O_1O_6O_{23}O_{45} -q^{-\frac{1}{4}}O_{23}O_{45}O_{61} -q^{-\frac{1}{4}}O_1O_{23}O_{123} -q^{\frac{1}{4}}O_6O_{45}O_{123} -qO_3^2O_4^2 +O_{123}^2 +q^{\frac{5}{4}}O_1O_6O_{61} +O_{23}^2\\
&+q^{2}O_{34}^2 +q^{-1}(q^2-q+1)O_{45}^2 -qO_{61}^2 -qO_1^2 +2qO_3^2 +2qO_4^2 -(q^2+q-1)O_6^2 -q^{-1}t^{-1}(t+1) \left(q^2+t\right),\\
%GB element #60
g_{60}=&O_4O_5O_{23}O_{45} -O_1O_2^2O_{123} -q^{\frac{3}{4}}O_{23}O_{45}^2 +q^{\frac{3}{4}}O_2O_{12}O_{123} +q^{-\frac{5}{4}}O_2^2O_{23} -q^{-\frac{1}{4}}O_4^2O_{23} -q^{-\frac{5}{4}}O_5^2O_{23} -q^{\frac{1}{4}}O_4O_5O_{61}\\ &+q^{-1}t^{-\frac{1}{2}}(q+t)O_1O_2O_5 +q^{\frac{3}{2}}O_{45}O_{61} -(q-2)O_1O_{123} +q^{-\frac{1}{2}}O_4O_{234} -q^{-\frac{3}{4}}t^{-\frac{1}{2}}(q+t)O_5O_{12} -q^{-\frac{3}{2}}O_2O_3\\
&-q^{-\frac{1}{4}}(q-1)^2O_{23},\\
%GB element #61
g_{61}=&O_5O_6O_{12}O_{34} -q^{\frac{1}{4}}O_{12}O_{34}O_{56} -q^{-\frac{3}{4}}O_6O_{12}O_{345} -q^{-\frac{1}{4}}O_5O_{34}O_{345} -q^{\frac{1}{2}}O_2^2O_3^2 +q^{-\frac{1}{2}}O_{345}^2 +q^{\frac{5}{4}}O_5O_6O_{56}\\
&+q^{-\frac{3}{2}}(q^2-q+1)O_{12}^2 +q^{\frac{3}{2}}O_{23}^2 +q^{-\frac{1}{2}}O_{34}^2 -q^{\frac{3}{2}}O_{56}^2 +2q^{\frac{1}{2}}O_2^2 +2q^{\frac{1}{2}}O_3^2 -q^{-\frac{1}{2}}(q^2+q-1)O_5^2 -q^{\frac{1}{2}}O_6^2\\
&-q^{-\frac{3}{2}}t^{-1}(t+1) \left(q^2+t\right).
\end{align*}
\end{proposition}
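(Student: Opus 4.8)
The plan is to show that each $g_i$ lies in the two-sided ideal $\mathcal J\subset F$ generated by the normal ordering relators $\eta_{I,J}$ of (\ref{eq:EtaIJRelator}) together with the $q$-Casimir relator $\rho_0$ of (\ref{eq:qCasimirRelation}), working entirely inside the free algebra $F$. It is essential to stay at this level: although Proposition \ref{prop:qDifferenceHomomorphism} guarantees $\widehat\Delta(g_i)=0$ once we know $g_i\in\mathcal J$, one cannot argue in the reverse direction by checking vanishing of the associated $q$-difference operators, since faithfulness of $\widehat\Delta$ is only established in Theorem \ref{th:BasisAqt}, whose proof already relies on the present statement. Hence every membership must be certified by an explicit cofactor expression $g_i=\sum_\alpha a_\alpha\,\eta_{I_\alpha,J_\alpha}\,b_\alpha+\sum_\beta c_\beta\,\rho_{j_\beta}\,d_\beta+e\,\rho_0\,f$ with $a_\alpha,b_\alpha,c_\beta,d_\beta,e,f\in F$, where we are free to invoke the $J$-relators $\rho_1,\dots,\rho_{18}$ since they already belong to $\mathcal J$ by Lemma \ref{lemm:JRelations}.

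First I would organize the $61$ relators by their weighted degree, using the weights (\ref{eq:GeneratorWeights}). The lowest-weight relators are immediate: a direct comparison shows that $g_1,\dots,g_6$ are scalar multiples of the type-(\ref{eq:JRelationsA}) relators, $g_{10},\dots,g_{15}$ of the type-(\ref{eq:JRelationsB}) relators, and $g_{19},g_{20},g_{21}$ reproduce the type-(\ref{eq:JRelationsC}) relators together with the three $q^2$-commutator entries listed at the bottom of Table \ref{tab:QCommRel}; all of these lie in $\mathcal J$ with no further work. For instance one checks directly that $g_4=-\rho_1$ and $g_{10}=q^{\frac12}\rho_8$ after substituting $q^{\frac12}t^{-\frac12}+q^{-\frac12}t^{\frac12}=q^{-\frac12}t^{-\frac12}(q+t)$.

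For the remaining, higher-weight relators I would proceed by induction on weight, exactly as in a noncommutative Buchberger reduction. Given a $g_i$, its leading monomial is reducible by a product of a lower-weight relator with a generator; subtracting the corresponding $F$-multiple of an $\eta_{I,J}$, a $\rho_j$, or (for the relators $g_{47},g_{48},g_{49}$ and the weight-ten relators carrying the constant $-q^{-1}t^{-1}(t+1)(q^2+t)$) the Casimir relator $\rho_0$, strictly lowers the leading term, and one repeats until reaching zero. The rewriting identities needed at each step are precisely the triple-ordering relations (\ref{eq:TripleOrderingI})--(\ref{eq:QJacobiRelation}) applied to the relevant triples of generators. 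To keep the bookkeeping finite I would exploit the symmetry of $\mathcal J$: the ideal is invariant under the order-six automorphism $I$ (Lemma \ref{lemm:IAutomorphism}) and the Dehn twist $d_1$ (Proposition \ref{prop:d1Automorphism}), so it suffices to certify one representative from each $\langle I\rangle$-orbit of the $g_i$ and propagate by equivariance, remembering that $I$ does not preserve normal form and so the transported certificates must be renormalized exactly as in Lemma \ref{lemm:IAutomorphism}.

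The main obstacle is purely the scale and noncommutativity of the cofactor computation: unlike the commutative Groebner basis, where Proposition \ref{prop:GroebnerBasisAq1t} settles matters by a short machine computation, here each certificate is a two-sided combination whose intermediate expressions do not lie in normal form, and the coefficients must be kept as Laurent polynomials in $q^{\frac14},t^{\frac14}$ so that the specializations $q=1$ and $q=t=1$ continue to behave well. I would therefore carry out and record these certificates by computer algebra, presenting the explicit cofactor data in the supplementary Mathematica implementation \cite{Arthamonov-GitHub-Flat}, with the weight-induction and $I$-equivariance arguments above serving to guarantee termination of the reduction and to cut the verification down to a manageable set of orbit representatives.
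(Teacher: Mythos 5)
Your proposal takes essentially the same route as the paper: the paper proves membership by exhibiting, for each $g_i$, an explicit two-sided cofactor decomposition in the free algebra $F$ in terms of the normal-ordering relators $\eta_{I,J}$, the $J$-relators $\rho_j$, and the $q$-Casimir relator (the example for $g_9$ is written out, and the remaining decompositions are generated by the accompanying Mathematica code), which is exactly the certificate-based strategy you describe, including the permissible use of $\rho_1,\dots,\rho_{18}$ and the correct observation that one cannot shortcut through the $q$-difference representation without circularity. The only small discrepancy is your insistence that certificate cofactors be Laurent polynomials in $q^{\frac14},t^{\frac14}$: the paper's certificates do carry denominators such as $(q+t)$ (cf.\ the remark on the least common multiple $\Lambda(q,t)$), which is harmless because only the relators $g_i$ themselves, not the cofactors witnessing ideal membership over $\mathbf k$, need to specialize well.
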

\begin{proof}
Each of the elements above can be expressed as a two-sided linear combination of defining relations $c,\rho_i$ and $\eta_{I,J}$. Below we give an example of such calculation
\begin{equation}
\begin{aligned}
g_{9}=&O_1O_2O_3 -O_4O_5O_6 -q^{-\frac{1}{4}}O_3O_{12} -q^{\frac{1}{4}}O_1O_{23} +q^{-\frac{1}{4}}O_6O_{45} +q^{\frac{1}{4}}O_4O_{56}\\
=&\frac{t^{\frac{1}{2}}}{q+t}\rho _{14} -\frac{t^{\frac{1}{2}}}{q+t}\rho _{17} -q^{-\frac{1}{2}}O_2\eta _{(3)(1)} -q^{-\frac{1}{4}}\eta _{(2)(1)}O_3 +q^{-\frac{1}{2}}O_5\eta _{(6)(4)} +q^{-\frac{1}{4}}\eta _{(5)(4)}O_6\\
&-q^{-1}(q-1)\eta _{(45)(6)} +q^{-1}(q-1)\eta _{(12)(3)} -\frac{q^{-\frac{1}{2}}t^{\frac{1}{2}}}{q+t}\eta _{(5)(2)} -\frac{q^{\frac{1}{2}}t^{\frac{1}{2}}}{q+t}\eta _{(61)(34)}\\
=&\frac{t^{\frac{1}{2}}}{q+t}\Big(\rho_{14} -\rho_{17}\Big)+\mathbf O(q-1).
\end{aligned}
\label{eq:g9Decomposition}
\end{equation}
We omit a complete list of these routine calculations from the text and invite careful reader to examine our Mathematica program \cite{Arthamonov-GitHub-Flat} which generates all such decompositions.

\begin{remark}
An observation which is not used in our text, but which is still worth mentioning is that there is only a handful amount of factors that can appear in denominators of formulas like (\ref{eq:g9Decomposition}). The one we construct in our Mathematica program \cite{Arthamonov-GitHub-Flat} turns out to have the following Least Common Multiple of denominators
\begin{equation*}
\Lambda(q,t)=q^{\frac{3}{4}}t^{\frac{1}{2}}\Big(q^{\frac{1}{4}}+1\Big) \Big(q^{\frac{1}{2}}+1\Big) \Big(q^{\frac{1}{2}}-q^{\frac{1}{4}}+1\Big) \Big(q^{\frac{1}{2}}+q^{\frac{1}{4}}+1\Big) q^4 (q+1) \Big(q-q^{\frac{1}{2}}+1\Big) \Big(q^2-q+1\Big) (t+1)^2 (q+t)^2.
\end{equation*}
\end{remark}
\end{proof}

\bibliographystyle{alpha}
\bibliography{references}

\end{document}